\newtheorem{iTheorem}{Theorem}
\newtheorem{iProp}{Proposition}
\newtheorem{iCor}{Corollary}
\newtheorem{Theorem}{Theorem}[section]
\newtheorem{lemma}[Theorem]{Lemma}
\newtheorem{prop}[Theorem]{Proposition}
\newtheorem{cor}[Theorem]{Corollary}
\theoremstyle{definition}
\newtheorem{defn}[Theorem]{Definition}
\theoremstyle{remark}
\newtheorem{rem}[Theorem]{Remark}
\newtheorem{ex}[Theorem]{Example}
\newcounter{numl}
\newcommand{\labelnuml}{\textup{(\roman{numl})}}
\DeclareSymbolFont{script}{U}{eus}{m}{n}
\DeclareSymbolFontAlphabet{\amathscr}{script}
\DeclareMathSymbol{\Wedge}{0}{script}{"5E}
\DeclareMathAlphabet{\mathrmsl}{OT1}{cmr}{m}{sl}
\renewcommand{\geq}{\geqslant}
\renewcommand{\leq}{\leqslant}
\newcommand{\R}{{\mathbb R}}
\newcommand{\C}{{\mathbb C}}
\newcommand{\N}{{\mathbb N}}
\newcommand{\T}{{\mathbb T}}
\newcommand{\G}{{\mathbb G}}
\newcommand{\V}{\mathrm{Vol}}
\newcommand{\PP}{{\mathbb P}}
\newcommand{\Pol}{{\Delta}}
\newcommand{\E}{{\mathcal E}}
\newcommand{\tor}{{\mathfrak t}}
\newcommand{\cK}{{\mathcal K}}
\newcommand{\cE}{{\bf M}}
\newcommand{\ce}{{\bf I}}
\newcommand{\cI}{{\bf I}}
\newcommand{\cJ}{{\bf J}}
\newcommand{\cF}{{\bf F}}
\newcommand{\m}{{\mu}} %moment map
\newcommand{\x}{{\mu}} % points of \tor^*
\newcommand{\p}{{p}} %polynomial weight
\newcommand{\q}{{q}} %polynimial weight
\newcommand{\K}{{\mathbb T}}
\newcommand{\cH}{{\mathcal K}}
\newcommand{\Ent}{\mathrm{Ent}}%Entropy
\newcommand{\dtor}{r}
\newcommand{\bom}{{\omega_0}}
\newcommand{\bN}{P}%principle T-bundle
\newcommand{\cL}{{\mathcal L}}% Lie derivative
\newcommand{\cO}{{\mathcal O}}% coherent sheaves
\newcommand{\cM}{{\mathcal M}}
\newcommand{\Hol}{{\mathscr H}}% space of holomorphic/regular functions
\newcommand{\tstL}{{\mathscr L}}% test configutation polarization
\newcommand{\tstA}{{\mathscr A}}% test configutation Kahler class
\newcommand{\tstX}{{\mathscr X}}
\newcommand{\Fut}{{\mathscr F}}% holomorphic Futaki invariant
\newcommand{\vol}{\mathrm{Vol}}
\newcommand{\tr}{{\rm tr}}
\newcommand{\sign}{\mathrmsl{sign}}
\newcommand{\Aut}{\mathrmsl{Aut}}
\newcommand{\Aff}{\mathrmsl{Aff}}
\renewcommand{\d}{{\mathrmsl d}}% exterior derivative
\newcommand{\Hess}{\mathrmsl{Hess}}
\newcommand{\Scal}{\mathit{Scal}}
\newcommand{\Sm}{N}% contact/CR/Sasaki manifold
\newcommand{\Sph}{{\mathbb S}}% sphere
\newcommand{\Ds}{{\amathscr D}}% Distribution
\newcommand{\Hor}{{\amathscr H}}%X-horizonatl
\newcommand{\Hs}{{\mathscr H}}%Horizontal distribution
\newcommand{\Z}{{\mathbb Z}}
\newcommand{\J}{{\mathbf J}}% GIT formal complex structure
\def\th/#1#2{{#1}^{#2}}% transversal holomorphic structure
\newcommand{\PSH}{{\mathrm{PSH}}}% Space of PSH functions
\newcommand{\MA}{{\mathrm{MA}}}% Monge-Ampere measure
\newcommand{\ext}{{\mathrm{ext}}}
\newcommand{\HL}{{\rm HL}}
\newcommand{\Ha}{{\mathbb H}} %Hashimoto operator
\newcommand{\dF}{{\bf V}}
\newcommand{\dW}{{\bf W}}
\newcommand{\Ko}{{\mathbb K}}
\newcommand{\ko}{{\mathfrak k}}
\newcommand{\bL}{{\mathbb L}}
\begin{document}

\title[Weighted K-stability]{Weighted K-stability and coercivity with applications to extremal K\"ahler and Sasaki metrics }
%\subtitle{To appear in {\it Geometry \& Topology}}
\author[V. Apostolov]{Vestislav Apostolov} \address{Vestislav Apostolov \\ D{\'e}partement de Math{\'e}matiques\\ UQAM\\ C.P. 8888 \\ Succursale Centre-ville \\ Montr{\'e}al (Qu{\'e}bec) \\ H3C 3P8 \\ Canada \\ { and Institute of Mathematics and Informatics, Bulgarian Academy of Sciences}} 
\email{apostolov.vestislav@uqam.ca}

\author[S. Jubert]{Simon Jubert} \address{Simon Jubert \\ D{\'e}partement de Math{\'e}matiques\\ UQAM\\ C.P. 8888 \\ Succursale Centre-ville \\ Montr{\'e}al (Qu{\'e}bec) \\ H3C 3P8 \\ Canada \\ and  {Institut de Math\'ematiques de Toulouse \\ Universit\'e Paul Sabatier \\ 118 route de Narbonne\\ 31062 Toulouse\\ France}}
\email{simonjubert@gmail.com}

\author[A. Lahdili]{Abdellah Lahdili} \address{Abdellah Lahdili \\  {Department of Mathematics \\ University of Aarhus\\  Ny Munkegade 118, buil. 1530, 323 \\
8000 Aarhus \\ Denmark}}
\email{lahdili.abdellah@gmail.com}

\thanks{V.A. was supported in part by an NSERC Discovery Grant and a Connect Talent Grant of the R\'egion Pays de la Loire. S.J. was supported by PhD fellowships of the UQAM and the University of Toulouse  III - Paul
Sabatier. A.L. was supported by a postdoctoral fellowship of Aarhus University. We are very grateful to the anonymous referee for their careful reading of the manuscript and many valuable suggestions which substantially improved our work. We thank D. Calderbank,  T. Darvas, R. Dervan, E. Inoue, E. Legendre, C. Li  and G. Tian for their interest and comments on the manuscript. V.A. thanks P. Gauduchon for sharing with him  his unpublished notes~\cite{gauduchon-privite}, and C. T{\o}nnesen-Friedman for bringing the reference \cite{GMSW}  to our attention.}

\begin{abstract} We show that a compact weighted extremal K\"ahler manifold  (as defined by the third named author in \cite{lahdili1})  has coercive weighted Mabuchi energy with respect to a maximal complex torus $\T^{\C}$  in the reduced group of  complex automorphisms. This provides a vast extension and a unification of a number of results concerning  K\"ahler metrics satisfying special curvature conditions, including K\"ahler metrics with constant scalar curvature~\cite{BDL,CC}, extremal K\"ahler metrics~\cite{He-ext}, K\"ahler--Ricci solitons~\cite{DaRu} and their weighted extensions~\cite{B-N,HL}.
Our result implies the strict positivity of the weighted Donaldson--Futaki invariant of any non-product $\T^{\C}$-equivariant smooth K\"ahler test configuration with reduced central fibre,  a property known as $\T^{\C}$-equivariant weighted K-polystability on such test configurations. It also yields the $\T^{\C}$-uniform weighted K-stability on the class of smooth $\T^{\C}$-equivariant polarized test configurations  with reduced central fibre.  For a class of fibrations constructed from principal torus bundles over a product of Hodge cscK manifolds, we use our results in conjunction with results of Chen--Cheng~\cite{CC}, He~\cite{He-ext} and Han--Li~\cite{HL}  in order to characterize the existence of extremal K\"ahler metrics and Calabi--Yau cones associated to the total space,  in terms of the coercivity of the weighted Mabuchi energy of the fibre. This   yields a new existence result for Sasaki--Einstein metrics on certain Fano toric fibrations, extending  the results of Futaki--Ono--Wang~\cite{FOW} in the toric Fano case,  and of Mabuchi--Nakagawa~\cite{MN} in the case of Fano $\PP^1$-bundles. \end{abstract}

%\date{to appear in {\it Geometry \& Topology}}

\maketitle

%\tableofcontents

\section*{Introduction} This paper is concerned with the existence and obstruction theory of a class of  special K\"ahler metrics,   called \emph{weighted constant scalar curvature metrics},  which were introduced by the third named author in \cite{lahdili2}, giving a vast extension of the notion of K\"ahler metrics of constant scalar curvature (cscK for short), and providing a unification for a number of  related notions of K\"ahler metrics satisfying special curvature conditions.

\subsection{The weighted cscK problem}
Let $X$ be a smooth compact complex $m$-dimensional manifold with a given deRham cohomology class $\alpha\in H^{1,1}(X, \R)$ of K\"ahler metrics, and let $\T\subset \Aut_r(X)$ denote a fixed compact torus in the \emph{reduced} group $\Aut_r(X)$ of automorphisms of $X$, i.e. the connected subgroup of automorphisms of $X$ generated by the Lie algebra of real holomorphic vector fields with zeros, see e.g.~\cite{gauduchon-book}.  It is well-known that $\T$ acts in a hamiltonian way with respect to any $\T$-invariant K\"ahler metric $\omega\in \alpha$,  and the corresponding momentum map $\mu_{\omega}$ sends $X$ onto a compact convex polytope $\Pol \subset \tor^*$ in the dual vector space $\tor^*$ of the Lie algebra $\tor$ of $\T$ (cf.~\cite{Atiyah, GS}). Furthermore, up to translations, $\Pol$ is independent of the choice of $\omega\in \alpha$.  We shall further fix $\Pol$, giving rise to a \emph{normalization} of the corresponding momentum maps $\{\mu_{\omega}, \omega \in \alpha\}$.

Following \cite{lahdili2}, let $v(\x)>0$ and $w(\x)$ be smooth functions defined on $\Pol$. One can then consider the following condition for $\T$-invariant K\"ahler  metrics $\omega$  in  $\alpha$ (and fixed polytope $\Pol$), called \emph{$(v,w)$-cscK metric}:
\begin{equation}\label{weighted-cscK-0}
\Scal_{v}(\omega) = w(\m_{\omega}), 
\end{equation}
where the so-called \emph{$v$-scalar curvature} of $\omega$ is introduced by 
\begin{equation}\label{v-scal}
\Scal_v(\omega):= v(\m_{\omega}) \Scal(\omega) + 2\Delta_{\omega} v(\m_{\omega})  + \big\langle g_{\omega}, \m_{\omega}^*\left(\Hess(v)\right)\big\rangle, \end{equation}
with  $\Scal(\omega)$ being the usual scalar curvature of the riemannian metric $g_{\omega}$  associated to $\omega$,  $\Delta_{\omega}$ the Laplace operator of $g_{\omega}$,  and  where the contraction $\langle \cdot, \cdot \rangle$  is taken between the smooth $\tor^*\otimes \tor^*$-valued function $g_{\omega}$ on $X$  (the restriction of the riemannian metric $g_{\omega}$  to $\tor \subset C^{\infty}(X, TX)$)  and the smooth $\tor\otimes \tor$-valued function ${\m_{\omega}}^*\left(\Hess(v)\right)$ on $X$ (given by the pull-back by $\m_{\omega}$ of $\Hess(v) \in C^{\infty}(\Pol, \tor\otimes \tor)$). The relevance of \eqref{weighted-cscK-0} to various geometric conditions is discussed in detail in \cite{lahdili2}, but we mention below a few special cases which partly motivate the study in this article.
\begin{itemize}
\item $v=1$ and $w$ is a constant: this is the familiar cscK problem;
\item $v=1$ and $w=\ell$ with $\ell$ an affine-linear function on $\tor^*$:  \eqref{weighted-cscK-0} then describes an extremal K\"ahler metric in the sense of Calabi~\cite{calabi};
\item $v= e^{\ell}, w= 2(\ell + a) e^{\ell}$ where $\ell$ is an affine-linear function on $\tor^*$ and $a$ is a constant correspond to  the so-called $\mu$-cscK~\cite{Inoue}, extending the notion of K\"ahler--Ricci solitons~\cite{TZ} defined when $X$ is Fano and $\alpha= 2\pi c_1(X)$.
\item $v=\ell^{-m-1}, w=a \ell^{-m-2}$,  $\alpha=c_1(L)$, where $\ell$ is a positive affine-linear function on $\Pol$,  $m$ is the complex dimension of $X$,  $a$ is a constant,  and $L$ is  a polarization of $X$:  \eqref{weighted-cscK-0} then describes  a scalar flat cone K\"ahler metric on the affine cone $(L^{-1})^{\times}$ polarized by the lift of $\xi = d\ell$ to $L^{-1}$ via $\ell$, see \cite{AC,ACL}.
\end{itemize}
In general, the problem of finding a $\T$-invariant K\"ahler metric $\omega \in \alpha$ solving \eqref{weighted-cscK-0}  is obstructed in a similar way that the cscK problem is obstructed by the vanishing of the Futaki invariant: for any $\T$-invariant K\"ahler metric $\omega\in \alpha$ and any affine-linear function $\ell$ on $\tor^*$, one must have
\begin{equation}\label{Futaki}
{\rm Fut}_{v, w}(\ell):=\int_X \Big(\Scal_v(\omega) - w(\m_{\omega})\Big) \ell(\m_{\omega})\omega^{m} =0,\end{equation}
should a solution to \eqref{weighted-cscK-0} exists.  In \cite{lahdili2}, an unobstructed modification of \eqref{weighted-cscK-0} is proposed, extending Calabi's notion~\cite{calabi} of extremal K\"ahler metrics.  To this end, suppose that $v, w_0>0$ are given positive smooth functions on $\Pol$. One can then find a unique affine-linear function $\ell^{\ext}_{v,w_0}(\x)$ on $\tor^*$, called \emph{the extremal function}, such that  \eqref{Futaki} holds for the  weights $(v, w)=(v, \ell^{\ext}_{v,w_0}w_0)$.  In this case,  a  solution of the $(v, w)$-cscK problem \eqref{weighted-cscK-0} is referred to as \emph{$(v,w_0)$-extremal K\"ahler metric}. We emphasize that $(v, w_0)$-extremal K\"ahler metrics are  $(v, w)$-cscK metrics with a special property of the weight function  $w$, namely, $w = \ell w_0$ with $w_0>0$ on $\Pol$ and $\ell$ affine-linear. In particular, $(v, w)$-cscK metrics with $w\neq 0$ on $\Pol$ are $(v, w)$-extremal with $\ell^{\ext}_{v, w} = \sign(w_{|_{\Pol}})$ and $(v, 0)$-cscK metrics are $(v, w)$-extremal  with $\ell^{\ext}_{v, w} =0$ for any $w>0$. It follows that all the above listed special cases are examples of $(v, w)$-extremal K\"ahler metrics,  and thus the setup of $(v,w)$-extremal K\"ahler metrics allows one to study these cases altogether. 

\subsection{Relation to $v$-solitons} Motivated by  works  of T. Mabuchi~\cite{M1,M2} and consequent work by Berman--Witt-Nystrom \cite{B-N}, Y. Han and C. Li~\cite{HL} have recently introduced and studied the general notion of a \emph{weighted $v$-soliton} on a smooth Fano variety $X$, as follows. In the setup explained above, we let $\alpha=2\pi c_1(X)$ and consider the natural action of $\T$ on $K^{-1}_X$,  which fixes the momentum polytope $\Pol$ of $(X, \alpha, \T)$ and normalizes the momentum map $\m_{\omega}$ for any $\T$-invariant K\"ahler metric $\omega \in \alpha$. For a (smooth) positive weight function $v(\x)$ on $\Pol$, one defines a $v$-soliton as a $\T$-invariant K\"ahler metric $\omega \in \alpha$, such that
\begin{equation}\label{v-KRS}
\rho_{\omega} - \omega = \frac{1}{2} dd^c \log v(\m_{\omega}),
\end{equation}
where $\rho_{\omega}$ denotes the Ricci form of $\omega$. Notice that when $v(\x)= e^{\langle \x, \xi \rangle}$ for some $\xi\in \tor$, one gets the well-studied class of \emph{K\"ahler--Ricci solitons}~\cite{TZ} whereas the case when $v(\x)$ is a positive affine-linear function on $\Pol$ corresponds to the \emph{Mabuchi solitons} studied in \cite{M1,M2}. As we shall see below other choices for $v$ are also geometrically meaningful. We make the following useful observation.
\begin{iProp}\label{tilde v} Let $X$ be a smooth Fano manifold and $\T\subset \Aut(X)$ a compact torus. A $\T$-invariant K\"ahler metric $\omega \in 2\pi c_1(X)$ is a $v$-soliton if and only if $\omega$ is $(v, w)$-cscK with 
$w(\x):= 2(m + \langle d\log v, \x\rangle)v(\x).$
\end{iProp}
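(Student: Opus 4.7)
The plan exploits the fact that on $X$ compact K\"ahler with $[\omega]=2\pi c_1(X)$, the tensorial $v$-soliton equation \eqref{v-KRS} is equivalent to its $\omega$-trace. Writing $\rho_\omega - \omega = dd^c f_\omega$ for the (canonically normalized) Ricci potential, \eqref{v-KRS} becomes $dd^c\bigl(f_\omega - \tfrac{1}{2}\log v(\mu_\omega)\bigr)=0$, and on a compact K\"ahler manifold any real $dd^c$-closed function is pluri-harmonic, hence constant. So \eqref{v-KRS} is equivalent to $\Delta_\omega\bigl(f_\omega - \tfrac{1}{2}\log v(\mu_\omega)\bigr)=0$, which using $\Delta_\omega f_\omega = \tfrac{1}{2}\Scal(\omega) - m$ (the trace of the Ricci potential equation) reads as the scalar equation
\[
\Scal(\omega) \;=\; 2m + \Delta_\omega\bigl(\log v(\mu_\omega)\bigr).
\]

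It then suffices to show that this scalar equation is equivalent to $\Scal_v(\omega)=w(\mu_\omega)$. Substituting the definition \eqref{v-scal} of $\Scal_v(\omega)$ and simplifying via the chain rule
\[
\Delta_\omega(F\circ\mu_\omega)\;=\;\sum_i (\partial_i F)(\mu_\omega)\,\Delta_\omega\mu_i + \langle g_\omega, \mu_\omega^*\Hess F\rangle
\]
together with the algebraic identity $v\,\Delta_\omega\log v(\mu_\omega) = \Delta_\omega v(\mu_\omega) - v(\mu_\omega)^{-1}|\nabla v(\mu_\omega)|_\omega^2$, the difference $\Scal_v(\omega)-w(\mu_\omega)$ decomposes as $v(\mu_\omega)$ times the left-hand side of the scalar equation above, plus a residual pointwise term involving $\sum_i(\partial_iv)(\mu_\omega)\Delta_\omega\mu_i$, contractions $g_\omega(\xi_i, \xi_j)$, and $\langle dv(\mu_\omega),\mu_\omega\rangle$. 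The canonical normalization of $\mu_\omega$ as the moment map of the lifted $\T$-action on $K_X^{-1}$ yields (by contracting \eqref{v-KRS} with each generator $\xi_i\in\tor$ and using the $\T$-invariance of $f_\omega = \tfrac{1}{2}\log v(\mu_\omega) + c$ on a $v$-soliton) a pointwise identity for $\Delta_\omega\mu_i$ in terms of $\mu_i$ and the metric pairings $g_\omega(\xi_i,\xi_j)$; this identity makes the residual term vanish, collapsing the equivalence.

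The main obstacle will be verifying this residual identity. Because $v$ is arbitrary and positive and the equation must hold pointwise, the cancellation cannot be reduced to an integrated statement (e.g.\ the vanishing of a Futaki-type invariant) but must be an algebraic consequence of the Fano normalization of $\mu_\omega$ and of the soliton structure. Once this identity is in hand, both directions of the proposition follow at once from the $dd^c$-lemma reduction of the first paragraph.
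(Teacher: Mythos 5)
Your reduction of the tensorial equation \eqref{v-KRS} to its $\omega$-trace (via the fact that a real $dd^c$-closed function on a compact K\"ahler manifold is constant) is correct, and your forward direction is essentially the paper's: contract \eqref{v-KRS} with each $\xi_i$, use the canonical normalization on $K_X^{-1}$ to fix the additive constant, and substitute. The gap is in the converse. The difference $\Scal_v(\omega)-w(\m_\omega)$ is \emph{not} equal to $v(\m_\omega)$ times the trace-equation defect plus a residual term that vanishes identically for every canonically normalized $\T$-invariant $\omega\in 2\pi c_1(X)$. Writing $\rho_\omega-\omega=\tfrac12 dd^c h$, the general-metric substitute for your ``pointwise identity for $\Delta_\omega\mu_i$'' is the second relation in \eqref{basic-relations}, namely $\Delta_\omega\m_\omega^{\xi_i}-2\m_\omega^{\xi_i}=-\cL_{J\xi_i}h$, which involves the Ricci potential $h$ and cannot be rewritten purely in terms of $\m_\omega$ and the pairings $g_\omega(\xi_i,\xi_j)$. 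Carrying out your decomposition, the residual term comes out to $-g_\omega\bigl(d\,v(\m_\omega),\,d(\log v(\m_\omega)-h)\bigr)$: it vanishes exactly when $\omega$ is already a $v$-soliton. Invoking ``the soliton structure'' to kill it, as your plan does, is therefore circular in the direction $(v,w)$-cscK $\Rightarrow$ $v$-soliton, and no purely algebraic consequence of the Fano normalization will make it vanish for a general metric.

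The repair is the paper's exact identity \eqref{Scal-v-tilde v}: for \emph{every} canonically normalized $\T$-invariant $\omega\in 2\pi c_1(X)$,
\[
\Scal_v(\omega)-w(\m_\omega)=v(\m_\omega)\,\Delta_{\omega,v}\bigl(\log v(\m_\omega)-h\bigr),
\qquad \Delta_{\omega,v}=\tfrac{1}{v(\m_\omega)}\,\delta_\omega\, v(\m_\omega)\, d ,
\]
in which your residual first-order term is precisely the drift part of $\Delta_{\omega,v}$. The converse then follows not from a pointwise cancellation but from the strong maximum principle for the second-order elliptic operator $\Delta_{\omega,v}$ (equivalently, by multiplying by $\log v(\m_\omega)-h$ and integrating by parts against $v(\m_\omega)\omega^{[m]}$), which forces $\log v(\m_\omega)-h$ to be constant; your $dd^c$-observation then closes the loop, since constancy of $\log v(\m_\omega)-h$ is exactly \eqref{v-KRS}.
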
 
We use the above result in order to make connection with the recent paper \cite{HL} (where the authors obtain a  complete Yau-Tian-Donaldson type correspondence for the existence of $v$-Ricci solitons) which will play an important role in our present study of $(v, w)$-cscK metrics.

We also notice that $v$-solitons can be viewed as $(\bar v,\bar w)$-cscK metrics for \emph{different} choices of weights. This is for instance the case when $v(\x)=\ell(\x)^{-(m+2)}$, where $\ell(\x)=\langle \xi, \mu\rangle + a$ is a positive affine-linear on $\Pol$. Whereas Proposition~\ref{tilde v} identifies the $v$-soliton as a $(v,w)$-cscK metric with
\[ v= \ell^{-(m+2)}, \qquad  w= 2\ell^{-(m+3)}\left(-2\ell +(m+2)a\right), \]
 we also observe  that
\begin{iProp}\label{SE} Let $(X, \T)$ be a smooth Fano variety and $\ell(\mu)=(\langle \xi, \mu\rangle + a)$ a positive affine-linear function on its canonical polytope $\Pol$. A $\T$-invariant K\"ahler metric $\omega \in 2\pi c_1(X)$ is an $\ell^{-(m+2)}$-soliton if and only if 
the lift $\hat \xi$ of $\xi=d\ell$ to $K_X$ via $\ell$ is the Reeb vector field of a Sasaki--Einstein structure defined on the unit circle bundle  $\Sm \subset K_X$ with respect to the hermitian metric on $K_X$ with curvature $-\omega$.  The latter condition is also equivalent to $\omega$ be a $(\ell^{-m-1}, 2ma\ell^{-m-2})$-cscK metric.
\end{iProp}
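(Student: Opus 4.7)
The plan is to route both equivalences through the $(v,w)$-cscK formulation. By Proposition~\ref{tilde v}, the $\ell^{-(m+2)}$-soliton condition is equivalent to $\omega$ being $(v_1,w_1)$-cscK with
\[v_1(\mu)=\ell(\mu)^{-(m+2)},\qquad w_1(\mu)=2\ell(\mu)^{-(m+3)}\bigl(-2\ell(\mu)+(m+2)a\bigr),\]
as already noted in the paragraph preceding the statement. Two tasks then remain: (a) show that this $(v_1,w_1)$-cscK equation is equivalent to the $(v_2,w_2)$-cscK equation with $v_2=\ell^{-(m+1)}$ and $w_2=2ma\,\ell^{-(m+2)}$; (b) show that the latter is the geometric condition that $\hat\xi$ be the Reeb vector field of a Sasaki--Einstein structure on $N$.

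For (a), the key algebraic observation is $v_2=\ell\cdot v_1$ on $\Pol$, combined with $\Hess(\ell)=0$ because $\ell$ is affine-linear. Expanding $\Scal_{v_2}(\omega)$ via \eqref{v-scal} and the product rule yields
\[\Scal_{v_2}(\omega)=\ell(\mu_\omega)\,\Scal_{v_1}(\omega)+2v_1(\mu_\omega)\,\Delta_\omega\ell(\mu_\omega)+4\bigl\langle d\ell,dv_1\bigr\rangle_{g_\omega}+2\bigl\langle g_\omega,\mu^*(d\ell\otimes dv_1)\bigr\rangle,\]
whose three correction terms are metric-dependent. To convert this into an equivalence, I would use the soliton identity $\rho_\omega-\omega=\tfrac12 dd^c\log v_1(\mu_\omega)$: pairing it with $\omega$ rewrites $\Delta_\omega\ell(\mu_\omega)$ in terms of $\ell$, $v_1$ and the Killing potential $\langle\xi,\mu_\omega\rangle=\ell-a$; the two gradient-type terms, which via $dv_1=-(m+2)\ell^{-(m+3)}d\ell$ reduce to multiples of $|X_\xi|^2_{g_\omega}$, are then handled by a further use of the soliton equation in the $\xi$-direction. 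A careful bookkeeping produces exactly $w_2(\mu_\omega)-\ell(\mu_\omega)\,w_1(\mu_\omega)=4(\ell-a)\ell^{-(m+2)}$ on the right, establishing (a).

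For (b), I would invoke the correspondence recalled in the last bullet point preceding the statement, from \cite{AC,ACL}. Endowing $K_X$ with the hermitian metric of curvature $-\omega$ turns $N\subset K_X$ into a Sasaki manifold whose K\"ahler cone is $(K_X)^\times=(L^{-1})^\times$ for $L=K_X^{-1}$, and selecting $\hat\xi$ as Reeb amounts to building the cone K\"ahler potential from $\omega$ and $\ell$ as in \cite{AC,ACL}. Scalar-flatness of this cone metric then reads as an $(\ell^{-(m+1)},c\,\ell^{-(m+2)})$-cscK equation on $(X,\omega)$ for some constant $c$; Sasaki--Einstein is the stronger Calabi--Yau cone condition, but in our Fano setting the canonical bundle of the cone is holomorphically trivial via the Kodaira formula $K_{\mathrm{tot}(K_X)}=\pi^*(K_X^{-1}\otimes K_X)=\lO$, so Calabi--Yau reduces to scalar-flat together with a normalization of the cone volume form compatible with Reeb-rescaling, which pins $c=2ma$. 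The main obstacle is the bookkeeping in (a): the product-rule expansion produces several metric-dependent cross-terms that must be absorbed using the soliton equation, and it is precisely this absorption that determines the coefficient $2ma$ in $w_2$.
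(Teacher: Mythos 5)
Your reduction of everything to the weighted-cscK formulation, and your treatment of part (b), are in line with the paper: the paper's Lemma~\ref{l:SE} likewise passes through \cite{AC} to identify the $(\ell^{-(m+1)},2ma\ell^{-(m+2)})$-cscK equation with constant transversal scalar curvature of the Sasaki structure with Reeb field $\hat\xi$, and then upgrades to Sasaki--Einstein using $c_1((K_X)^{\times})=0$, so that the basic first Chern class of the CR distribution vanishes and constant transversal scalar curvature forces transversal K\"ahler--Einstein. Your forward computation in (a) also matches the paper's: Lemma~\ref{SE=v-KRS} specializes the two soliton identities \eqref{trace-v-sol} and \eqref{int-v-sol} to $v=\ell^{-(m+2)}$, sets $f=\ell(\m_\omega)$, and combines $f^2\cdot(\text{first})+mf\cdot(\text{second})$ to land exactly on $\Scal_{\ell^{-(m+1)}}(\omega)=2ma\,\ell^{-(m+2)}(\m_\omega)$; this is the same bookkeeping you describe via the product rule for $v_2=\ell v_1$.

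The genuine gap is the converse implication in (a): from $(\ell^{-(m+1)},2ma\ell^{-(m+2)})$-cscK back to the soliton equation. Your plan absorbs the metric-dependent cross terms ($\Delta_\omega\ell(\m_\omega)$ and $|d\ell(\m_\omega)|^2_{g_\omega}$, which are \emph{not} functions of $\m_\omega$ alone) by "a further use of the soliton equation" --- but in the converse direction the soliton identity \eqref{v-KRS} is precisely what you are trying to prove, so it is not available. Knowing that the two scalar equations $\Scal_{v_1}(\omega)=w_1(\m_\omega)$ and $\Scal_{v_2}(\omega)=w_2(\m_\omega)$ coincide \emph{on solitons} does not show that a solution of the second is a soliton. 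The paper closes this by taking an arbitrary $\T$-invariant $\omega\in 2\pi c_1(X)$ with Ricci potential $h$, using the canonically normalized identities \eqref{basic-relations} to rewrite the defect $\Scal_{v_2}(\omega)-w_2(\m_\omega)$ as a second-order degenerate-elliptic operator (with no zeroth-order term) applied to $(m+2)\log\ell(\m_\omega)+h$ (see \eqref{Scal-f}), and then invoking the maximum principle to conclude that this function is constant, i.e.\ that $\omega$ satisfies \eqref{v-KRS}. Some argument of this type (or an appeal to uniqueness of solitons in their class, which is not what the statement asserts) is needed; without it your proof only establishes one of the two claimed implications between the soliton condition and the $(\ell^{-(m+1)},2ma\ell^{-(m+2)})$-cscK condition.
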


\subsection{Main results} Similarly to the usual cscK case,  it is shown in \cite{lahdili2} that the solutions of \eqref{weighted-cscK-0} can be characterized as minimizers of a functional $\cE_{v, w}$ defined on the space of $\T$-invariant K\"ahler metrics in $\alpha$, extending the \emph{Mabuchi energy} to the weighted setting (see Section~\ref{s:weighted-setup} below for the precise definition). After  the deep works \cite{BDL,CC},  it is now well-understood that the coercivity of the Mabuchi energy is equivalent to the existence of a cscK metric in a given cohomology class.  Noting that, by  the results in \cite{lahdili2},  any $(v, w)$-extremal metric is  invariant under a \emph{maximal} compact torus in  $\Aut_{r}(X)$,  our  first main result is an extension of one direction of the correspondence in the cscK case to the weighted setting.
\begin{iTheorem}\label{main} Suppose $\T\subset {\rm Aut}_{r}(X)$ is a maximal torus in the reduced group of  automorphisms of $X$, and $\bom\in \alpha$ a $\T$-invariant $(v, w_0)$-extremal K\"ahler metric. Then  the weighted Mabuchi energy $\cE_{v,w}$ (with $w= \ell^{\rm ext}_{v, w_0}w_0$) is coercive relative to the complex torus $\T^{\C}$, in the sense of \cite{DaRu}, i.e. there exist positive real constants $\lambda, \delta$ such that for any $\T$-invariant K\"ahler metric $\omega \in \alpha$,
\[\cE_{v,w}(\omega) \ge \lambda \inf_{\sigma \in \T^{\C}}\cJ(\sigma^*\omega) - \delta, \]
where $\cJ$ denotes the Aubin functional on the space K\"ahler metrics, see Definition~\ref{d:I, I-Aubin, J} below.
\end{iTheorem}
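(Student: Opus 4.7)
The plan is to adapt the variational strategy of Darvas--Rubinstein combined with the a priori estimates of Chen--Cheng, which in the classical cscK case establishes the equivalence of coercivity of the Mabuchi functional (modulo a maximal torus) and the existence of a cscK metric. Writing any K\"ahler metric in the class as $\omega_\phi = \bom + dd^c\phi$ with $\phi$ a $\T$-invariant K\"ahler potential, I would first express $\cE_{v,w}$ as the sum of a weighted entropy term $\int_X \log(\omega_\phi^m/\bom^m)\,v(\m_{\omega_\phi})\,\omega_\phi^m$ and a weighted pluripotential-energy piece, and extend this sum lower semi-continuously to the $d_1$-completion of the space of $\T$-invariant potentials in the sense of Darvas' Finsler framework.

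The core technical step is a weighted convexity theorem: along any weak $C^{1,\bar 1}$ geodesic of $\T$-invariant potentials, $\cE_{v,w}$ is convex. The positivity of $v$ preserves the subharmonic-entropy argument of Berman--Berndtsson, while the factor $w=\ell^{\ext}_{v,w_0}w_0$ contributes only through an affine-linear function of the momentum map, which is itself affine along weak geodesics in the appropriate sense. Since $\bom$ is a critical point of $\cE_{v,w}$ by \eqref{weighted-cscK-0}, convexity forces it to be a global minimizer on the space of $\T$-invariant potentials, and a strict-convexity argument --- where the maximality of $\T$ is used to rule out further holomorphic symmetries that could shift the minimizer --- yields uniqueness of minimizers modulo the $\T^\C$-action on potentials. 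Coercivity then follows by a contradiction/compactness argument: a sequence $\omega_k$ violating the claimed inequality may, after optimal $\T^\C$-normalization, be shown via weighted Chen--Cheng-type $L^\infty$ and higher-order estimates to $d_1$-subconverge to a $(v,w)$-cscK metric at infinite $\cJ$-distance from $\bom$ modulo $\T^\C$, contradicting the uniqueness of the minimizer.

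The principal obstacles are technical. One must (a) establish the weighted Berman--Berndtsson convexity, where the reference measure $\omega_\phi^m$ is replaced by the signed potential-dependent measure $v(\m_{\omega_\phi})\omega_\phi^m$ and the usual plurisubharmonicity arguments have to survive multiplication by a smooth function of the (non-holomorphic) momentum map; (b) adapt the fourth-order a priori estimates of Chen--Cheng to the weighted equation $\Scal_v(\omega)=w(\m_{\omega})$, uniformly along a suitable continuity family of weights or right-hand sides; and (c) compute the variation of $\cE_{v,w}$ along one-parameter subgroups of $\T^\C$ --- using Proposition~\ref{tilde v} in the relevant specializations and the extremal identity $w=\ell^{\ext}_{v,w_0}w_0$ --- to see that, modulo the weighted Futaki invariant \eqref{Futaki}, this variation vanishes. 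This last point is precisely what forces the coercivity to be stated \emph{relative} to $\T^\C$ rather than absolutely, and aligns the statement with the Darvas--Rubinstein formalism invoked in the classical cscK and K\"ahler--Ricci soliton cases.
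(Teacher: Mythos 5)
Your overall skeleton --- the Darvas--Rubinstein variational framework, convexity along weak geodesics, uniqueness of minimizers modulo $\T^{\C}$, and a compactness ingredient --- matches the paper's strategy, and your point (c) about the variation along one-parameter subgroups of $\T^{\C}$ correctly identifies why the statement is relative to $\T^{\C}$. But there are two genuine gaps. First, you treat the lsc extension of $\cE_{v,w}$ to $\E^1_{\T}(X,\bom)$ as routine, whereas it is the paper's main technical contribution: the weighted entropy term and the twisted energy $\ce^{\rho_{\bom}}_v$ in the Chen--Tian decomposition both involve the momentum map of a merely finite-energy potential, so one must first make sense of the weighted Monge--Amp\`ere measure $\MA_v(\varphi)$ for $\varphi \in \E^1_{\T}(X,\bom)$ and prove its weak continuity under $d_1$-convergence. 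The paper does this by realizing weighted quantities on $X$ as unweighted quantities on a semi-simple principal $(X,\T)$-fibration $Y$ for polynomial weights, and then using Bernstein approximation together with the linearity and uniform continuity of $\cE_{v,w}$ in $(v,w)$; without some such device your extension step does not go through.

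Second, your coercivity endgame is the wrong tool for this direction. A sequence $\omega_k$ violating the coercivity inequality satisfies no elliptic equation, so the Chen--Cheng fourth-order a priori estimates (which apply to solutions of the weighted cscK equation or of a continuity-path PDE) cannot be applied to it; those estimates are what the paper uses for the \emph{converse} implication in Theorem~\ref{extremal-bundle}, not for Theorem~\ref{main}. What the Darvas--Rubinstein principle actually requires here is (a) entropy compactness of minimizing sequences with bounded $d_1$-distance (Theorem~\ref{compact}), and (b) the regularity of \emph{weak} $\E^1$-minimizers (Theorem~\ref{W&S}). The latter is proved not by PDE estimates but by the Berman--Darvas--Lu perturbation scheme: one perturbs $\cE_{v,w}$ by $\lambda\,\ce^{\omega_\varphi}$, uses the existing smooth extremal metric and the solvability of the weighted Lichnerowicz equation ${\mathbb L}_{\omega,v}(\dot\psi)=f$ transverse to its kernel of Killing potentials to build competitors along the continuity path, and concludes by strict convexity of $\ce^{\omega_\varphi}$ along finite-energy geodesics. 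Your proposal omits this step, and it cannot be replaced by the compactness argument you sketch, since that argument never produces a second smooth $(v,w)$-cscK metric to contradict uniqueness.
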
 
Our proof of Theorem~\ref{main} also adapts to the case when the torus $\T\subset \Aut_r(X)$ is not necessarily maximal,  but  instead of $\T^{\C}$ one takes the infimum of  $\cJ(\sigma^*\omega)$ over $\hat\G:={\Aut}_{r}^{\T}(X)$, the connected component of the identity of the centralizer of $\T$  in $\Aut_r(X)$ (which by \cite{lahdili2} is a reductive group if  $X$ admits a $(v, w_0)$-extremal $\T$-invariant K\"ahler metric, see Remark~\ref{r:hat-G} for more details). Furthermore,  we can also consider \emph{any} reductive connected subgroup group $\G= \Ko^{\C}\subset \hat\G$ with a compact form $\Ko$ containing $\T$,  and restrict $\cE_{v, w}$ to the space of $\Ko$-invariant K\"ahler metrics in $\alpha$ as in \cite{HL}.~\footnote{We are grateful to Chi Li for pointing this out to us.}

As noticed in \cite{BDL} (in the polarized case) and in \cite{dyrefelt2} (in the more general K\"ahler case), the coercivity of  the Mabuchi energy yields a sharp estimate of the sign of the Donaldson--Futaki invariant of a $\T$-equivariant test configuration.    In our weighted setting, we consider $\T$-equivariant (compactified) K\"ahler test configurations $(\tstX, \tstA)$ associated to $(X, \alpha, \T)$,  which have smooth total space. To any such test-configuration one can associate  a \emph{weighted Donaldson--Futaki} invariant by the formula (cf. \cite{lahdili2})
\[ \Fut_{v,w}(\tstX, \tstA) :=-\int_{\tstX}\left(\Scal_v(\Omega)- w(\m_{\Omega})\right)\Omega^{[m+1]} + (8\pi) \int_X v(\m_{\omega}) \omega^{[m]}, \]
where $\Omega \in \tstA, \omega \in \alpha$ are $\T$-invariant K\"ahler forms respectively  on $\tstX$ and $X$, with respective $\Pol$-normalized momentum maps $\m_{\Omega}, \m_{\omega}$,  and   $\Scal_v(\Omega)$ is the $v$-scalar curvature of $\Omega$ defined by \eqref{v-scal}. In the above formula,  for any $2$-form $\psi$ we use the convention $\psi^{[k]}:= \frac{\psi^k}{k!}$ so that  $\Fut_{v,w}(\tstX, \tstA)$ extends to the weighted setting the expression~\cite{odaka,wang} of the Donaldson--Futaki invariant of $(\tstX, \tstA)$ in terms of intersection numbers.

\begin{iCor}\label{stability} Under the hypotheses of Theorem~\ref{main}, for any $\T$-equivariant smooth K\"ahler test configuration $(\tstX, \tstA)$ of $(X, \alpha, \T)$ which  has a reduced central fibre,  we have the inequality
\[ \Fut_{v, w}(\tstX, \tstA) \ge 0,\] with equality if and only if $(\tstX, \tstA)$ is a product test configuration. Furthermore, if $\alpha= 2\pi c_1(L)$ corresponds to a polarization $L$ of $X$ and $(\tstX, \tstL, \T)$ is a $\T$-equivariant smooth polarized test configuration of $(X, L)$ as above,  we have the inequality
\[ \Fut_{v, w}(\tstX, \tstA) \ge \lambda  \, {\bf J}_{\T^{\C}}^{\rm NA}(\tstX, \tstA),\]
where $\tstA=2\pi c_1(\tstL)$, $\lambda>0$ is the constant appearing in Theorem~\ref{main},  and ${\bf J}_{\T^\C}^{\rm NA}(\tstX, \tstA)$ is the $\T^{\C}$-relative non-Archimedean ${\bf J}$-functional of the test configuration  introduced  in \cite{H, ChiLi}, see \eqref{def:J-NA-G}.
\end{iCor}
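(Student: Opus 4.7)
The plan is to deduce both inequalities from Theorem~\ref{main} by passing to a slope-at-infinity limit along smooth rays attached to the test configuration. To a $\T$-equivariant smooth K\"ahler test configuration $(\tstX,\tstA)$ with reduced central fibre I would attach a smooth ray $(\omega_t)_{t\ge 0}$ of $\T$-invariant K\"ahler metrics in $\alpha$ on $X$, obtained by choosing a $\T\times S^1$-invariant representative $\Omega\in\tstA$ and pulling back its restriction to the smooth fibres along the $\C^*$-action, setting $t=-\log|\tau|$. This is the standard construction used in \cite{BDL,dyrefelt2}.

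The key step is to establish two asymptotic slope formulas along this ray. The first is the weighted slope formula
\[ \lim_{t\to\infty}\frac{\cE_{v,w}(\omega_t)}{t} = \Fut_{v,w}(\tstX,\tstA), \]
which extends the classical Tian--Paul identification of the asymptotic Mabuchi slope with the Donaldson--Futaki invariant. In the smooth K\"ahler setting this is obtained (as in \cite{dyrefelt2} in the unweighted case, and along the lines of \cite{lahdili2} in the polarized weighted case) by integrating $\partial_t\cE_{v,w}(\omega_t)$ on $\tstX$ and recognising the boundary contribution at the central fibre as the intersection-number expression defining $\Fut_{v,w}$; reducedness of the central fibre ensures that no multiplicity correction arises. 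The second slope formula reads
\[ \lim_{t\to\infty}\frac{\inf_{\sigma\in\T^{\C}}\cJ(\sigma^*\omega_t)}{t} = {\bf J}_{\T^{\C}}^{\rm NA}(\tstX,\tstA), \]
and is purely K\"ahler-geometric, following from the results of \cite{H,ChiLi} on the relative non-Archimedean $\cJ$-functional.

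Granted these formulas, I apply the coercivity estimate of Theorem~\ref{main} to $\omega_t$, divide by $t$, and let $t\to\infty$. This yields the polarized inequality $\Fut_{v,w}(\tstX,\tstA)\ge \lambda\,{\bf J}_{\T^\C}^{\rm NA}(\tstX,\tstA)$ directly, while in the general smooth K\"ahler case one only needs $\cJ(\sigma^*\omega_t)\ge 0$ to deduce $\Fut_{v,w}(\tstX,\tstA)\ge 0$. In the equality case, $\Fut_{v,w}(\tstX,\tstA)=0$ forces the asymptotic slope of $\inf_{\sigma\in\T^\C}\cJ(\sigma^*\omega_t)$ to vanish, and the characterisation of $\T^\C$-product test configurations by the vanishing of the relative non-Archimedean $\cJ$-functional (see \cite{BDL,ChiLi}) then identifies $(\tstX,\tstA)$ as a product.

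The main obstacle is the rigorous derivation of the weighted slope formula on a smooth K\"ahler (not only polarized) test configuration. The weight-dependent terms $2\Delta_\omega v(\mu_\omega) + \langle g_\omega,\mu_\omega^*(\Hess v)\rangle$ appearing in $\Scal_v(\omega)$ must be integrated by parts on $\tstX$ and matched against the intersection-number definition of $\Fut_{v,w}$. This is a direct but delicate adaptation of the cscK computation in \cite{dyrefelt2} and the weighted polarized calculation of \cite{lahdili2}, carried out by exploiting the $\T$-equivariance and the fact that $v$ and $w$ depend only on the $\tor^*$-valued momentum variable.
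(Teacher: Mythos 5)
Your derivation of the two inequalities is sound and is essentially the route the paper takes: the weighted slope formula $\lim_{t\to\infty}\cE_{v,w}(\omega_t)/t=\Fut_{v,w}(\tstX,\tstA)$ for smooth test configurations with reduced central fibre is exactly \cite[Thm.~7]{lahdili2} (quoted here as Example~\ref{M-NA}), so you need not re-derive it, and combining it with the coercivity estimate of Theorem~\ref{main} divided by $t$ gives $\Fut_{v,w}\ge 0$ in general (using only $\cJ\ge 0$) and $\Fut_{v,w}\ge\lambda\,{\bf J}^{\rm NA}_{\T^{\C}}$ in the polarized case via Example~\ref{J-NA}-bis. Note also that the boundedness below of $\cE_{v,w}$ forces \eqref{c-relation}, so $\cE_{v,w}$ genuinely descends to a functional on metrics; this should be said, but it is automatic here.

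The genuine gap is in the equality case. The Corollary asserts ``$\Fut_{v,w}=0$ iff product'' for \emph{arbitrary} smooth K\"ahler test configurations, but your argument for it rests on two facts that are only available in the polarized setting: (a) the identification of $\lim_{t\to\infty}\inf_{\sigma\in\T^{\C}}\cJ(\sigma^*\omega_t)/t$ with ${\bf J}^{\rm NA}_{\T^{\C}}(\tstX,\tstA)$ (which involves exchanging the infimum over $\T^{\C}$ with the limit in $t$, established in \cite{H,ChiLi} only for polarized test configurations), and (b) the characterization ``${\bf J}^{\rm NA}_{\T^{\C}}=0$ iff product,'' which is likewise a polarized statement from \cite{BHJ,H,ChiLi}. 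In the transcendental case the paper instead argues as in \cite{dyrefelt2}: coercivity plus $\Fut_{v,w}=0$ yields $d_1^{[\G]}(0,\psi_t)=o(t)$ along the ray of normalized potentials; this sublinear growth is transferred to the $C^{1,\bar1}$ weak geodesic ray attached to the (suitably shifted) class $\tstA_c=\tstA-c[X_0]$ via \cite[Lemma~4.8]{dyrefelt2}; the implication $(2)\Rightarrow(5)$ of \cite[Thm.~4.4]{dyrefelt2} then shows the geodesic ray is of the form $\exp(tJ\xi)^*\omega_{\psi_0}$ with $\xi\in\tor$; and \cite[Thm.~A.6]{dyrefelt2} converts this into the statement that $(\tstX,\tstA)$ is a product. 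Without some substitute for this geodesic-ray argument, your equality case only covers polarized test configurations, not the full class for which the first assertion of the Corollary is claimed.
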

Corollary~\ref{stability} improves the ($\T$-equivariant) $(v,w)$-K-semistability established in \cite[Thm.~2]{lahdili3} to  ($\T$-equivariant)
$(v, w)$-K-polystability on the test configurations as above, and, in the projective case,  further to  $\T^{\C}$-uniform $(v,w)$-K-stability in the sense of \cite{H,ChiLi}. As we already mentioned, the fist part of Corollary~\ref{stability}  was proved in \cite{stoppa,BDL,dyrefelt2} in the cscK case ($v=1$ and $w$ is a constant), and in \cite{dervan2, stoppa-sz, dyrefelt2} in the unweighted extremal case ($v=1=w_0$). We however notice that in the extremal case our proof uses directly the coercivity of the \emph{relative} Mabuchi energy (which follows from Theorem~\ref{main}) whereas  the proofs in \cite{dervan2,stoppa-sz}  and \cite{dyrefelt2}  are based respectively on the Arezzo--Pacard existence results of extremal metrics on blow-ups~\cite{APS},  and on the coercivity of the unweighted  Mabuchi energy $\cE_{1,c}$ established in \cite{BDL, dyrefelt2}.  The  $\T^{\C}$-uniform $(v,w)$-K-stability statement in the second part of Corollary~\ref{stability} is established in the cscK case in \cite{H,ChiLi}, and in the case of a $v$-soliton in \cite{HL}.  Our proof of Corollary~\ref{stability} in the general weighted case follows easily from Theorem~\ref{main} by the established techniques in the cscK case, see Section~\ref{s:stability}.

Another notable special case where our results apply is when  $\alpha = c_1(L)$ for an ample line bundle $L$ over $X$,  and $v=\ell^{-m-1}, w_0=\ell^{-m-3}$ for a positive affine-linear function on $\Pol$. It is observed in \cite{AC} that  in this case a $(v, w_0)$-extremal K\"ahler metric in $\alpha$ describes an \emph{extremal Sasaki} metric on the total space $\Sm$ of the unit circle bundle in $L^{-1}$ with respect to the hermitian metric with curvature $-\omega$,  and  Reeb vector field corresponding to the lift of $d\ell$  to $L^{-1}$ via $\ell$. In this special case, the first part of Corollary~\ref{stability} above was obtained in \cite{ACL}  for \emph{polarized} test configurations (see Theorem 1, Conjecture 5.8 and Remark 5.9 in \cite{ACL}), by using the results in~\cite{He-Li} which establish an analogue of Theorem~\ref{main} in the Sasaki case. Thus, our proofs of Theorem~\ref{main} and Corollary~\ref{stability} presented in this paper allow one to recast and further generalize \cite[Thm.~1]{ACL} entirely within the framework of weighted K\"ahler geometry of $X$.

\subsection{Method of proof} We now discuss briefly the method of proof of Theorem~\ref{main} above. It  is an application of the general coercivity principle~\cite[Thm.~3.4]{DaRu}, see Section~\ref{s:coercivity} below. The latter  is used in the cscK case in \cite{BDL},  and our approach is mainly inspired by these two references. Noting that in the weighted extremal case $\cE_{v,w}$ is  $\G$-invariant and $\G:=\T^{\C}$  is reductive, by the results of \cite{DaRu}, in order to obtain Theorem~\ref{main}, one needs to accomplish the following steps:  (1) extend $\cE_{v,w}$ to the space $\E^1(X, \bom)$ of $\bom$-relative pluri-subharmonic functions of maximal mass and finite energy; (2) show that the extension is convex and continuous along weak $d_1$-geodesics in $\E^1(X, \bom)$; (3) establish a compactness result for the extension of $\cE_{v, w}$, and (4) show the uniqueness modulo the action of $\G$ (and in particular the regularity) of the weak minimizers of $\cE_{v,w}$, under the assumption that a $(v, w_0)$-extremal metric exists. The steps (1), (2) and (3) in the unweighted cscK case are obtained in \cite{BDL0} and  follow from the Chen--Tian formula of $\cE_{1,1}$. The analogous  formula for $\cE_{v,w}$ is obtained in \cite{lahdili2}, but the presence of weights does not allow for a straightforward generalization of the arguments in \cite{BDL0}.  Similar difficulty arises in \cite{B-N},  in the framework of $v$-solitons on a Fano variety,  where the authors were able to obtain a suitable extension of the \emph{weighted Ding functional} to the space $\E^1(X, \bom)$. The latter functional has  milder dependence on the weights than the weighted Mabuchi functional we consider.  Indeed, the arguments of \cite{B-N} yield the existence of a continuous extension  to $\E^1(X, \bom)$ of one of the three terms in the Chen--Tian decomposition of $\cE_{v,w}$, which  depend on the weight $w$. Building on \cite{B-N},  Han--Li~\cite{HL} proposed a new approach to the extension problem in the case of $v$-solitons,  based on an idea going back to Donaldson \cite{Do-05} (see in particular the proof of Proposition 3 in \cite{Do-05}), which amounts to consider suitable fibre-bundles $Y$ over a cscK bases $B$ and fibre $X$,  and show that the weighted quantities on $X$ correspond to the restrictions of unweighted quantities on the total space $Y$. This is the \emph{semi-simple principal  $(X,\T)$-fibration construction} which we review in the next subsection. Going further than \cite{HL}, we express  in general the scalar curvature of  a bundle-compatible K\"ahler metric on $Y$  in terms of the  weighted scalar curvature of $X$,  and show that the usual (unweighted) Mabuchi energy on $Y$ restricts to a suitably weighted Mabuchi energy on $X$. It thus follows that at least for suitable polynomial weights $v$, the remaining terms of the Chen--Tian decomposition of $\cE_{v, w}$ can be extended to $\E^1(X, \bom)$ simply by restricting  to the fibres the corresponding (unweighted)  extension of the Mabuchi energy of $Y$. The final crucial observation for obtaining the extension for \emph{any} weights is that $\cE_{v,w}$ depends \emph{linearly} and \emph{continuously} on $(v, w)$, so that one can further use (as in \cite{HL}) the Stone--Weierstrass approximation theorem over $C^0(\Pol)$.  
With this in place,  and using the weighted analogue of the uniqueness~\cite{BB} achieved in \cite{lahdili3}, we can adapt the arguments from \cite{BDL}.

\subsection{Applications to the semi-simple principal fibration construction} We briefly review here the semi-simple principal bundle construction, which is a key tool in our proof of Theorem~\ref{main}, but also provides a framework for further geometric applications of our results, extending the setting of the generalized Calabi construction in~\cite{HFKG4}.

We denote by $\T$ a compact $r$-dimensional torus with Lie algebra $\tor$ and lattice $\Lambda \subset \tor$ of generators of $S^1$-subgroups, i.e. $\T= \tor/2\pi \Lambda$. Let $B= B_1 \times \cdots \times B_k$ be a $2n$-dimensional cscK manifold which is a product of  compact cscK Hodge K\"ahler $2n_a$-manifolds  $(B_a, \omega_{B_a})$,   $a=1, \ldots, k$. We then consider a principal $\T$-bundle
$\pi : \bN \to B$  endowed with  a connection $1$-form $\theta \in \Omega^1(\bN, \tor)$ with curvature 
\[d\theta = \sum_{a=1}^k (\pi^* \omega_{B_a}) \otimes p_a, \qquad p_a \in \Lambda.\]
For any smooth compact K\"ahler  $2m$-manifold $(X, \omega_X, \T)$,  endowed with a hamiltonian isometric action of the torus $\T$ as in the setup above, we can construct the \emph{principal $(X, \T)$-fibration}
\[ Y:= (X \times \bN)/\T \to B, \]
where the $\T$-action on the product is $\sigma(x, p)= (\sigma^{-1}x, \sigma p), \, x\in X, \, p\in \bN, \,  \sigma \in \T$. Using the chosen connection on $\bN$,  the almost complex structures on $X$ and $B$  lift to define a  CR structure on the product $X\times \bN$, and thus endow $Y$  with the structure of a  $2(m+n)$-dimensional smooth complex manifold. Furthermore, $Y$ comes equipped with an induced holomorphic  fibration $\pi : Y \to B$,  with smooth complex fibres $X$, and induced fibre-wise $\T$-action. Fixing constants $c_a\in \R$ such that for each $a=1, \ldots, k$,  the affine linear function $\langle p_a, \x \rangle + c_a$ on $\tor^*$  is strictly positive on the momentum image $\Pol$ of $X$, one can define a lifted K\"ahler metric $\omega_Y$ on $Y$ which,  pulled-back  to $X \times \bN$,  has the form
\begin{equation*}\label{Y-Kahler-0}
\begin{split}
\omega_Y  &:= \omega_X  + \sum_{a=1}^k (\langle p_a, \m_{\omega}\rangle + c_a)\pi^*\omega_{B_a} +  \langle d m_{\omega} \wedge  \theta \rangle
                     \end{split}
\end{equation*}
where $\langle \cdot, \cdot \rangle$ stands for the natural pairing of $\tor$ and $\tor^*$:  thus $\langle p_a, \m_{\omega}\rangle$ is a smooth function  and $\langle d\m_{\omega} \wedge \theta \rangle$ is a $2$-form on $X\times \bN$. As we show in Section~\ref{s:geometric} below, when $\omega_X$ varies in a given K\"ahler class of $X$, the corresponding K\"ahler metric $\omega_Y$ will vary in a fixed K\"ahler class on $Y$.
We also notice that when $(X, \omega_X, \T)$ is a smooth \emph{toric} K\"ahler manifold,  the setup above reduces to the theory of \emph{semi-simple rigid}  toric fibrations studied in \cite{HFKG1, HFKG2, HFKG4}. Inspired by the results in the latter works, we show that the scalar curvature of $\omega_Y$ can be expressed in terms of the $p$-weighted scalar curvature of $(X, \omega_X)$, where the weight function $p(\x)$ is a polynomial depending on the fixed data $(p_a, c_a, n_a)$ of the construction. With this observation in mind, we show that (similarly to the case of semi-simple rigid toric fibrations recently studied in \cite{Jubert}) the recent results~\cite{CC,He-ext}  can be used to obtain a converse of Theorem~\ref{main} in the case of a semi-simple principal fibrations.
 
\begin{iTheorem}\label{extremal-bundle} Suppose $Y$ is a semi-simple principal $(X, \T)$-fibration, with a K\"ahler metric $\omega_Y$ induced by a $\T$-invariant K\"ahler metric $\omega_X$ on $X$.  We suppose, moreover, that $\T$ is a maximal torus in the reduced group of automorphisms $\Aut_r(X)$. Then,  the following conditions are equivalent
\begin{itemize}
\item[(i)] $Y$ admits an extremal K\"ahler  metric in the K\"ahler class $[\omega_Y]$;
\item[(ii)] $X$ admits a  $\T$-invariant $(p, \tilde w)$-cscK metric in the K\"ahler class $[\omega_X]$,  with weights 
\[p(\mu)=\prod_{a=1}^k(\langle p_a, \x\rangle+ c_a)^{n_a}, \qquad \tilde w(\mu)= p(\mu)\left(-\sum_{a=1}^k \frac{\Scal(\omega_{B_a})}{\left(\langle p_a, \x \rangle + c_a\right)} + \ell^{\rm ext}(\mu)\right),\]
where $\ell^{\rm ext}$ is an affine-linear function determined by the condition \eqref{Futaki};
\item[(iii)] The weighted Mabuchi energy $\cE_{p, \tilde w}^X$ of $(X, [\omega_X], \T)$ is coercive with respect to $\T^{\C}$, where $p, \tilde w$ are the weights defined in (ii).
\end{itemize}
\end{iTheorem}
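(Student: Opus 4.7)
The plan is to establish the three-way equivalence by running the cycle $(ii)\Rightarrow(iii)\Rightarrow(i)\Rightarrow(ii)$, relying on two geometric correspondences from the semi-simple principal fibration construction (to be developed in Section~\ref{s:geometric}): the scalar-curvature identity
\[\Scal(\omega_Y)\,=\,\frac{\Scal_p(\omega_X)}{p(\m_{\omega_X})} + \sum_{a=1}^k \frac{\Scal(\omega_{B_a})}{\langle p_a, \m_{\omega_X}\rangle + c_a}\]
for a bundle-compatible K\"ahler metric $\omega_Y$ on $Y$ induced by $\omega_X$, and a parallel Mabuchi-energy identity according to which the restriction of the usual Mabuchi energy $\cE^Y$ to bundle-compatible metrics in $[\omega_Y]$ equals, up to a topological constant and a positive scale, the weighted Mabuchi energy $\cE^X_{p,\tilde w}$. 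Combined with the explicit form of $\tilde w$, the scalar-curvature identity immediately shows that the $(p,\tilde w)$-cscK condition on $\omega_X$ is equivalent to $\Scal(\omega_Y)$ being the pull-back of an affine-linear function on the momentum image of $Y$ under the extended torus action, i.e.\ to the Calabi extremality of the specific bundle-compatible $\omega_Y$, which already gives the (easy) implication $(ii)\Rightarrow(i)$.

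For $(ii)\Rightarrow(iii)$, the $(p,\tilde w)$-cscK assumption places us within the hypotheses of Theorem~\ref{main} for the weight data $(v,w)=(p,\tilde w)$ (after viewing $\tilde w$ as $\ell^{\rm ext}_{p,w_0}\,w_0$ for an appropriate choice of $w_0$), so the coercivity of $\cE^X_{p,\tilde w}$ modulo $\T^{\C}$ is provided directly by Theorem~\ref{main}. For $(iii)\Rightarrow(i)$, I would use the Mabuchi-energy reduction to promote the hypothesized coercivity on $X$ to a $\hat\T^{\C}$-relative coercivity of $\cE^Y$ on $(Y,[\omega_Y])$, where $\hat\T\subset \Aut_r(Y)$ is a maximal torus whose fibrewise part is $\T$, its existence and structure controlled by the maximality of $\T$ in $\Aut_r(X)$. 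Once this full relative coercivity is in place, the existence theorems of Chen--Cheng~\cite{CC} and He~\cite{He-ext} deliver an extremal K\"ahler metric in $[\omega_Y]$. Finally, $(i)\Rightarrow(ii)$ relies on the uniqueness modulo $\Aut_r(Y)$ of extremal K\"ahler metrics (Berman--Berndtsson): one arranges, via the $\hat\T^{\C}$-action, that the given extremal representative is bundle-compatible, and then its fibrewise restriction is a $(p,\tilde w)$-cscK metric on $X$ by the scalar-curvature formula read in reverse.

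The step I expect to be most delicate is the coercivity transfer in $(iii)\Rightarrow(i)$, namely the passage from coercivity of $\cE^Y$ along the bundle-compatible slice to full $\hat\T^{\C}$-relative coercivity of $\cE^Y$. This amounts to a slicing argument: one must show that any $\hat\T$-invariant K\"ahler metric in $[\omega_Y]$ can be moved, through the $\hat\T^{\C}$-action and with controlled Aubin $\cJ$-cost, to a bundle-compatible one. Establishing this rigorously will hinge on a precise description of $\Aut_r(Y)$ for a semi-simple principal fibration -- specifically, on the expectation that, under the maximality of $\T\subset \Aut_r(X)$, the horizontal automorphisms coming from the base together with the fibrewise $\T^{\C}$-action generate the reductive part of $\Aut_r(Y)$ modulo $\hat\T^{\C}$ -- combined with the weak $d_1$-geometry of the space of K\"ahler potentials on $Y$ underlying Theorem~\ref{main}. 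The same circle of ideas should also allow one, where needed, to invoke the weighted existence theorem of Han--Li~\cite{HL} as an alternative route from $(iii)$ to $(ii)$, sidestepping the slicing step at the cost of a direct weighted PDE argument on $X$.
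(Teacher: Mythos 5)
Your implications $(ii)\Rightarrow(i)$ (via the scalar-curvature identity for bundle-compatible metrics) and $(ii)\Rightarrow(iii)$ (via Theorem~\ref{main} applied to the weights $(p,\tilde w)$) are correct and coincide with the paper's argument. The two remaining legs, however, each contain a genuine gap. For $(iii)\Rightarrow(i)$, the proposed ``coercivity transfer'' cannot work as described: under the identification $C^{\infty}_{\T}(Y)\cong C^{\infty}_{\T}(X\times B)$, the bundle-compatible potentials form the infinite-codimension subspace of potentials that are constant in the base directions, and the finite-dimensional group $\hat\T^{\C}$ (or even all of $\Aut_r(Y)$) cannot move a general $\Ko_Y$-invariant potential into this slice with controlled $\cJ$-cost --- its orbit of the slice is nowhere near all of $\cK_{\Ko_Y}(Y,\tilde\omega_0)$. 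So coercivity of $\cE^Y$ restricted to the slice does not upgrade to the full relative coercivity that a black-box application of Chen--Cheng and He would require. The paper sidesteps this entirely: the key observation (absent from your proposal) is that the Chen--Cheng/He continuity path \eqref{e:continuity-path}, with the twist form $\tilde\rho$ chosen bundle-compatible, \emph{stays inside the slice} $\cK_{\T}(X,\bom)\subset\cK_{\Ko_Y}(Y,\tilde\omega_0)$, because the relevant linearized operators restrict to the slice (Lemma~\ref{l:operators-tilde}); the a priori estimates of \cite{CC,He-ext} only need coercivity \emph{along the path}, and on the slice $\cE^Y_{1,\ell^{\rm ext}}$ is a positive multiple of $\cE^X_{p,\tilde w}$, which is exactly what (iii) supplies. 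Note also that the paper's route then produces a bundle-compatible extremal metric, so it proves $(iii)\Rightarrow(ii)$ directly rather than passing through $(i)$.

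For $(i)\Rightarrow(ii)$, the step ``one arranges, via the $\hat\T^{\C}$-action, that the given extremal representative is bundle-compatible'' is precisely what fails: the group elements $\sigma_j\in\Ko_Y^{\C}$ produced by the continuity method can act nontrivially on the base $B$, so the limit extremal metric is in general \emph{not} bundle-compatible, and there is no a priori reason an extremal metric in $[\omega_Y]$ should lie in the $\Aut_r(Y)$-orbit of a bundle-compatible one. The paper's resolution is to exploit that $\Ko_Y^{\C}$ maps fibres of $\pi_B:Y\to B$ to fibres, so the restrictions $\sigma_j^*(\tilde\omega_{\varphi_j})|_{X_b}$ still converge smoothly, and the scalar-curvature identity passes to the limit fibrewise, yielding a $(p,\tilde w)$-cscK metric on $X$ even though the ambient limit is not in the slice. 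Without the path-preservation observation and this fibrewise limiting argument, both of your delicate steps remain unproved.
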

Compared to the general setting of \cite{DS}, the semi-simple principal $(X, \T)$-fibration (trivially) satisfy the condition of optimal symplectic connection. Accordingly, one can conclude by \cite{DS} that $(Y, [\omega_Y])$ admits an extremal K\"ahler metric, provided that  $(X, \omega_X)$ is cscK,  and if we take large enough constants $c_a$. As a matter of fact,  the conclusion  also follows under the more general assumption that $(X, \omega_X)$ is extremal, by  the proof of \cite[Thm.~3]{HFKG2}. The novelty of Theorem~\ref{extremal-bundle} is therefore in the fact that it gives a precise condition (in terms of $X$) for the existence of an extremal K\"ahler metric in a \emph{given} K\"ahler class $[\omega_Y]$, also revealing that $(X, [\omega_X])$ needs not to be extremal in general. 
We finally note that in the case of toric fibre, \cite{Jubert} provides a further equivalence with a certain weighted notion of \emph{uniform K-stability} of the corresponding Delzant polytope.

\smallskip
If all the factors $(B_a, \omega_{B_a})$ of the base are positive K\"ahler--Einstein manifolds,  and  the fibre $(X, \T)$ is a smooth Fano variety,  the semi-simple principal $(X, \T)$-fibration construction can produce a smooth Fano variety  $Y$ for suitable choice of the principal $\T$-bundle over $B$  (see Lemma~\ref{Fano} below). In this case, combining  \cite[Thm.~3.5]{HL} with the results in this paper,  we get
\begin{iTheorem}\label{v-soliton-bundle} Suppose $Y$ is a Fano semi-simple principal $(X, \T)$-fibration, obtained from the product of positive K\"ahler--Einstein Hodge manifolds $(B_a, \omega_{B_a})$  and a smooth Fano fibre $(X, \T)$ via Lemma~\ref{Fano}. Suppose also   that $\T$ is a maximal torus in the automorphism group $\Aut(X)$.  Then  $Y$ admits a $v$-soliton in $2\pi c_1(Y)$, provided that the weighted Mabuchi functional $\cE^X_{pv, \tilde w}$ of $(X, \T, 2\pi c_1(X))$ is coercive with respect to $\T^{\C}$, where $p$ is the weight defined in Theorem~\ref{extremal-bundle}-(ii) and
\[ \tilde w = 2pv\Big(m + \langle d\log v, \x \rangle + \langle d\log p, \x\rangle \Big). \]

If, furthermore,  the fibre  $(X, \T)$  is a smooth toric Fano variety, then the latter condition is equivalent to the vanishing of the  Futaki invariant \eqref{Futaki} associated to the weights $(pv, \tilde w)$ on $X$. In particular, any Fano semi-simple principal $(X, \T)$-fibration with smooth toric Fano fibre $(X, \T)$ admits a K\"ahler--Ricci soliton,  and the corresponding affine cone $(K_Y)^{\times}$  admits a Calabi--Yau cone metric, given by  a Sasaki--Einstein structure on a unit circle bundle associated to the canonical bundle $K_Y$.
\end{iTheorem}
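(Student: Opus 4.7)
The plan is to translate the $v$-soliton equation on $Y$ into a weighted soliton equation on the Fano fibre $X$ via the semi-simple principal fibration construction, and then to invoke the Yau--Tian--Donaldson correspondence of Han--Li~\cite[Thm.~3.5]{HL} together with the coercivity hypothesis.

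First I would apply Proposition~\ref{tilde v} to $(Y,\T,2\pi c_1(Y))$: since $\dim_\C Y = m+n$, a $\T$-invariant metric $\omega_Y\in 2\pi c_1(Y)$ is a $v$-soliton iff it is $(v,w_Y)$-cscK with $w_Y(\mu)=2v(\mu)(m+n+\langle d\log v,\mu\rangle)$. Specializing to bundle-compatible $\omega_Y$ built from $\omega_X\in 2\pi c_1(X)$ as in Section~\ref{s:geometric}, I would refine the scalar-curvature computation underlying Theorem~\ref{extremal-bundle}(ii) by propagating the weight $v$ through $\Delta_{\omega_Y}v(\mu)$ and $\langle g_{\omega_Y},\mu^*\Hess v\rangle$, yielding an identity of the form
\[ p(\mu)\,\Scal_v(\omega_Y)\;=\;\Scal_{pv}(\omega_X)\;-\;p(\mu)v(\mu)\sum_{a=1}^{k}\frac{\Scal(\omega_{B_a})}{\langle p_a,\mu\rangle+c_a}. \]
The Fano normalization supplied by Lemma~\ref{Fano} (together with the positive K\"ahler--Einstein condition on each $(B_a,\omega_{B_a})$) fixes the data $(p_a,c_a,n_a)$ so that $[\omega_Y]=2\pi c_1(Y)$, and in this case the base-curvature sum together with the term $2p(\mu)v(\mu)\,n$ recombines into $2p(\mu)v(\mu)\langle d\log p,\mu\rangle$. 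Multiplying the $v$-soliton equation on $Y$ through by $p(\mu)$ then produces exactly
\[ \Scal_{pv}(\omega_X)\;=\;2p(\mu)v(\mu)\bigl(m+\langle d\log v,\mu\rangle+\langle d\log p,\mu\rangle\bigr)\;=\;\tilde w(\mu), \]
i.e.\ $\omega_X$ is $(pv,\tilde w)$-cscK on $X$.

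By Proposition~\ref{tilde v} applied in the reverse direction on $(X,\T,2\pi c_1(X))$ with weight $pv>0$, this $(pv,\tilde w)$-cscK equation is precisely the $(pv)$-soliton equation on the Fano manifold $X$. Since $\T$ is maximal in $\Aut(X)$, the Han--Li Yau--Tian--Donaldson correspondence~\cite[Thm.~3.5]{HL} equates the existence of a $(pv)$-soliton on $X$ with the $\T^\C$-coercivity of $\cE^X_{pv,\tilde w}$, so the coercivity hypothesis delivers a $(pv)$-soliton $\omega_X$, and the fibration construction promotes it to a $v$-soliton $\omega_Y\in 2\pi c_1(Y)$.

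For the toric specialization, when $(X,\T)$ is smooth toric Fano the weighted Mabuchi functional $\cE^X_{pv,\tilde w}$ descends to a strictly convex functional on symplectic potentials over the Delzant polytope of $X$, and by the weighted extension of Donaldson's toric framework developed in~\cite{Jubert} and~\cite{HL} its $\T^\C$-coercivity is equivalent to the vanishing of the weighted Futaki invariant~\eqref{Futaki} for the weights $(pv,\tilde w)$. In the K\"ahler--Ricci soliton case ($v=e^{\langle\xi,\mu\rangle}$) this reduces to a single linear equation determining $\xi\in\tor$, always solvable via a weighted adaptation of the Wang--Zhu argument. For the Sasaki--Einstein assertion, take $v=\ell^{-(m+n+2)}$ with $\ell$ an affine-linear function positive on the anticanonical polytope of $Y$, apply Proposition~\ref{SE} to $Y$, and reduce again to $X$; the Futaki vanishing for $\ell$ then becomes a weighted volume-minimization problem on the fibre polytope, solved as in Futaki--Ono--Wang~\cite{FOW}, producing $\ell$ and hence a Sasaki--Einstein structure on the unit circle bundle in $K_Y$, whence the Calabi--Yau cone on $(K_Y)^\times$. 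The principal obstacle is the explicit fibration formula for $\Scal_v(\omega_Y)$ displayed above and the verification that the Fano normalization makes the base-curvature contributions cancel precisely into the stated $\tilde w$.
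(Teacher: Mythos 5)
Your proposal is correct and follows essentially the same route as the paper: reduce the $v$-soliton equation on $Y$ to the $(pv,\tilde w)$-cscK (equivalently $pv$-soliton) equation on the fibre, invoke the Han--Li correspondence \cite[Thm.~3.5]{HL} under the coercivity hypothesis, and in the toric case solve the residual Futaki condition by minimizing a convex, proper functional on $\tor$ (the $p$-weighted Tian--Zhu functional for the soliton, the $p$-weighted MSY volume functional for the Sasaki--Einstein statement). Your re-derivation of the fibre reduction via the scalar-curvature identity of Lemma~\ref{l:Y-Scal}, with the check that $2pvn-pv\sum_a s_a/(\langle p_a,\x\rangle+k_a)=2pv\langle d\log p,\x\rangle$ under the Fano normalization $c_a=k_a$, $s_a=2n_ak_a$, is a valid alternative to the paper's Ricci-potential computation in Lemma~\ref{Fano}; both yield the same statement.

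Two points need tightening. First, \cite[Thm.~3.5]{HL} is phrased in terms of the Han--Li functional $\cE^{\HL}_{pv}$, not $\cE^X_{pv,\tilde w}$; to feed your coercivity hypothesis into their theorem you must observe (as the paper does via \eqref{Chen-Tian}) that $\cE^{\HL}_{pv}$ and $\cE_{pv,\tilde w}$ differ by a constant independent of the metric, so their coercivity properties coincide. Second, the Futaki vanishing condition determining $\xi_0$ is \emph{not} a single linear equation: it is the system $\int_{\Pol}\langle\zeta,\x\rangle e^{\langle\xi_0,\x\rangle}p(\x)\,d\x=0$ for all $\zeta\in\tor$, which is nonlinear in $\xi_0$ and is solved exactly as you then say, as the critical-point equation of the strictly convex, proper functional $\xi\mapsto\int_{\Pol}e^{\langle\xi,\x\rangle}p(\x)\,d\x$; the properness uses that the origin lies in the interior of the canonically normalized polytope. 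Similarly, the equivalence in the toric case between coercivity and Futaki vanishing is most cleanly obtained from \cite[Thm.~1.7]{HL} (uniform K-stability on special test configurations) plus the fact that the only special test configurations of a smooth toric Fano are products, rather than from convexity over symplectic potentials alone.
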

The existence of a K\"ahler--Ricci soliton in the above setting is essentially known even though we didn't find it explicitly stated  in the literature.  In the toric case (i.e. when $Y=X$ and $B$ is a point) the result follows by  \cite{ZW} (see also \cite{DaSz}), and for  $\PP^1$-bundles by \cite{KS,DW,HFKG5}. In the general case,  the result  can be obtained from \cite{SP},  which in turn extend \cite{ZW} to  the framework of multiplicity-free manifolds, but the arguments can be also adapted to the case of semi-simple principal $(X,\T)$-fibrations (see \cite[Rem.7]{HFKG4} and \cite{Do-survey}). Our approach, however, builds on the idea of \cite{DaSz}. There are also related existence results for K\"ahler--Ricci solitons on  spherical manifolds, see \cite{Delcroix, Delgove}. On the other hand, the existence  of  Sasaki--Einstein metrics seems to be new in the above stated generality. Indeed, in the toric case the claim follows from \cite{FOW}, and there are known existence results \cite{GMSW, BT, MN} on $\PP^1$-bundles. We expect our arguments to extend to spherical manifolds too.

\subsection{Structure of the paper}  In Section~\ref{s:weighted-setup},  we recall the setup of weighted cscK metrics  and  state the main results we shall need from \cite{lahdili2, lahdili3}.  In Section~\ref{s:v-KRS},  we recall the notion of $v$-solitons from \cite{M1,HL}, and establish the equivalences stated in Propositions~\ref{tilde v} and \ref{SE}. Sections \ref{s:coercivity} and \ref{s:stability} review and recast in the weighted setting respectively the coercivity principle of \cite{DaRu} and its application to stability~\cite{BDL,dyrefelt2}, thus outlining the main steps needed for the proofs of Theorem~\ref{main} and deriving Corollary~\ref{stability} from the latter. In Section~\ref{s:geometric}, we  introduce the semi-simple principal  $(X, \T)$-fibration construction, and establish the main geometric properties allowing us to extend the results from \cite{HFKG4}. In Section~\ref{s:extensions}, we  use an idea from \cite{HL} in order to define an extension of the weighted Mabuchi energy to the space $\E^1(X, \bom)$, and show its convexity and compactness properties. In Section~\ref{s:regularity}, we extend the arguments of \cite{BDL}  to show that weak minimizers of the weighted Mabuchi energy are smooth. Here, we complete   the proof of Theorem~\ref{main}. In Section~\ref{s:proofs2}, we detail the proofs of Theorems~\ref{extremal-bundle} and \ref{v-soliton-bundle}. In the Appendix, we present some technical computational results,  detailing the linearization of the scalar and the twisted scalar curvature of a semi-simple principal $(X,\T)$-fibre and re-casting the weighted Futaki invariant \eqref{Futaki}, which are needed for the proofs of Theorem~\ref{extremal-bundle} and \ref{v-soliton-bundle}.

\section{Preliminaries on the weighted cscK problem}\label{s:weighted-setup} We recall the setup from \cite{lahdili2}. 
Let $X$ be a smooth compact, connected K\"ahler manifold of (real) dimension $2m$, and let \[
\cK (X, \bom)=\{\varphi \in C^{\infty}(X) \, | \,  \omega_{\varphi} : = \bom + dd^c \varphi >0\}
\]
be the space of $\bom$-relative smooth K\"ahler potentials on $X$. We let $\T \subset \Aut_r(X)$ be a fixed compact torus in the \emph{reduced} group  of automorphisms of $X$, i.e. the connected closed subgroup $\Aut_r(X)$ of  the group of complex automorphisms $\Aut(X)$, whose Lie algebra is the space of holomorphic vector fields of $X$ with zeros (see e.g. \cite{gauduchon-book}). Equivalently, $\Aut_r(X)$ is the connected component of the identity of the kernel of the natural group homomorphism from 
$\Aut(X)$ to the Albanese torus,  and is known to be isomorphic to the linear algebraic group in the Chevalley-type decomposition of $\Aut(X)$,  cf. \cite{Fujiki}.  We denote by $C^{\infty}_\T(X)$ the space of $\T$-invariant smooth functions on $X$ and  introduce the space
\[ \cK_\T(X, \bom) := \cK(X, \bom) \cap C^{\infty}_\T(X), \]
of $\T$-invariant relative K\"ahler potentials, assuming also that $\bom$ is $\T$-invariant.

It is well-known that the action of $\T$ on $(X, \bom)$ is hamiltonian, and  we let $\m_{0} : X \to \tor^*$ be a momentum map, where $\tor$ is the Lie algebra of $\T$ and $\tor^*$ the dual vector space. By the convexity theorem \cite{Atiyah,GS}, the image $\Pol:=\m_{0}(X) \subset \tor^*$ is a compact convex polytope. For any $\varphi \in \cK_{\T}(X, \bom)$, the smooth $\tor^*$-valued function
\begin{equation}\label{momenta}
\m_{\varphi} = \m_{0} + d^c\varphi 
\end{equation}
is the $\T$-momentum map of $(X, \omega_{\varphi})$,   normalized by the condition $\m_{\varphi}(X)=\Pol$. In the above formula, $d^c\varphi$ is viewed as a smooth $\tor^*$-valued function via the identity $\langle d^c\varphi, \xi \rangle := d^c\varphi (\xi)$ for  any  $\xi \in  \tor \subset C^{\infty}(X, TX)$.

\subsection{The $(v,w)$-constant scalar curvature K\"ahler metrics}
Following \cite{lahdili2}, let $v(\x)>0$ and $w(\x)$ be smooth functions on $\Pol$. One can then consider the condition  \eqref{weighted-cscK-0} for a $\T$-invariant K\"ahler  metric $\omega_{\varphi}$  in  $\alpha$ (and the fixed polytope $\Pol$), called \emph{$(v,w)$-cscK metric}. We thus want to solve the following PDE for $\varphi\in \cK_{\T}(X, \bom)$:
\begin{equation}\label{weighted-cscK}
\begin{split}
\Scal_{v}(\omega_{\varphi}) &= w(\m_{\varphi}), \\
\Scal_v(\omega_{\varphi}) &:= v(\m_{\varphi}) \Scal(\omega_{\varphi}) + 2\Delta_{\omega_{\varphi}} v(\m_{\varphi})  + \big\langle g_{\varphi}, \m_{\varphi}^*\left(\Hess(v)\right)\big\rangle
\end{split}\end{equation}
As we explained in the Introduction,  the problem of finding $\omega_{\varphi} \in \alpha$ solving \eqref{weighted-cscK}  is obstructed by the condition \eqref{Futaki}, and in the case when $v, w_0$ are positive weights, this can be resolved (similarly to the approach in \cite{calabi}) by  finding a unique affine-linear function $\ell^{\ext}_{v,w_0}(\x)$ on $\tor^*$, called \emph{the extremal function}, such that for any $\omega_{\varphi}$
\[\int_X \Big(\Scal_v(\omega_{\varphi}) - \ell^{\ext}_{v, w_0}(\m_{\varphi}) w_0(\m_{\varphi})\Big) \ell(\m_{\varphi})\omega_{\varphi}^{[m]} =0, \qquad \forall \ell \in \Aff(\tor^*). \]
Geometrically, the above condition means that the weighted cscK problem with weights $(v, w)=(v, \ell^{\ext}_{v,w_0}w_0)$ is unobstructed in terms of \eqref{Futaki},  and  a  solution $\omega_{\varphi}$ of the $(v, \ell^{\ext}_{v,w_0}w_0)$-cscK problem is referred to as \emph{$(v,w_0)$-extremal metric}.

\subsection{The weighted Mabuchi energy} 
\begin{defn}\cite{lahdili2}\label{d:Mabuchi}  Let $v, w$ be weight functions on $\Pol$ with $v(\x)>0$. The weighted Mabuchi  energy $\cE_{v, w}$  on $\cK_{\T}(X, \bom)$ is defined by
\[(d_{\varphi} \cE_{v, w})(\dot\varphi)= -\int_X\left(\Scal_{v}(\omega_{\varphi}) - w(\m_{\varphi})\right) \dot \varphi \, \omega_{\varphi}^{[m]}, \qquad \cE_{v, w}(0)=0. \]
\end{defn}
\begin{rem}\label{r:potentials/metrics} It  follows from the above definition  and the results in \cite{lahdili2} that for  a constant $c$, $\cE_{v, w}(\varphi+c) = \cE_{v, w}(\varphi)$ if and only if $v, w$ satisfy the integral relation
\begin{equation}\label{c-relation}
\int_X \Scal_v(\bom) \omega_0^{[m]} = \int_X w(\m_{0}) \omega_0^{[m]}. 
\end{equation}
Furthermore, by the results in \cite{lahdili2}, \eqref{c-relation} is a necessary condition for the existence of a solution of  \eqref{weighted-cscK}  and it is incorporated in the definition of $\cE_{v,w}$ given in \cite{lahdili2},   via the constant $c_{v, w}(\alpha)$ in front of $w$,  but we do not  assume  a priori  this condition in the current article.  It is however automatically satisfied if $\alpha$ admits a $\T$-invariant $(v, w)$-cscK metric, or if we consider the weights $(v, w)=(v, \ell^{\ext}_{v,w_0}w_0)$ corresponding to $(v, w_0)$-extremal K\"ahler metrics.  In these cases, we shall write $\cE_{v, w}(\omega_{\varphi})$ to emphasize  that the weighted Mabuchi functional acts of the space of $\T$-invariant K\"ahler metrics  in $\alpha=[\bom]$. \end{rem}

The following result is established in \cite{lahdili3}, generalizing \cite{BB} to arbitrary weights $v>0, w$.
\begin{Theorem} If $\omega$ is a $\T$-invariant $(v, w)$-cscK metric on $(X, \alpha, \T, \Pol)$, then for any $\varphi\in \cK_{\T}(X, \bom)$, \,  $\cE_{v, w}(\omega_{\varphi}) \ge \cE_{v, w}(\omega)$. 
\end{Theorem}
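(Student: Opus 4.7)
The plan is to follow the Berman--Berndtsson strategy \cite{BB} adapted to the weighted setting. Let $\varphi_0\in\cK_\T(X,\bom)$ be a potential for the given $(v,w)$-cscK metric $\omega=\omega_{\varphi_0}$ and let $\varphi_1=\varphi$ be arbitrary. Connect them by a weak geodesic $(\varphi_t)_{t\in[0,1]}$ in the finite-energy space $\E^1(X,\bom)^{\T}$. The strategy splits into three parts: (a) extend $\cE_{v,w}$ to $\E^1(X,\bom)^{\T}$ with enough continuity to make sense along weak geodesics; (b) show that $t\mapsto\cE_{v,w}(\varphi_t)$ is convex on $[0,1]$; (c) verify that the right-derivative at $t=0$ is non-negative. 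Then convexity together with (c) immediately yields $\cE_{v,w}(\varphi_1)\ge\cE_{v,w}(\varphi_0)$. Item (c) follows directly from Definition~\ref{d:Mabuchi}: the smooth endpoint $\varphi_0$ admits a Lipschitz right-velocity $\dot\varphi_0^+$ along the geodesic, and the first variation of $\cE_{v,w}$ at $\varphi_0$ in this direction is $-\int_X(\Scal_v(\omega)-w(\m_\omega))\dot\varphi_0^+\,\omega^{[m]}$, which vanishes identically by \eqref{weighted-cscK} (the compatibility condition \eqref{c-relation} is automatically satisfied at a $(v,w)$-cscK metric by Remark~\ref{r:potentials/metrics}, so no constraint on the sign of $\dot\varphi_0^+$ is needed).

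The crux is establishing (a) and (b). I would first exhibit a weighted Chen--Tian type decomposition of $\cE_{v,w}$ into a weighted relative entropy term $H_v(\bom,\omega_\varphi)$, a weighted energy functional involving the Ricci form of $\bom$, and a term depending linearly on $w$ through the momentum map. For the special class of polynomial weights arising from the semi-simple principal $(X,\T)$-fibration construction of Section~\ref{s:geometric}, the individual functionals on $(X,\alpha,\T)$ can be identified, up to topological additive constants, with the restriction of the classical unweighted Mabuchi energy on a higher-dimensional total space $Y=(X\times\bN)/\T$ to bundle-adapted $\T$-invariant K\"ahler potentials. Since the unweighted Mabuchi on $Y$ admits a continuous, convex extension to $\E^1(Y)$ along weak geodesics by \cite{BDL0} (via Berndtsson's subharmonicity theorem for the entropy term and pluripotential calculus for the remaining terms), and since bundle-adapted weak geodesics on $Y$ descend to weak geodesics on $X$, both (a) and (b) are inherited for polynomial weights of this form.

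To pass to arbitrary smooth weights, I would invoke the observation from the Introduction that $\cE_{v,w}$ depends \emph{linearly and continuously} on $(v,w)\in C^0(\Pol)\times C^0(\Pol)$. By the Stone--Weierstrass theorem, the polynomial weights produced from sufficiently rich fibration data are dense in $C^0(\Pol)$, so a uniform approximation of $(v,w)$ yields uniform convergence of the associated Mabuchi energies on $\E^1(X,\bom)^{\T}$; convexity and continuity along weak geodesics are preserved in the limit, and (c) survives because the Euler--Lagrange identity is stable under the same approximation. The main obstacle I foresee is the continuity of the extension in (a) along weak geodesics with sufficient quantitative control to legitimately take the one-sided $t$-derivative at the smooth endpoint: the weighted entropy $H_v$ is only lower semi-continuous in general, and it is precisely here that the reduction to the higher-dimensional total space $Y$ and the extension/continuity results of \cite{BDL0} do the decisive work before the Stone--Weierstrass limit is taken.
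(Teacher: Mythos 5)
Your proposal is correct and follows essentially the same route as the paper: the paper itself simply cites \cite{lahdili3} for this theorem, but the machinery you describe --- extending $\cE_{v,w}$ and proving its convexity along weak geodesics via the semi-simple principal $(X,\T)$-fibration construction together with Stone--Weierstrass approximation in the weights, then combining convexity with the vanishing of the first variation at the $(v,w)$-cscK endpoint --- is precisely the content of Theorem~\ref{M-extension} and Lemma~\ref{p:sub-slop}. The only imprecision is that along a weak $C^{1,\bar 1}$-geodesic the one-sided derivative at $t=0$ is in general only bounded \emph{below} by $\int_X\big(w(\m_{\omega})-\Scal_v(\omega)\big)\dot\varphi_0^{+}\,\omega^{[m]}$ rather than equal to it, but since this integrand vanishes identically at a $(v,w)$-cscK metric the inequality suffices for your conclusion.
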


\subsection{The automorphism group of a $(v,w_0)$-extremal K\"ahler manifold} In what follows we will consider \emph{connected} Lie groups. We recall that we have set $\Aut_r(X)$ to be the connected component of the identity of the kernel of the Albanese homomorphism and,  similarly, we denote by $\Aut_r^{\T}(X)$ the connected component of the identity of the centralizer of the torus $\T$ in $\Aut_r(X)$. We shall use the following result, established in \cite[Thm.~B.1]{lahdili2} (cf. also \cite{FO}) and \cite[Rem.~2]{lahdili3}:
\begin{prop}\label{T-max} If $(X, \alpha, \T)$ admits a $(v, w_0)$-extremal K\"ahler metric $\omega$, then the connected component of the identity  $\Aut_r^{\T}(X)$  of the subgroup  of $\T$-commuting automorphisms in $\Aut_r(X)$ is reductive, and $\omega$ is invariant under the action of a maximal compact connected subgroup of $\Aut_r^{\T}(X)$. In particular, the isometry group of $(X, \omega)$ contains a maximal torus $\T_{\rm max} \subset \Aut_r(X)$ with $\T \subset \T_{\rm max}$. If, furthermore,  $\T=\T_{\rm max}$, then $\Aut_r^{\T}(X)= \T^{\C}$.
\end{prop}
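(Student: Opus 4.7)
The plan is to adapt the Calabi–Lichnerowicz–Matsushima argument to the weighted setting. Let $\omega$ be a $\T$-invariant $(v,w_0)$-extremal K\"ahler metric, and let $V_{\mathrm{ext}}$ denote the Hamiltonian vector field on $(X,\omega)$ of the function $\ell^{\mathrm{ext}}_{v,w_0}(\m_\omega)$. Since $\ell^{\mathrm{ext}}_{v,w_0}$ is affine-linear on $\tor^{*}$, its linear part determines an element of $\tor$, so $V_{\mathrm{ext}}\in\tor$ and in particular is a Killing field of $\omega$ belonging to $\T$. This is the weighted analogue of Calabi's extremal vector field, but it is built into $\T$ from the outset, which will be the key structural observation.

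The central step is to establish the \emph{weighted Matsushima-type decomposition} of the Lie algebra $\aut_r^{\T}(X)$ of real holomorphic vector fields with zeros on $X$ that commute with $\T$. Any such $Z\in\aut_r^{\T}(X)$ can be written uniquely as $Z=\grad_\omega f+J\grad_\omega h$ for real $\T$-invariant functions $f,h$ (normalized by e.g. $\int_X f v(\m_\omega)\omega^{[m]}=0$). The claim is that $h$ lies in the kernel of the fourth-order weighted Lichnerowicz operator $\mathbb{L}_v$ which arises as the linearization of $\Scal_v$ restricted to gradient-type variations; a weighted integration by parts against $h$, using the $(v,w_0)$-extremal equation $\Scal_v(\omega)=\ell^{\mathrm{ext}}(\m_\omega)w_0(\m_\omega)$ together with the fact that $Z$ commutes with $V_{\mathrm{ext}}$ (and hence $dh$ is orthogonal to $d\ell^{\mathrm{ext}}(\m_\omega)$ in a suitable weighted pairing), should allow one to conclude that $\mathbb{L}_v h=0$. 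By the weighted analogue of the Lichnerowicz identity, $\ker\mathbb{L}_v\cap C^\infty_{\T}(X)$ consists precisely of the Killing potentials of $g_\omega$ (modulo constants), so $J\grad_\omega h$ is itself a Killing field. One then obtains $\aut_r^{\T}(X)=\ko\oplus J\ko$, where $\ko$ is the Lie algebra of $\omega$-isometries inside $\Aut_r^{\T}(X)$.

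From this direct-sum decomposition it follows by the standard Lichnerowicz argument that $\aut_r^{\T}(X)$ is reductive, with $\ko$ as its compact form; exponentiating, $K:=\exp\ko$ is a maximal compact connected subgroup of $\Aut_r^{\T}(X)$ under whose action $\omega$ is invariant. In particular, any maximal torus of $K$ is a maximal torus of $\Aut_r^{\T}(X)$ (and hence of $\Aut_r(X)$, since centralizers of tori in reductive groups always contain some maximal torus); calling this torus $\T_{\mathrm{max}}$, we have $\T\subset\T_{\mathrm{max}}$ and $\T_{\mathrm{max}}\subset\mathrm{Isom}(X,\omega)$. Finally, if $\T=\T_{\mathrm{max}}$ then $\ko$ has $\tor$ as a maximal torus, forcing $\ko=\tor$ (a compact reductive Lie algebra equals any of its maximal tori iff it is abelian, which it is here by maximality of $\T$ in $\Aut_r(X)$), whence $\aut_r^{\T}(X)=\tor\oplus J\tor=\tor^{\C}$ and $\Aut_r^{\T}(X)=\T^{\C}$ as claimed.

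The main technical obstacle is the kernel identification for the weighted Lichnerowicz operator $\mathbb{L}_v$ and the verification that the commutation $[Z,V_{\mathrm{ext}}]=0$ together with the $(v,w_0)$-extremal equation really does force $\mathbb{L}_v h=0$; this is where the weights genuinely intervene and where one must carry out the computation in \cite[App.~B]{lahdili2} (cf.\ also \cite{FO} in the Sasaki/Futaki--Ono case). Everything else is a formal consequence of the decomposition, exactly as in the unweighted Calabi picture.
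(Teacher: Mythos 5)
Your outline is essentially the argument behind the result as the paper presents it: Proposition~\ref{T-max} is not proved in this paper but quoted from \cite[Thm.~B.1]{lahdili2} (cf.\ also \cite{FO}) and \cite[Rem.~2]{lahdili3}, where precisely this weighted Calabi--Lichnerowicz--Matsushima decomposition $\aut_r^{\T}(X)=\ko\oplus J\ko$ via the kernel of the weighted Lichnerowicz operator $\mathbb{L}_{\omega,v}$ is carried out. The only points worth tightening are the technical crux you yourself flag (deducing $\mathbb{L}_{\omega,v}h=0$ from the commutation with the extremal field) and the final step, where $\ko=\tor$ is cleanest to obtain from the observation that $\tor$ is central in $\ko$ because $\Aut_r^{\T}(X)$ centralizes $\T$, so once $\tor$ is a maximal torus of $\ko$ every one-parameter subgroup of $K$ must already lie in $\T$.
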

Because of this result, we shall often assume (without loss of generality for solving \eqref{weighted-cscK}) that $\T=\T_{\rm max} \subset \Aut_r(X)$ and thus $\Aut_r^{\T}(X)=\T^{\C}$.

\subsection{Uniqueness of the $(v,w_0)$-extremal K\"ahler metrics} Another key result in the theory is the extension in \cite{lahdili3} of the uniqueness results \cite{BB, CPZ} to the weighted setting.
\begin{Theorem}\label{thm:uniqueness-weighted} Suppose $\omega, \omega'$ are $\T$-invariant  $(v, w_0)$-extremal K\"ahler metrics. Then there exists $\sigma \in  \Aut_r^{\T}(X)$ such that  $\sigma^*(\omega')=\omega$. In particular, if $\T \subset \Aut_r(X)$ is maximal, then the uniqueness holds modulo $\T^{\C}$.
\end{Theorem}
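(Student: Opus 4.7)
The plan is to adapt the Berman--Berndtsson strategy \cite{BB} (and its extremal refinement \cite{CPZ}) to the weighted setting, as developed in \cite{lahdili3}. Since the polytope $\Pol$ is fixed by the chosen normalization of the momentum maps, the extremal affine function $\ell^{\ext}_{v,w_0}$ depends only on $(\Pol, v, w_0)$ through the orthogonality condition \eqref{Futaki}. Hence both $\omega$ and $\omega'$ solve the $(v,w)$-cscK equation \eqref{weighted-cscK} with the \emph{same} weight $w := \ell^{\ext}_{v,w_0}\, w_0$, and by the minimization theorem cited just above, each is a global minimizer of the weighted Mabuchi energy $\cE_{v,w}$ on $\cK_\T(X,\bom)$.

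Next, write $\omega' = \omega + dd^c \psi$ for some $\psi \in \cK_\T(X,\omega)$ (unique up to an additive constant), and connect $0$ to $\psi$ by a $\T$-invariant weak $C^{1,1}$-geodesic $(\varphi_t)_{t\in[0,1]}$ in the metric completion of $\cK_\T(X,\omega)$. Existence and regularity of such geodesics follow from Chen's theorem, applied to the degenerate complex Monge--Amp\`ere equation on $X \times A$ (with $A$ an annulus), and $\T$-invariance of the boundary data is preserved along the solution. By the weighted convexity theorem of \cite{lahdili3}, the function $t \mapsto \cE_{v,w}(\varphi_t)$ is convex on $[0,1]$; since both endpoints minimize $\cE_{v,w}$, this function must in fact be constant along the geodesic.

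The crucial and hardest step is to extract a holomorphic flow from the constancy of $\cE_{v,w}$ along $(\varphi_t)$. Following \cite{BB}, one analyzes the second variation of $\cE_{v,w}$ along a smoothing of the weak geodesic; the resulting non-negative integrand is a weighted Mabuchi-type pairing that explicitly involves $v$, and its vanishing has to be shown to imply that $(\varphi_t)$ coincides with the potential of a path $\sigma_t^*\omega$, for a one-parameter subgroup $\{\sigma_t\} \subset \Aut_r^{\T}(X)$ generated by a real holomorphic vector field commuting with $\T$. Setting $\sigma := \sigma_1$ and absorbing the constant ambiguity in $\psi$ then yields $\sigma^*\omega' = \omega$; if moreover $\T = \T_{\rm max}$ is maximal, Proposition~\ref{T-max} gives $\Aut_r^{\T}(X) = \T^{\C}$, so $\sigma \in \T^{\C}$.

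The principal obstacle is precisely this rigidity analysis: one must upgrade the pluripotential arguments of \cite{BB} to track the role of the weight $v$ in both the second-variation computation and the ensuing analysis of the Monge--Amp\`ere foliation, and then show that vanishing of the weighted pairing still forces smoothness of the geodesic together with the existence of a holomorphic generator commuting with $\T$. This upgrade is the core technical content of \cite{lahdili3}, which in turn builds on the techniques introduced in \cite{B-N} for the closely related case of $v$-solitons.
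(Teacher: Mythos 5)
The paper offers no proof of Theorem~\ref{thm:uniqueness-weighted}: it is imported wholesale from \cite{lahdili3}, with only the citation given. Your outline --- the extremal function $\ell^{\ext}_{v,w_0}$ is metric-independent so both metrics solve the same $(v,w)$-cscK equation and both globally minimize $\cE_{v,w}$; a $\T$-invariant weak $C^{1,\bar 1}$-geodesic joins them; convexity and continuity of $\cE_{v,w}$ along that geodesic (the main theorem of \cite{lahdili3}) force constancy; and a Berman--Berndtsson rigidity analysis of the vanishing second variation produces a holomorphic flow in $\Aut_r^{\T}(X)$, with the maximal-torus case following from Proposition~\ref{T-max} --- is exactly the strategy of that reference, so your approach is consistent with the paper's treatment. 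The one caveat is that what you have written is a roadmap rather than a proof: the decisive step, namely that vanishing of the weighted second-variation pairing along the (a priori only $C^{1,\bar 1}$) geodesic forces the geodesic to be generated by a real holomorphic vector field commuting with $\T$, is precisely the content you defer back to \cite{lahdili3}, and nothing in your write-up independently establishes it. Relative to the paper, which proves nothing here either, this is not a defect; but if the theorem were meant to be proved from scratch, that rigidity/regularity upgrade of \cite{BB,CPZ} to the weighted setting is where all the work lies.
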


\section{$v$-solitons as weighted cscK metrics}\label{s:v-KRS} We review here the definition of $v$-solitons on a Fano manifold, following \cite{B-N,HL}, and discuss their link with $(v,w)$-cscK metrics. 

We thus suppose throughout this section that that $X$ is a smooth Fano manifold,   $\alpha:=2\pi c_1(X)$ and $\T\subset\Aut(X)$ a fixed  compact torus. (We recall here  that on a Fano manifold, $\Aut_r(X)$ coincides with the connected component of the identity  of the full automorphism group.) We further consider the \emph{natural} action of $\T$ on the anti-canonical bundle $K_X^{-1}$ of $X$, which normalizes the momentum map $\m_{\omega}$ of each $\T$-invariant K\"ahler metric $\omega \in \alpha$, and fixes the momentum image $\Pol$.  We shall sometimes refer to this normalization as the \emph{canonical normalization} of $\Pol$. In this setup, we recall
\begin{defn}\label{d:v-soliton} Let $v>0$ be a positive smooth weight  function on $\Pol$. A $v$-soliton on $X$ is a $\T$-invariant K\"ahler metric $\omega\in 2\pi c_1(X)$ which satisfies the relation \eqref{v-KRS}:
\[\rho_{\omega} - \omega = \frac{1}{2} dd^c \log v(\m_{\omega}).\]
 In the special case $v=e^{\langle \xi, \x\rangle}$ we obtain a K\"ahler--Ricci soliton in the sense of \cite{TZ}. \end{defn}

\begin{lemma}\label{v-KRS=(v,w)-cscK} A $\T$-invariant K\"ahler metric $\omega \in 2\pi c_1(X)$ is a $v$-soliton if and only if 
$\omega$ is a $(v, w)$-cscK metric with weight $w(\x)= 2v(\x)\Big[m + \langle \d \log v (\x), \x \rangle\Big]$.
\end{lemma}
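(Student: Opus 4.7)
Since $X$ is Fano and $[\omega]=2\pi c_1(X)$, we can write $\rho_\omega-\omega = dd^c h_\omega$ for a Ricci potential $h_\omega\in C^\infty(X)$, unique modulo additive constants. On compact $X$ the $v$-soliton equation $\rho_\omega-\omega = \tfrac{1}{2}dd^c\log v(\mu_\omega)$ is therefore equivalent to the scalar identity
\[
h_\omega = \tfrac{1}{2}\log v(\mu_\omega) + c
\]
for some constant $c$. My plan is to verify that, under this relation, the defining expression for $\Scal_v(\omega)$ reduces pointwise to $w(\mu_\omega) = 2v(\mu_\omega)[m + \langle d\log v(\mu_\omega),\mu_\omega\rangle]$, and conversely; both directions then become a single algebraic check.

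For the forward implication I would compute $\Scal_v(\omega)$ using the chain rule. Writing $v_i = \partial v/\partial\mu_i$ and $v_{ij}=\partial^2 v/\partial\mu_i\partial\mu_j$, the basic expansions are
\[
\Delta_\omega v(\mu) = v_i(\mu)\,\Delta_\omega\mu^i + v_{ij}(\mu)\,g_\omega(\nabla\mu^i,\nabla\mu^j),\qquad \langle g_\omega,\mu^*\Hess v\rangle = v_{ij}(\mu)\,g_\omega(\nabla\mu^i,\nabla\mu^j),
\]
while the purely algebraic identity $v\cdot dd^c\log v(\mu) = dd^c v(\mu) - v^{-1}\,dv(\mu)\wedge d^cv(\mu)$ (from $dd^c\log e^F = dd^c F$) yields $v\,\Delta_\omega\log v(\mu)=\Delta_\omega v(\mu) - v^{-1}|\nabla v(\mu)|^2_{g_\omega}$. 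Combined with the trace $\Scal(\omega) = 2m + \Delta_\omega\log v(\mu_\omega)$ obtained by applying $\tr_\omega$ to the $v$-soliton equation, substitution into $\Scal_v = v(\mu)\Scal + 2\Delta_\omega v(\mu) + \langle g_\omega,\mu^*\Hess v\rangle$ reduces everything to an expression in the four scalar quantities $v_i\,\Delta_\omega\mu^i$, $v_i\mu^i$, $v_iv_j\,g_\omega(\nabla\mu^i,\nabla\mu^j)$ and $v_{ij}\,g_\omega(\nabla\mu^i,\nabla\mu^j)$.

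The closing step uses the canonical normalization of $\mu_\omega$ coming from the natural $\T$-lift to $K_X^{-1}$: this provides, for each basis Killing potential $\mu^i$, a universal pointwise identity tying $\Delta_\omega\mu^i + \mu^i$ to $g_\omega(\nabla h_\omega,\nabla\mu^i)$. Substituting $h_\omega = \tfrac{1}{2}\log v(\mu) + c$ into this identity expresses $v_i(\mu)\,\Delta_\omega\mu^i$ in terms of $v_i\mu^i$ and $v_iv_j\,g_\omega(\nabla\mu^i,\nabla\mu^j)$; the remaining terms in $\Scal_v - w$ are then seen to cancel algebraically. For the converse direction I would run the same manipulations backwards: starting from $\Scal_v(\omega)=w(\mu_\omega)$ and invoking the canonical normalization identity, one deduces that $h_\omega - \tfrac{1}{2}\log v(\mu_\omega)$ is a $dd^c$-closed function, hence constant on compact $X$, which is the $v$-soliton equation. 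The main obstacle I anticipate is pinning down the precise form (signs, and the role of the Futaki constant) of the canonical-normalization identity for Killing potentials on a Fano, since it is exactly this form that makes the algebraic cancellation close.
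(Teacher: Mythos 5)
Your overall strategy coincides with the paper's: reduce the soliton equation to the statement that the Ricci potential equals $\log v(\m_\omega)$ up to an additive constant (in the paper's normalization $\rho_\omega-\omega=\tfrac12 dd^c h$, so $h=\log v(\m_\omega)+c$), expand $\Scal_v(\omega)$ by the chain rule in the momenta, and close the computation with the identity for $\Delta_\omega \m_\omega^{\xi}$ forced by the canonical normalization. The identity you flag as the anticipated obstacle is exactly what the paper pins down: comparing the infinitesimal $\T$-action on sections of $K_X^{-1}$ with the Chern connections of the two hermitian metrics with curvatures $\rho_\omega$ and $\omega$ yields $\tfrac12\Delta_\omega \m_\omega^{\xi}=\m_\omega^{\xi}-\tfrac12 d^c\big(\log v(\m_\omega)\big)(\xi)$ with \emph{no} free constant; note the coefficient of the momentum is $2\m_\omega^{\xi}$, not $\m_\omega^{\xi}$ as in your sketch, and with the paper's Laplacian $\Delta_\omega=\delta_\omega d$ the trace identity reads $\Scal(\omega)-2m=-\Delta_\omega\log v(\m_\omega)$.

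The one step that would fail as written is the converse. From the single scalar equation $\Scal_v(\omega)=w(\m_\omega)$ you cannot directly ``deduce that $h_\omega-\tfrac12\log v(\m_\omega)$ is a $dd^c$-closed function'': a scalar identity does not determine the full $(1,1)$-form $dd^c\big(h-\tfrac12\log v(\m_\omega)\big)$. Running the manipulations backwards only yields the scalar identity $\Scal_v(\omega)-w(\m_\omega)=v(\m_\omega)\,\Delta_{\omega,v}\big(\log v(\m_\omega)-h\big)$ (paper's normalization of $h$), where $\Delta_{\omega,v}=\tfrac{1}{v(\m_\omega)}\delta_\omega\, v(\m_\omega)\, d$ is the weighted Laplacian. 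The missing ingredient is the maximum principle (equivalently, integration by parts against $v(\m_\omega)\omega^{[m]}$) for this second-order operator with no zeroth-order term: it forces $\log v(\m_\omega)-h$ to be constant, and only then does $dd^c$-closedness --- hence the soliton equation --- follow. With that insertion and the sign/normalization fixes above, your plan reproduces the paper's proof.
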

\begin{proof}
We start by showing that \eqref{v-KRS} implies that $\omega$ is $(v,w)$-cscK with the weight $w$ specified in the Lemma.
 Taking the trace in \eqref{v-KRS} with respect to $\omega$ gives
\begin{align}
\begin{split}\label{trace-v-sol}
\Scal(\omega)-2m &=-\Delta_\omega\left(\log v(\mu_\omega)\right)= -\frac{1}{v(\m_{\omega})}\Delta_{\omega} (v(\m_{\omega})) -\frac{1}{v(\m_{\omega})^2}g_{\omega}\Big(d v(\m_{\omega}), d v(\m_{\omega})\Big) \\
=&-\sum_{i=1}^{m}\frac{v_{,i}(\m_\omega)}{v(\mu_\omega)}(\Delta_\omega \m_\omega^{\xi_i})
  +\sum_{i,j=1}^{m}\frac{v_{,ij}(\m_\omega)}{v(\m_\omega)}g_{\omega}(\xi_i,\xi_j) \\& -\sum_{i,j=1}^{m}\frac{v_{,i}(\mu_\omega)v_{,j}(\mu_\omega)}{v(\mu_\omega)^{2}}g_{\omega}(\xi_i,\xi_j)
\end{split}
\end{align}
where $(\xi_i)_{i=1,\cdots,\dtor}$ is a basis of $\tor$ and $v_{, i}$ denotes the partial derivative in direction of $\xi_i$. On the other hand, by taking the interior product of \eqref{v-KRS} with $\xi_i$ and using that $\xi_i$ is Killing with respect to $\omega$,  we get 
\[
\begin{split}
-d\Delta_\omega \m_\omega^{\xi_i}+2d\m_\omega^{\xi_i}&= d\Big(d^c\big(\log v(\m_{\omega})\big)(\xi_i)\Big)
=d\left(\sum_{j=1}^{m}\frac{v_{,j}(\m_\omega)}{v(\m_\omega)}g_{\omega}(\xi_i,\xi_j)_\omega\right),
\end{split}\]
where $\m_{\omega}^{\xi}:=\langle \mu_{\omega}, \xi \rangle$ is the momentum of $\xi$. It follows that 
\begin{equation}\label{int-v-sol}
-\Delta_\omega \m_\omega^{\xi_i}+ 2\m_\omega^{\xi_i}=\sum_{j=1}^{m}\frac{v_{,j}(\m_\omega)}{v(\m_\omega)}g_{\omega}(\xi_i,\xi_j) +c
\end{equation}
for some constant $c$.  As we consider the canonical normalization of $\m_{\omega}$ (corresponding to the natural
lifted $\T$-action on $K_X^{-1}$), one can see that $c=0$. Indeed, the infinitesimal actions $A_i$ of the elements of the basis $(\xi_i)_i$ on smooth sections of $K^{-1}_X$ are given by $A_i(s) := \mathcal{L}_{\xi_i}s$. We denote by $H_g$ the induced hermitian metric on $K_X^{-1}$ through the Riemannian metric $g_{\omega}$ of $\omega$ (so that $H_g$ has curvature $\rho_{\omega}$) and by $H=v(\m_{\omega})H_g$ the induced hermitian metric with curvature $\omega$ (by using \eqref{v-KRS}); comparing the actions of the corresponding Chern connections, $\nabla^g_{\xi_i}$ and $\nabla^H_{\xi_i}=\nabla^g_{\xi_i}  -\frac{\sqrt{-1}}{2} d^c \log v(\m_{\omega})(\xi_i) {\rm id}$ on smooth sections of $K^{-1}_X$ with the infinitesimal actions  $A_i$ gives  (see e.g. \cite[Prop. 8.8.2 \& 8.8.3]{gauduchon-book})
\begin{equation}\label{canonical action}
A_i(s)=\nabla^{g}_{\xi_i}s+\frac{\sqrt{-1}}{2}(\Delta_\omega \m^{\xi_i}_\omega)s, \qquad A_i(s)=\nabla^{H}_{\xi_i}s+\sqrt{-1}\mu^{\xi_i}_\omega s. \end{equation}
We thus deduce $\frac{1}{2}\Delta_{\omega} \m_{\omega}^{\xi_i} = \m_{\omega}^{\xi_i} - \frac{1}{2}d^c\big(\log v(\m_{\omega})\big)(\xi_i)$, 
i.e.  $c=0$ in \eqref{int-v-sol}. 

Now letting $c=0$ in  \eqref{int-v-sol},  multiplying it by $\frac{v_{,i}(\m_\omega)}{v(\m_\omega)}$,  and taking the sum over $i$,  give
\begin{align*}\label{int-v-sol-2}
\sum_{i,j=1}^{m}\frac{v_{,i}(\m_\omega)v_{,j}(\m_\omega)}{v(\m_\omega)^{2}}g_{\omega}(\xi_i,\xi_j)=\sum_{i=1}^{\dtor} \frac{v_{,i}(\m_\omega)}{v(\m_\omega)}(\Delta_\omega \m^{\xi_i}_\omega)-2\sum_{i=1}^{\dtor}\frac{v_{,i}(\m_\omega)}{v(\m_\omega)}\m^{\xi_i}_\omega,
\end{align*}
which substituting back in \eqref{trace-v-sol} yields
\[
\begin{split}
\Scal(\omega)-2m =&-2\sum_{i=1}^{\dtor} \frac{v_{,i}(\m_\omega)}{v(\m_\omega)} \Delta_{\omega} \m^{\xi_i}_\omega +\sum_{i,j=1}^{m}\frac{v_{,ij}(\m_\omega)}{v(\m_\omega)}g_{\omega}(\xi_i,\xi_j) + 2\sum_{i=1}^{\dtor}\frac{v_{,i}(\m_\omega)}{v(\m_\omega)}\mu^{\xi_i}_\omega \\
= &-2\sum_{i=1}^{\dtor} \frac{v_{,i}(\m_\omega)}{v(\m_\omega)} \Delta_{\omega} \m^{\xi_i}_\omega +2\sum_{i,j=1}^{m}\frac{v_{,ij}(\m_\omega)}{v(\m_\omega)}g_{\omega}(\xi_i,\xi_j) \\
&-\sum_{i,j=1}^{m}\frac{v_{,ij}(\m_\omega)}{v(\m_\omega)}g_{\omega}(\xi_i,\xi_j) + 2 \langle d\log v, \m_{\omega}\rangle \\
= & -2\Delta_{\omega} \left(v(\m_{\omega})\right) -\sum_{i,j=1}^{m}\frac{v_{,ij}(\m_\omega)}{v(\m_\omega)}g_{\omega}(\xi_i,\xi_j) + 2 \langle d\log v, \m_{\omega}\rangle.
\end{split}
\]
Thus $\Scal_v(\omega)=w(\mu_\omega)$. 

\bigskip
Now we show the converse. To this end, let  $\omega \in 2\pi c_1(X)$  be a  $\T$-invariant K\"ahler metric, $v>0$ a positive smooth function on the canonically normalized polytope $\Pol$ and  $w= 2(m+ \langle d\log, \x \rangle)v$ the weight defined in Lemma~\ref{v-KRS=(v,w)-cscK}. Let $h\in C^{\infty}_\T(X)$ be an $\omega$-relative Ricci potential, i.e.  \[\rho_\omega-\omega=\frac{1}{2}dd^{c}h.\]
 Taking the trace with respect to $\omega$  and the interior product with $\xi\in \tor$ in the above identity we get
 \begin{equation}\label{basic-relations}
\Scal(\omega) = 2m - \Delta_{\omega} h, \qquad \Delta_{\omega}\m_\omega^{\xi} + \cL_{J\xi} h= 2\m_{\omega}^{\xi}, \end{equation}
where we have used the canonical normalization of $\m_{\omega}$ to determine the additive constant in the second inequality (as we did for \eqref{int-v-sol}). Similar  computations as  in the first part of the proof (using  \eqref{basic-relations}) give 
\begin{equation}\label{pain}
\begin{split}
 \Scal_v(\omega)- & w(\m_\omega) \\
= &-v(\m_\omega)(\Delta_\omega h)+2\sum_{i=1}^{\dtor}v_{,i}(\m_\omega)(\Delta_\omega \m_\omega^{\xi_i})-\sum_{i,j=1}^{\dtor}v_{,ij}(\m_\omega)(\xi_i,\xi_j)_\omega
+2\sum_{i=1}^\dtor v_{,i}(\m_\omega)\m_\omega^{\xi_i}\\
=&-v(\m_\omega)(\Delta_\omega h)+\sum_{i=1}^{\dtor}v_{,i}(\m_\omega)g_{\omega}(dh, d\m^{\xi_i}_\omega)\\
&+\sum_{i=1}^{\dtor}v_{,i}(\m_\omega)(\Delta_\omega \m_\omega^{\xi_i})-\sum_{i,j=1}^{\dtor}v_{,ij}(\m_\omega)g_{\omega}(\xi_i,\xi_j) \\
=&-v(\m_\omega)(\Delta_{\omega,v} h)+\sum_{i=1}^{\dtor}v_{,i}(\m_{\omega})(\Delta_\omega \m_\omega^{\xi_i})-\sum_{i,j=1}^{\dtor}v_{,ij}(\m_\omega)g_{\omega}(\xi_i,\xi_j),
\end{split}
\end{equation}
where $\Delta_{\omega,v}:=\frac{1}{v(\m_{\omega})}\delta_\omega v(\m_{\omega}) d$ is the weighted Laplacian, see Appendix~\ref{s:appendix}. Using the second equality in \eqref{trace-v-sol}, we compute
\[
\begin{split}
v(\m_\omega)\Delta_{\omega, v}(\log v(\m_\omega)) &=v(\m_\omega)(\Delta_\omega \log v(\m_\omega))-\sum_{i=1}^{m}v_i(\mu_\omega)g_{\omega}(d(\log v(\m_\omega)),d\m_\omega^{\xi_i}) \\
&=v(\m_\omega)(\Delta_\omega \log v(\m_\omega))-\sum_{i,j=1}^{m}\frac{v_{,i}(\mu_\omega)v_{,j}(\mu_\omega)}{v(\mu_\omega)}g_{\omega}(\xi_i,\xi_j)\\
&=\sum_{i=1}^{\dtor}v_{,i}(\m_\omega)(\Delta_\omega \m_\omega^{\xi_i})-\sum_{i,j=1}^{\dtor}v_{,ij}(\m_\omega)g_{\omega}(\xi_i,\xi_j).
\end{split}\]
Substituting back in \eqref{pain} we obtain
\begin{equation}\label{Scal-v-tilde v}
\Scal_v(\omega)-  w (\m_\omega) = v(\m_{\omega}) \Delta_{\omega, v}\big(\log v(\m_{\omega}) - h\big).
\end{equation}
It follows that if $\omega$ is $(v, w)$-cscK then $h=\log v(\m_\omega)+c$ by the maximum principle, showing that $\omega$ satisfies \eqref{v-KRS}. \end{proof}

\begin{rem}\label{r:canonical-normalization} Using the second relation in \eqref{basic-relations} it follows that under the canonical normalization of $\m_{\omega}$  we have
\begin{equation}\label{canonical-normalization}
\int_X \mu^{\xi}_{\omega} e^h  \omega^{[m]} =0, \qquad \xi \in \tor. \end{equation}
This is precisely the normalization of $\mu_{\omega}$ used in \cite[Sect.2]{TZ}.
\end{rem}

\begin{lemma}\label{SE=v-KRS} Let $v:= \ell^{-(m+2)}$ where $\ell(\x)=\langle \xi, \x\rangle + a$ is a positive affine-linear function on $\Pol$. Then  $\omega \in 2\pi c_1(X)$ is a $v$-soliton if and only if $\omega$  is $(\ell^{-(m+1)}, 2ma\ell^{-(m+2)})$-cscK metric.
\end{lemma}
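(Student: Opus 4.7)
The plan is to apply Lemma~\ref{v-KRS=(v,w)-cscK} to rewrite the $\ell^{-(m+2)}$-soliton condition as a weighted cscK equation with a specific weight $w_1$, and then to establish a direct algebraic identity relating $\Scal_{v_2}-w_2$ to the natural quantity $\log v_1(\m_\omega)-h$, where $h$ is the Ricci potential.

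First I would invoke Lemma~\ref{v-KRS=(v,w)-cscK} with $v=\ell^{-(m+2)}$. Using $d\log v=-(m+2)\xi/\ell$ and $\langle \xi,\m\rangle=\ell-a$, a direct computation gives $m+\langle d\log v,\m\rangle=-2+(m+2)a/\ell$, so the $\ell^{-(m+2)}$-soliton condition is equivalent to $\Scal_{v_1}(\omega)=w_1(\m_\omega)$ with $v_1:=\ell^{-(m+2)}$ and $w_1:=-4\ell^{-(m+2)}+2(m+2)a\ell^{-(m+3)}$. The problem therefore reduces to proving equivalence of the two weighted cscK conditions $\Scal_{v_1}=w_1$ and $\Scal_{v_2}=w_2$, where $v_2=\ell^{-(m+1)}$ and $w_2=2ma\ell^{-(m+2)}$.

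The heart of the argument is the identity
\begin{equation*}
\Scal_{v_2}(\omega)-w_2(\m_\omega)\;=\;v_2(\m_\omega)\,\Delta_{\omega,\ell^{-m}}\bigl[\log v_1(\m_\omega)-h\bigr],
\end{equation*}
valid for every $\T$-invariant K\"ahler metric $\omega\in 2\pi c_1(X)$, where $\Delta_{\omega,s}u:=s(\m_\omega)^{-1}\delta_\omega\bigl(s(\m_\omega)du\bigr)$ is the weighted Laplacian associated to a positive smooth weight $s$ on $\Pol$. To derive it I would first establish the general transformation formula
\[\Scal_{\ell v}(\omega)=\ell(\m_\omega)\,\Scal_v(\omega)+2v(\m_\omega)\,\Delta_\omega\m_\omega^{\xi}-2g_\omega\bigl(\xi,dv(\m_\omega)\bigr)\]
by expanding $\Hess(\ell v)=\ell\Hess v+\xi\otimes dv+dv\otimes\xi$ and applying the product rule for $\Delta_\omega[\ell(\m_\omega)v(\m_\omega)]$. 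Specialising to $v=v_1$ so that $\ell v_1=v_2$, substituting formula \eqref{Scal-v-tilde v} in the form $\Scal_{v_1}-w_1=v_1\Delta_{\omega,v_1}(\log v_1(\m_\omega)-h)$, and using the trace identity $\Delta_\omega\m^\xi_\omega=2\m_\omega^\xi-\mathcal{L}_{J\xi}h$ from \eqref{basic-relations} together with $\m^\xi=\ell-a$, the various first- and second-order contributions reorganize into the single weighted operator $\Delta_{\omega,\ell^{-m}}$.

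With the identity in hand, the conclusion is immediate. The forward direction is clear: an $\ell^{-(m+2)}$-soliton has $\log v_1(\m_\omega)=h+c$, so the right-hand side vanishes and $\omega$ is $(v_2,w_2)$-cscK. For the converse, if $\Scal_{v_2}(\omega)=w_2(\m_\omega)$, then $\Delta_{\omega,\ell^{-m}}\bigl[\log v_1(\m_\omega)-h\bigr]=0$, and integration by parts against the measure $\ell^{-m}(\m_\omega)\,\omega^{[m]}$ on the compact manifold $X$ yields $\int_X\ell^{-m}(\m_\omega)\,\bigl|d(\log v_1(\m_\omega)-h)\bigr|^2_\omega\,\omega^{[m]}=0$, forcing $\log v_1(\m_\omega)=h+c$, which is the soliton equation. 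The main technical obstacle is the bookkeeping in the derivation of the identity, and in particular recognising that the correct weight absorbing the drift term coming from $\mathcal{L}_{J\xi}h$ into a single weighted Laplacian is $\ell^{-m}$, rather than either of $v_1$ or $v_2$.
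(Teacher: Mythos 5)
Your proposal is correct, and its key identity
\[
\Scal_{\ell^{-(m+1)}}(\omega)-2ma\,\ell^{-(m+2)}(\m_\omega)=\ell^{-(m+1)}(\m_\omega)\,\Delta_{\omega,\ell^{-m}}\bigl[\log v(\m_\omega)-h\bigr]
\]
is exactly the paper's formula \eqref{Scal-f} divided by $f^{m+3}$ (with $f=\ell(\m_\omega)$) and repackaged as a weighted Laplacian; I checked that the weight $\ell^{-m}$ is the right one, since \eqref{Scal-f} produces the drift term $m\,g_\omega(d\log f,\,\cdot\,)$ and $\Delta_{\omega,s}=\Delta_\omega-g_\omega(d\log s(\m_\omega),d\,\cdot\,)$. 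Where you differ from the paper is in the route to this identity and in the endgame. The paper rederives everything from scratch: it contracts the soliton equation (or the Ricci-potential equation \eqref{basic-relations}) with $\omega$ and with $\xi$, and combines the two resulting scalar identities by the ad hoc weighting ``$f^2\times(\text{first})+mf\times(\text{second})$''; you instead reuse the already-established identity \eqref{Scal-v-tilde v} from Lemma~\ref{v-KRS=(v,w)-cscK} for $v_1=\ell^{-(m+2)}$ and transport it to $v_2=\ell v_1$ via a general product formula for $\Scal_{\ell v}$, which is more modular and makes the appearance of the weight $\ell^{-m}$ less mysterious (your transformation formula is correct: $\Hess(\ell v)=\ell\,\Hess v+dv\otimes\xi+\xi\otimes dv$ and the Leibniz rule for $\Delta_\omega$ give exactly the stated three terms). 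Finally, the paper concludes the converse by the maximum principle applied to the drift operator with no zeroth-order term, while you integrate by parts against $\ell^{-m}(\m_\omega)\,\omega^{[m]}$ to kill the gradient; both are valid, and the energy argument has the minor advantage of not requiring one to recognize the operator as satisfying a maximum principle. The forward direction in both treatments reduces to $\log v(\m_\omega)=h+c$, which follows immediately from $dd^c(\log v(\m_\omega)-h)=0$ on a compact manifold.
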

\begin{proof} The proof is similar to the one of Lemma~\ref{v-KRS=(v,w)-cscK}.  

If $\omega$ is a $v$-soliton with $v:= \ell^{-(m+2)}$, specializing \eqref{trace-v-sol} and \eqref{int-v-sol} to the specific choice of $v$,  and letting $f:=\ell(\m_{\omega})= \m_{\omega}^{\xi} + a$, we get the identities
\begin{equation*}
\begin{split}
\Scal(\omega) &= 2m + (m+2) \Delta_{\omega} \log f,  \qquad -\Delta_{\omega} f  + 2f = \frac{(m+2)}{f}g_{\omega}(df, df) + 2a.
\end{split}\end{equation*}
Multiplying by $f^2$ the first equality and taking the sum with the second  equality multiplied by  $mf$ gives
\begin{equation}\label{(m+2,f)}
f^2\Scal(\omega) - 2(m+1) f\Delta_{\omega} f -(m+1)(m+2) g_{\omega}(df, df) = 2m a f.\end{equation}
The RHS is the $(m+2, f)$-scalar curvature (see \cite{AC}) and it is straightforward check that the above equality is equivalent with the condition that $\omega$ is an $(\ell^{-(m+1)}, 2ma \ell^{-(m+2]})$-cscK metric.

\bigskip In the other direction, for any $\T$-invariant K\.ahler metric $\omega \in 2\pi c_1(X)$ we let $f:= \ell(\m_\omega)= \m_{\omega}^{\xi} + a>0$ be the corresponding Killing potential and  let $h\in C^{\infty}_{\T}(X)$  be such that $\rho_{\omega}-\omega =\frac{1}{2} dd^c h.$
From  \eqref{basic-relations} we have
\[\Scal(\omega) = 2m - \Delta_{\omega} h,  \qquad -\Delta_{\omega} f  + 2f = -g_{\omega}(df, dh) + 2a.\]
Multiplying the first identity by $f^2$ and summing with the second identity multiplied by $mf$ gives
\begin{equation}\label{Scal-f}
\begin{split} &f^2\Scal(\omega) - 2(m+1) f\Delta_{\omega} f -(m+1)(m+2) g_{\omega}(df, df) -2m a f \\
                      &= -f^2\Big(\Delta_{\omega}(h + (m+2)\log f) + m g_{\omega}\big(d\log f, dh + (m+2)d\log f\big)\Big).                      \end{split}\end{equation}
If we suppose that \eqref{(m+2,f)} holds,   we conclude again by the maximum principle that $((m+2)\log f +h)$ must be constant.      
\end{proof}
\begin{rem}\label{l:equivalent-weights} Lemmas~\ref{v-KRS=(v,w)-cscK} and~\ref{SE=v-KRS}  give two different realizations  of the same $\ell^{-(m+2)}$-soliton as a weighted cscK metric,  with respective weights $\Big(\ell^{-(m+2)}, 2\big(-2\ell + (m+2)a\big)\ell^{-(m+3)}\Big)$ and $\Big(\ell^{-(m+1)}, 2am\ell^{-(m+2)}\Big)$.
\end{rem}

We derive from Lemma~\ref{SE=v-KRS} and the correspondence in \cite{AC} the following fact, which does not seem to have been noticed before.

\begin{lemma}\label{l:SE} On a Fano manifold $(X, \T)$,  a $\T$-invariant K\"ahler metric $\omega \in 2\pi c_1(X)$ is a $\ell^{-(m+2)}$-soliton with respect to a positive affine linear function $\ell= \langle \xi, \x \rangle +a$ if and only if the  lift $\hat \xi$ of the vector field $\xi$ to $K_X$,  via the hermitian connection $\nabla^h$ with curvature $-\omega$ and the $\omega$-momentum $\ell(\m_{\omega})$ of $\xi$, is a Reeb vector of  a Sasaki--Einstein (transversal) structure of transversal scalar curvature $2am$, defined on the unit circle bundle $\Sm$ of $(K_X, h)$.
\end{lemma}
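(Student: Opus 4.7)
The plan is to combine Lemma~\ref{SE=v-KRS} with the K\"ahler/Sasaki correspondence of \cite{AC}, as hinted in the preamble to the statement. First, by Lemma~\ref{SE=v-KRS}, the metric $\omega \in 2\pi c_1(X)$ is a $\ell^{-(m+2)}$-soliton if and only if it is a $(\ell^{-(m+1)}, 2ma\,\ell^{-(m+2)})$-cscK metric. It thus suffices to identify this weighted cscK condition with the claimed Sasaki--Einstein condition on the unit circle bundle $\Sm \subset (K_X, h)$.

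Next, I invoke the K\"ahler/Sasaki correspondence of \cite{AC}, recalled in one of the motivating examples from the Introduction: for an ample line bundle $L$ over $X$ with $\alpha = 2\pi c_1(L)$, the $(\ell^{-m-1}, b\,\ell^{-m-2})$-cscK equation on $(X,\alpha,\T,\Pol)$ is precisely the condition that the induced cone K\"ahler metric on the affine cone $(L^{-1})^{\times}$ be scalar-flat, with Reeb vector field equal to the lift $\hat\xi$ of $\xi = d\ell$ to $L^{-1}$ through $\ell$. I apply this with $L := K_X^{-1}$, so that $\alpha = 2\pi c_1(L) = 2\pi c_1(X)$ and $L^{-1} = K_X$; the resulting scalar-flat K\"ahler cone metric lives on $K_X^{\times}$, and the induced Sasaki metric on $\Sm$ has Reeb $\hat\xi$ with constant transverse scalar curvature determined (up to standard conventions in \cite{AC}) by the coefficient $b = 2ma$.

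The final step is to promote the scalar-flat K\"ahler cone metric on $K_X^{\times}$ to a Ricci-flat (Calabi--Yau) cone metric. This rests on the observation that the total space $V := K_X$ has trivial canonical bundle: from the short exact sequence $0 \to T_{\rm fib} \to TV \to \pi^*TX \to 0$ and the tautological identification $T_{\rm fib} \cong \pi^*K_X$ one obtains $K_V \cong \pi^*K_X \otimes T^*_{\rm fib} \cong \mathcal{O}_V$. For a K\"ahler cone metric on a complex manifold with trivial canonical bundle, being scalar-flat forces being Ricci-flat, so the cone metric is automatically Calabi--Yau. By the classical Calabi--Yau/Sasaki--Einstein correspondence, this is equivalent to the induced Sasaki structure on $\Sm$ being Sasaki--Einstein with Reeb vector field $\hat\xi$, and a direct comparison of the normalizations identifies the transverse scalar curvature with $2am$.

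The main obstacle I expect is the final constant-matching: verifying that the scalar-flat cone normalization of \cite{AC} with coefficient $b = 2ma$ produces a Sasaki--Einstein structure whose transverse scalar curvature is exactly $2am$, keeping careful track of how the affine constant $a$ enters through the rescaling of the Reeb vector field $\hat\xi$ determined by $\ell$. All remaining ingredients are either direct invocations (Lemma~\ref{SE=v-KRS}, the \cite{AC} correspondence, the Calabi--Yau/Sasaki--Einstein dictionary) or standard line-bundle computations.
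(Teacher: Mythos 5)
Your first two steps coincide with the paper's proof: the reduction via Lemma~\ref{SE=v-KRS}, the appeal to \cite[Thm.~1]{AC} applied with $L=K_X^{-1}$, and the computation $K_V\cong \pi^*K_X\otimes T^*_{\rm fib}\cong \cO_V$ (which is exactly why the first Chern class of the CR distribution of $\Sm\subset K_X$ vanishes) are all present in the paper's argument. The gap is in your final step, in two places.

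First, what \cite[Thm.~1]{AC} gives for the weights $(\ell^{-(m+1)},2am\ell^{-(m+2)})$ is that the Sasaki structure on $\Sm$ with Reeb field $\hat\xi$ has \emph{constant transversal scalar curvature equal to $2am$}; the associated cone metric is scalar-flat only for the single value of $a$ at which $2am$ equals the dimensional constant forced by scalar-flatness of the cone. For general $a>0$ no scalar-flat (let alone Ricci-flat, Calabi--Yau) cone metric is being produced, so the chain ``scalar-flat cone $\Rightarrow$ Ricci-flat cone $\Rightarrow$ Sasaki--Einstein'' is not available; this is consistent with the statement of the lemma, whose conclusion is only a \emph{transversal} Einstein condition with transversal scalar curvature $2am$ (an $\eta$-Einstein structure), not a genuine Sasaki--Einstein metric. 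Second, even at the special value of $a$, the assertion ``scalar-flat K\"ahler metric on a manifold with trivial canonical bundle is Ricci-flat'' is not a standard fact on the non-compact cone: writing $\rho=dd^c f$ with $f=-\log\|\Omega\|^2_g$ for a holomorphic volume form $\Omega$, scalar-flatness only says that $f$ is harmonic, and one needs the homogeneity of the cone structure to reduce to a maximum-principle argument on the compact link. The correct (and uniform in $a$) form of this last step is the transversal one: a Sasaki structure on $\Sm\subset K_X$ of constant transversal scalar curvature is transversally K\"ahler--Einstein because the basic first Chern class of the CR distribution vanishes; this is \cite[Cor.~5.3]{BGS} and \cite[Prop.~4.3]{FOW}, which is how the paper closes the argument.
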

\begin{proof} By Lemma~\ref{SE=v-KRS}, we need to show that  an $(\ell^{-(m+1)}, 2am\ell^{-(m+2)})$-cscK metric in $2\pi c_1(X)$ corresponds to a Sasaki--Einstein structure as defined in the statement. By \cite[Thm.~1]{AC}, the condition that $\omega$ is $(\ell^{-(m+1)}, 2am\ell^{-(m+2)})$-cscK is equivalent to the condition that the corresponding Sasaki structure has transversal scalar curvature equal to $2m a$ (notice that $a>0$ by the positivity of $\ell$ over the canonical polytope $\Pol$). Any Sasaki structure of constant transversal scalar curvature on $\Sm \subset K_X$ is transversally K\"ahler--Einstein  as $c_1(K_X^{\times})=0$,  and therefore the first Chern class of the CR distribution of $\Sm$ vanishes (see e.g.  \cite[Cor.~5.3]{BGS} and \cite[Prop.~4.3]{FOW}). This completes the proof.
\end{proof}
\begin{rem} The correspondence in Lemma~\ref{l:SE} is, in fact,  local and can be deduced directly from the relation between the transversal Ricci tensors of the two Sasaki structures on the CR manifold $\Sm\subset K_X$, respectively defined by $\hat \xi$ and the regular Reeb vector field $\hat \chi$ (cf. \cite{JL,gauduchon-privite}).
\end{rem}

\begin{proof}[\bf Proofs of Propositions~\ref{tilde v} and \ref{SE}] Propositions~\ref{tilde v}  and ~\ref{SE} from the introduction follow directly from  Lemmas~\ref{v-KRS=(v,w)-cscK}, \ref{SE=v-KRS} and \ref{l:SE} above.
\end{proof}

\section{The  coercivity principle: Plan of proof of Theorem~\ref{main}}~\label{s:coercivity} We consider the following  general setup, based on the results of \cite{Da,DaRu,Tian}.  As before, we let $\T \subset \Aut_r(X)$ be a fixed connected compact torus in the reduced group of automorphisms of $X$, and denote by $\G=\T^{\C} \subset \Aut_r(X)$ the corresponding complex torus.

\smallskip
Following \cite{Da}, we consider the $L_1$-length function on $\cK(X, \bom)$, introduced on a smooth curve $\psi_t, t\in [0, 1]$ by
\[ L_1(\psi_t) := \int_{0}^1 \left(\int_X |\dot \psi_t|\omega_{\psi_t}^{[m]}\right) ds, \]
and,  for $\varphi_0, \varphi_1 \in  \cK(X, \bom)$,  we let
\[ d_1(\varphi_0, \varphi_1) := \inf\{L_1(\psi_t) \, | \, \psi_t \in \cK(X, \bom), t\in [0, 1] \, \psi_0=\varphi_0, \psi_1=\varphi_1\}.\]
Similarly we define $d_1$ on $\cK_{\T}(M, \bom)$ by considering infimum over smooth curves in $\cK_{\T}(X, \bom)$. It is proved in \cite{Da} that $(\cK(X, \bom), d_1)$ is a metric space, and it is observed in \cite{DaRu} that $(\cK_{\T}(X, \bom), d_1)$  is a metric subspace of $(\cK(X, \bom), d_1)$.

\smallskip
Recall the following well-known functionals on $\cK(X, \bom)$.

\begin{defn}\label{d:I, I-Aubin, J} Let ${\cI}$ denote the functional on $\cK(X, \bom)$ defined by
\[ (d_{\varphi} {\cI})(\dot \varphi) = \int_X \dot \varphi  \omega_{\varphi}^{[m]}, \qquad {\cI}(0)=0,\]
and let $\cJ(\varphi):= \int_X \varphi \omega_0^{[m]} - \cI(\varphi)$.
\end{defn}
\begin{rem}\label{r:J-invariance}  For any constant $c$,  $\cI(\varphi + c) = \cI(\varphi) + c\V(X, \bom)$ (where $\V(X, \bom)=\int_X \bom^{[m]}$ stands for the total volume of $(X, \bom)$) whereas  $\cJ(\varphi+c) = \cJ(\varphi)$, i.e. we can  see  $\cJ$ as a functional on the space of  K\"ahler metrics in the K\"ahler class $\alpha = [\bom]$, which  motivates the notation ${\cJ}(\omega_{\varphi})$. One can further show that $\cJ(\omega_{\varphi}) \ge 0$ with equality iff $\omega_{\varphi}=\bom$.
\end{rem}
By the above remark, for any K\"ahler metric  $\omega_{\varphi}$ in the K\"ahler class $[\bom]$,  there exists a uniquely determined $\bom$-relative potential $\varphi \in \cK(X, \bom)$ satisfying 
\[ \cI(\varphi) =0.\]
We shall denote by $\mathring{\cK}(X, \bom)$ (resp. $\mathring{\cK}_{\T}(X, \bom)$) the subspaces of \emph{normalized} $\bom$-relative K\"ahler potentials  satisfying the above equality. We notice that the group $\G=\T^{\C}$ naturally acts on the space of K\"ahler metrics in $[\bom]$, preserving the subspace of $\T$-invariant K\.ahler metrics. This induces an action $[\G]$ on the spaces  $\mathring{\cK}(X, \bom)$  and $\mathring{\cK}_{\T}(X, \bom)$,  such that
\[ \omega_{\sigma[\varphi]}= \sigma^*(\omega_{\varphi}), \qquad \forall \sigma \in \G, \, \varphi \in \mathring{\cK}(X, \bom). \]
We introduce the $\G$-relative distance on $\mathring{\cK}(X, \bom)$ and $\mathring{\cK}_{\K}(X, \bom)$ by 
\[d_1^{[\G]}(\varphi_0, \varphi_1) = \inf_{\sigma_0, \sigma_1 \in \G} d_1(\sigma_0[\varphi_0], \sigma_1[\varphi_1]).\]
It is proved in \cite{DaRu} that $d_1^{[\G]}$ is $\G$-invariant, i.e. $d_1^{[\G]}(\sigma[\varphi_0], \sigma[\varphi_1])= d_1^{[\G]}(\varphi_0, \varphi_1)$ and thus
\[ d_1^{[\G]}(\varphi_0, \varphi_1) = \inf_{\sigma\in \G} d_1(\varphi_0, \sigma[\varphi_1]). \]
\begin{defn}\label{proper}
Let $\cF$ be a functional on $\cK_{\T}(X, \bom)$. We say that  $\cF$ is  \emph{$\G$-coercive}  if there exist uniform positive constants $(\lambda, \delta)$ such that 
\begin{equation}\label{d_1-proper}
\cF(\varphi) \ge \lambda d_1^{[\G]}(0, \varphi)- \delta, \qquad \forall \varphi \in \mathring{\cK}_{\T}(X, \bom). 
\end{equation}
\end{defn}
It is some times more natural to introduce $\G$-coercivity in terms of the functional $\cJ$, via the following result 
\begin{prop}\cite{DaRu}\label{p:J-proper} $\cF$ is $\G$-coercive if and only if there exist uniform positive constants $(\lambda', \delta')$ such that 
\begin{equation}\label{J-proper}
\cF(\varphi) \ge \lambda'  \inf_{\sigma\in \G}\cJ(\sigma^*\omega_{\varphi}) - \delta', \qquad \forall \varphi \in \cK_\T(X, \bom).
\end{equation}
\end{prop}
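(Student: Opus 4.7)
The plan is to derive this equivalence from the fundamental Darvas comparison between the $d_1$-metric and the Aubin functional $\cJ$ on the space $\mathring{\cK}(X,\bom)$ of normalized potentials, together with a $\G$-equivariant passage to the quotient. Throughout I exploit that the weighted Mabuchi energy $\cE_{v,w}$, and more generally any functional $\cF$ to which the proposition is applied in the paper, is invariant under $\varphi\mapsto\varphi+c$ --- an assumption that for $\cE_{v,w}$ amounts to \eqref{c-relation}. By Remark~\ref{r:J-invariance} the right-hand side of \eqref{J-proper} is also constant-invariant, so each $\varphi\in\cK_\T(X,\bom)$ may be shifted to its unique representative $\tilde\varphi\in\mathring{\cK}_\T(X,\bom)$ with $\cI(\tilde\varphi)=0$ without affecting either side. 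Hence it suffices to prove the equivalence with $\varphi$ restricted to $\mathring{\cK}_\T(X,\bom)$.

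The first step is the key metric-vs-energy comparison of Darvas: there exist uniform constants $A,B>0$, depending only on $(X,\bom)$, such that
\[ A^{-1}\,\cJ(\omega_\varphi) - B \;\le\; d_1(0,\varphi) \;\le\; A\,\cJ(\omega_\varphi) + B, \qquad \varphi\in\mathring{\cK}(X,\bom). \]
The lower estimate is obtained by bounding the $L^1$-length of any smooth curve joining $0$ to $\varphi$ from below by the Aubin $I$--$J$ energy, which on $\mathring{\cK}(X,\bom)$ agrees with $\cJ$ up to a bounded error; the upper estimate follows by taking an explicit segment such as $t\mapsto t\varphi$ as a witness path and controlling its $L^1$-length against $\cJ(\omega_\varphi)$ on normalized potentials.

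The second step is a straightforward $\G$-equivariant passage. Since the action $\varphi\mapsto \sigma[\varphi]$ on $\mathring{\cK}_\T(X,\bom)$ preserves the normalization $\cI=0$ by construction and satisfies $\omega_{\sigma[\varphi]}=\sigma^*\omega_\varphi$, applying the comparison of Step~1 to each $\sigma[\varphi]$ and taking the infimum over $\sigma\in\G$ yields
\[ A^{-1}\inf_{\sigma\in\G}\cJ(\sigma^*\omega_\varphi) - B \;\le\; d_1^{[\G]}(0,\varphi) \;\le\; A\inf_{\sigma\in\G}\cJ(\sigma^*\omega_\varphi) + B \]
for every $\varphi\in\mathring{\cK}_\T(X,\bom)$. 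Substituting these two-sided bounds into \eqref{d_1-proper} and \eqref{J-proper} produces each inequality from the other after a routine rescaling $(\lambda,\delta)\leftrightarrow(\lambda',\delta')$ of the coercivity constants, and the reduction in the first paragraph extends \eqref{J-proper} from $\mathring{\cK}_\T(X,\bom)$ to all of $\cK_\T(X,\bom)$.

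The main obstacle is Step~1, the Darvas comparison, which rests on the nontrivial pluripotential theory of finite-energy geodesics in $\E^1(X,\bom)$ and on the identification of the metric completion of $(\cK(X,\bom),d_1)$ with $\E^1(X,\bom)$. I would treat this as an established input (following \cite{Da,DaRu}, as the paper itself does) rather than reprove it; the remainder of the argument is then essentially formal.
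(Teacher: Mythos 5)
Your argument is correct and follows essentially the same route as the paper's source for this statement: the proposition is quoted from \cite{DaRu} without proof, and the standard proof there is precisely the uniform quasi-isometry between $d_1(0,\cdot)$ and $\cJ$ on the normalized slice $\cI^{-1}(0)$, followed by taking the infimum over the $\G$-orbit, exactly as you do. Your explicit appeal to the translation-invariance $\cF(\varphi+c)=\cF(\varphi)$ to pass between $\cK_\T(X,\bom)$ and $\mathring{\cK}_\T(X,\bom)$ is the right reading of the implicit hypotheses, since this invariance is assumed wherever the proposition is applied in the paper.
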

\begin{rem}\label{proper=>bounded}
If $\cF$ is $\G$-coercive,  then it is bounded below by \eqref{d_1-proper}. \end{rem}

\smallskip
Following \cite{Da}, one can consider the metric completion $(\E^1(X, \bom), d_1)$ of $(\cK(X, \bom), d_1)$, which can be characterized  by a suitable continuously embedded subspace in  $L^1(X, \bom)$; 
similarly we let $(\E^1_{\K}(X, \bom), d_1)$ be the metric completion of $(\cK_{\K}(X, \bom), d_1)$ which, again by the results in \cite{DaRu}, can be viewed as the closed subspace of  $\T$-invariant elements of $\E^1(X, \bom)$. It will be  important for us that $(\E^1_{\T}(X, \bom), d_1)$ is a \emph{geodesic space}, i.e. each two elements $\psi_0, \psi_1 \in \E^1_{\T}(X, \bom)$ can be connected with a curve $\psi_t, t\in [0,1]$ in $(\E^1_{\T}(X, \bom), d_1)$, called a \emph{weak geodesic}, obtained as the limit of $C^{1, \bar 1}$-geodesics between elements of $\cK_{\T}(X, \bom)$, see \cite{C,Da}.  The latter object is a curve $\varphi_t \in \E^1_{\T}(X, \bom)$, of regularity $C^{1, 1}([0,1] \times X)$, which is uniquely associated to each $\varphi_0, \varphi_1 \in \cK_{\T}(X, \bom)$ (see \cite{C,Bl,Tosatti-et-al} and the proof of Proposition~\ref{p:totally-geodesic} below for more details about the weak $C^{1, \bar 1}$-geodesics). 
\smallskip

\smallskip In \cite[Thm.~3.4]{DaRu},   the following  general principle is established.

\begin{Theorem}[\bf Coercivity Principle]\label{thm:coercivity-principle} Let $\cF:  \cK_{\T}(X, \bom) \to \R$  be a lower semicontinuous (lsc) functional with respect to $d_1$, and $\cF : \E^1_{\T}(X, \bom) \to \R \cup \{+\infty\}$ be its largest lsc extension. Suppose, furthermore, that
$\cF(\varphi+c)=\cF(\varphi)=:\cF(\omega_{\varphi})$  and $\cF(\sigma^*\omega_{\varphi})= \cF(\omega_{\varphi})$ for any $\varphi\in \cK_{\T}(X, \bom)$ and $\sigma \in \G$, and that $\cF$ satisfies the following properties
\begin{enumerate}
\item[\rm (i)] {\rm (Convexity)} For each $\varphi_0, \varphi_1 \in \cK_{\T}(X, \bom)$ and the $C^{1, \bar1}$-geodesic $\varphi_t$ joining $\varphi_0$ and $\varphi_1$, $t \to \cF(\varphi_t)$ is continuous and convex.
\item[\rm (ii)] {\rm (Regularity)} If $\psi\in \E^1_{\T}(X, \bom)$ is a minimizer of $\cF$, then $\psi \in \cK_{\T}(X, \bom)$.
\item[\rm(iii)] {\rm (Uniqueness)} $\G$ acts transitively on the set of minimizers of $\cF$.
\item[\rm(iv)] {\rm (Compactness)} If $\{\psi_j\}_j \in \E^1_{\T}(X, \bom)$  satisfies $\lim_{j\to \infty} \cF(\psi_j)=\inf_{\E_{\T}^1(X, \bom)} \cF$ and, for some $C>0$, $d_1(0, \psi_j) \le C$, then there exists a $\psi\in \E^1_{\T}(X, \bom)$ and a subsequence $\{\psi_{j_k}\}_k$ with $\psi_{j_k} \to \psi$ in $(\E^1_{\T}(X, \bom), d_1)$.
\end{enumerate}
Then, the  following two conditions are equivalent:
\begin{enumerate}
\item[$\bullet$] $\cF$  has minimizer in $\cK_{\T}(X, \bom)$;
\item[$\bullet$] $\cF$ is \emph{$\G$-coercive}.
\end{enumerate}
\end{Theorem}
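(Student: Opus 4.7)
The plan is to treat the two implications separately; the forward direction (coercivity implies a minimizer) is a routine lower-semicontinuity argument, while the converse is a geodesic contradiction that pits convexity of $\cF$ against uniqueness of the minimizer.

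For ``$\G$-coercivity $\Rightarrow$ existence of a minimizer in $\cK_{\T}(X,\bom)$'', I would start with a minimizing sequence $\psi_j \in \cK_{\T}(X, \bom)$. Since $\cF$ is $\G$-invariant, I can replace each $\psi_j$ by $\sigma_j[\psi_j]$ with $\sigma_j \in \G$ chosen to realize $d_1^{[\G]}(0,\psi_j)$ up to $1/j$. Coercivity \eqref{d_1-proper} then gives a uniform bound on $d_1(0, \sigma_j[\psi_j])$ because $\cF(\sigma_j[\psi_j]) = \cF(\psi_j)$ is bounded. Compactness (iv) produces a $d_1$-limit $\psi \in \E^1_{\T}(X, \bom)$, the defining lower semicontinuity of the extension gives $\cF(\psi) \le \inf \cF$, and the regularity (ii) upgrades $\psi$ to an element of $\cK_{\T}(X, \bom)$.

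For the converse, suppose that a minimizer $\varphi_{\min}\in\cK_\T(X,\bom)$ exists yet coercivity fails. Then one finds a sequence $\varphi_j \in \mathring{\cK}_{\T}(X, \bom)$ with $T_j := d_1^{[\G]}(\varphi_{\min}, \varphi_j) \to \infty$ and $\cF(\varphi_j) - \cF(\varphi_{\min}) = o(T_j)$. Replacing $\varphi_j$ by an almost-optimal $\G$-translate (which is invisible to $\cF$ and to $d_1^{[\G]}$) one arranges $d_1(\varphi_{\min}, \varphi_j) \le T_j + 1/j$. I would then connect $\varphi_{\min}$ to $\varphi_j$ by the weak $C^{1,\bar 1}$-geodesic $\psi_t^j$ in $\E^1_{\T}(X,\bom)$ reparametrized to unit speed on the interval $[0, d_1(\varphi_{\min},\varphi_j)]$, and look at $\psi_1^j$. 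Convexity (i) gives
\[
\cF(\psi_1^j) \le \cF(\varphi_{\min}) + \frac{1}{d_1(\varphi_{\min},\varphi_j)}\bigl(\cF(\varphi_j) - \cF(\varphi_{\min})\bigr) \longrightarrow \cF(\varphi_{\min}),
\]
while $d_1(\varphi_{\min},\psi_1^j) = 1$, so by (iv) a subsequence $d_1$-converges to some $\psi^*$. Lower semicontinuity makes $\psi^*$ a minimizer; (ii) places it in $\cK_{\T}(X,\bom)$; and uniqueness (iii) forces $\psi^* = \sigma[\varphi_{\min}]$ for some $\sigma\in\G$.

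To extract a contradiction I would use that the $\G$-action on $(\mathring{\cK}_{\T}(X,\bom),d_1)$ is isometric (as established in \cite{DaRu}, and already implicit in the definition of $d_1^{[\G]}$). Applying $\sigma^{-1}$ to the entire geodesic yields a unit-speed curve from $\sigma^{-1}[\varphi_{\min}]$ to $\sigma^{-1}[\varphi_j]$ whose unit-distance point converges to $\varphi_{\min}$ in $d_1$, so the triangle inequality along this shifted geodesic gives
\[
d_1^{[\G]}(\varphi_{\min}, \varphi_j) \;\le\; d_1(\varphi_{\min}, \sigma^{-1}[\varphi_j]) \;\le\; o(1) + \bigl(d_1(\varphi_{\min}, \varphi_j) - 1\bigr).
\]
Combined with $d_1(\varphi_{\min}, \varphi_j) \le d_1^{[\G]}(\varphi_{\min}, \varphi_j) + 1/j$, this forces $1 \le o(1)$, the desired contradiction. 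I expect the main obstacle to be precisely this last step: one has to locate the $d_1$-limit $\psi^*$ of the unit-distance points \emph{inside} the $\G$-orbit of $\varphi_{\min}$, which is exactly where uniqueness (iii) is indispensable, and then cash in the unit of arc length gained along the geodesic against $d_1^{[\G]}$ using the isometric character of the $\G$-action.
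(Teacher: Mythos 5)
The paper does not actually prove this statement: it is quoted as the general coercivity principle of Darvas--Rubinstein \cite[Thm.~3.4]{DaRu}, and the paper only verifies its hypotheses (i)--(iv) for $\cF=\cE_{v,w}$ in later sections. Your argument is a correct reconstruction of the Darvas--Rubinstein proof --- the direct method (almost-optimal $\G$-translates, compactness, lsc, regularity) for the forward implication, and for the converse the unit-distance point on the $C^{1,\bar 1}$-geodesic played off against convexity, compactness, regularity, uniqueness and the isometric $\G$-action --- so it matches the approach of the cited source.
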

The above result provides a clear framework for achieving the proof of Theorem~\ref{main}: we need to find a suitable largest lsc extension of the weighted Mabuchi functional $\cE_{v,w}$ to the space  $\E^1_{\T}(X, \bom)$,  and show it satisfies the properties (i)--(iv). Notice that the invariance of $\cE_{v,w}$ under the action of $\G=\T^{\C}$ is equivalent to the necessary condition \eqref{Futaki} for the existence of a $(v,w)$-cscK metric whereas (iii) will follow from Theorem~\ref{thm:uniqueness-weighted} once the regularity condition (ii) is established. Furthermore, the property (i) is proved in \cite[Thm.~1]{lahdili3}, so the core of our arguments is to define the extension of $\cE_{v,w}$ to  $\E^1_{\T}(X, \bom)$ and establish  the properties (ii) and (iv). These steps will be respectively detailed in Theorems~\ref{M-extension},~\ref{W&S} and~\ref{compact} below. 

\section{K-stability via coercivity: Deriving Corollary~\ref{stability} from Theorem~\ref{main}}\label{s:stability} We consider the following general setup, based on the results of \cite{Be,BDL,dyrefelt2, Tian, BHJ, H, ChiLi} which deal with the K-polystability and uniform K-stability in the unweighted cscK case. Let  $\T \subset \Aut_r(X)$ be a connected compact torus in the reduced group of  automorphisms of $X$.
\begin{defn} A  $\T$-equivariant  K\"ahler test configuration $(\tstX, \tstA)$ associated to $(X, \alpha, \T)$ is a  \emph{normal} compact K\"ahler space $\tstX$ endowed with
\begin{itemize}
\item a flat morphism $\pi : \tstX \to \PP^1$;
\item a $\C^*$-action $\rho$  covering the standard $\C^*$-action on $\PP^1$,  and a $\T$-action commuting with $\rho$ and preserving $\pi$;
\item 
 a $\T\times \C^*$-equivariant biholomorphism  $\Pi_0 : (\tstX, \setminus \pi^{-1}(0)) \cong (X\times (\PP^1\setminus \{0\}))$;
\item a K\"ahler class $\tstA\in H^{1,1}(\tstX, \R)$ such that $(\Pi_0^{-1})^*(\tstA)_{|_{X \times \{\tau\}}} = \alpha$.
\end{itemize}
We say that $(\tstX, \tstA)$ is \emph{smooth} if $\tstX$ is smooth and \emph{dominating} if $\Pi_0$  extends to
a $\T\times \C^*$-equivariant  morphism 
\begin{equation}\label{dominate}
 \Pi : \tstX \to X \times \PP^1.
 \end{equation}
$(\tstX, \tstA)$  is called \emph{trivial} if  it is dominating and $\Pi$ is an isomorphism; $(\tstX, \tstA)$ is called \emph{product} if $\pi^{-1}(0) \cong X$. If $(X, L)$ is a smooth polarized variety and $\alpha = 2\pi c_1(L)$, a  \emph{polarized} test configuration is a  normal polarized variety $(\tstX, \tstL)$ such that  for some $r\in \N^*$, $(\tstX, \frac{1}{r}2\pi c_1(\tstL))$ defines a K\"ahler  test configuration  of $(X, \alpha)$ and, under $\Pi_0$,  $(X, \tstL_{|_{X \times \{\tau\}}}) \cong (X, L^r)$.
\end{defn}

\subsection{Non-Archimedean functionals}
We recall that any $\T\times \Sph^1$-invariant K\"ahler metric $\Omega \in \tstA$ on $\tstX$ gives rise to a smooth ray  of  $\T$-invariant K\"ahler metrics $\omega_t \in \alpha$ on $X$ defined by
\[ \omega_t := \rho(e^{-t + i s})^*(\Omega)_{|_{X\times\{1\}}}.\]
\begin{defn}\label{def:NA}
Let  $\cF$ be a functional defined on the space of $\T$-invariant K\"ahler metrics on $X$  in the class $\alpha$.   We say that ${\bf F}$ admits a \emph{non-Archimedean version} ${\bf F}^{\rm NA}$, defined on a subclass  $C$ of $\T$-equivariant K\"ahler test configurations  $(\tstX, \tstA)$ associated to $(X, \alpha, \T)$, if for any  $(\tstX, \tstA) \in C$, and any induced smooth ray of  $\T$-invariant K\"ahler metrics $\omega_t \in \alpha$ on $X$, the slope $\lim_{t\to \infty} \frac{{\bf F}(\omega_t)}{t}$ is well-defined and given by a quantity $\cF^{\rm NA}(\tstX, \tstA)$ which is independent of the choice of the $\T\times \Sph^1$-invariant K\"ahler form $\Omega \in \tstA$.
\end{defn}
We give below  two key examples of non-Archimedean versions of known functionals. The first one is established in the polarized case in \cite{BHJ} and in the generality we consider in \cite{dervan-ross, dyrefelt1}:
\begin{ex}\label{J-NA} The functional ${\bf J}$ introduced in Definition~\ref{d:I, I-Aubin, J} admits a non-Archimedean version defined, up to a positive dimensional multiplicative constant,  on the class of smooth $\T$-equivariant dominating K\"ahler test configurations $(\tstX, \tstL)$ by 
\[ {\bf J}^{\rm NA}(\tstX, \tstA) = \frac{\left((\Pi^*\alpha)^m \cdot \tstA\right)_{\tstX}}{(\alpha^m)_X} -\frac{1}{m+1}\frac{(\tstA^{m+1})_{\tstX}}{(\alpha^m)_X}, \]
where $\Pi$ is the morphism \eqref{dominate} and $\alpha$ denotes both the K\"ahler class on $X$ and its pull back to $X\times \PP^1$.
\end{ex}
The above expression generalizes to dominating smooth test configurations which are only \emph{relatively nef} (in the terminology of \cite{dyrefelt2}), thus also providing a non-Archimedean version of ${\bf J}$ for any K\"ahler test configuration: indeed,  by the equivariant Hironaka resolution, any  $\T$-equivariant test configuration can be dominated by a smooth  relatively nef  K\"ahler dominating test configuration, and the computation  of ${\bf J}^{\rm NA}$ on the latter does not depend on the choice made.

The non-Archimedean functional ${\bf J}^{\rm NA}$ defined above is always non-negative and equals to zero precisely when $(\tstX, \tstA)$ is the \emph{trivial} test configuration. The latter statement is established in  \cite[Thm.~7.9]{BHJ} in the polarized case,  and follows from the results in  \cite{dyrefelt2} in the K\"ahler case: see in particular  \cite[Lemma~4.8]{dyrefelt2} with $G$ trivial and recall that the ${\bf J}$-norm is Lipschitz equivalent to the $d_1$-distance, so that the unique weak geodesic ray associated to a test configuration with vanishing ${\bf J}^{\rm NA}$-norm must be constant,  and hence the test configuration must be trivial by \cite[Cor. 3.12]{dyrefelt2}. Thus, ${\bf J}^{\rm NA}$ can be thought of as a ``norm'' on the space of K\"ahler test configurations.

\smallskip
In order to obtain a norm which is zero  for more general \emph{product} test configurations, in \cite{dervan2,H,ChiLi}  the authors consider smooth rays $\tilde \omega_t \in \alpha$ of $\T$-invariant K\"ahler metrics on $X$ which are obtained by composing an induced ray $\omega_t$ from a $\T\times \Sph^1$-invariant K\"ahler metric $\Omega \in \tstA$ on $\tstX$ with the flow of a vector field $J\xi$,  where $\xi \in \tor$, i.e. $\tilde \omega_t = \exp(tJ\xi)^*(\omega_t)$. They show that the slope 
\[\lim_{t\to \infty} \frac{{\bf J}(\tilde \omega_t)}{t}=: {\bf J}^{\rm NA}(\tstX_{\xi}, \tstA_{\xi})\] is well-defined and  independent of the choice of   induced ray $\omega_t$.  We notice that when $\xi \in 2\pi \Lambda$ is a lattice element (or more generally is rational), $\xi$ induces an $\C^*$-action $\rho_{\xi}$ on $\tstX$ and   $\tilde \omega_t$ is an induced smooth ray from another K\"ahler test configuration $(\tstX_{\xi}, \tstA_{\xi})$,  called the \emph{$\xi$-twist} of $(\tstX, \tstA)$, obtained from $\tstX$ by composing the initial $\C^*$-action $\rho$ with $\rho_{\xi}$,  and compactifying trivially at infinity. (For instance, the product test configurations are precisely the $\xi$-twists of the trivial test configuration.) In  this case,  ${\bf J}^{\rm NA}(\tstX_{\xi}, \tstA_{\xi})$ is just the non-Archimedean $\bf J$-functional computed  as in Example~\ref{J-NA} on $(\tstX_{\xi}, \tstA_{\xi})$.  For a general $\xi$, the quantity $(\tstX_{\xi}, \tstA_{\xi})$ in the notation 
is not a test configuration in the usual sense (it is sometimes refereed to as a \emph{$\R$-test configuration})  but  the value ${\bf J}^{\rm NA}(\tstX_{\xi}, \tstA_{\xi})$ can be obtained as a continuous extension of the corresponding quantity for rational $\xi$'s. Following \cite{H,ChiLi}, we let
\begin{equation}\label{def:J-NA-G}
{\bf J}^{\rm NA}_{\T^{\C}}(\tstX, \tstA):=\inf_{\xi \in \tor} {\bf J}^{\rm NA}(\tstX_{\xi}, \tstA_{\xi}) \ge 0.
\end{equation}
A key observation~\cite{BHJ, H,ChiLi}  in the polarized case  is that the equality in \eqref{def:J-NA-G}  holds 
if and only if $(\tstX, \tstL)$ is  a \emph{product} test configuration. %(A similar conclusion holds in the K\"ahler case, using the methods in \cite{dyrefelt2}.) 
Furthermore, according to \cite[Thm.~B]{H} and \cite[Thm.~3.14]{ChiLi}, we have

\smallskip
\smallskip
\noindent
{\it Example}~\ref{J-NA}.-bis.  In the polarized case, the quantity ${\bf J}^{\rm NA}_{\T^{\C}}(\tstX, \tstA)$ introduced in \eqref{def:J-NA-G} defines a non-Archimedean version of the functional
\[{\bf J}_{\T^{\C}}(\omega):= \inf_{\sigma\in \T^{\C}} {\bf J}(\sigma^*(\omega)),\]
on the class of $\T$-equivariant polarized test configuration of $(X, L, \T)$.
\smallskip
\noindent

\smallskip
\noindent
Our second example is established in  \cite[Thm.~7]{lahdili2}\label{M-NA}:

 \begin{ex}\label{M-NA} Consider the weighted Mabuchi functional $\cE_{v,w}$ introduced in Definition~\ref{d:Mabuchi} and assume that the relation \eqref{c-relation} holds, see Remark~\ref{r:potentials/metrics}. Then  $\cE_{v,w}$ 
  admits a non-Archimedean version defined on \emph{smooth} $\T$-equivariant K\"ahler test configurations  with \emph{reduced central fibre}, given by the formula
 \begin{equation}\label{slope1}
 \begin{split}
 \Fut_{v,w}(\tstX, \tstA) := & -\int_{\tstX}\left(\Scal_v(\Omega)- w(\m_{\Omega})\right)\Omega^{[m+1]}
  + (8\pi) \int_X v(\m_{\omega}) \omega^{[m]}, \end{split}
\end{equation}
where  $\Omega\in \tstA$ is any $\T$-invariant K\"ahler metric on $\tstX$ with $\Pol$-normalized $\T$-momentum map $\m_{\Omega} : \tstX \to \Pol$ and  $v$-scalar curvature $\Scal_v(\Omega)$,  and $\omega\in \alpha$ is any  $\T$-invariant K\"ahler metric on $X$ with $\Pol$-normalized $\T$-momentum map $\m_{\omega} : X \to \Pol$. 
\end{ex}
\begin{defn}\label{d:Futaki}  The  RHS of \eqref{slope1} is independent of $\Omega \in \tstA$   and $\omega \in \alpha$ (see \cite{lahdili2}) and is referred to as the \emph{$(v,w)$-weighted Donaldson-Futaki invariant} of  a smooth $\T$-equivariant K\"ahler test configuration $(\tstX, \tstA)$.
 \end{defn}
\begin{rem} In the unweighted case (i.e. $v=1, w= 4m\pi \frac{c_1(X) \cdot \alpha^{m-1}}{\alpha^m}$), $\Fut_{v,w}(\tstX, \tstA)$ admits an equivalent expression in terms of intersection cohomology numbers on $\tstX$, see \cite{odaka, wang}. This allows one to extend the definition of the (unweighted) Donaldson--Futaki invariant to \emph{any} normal K\"ahler test configuration. For arbitrary weight functions $v>0$ and $w$,  we don't have as yet  a general definition for $\Fut_{v, w}$  but  \eqref{slope1} can be readily extended to orbifold test configurations. We also notice that the assumption on the central fibre in Example~\ref{M-NA} is necessary in order to ensure the equality $\Fut_{v,w} = \cE_{v,w}^{\rm NA}$ (see \cite{dyrefelt1} for a general formula of the non-Archimedean version of the unweighted Mabuchi energy). It will be interesting to obtain a non-Archimedean version of $\cE_{v,w}$ for any orbifold $\T$-equivariant  K\"ahler test configuration.
\end{rem}
\subsection{${\bf F}^{\rm NA}$-K-stability}
\begin{defn}\label{F-NA-stability}
Let  $\cF$ be  a functional defined on the space of $\T$-invariant K\"ahler metrics on $X$ in the K\"ahler class $\alpha$,  and suppose $\cF$ admits a non-Archimedean version  $\cF^{\rm NA}(\tstX, \tstA)$ (see Definition~\ref{def:NA}),  defined on a class  $C$ of $\T$-equivariant  K\"ahler test configurations  $(\tstX, \tstA)$ associated to $(X, \alpha, \T)$. We suppose that $C$ contains the product test configurations. We say that:
\begin{enumerate}
\item[(i)] $(X, \alpha, \T)$ is  \emph{$\T$-equivariant $\cF^{\rm NA}$-K-semistable} (on test configurations of the class $C$)  if  for any  $(\tstX, \tstA) \in C$ we have  $\cF^{\rm NA}(\tstX, \tstA) \ge 0$.
\item[(ii)] $(X, \alpha, \T)$ is  \emph{$\T$-equivariant $\cF^{\rm NA}$-K-polystable} (on test configurations of the class $C$) if it is $\K$-equivariant $\cF^{\rm NA}$-K-semistable,  and, furthermore,  $\cF^{\rm NA}(\tstX, \tstA) = 0$ if and only if $(\tstX, \tstA)$ is a product test configuration.
\item[(iii)]  $(X, \alpha, \T)$ is \emph{$\T^{\C}$-uniform $\cF^{\rm NA}$-K-stable} (on test configurations of the class $C$) if there exists a uniform positive constant $\lambda>0$ such that for any test configuration  $(\tstX, \tstA) \in C$ 
\[ \cF^{\rm NA}(\tstX, \tstA)  \ge \lambda {\bf J}^{\rm NA}_{\T^{\C}}(\tstX, \tstA), \]
where ${\bf J}^{\rm NA}_{\T^{\C}}(\tstX, \tstL)$ is introduced in \eqref{def:J-NA-G}.
\end{enumerate}
\end{defn}

\begin{rem}\label{proper=>semistable}
If $\cF$ is bounded below  then $(X, \alpha, \T)$ is  $\T$-equivariant $\cF^{\rm NA}$-K-semistable; furthermore both (ii) and (iii) imply (i) and, in the polarized case, (iii) implies (ii) by  the results in \cite{BHJ,H,ChiLi}.
\end{rem}

\begin{Theorem}\cite{BDL,H,  ChiLi, dyrefelt2}\label{proper=>stable} Suppose  $\cF$ is a functional defined on the space of $\T$-invariant K\"ahler metrics in $\alpha$,   which is $\K$-relatively $\T^{\C}$-proper.  Suppose, furthermore, that  $\cF$ admits a non-Archimedean version ${\bf F}^{\rm NA}$ defined for  a class $C$ of $\K$-equivariant K\"ahler test configurations of $(X, \alpha, \T)$. Then  $(X, \alpha, \T)$ is $\K$-equivariant $\cF^{\rm NA}$-K-polystable on $C$. If, moreover, $(X, L)$ is a polarized variety and $\alpha=2\pi c_1(L)$, then $(X, \alpha, \T)$ is  $\T^\C$-uniform $\cF^{\rm NA}$-K-stable on polarized test configurations in $C$. \end{Theorem}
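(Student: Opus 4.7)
The plan is to evaluate the coercivity inequality for $\cF$ along a smooth ray $\omega_t \in \alpha$ of $\T$-invariant K\"ahler metrics induced from a $\T \times S^1$-invariant K\"ahler form $\Omega \in \tstA$, and then to let $t \to \infty$. First I would fix $(\tstX, \tstA) \in C$ and produce such a ray, with associated normalized $\bom$-relative potentials $\varphi_t$. By Definition~\ref{def:NA}, the slope $\lim_{t\to\infty} \cF(\omega_t)/t$ equals $\cF^{\rm NA}(\tstX, \tstA)$ and is independent of the choice of $\Omega$. On the other hand, by Proposition~\ref{p:J-proper} the $\T^{\C}$-coercivity of $\cF$ furnishes uniform constants $\lambda', \delta' > 0$ with
\begin{equation*}
\cF(\omega_t) \geq \lambda' \inf_{\sigma \in \T^{\C}} \cJ(\sigma^*\omega_t) - \delta'.
\end{equation*}
Dividing by $t$ and passing to $\liminf_{t \to \infty}$ yields
\begin{equation*}
\cF^{\rm NA}(\tstX, \tstA) \geq \lambda' \liminf_{t \to \infty} \frac{\inf_{\sigma \in \T^{\C}} \cJ(\sigma^*\omega_t)}{t}.
\end{equation*}

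In the polarized case, the right-hand side is precisely (up to the factor $\lambda'$) the non-Archimedean $\T^{\C}$-relative ${\bf J}$-functional ${\bf J}_{\T^{\C}}^{\rm NA}(\tstX, \tstA)$ introduced in \eqref{def:J-NA-G}, so I obtain the desired $\T^{\C}$-uniform $\cF^{\rm NA}$-K-stability inequality. $\T$-equivariant K-semistability then follows from the non-negativity of ${\bf J}_{\T^{\C}}^{\rm NA}$, while K-polystability follows from the characterization, established in \cite{BHJ, H, ChiLi}, of product test configurations as precisely those with vanishing $\T^{\C}$-relative non-Archimedean ${\bf J}$-norm. In the general (non-polarized) K\"ahler case, where a relative non-Archimedean ${\bf J}$-functional on arbitrary smooth test configurations is more delicate, I would instead exploit the equivalent formulation \eqref{d_1-proper} of coercivity in terms of $d_1^{[\G]}$: after normalizing $\cI(\varphi_t) = 0$, the same slope argument gives $\cF^{\rm NA}(\tstX, \tstA) \geq \lambda \liminf_{t \to \infty} d_1^{[\G]}(0, \varphi_t)/t \geq 0$, yielding semistability; by the results of \cite{dyrefelt2}, the vanishing of this slope characterizes product test configurations, which then yields polystability.

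The main obstacle is the identification of the slope of $\inf_{\sigma \in \T^{\C}}\cJ(\sigma^*\omega_t)/t$, respectively of $d_1^{[\G]}(0, \varphi_t)/t$, with the corresponding non-Archimedean $\T^{\C}$-relative invariant of $(\tstX, \tstA)$; this amounts to exchanging the infimum over $\sigma \in \T^{\C}$ with the slope limit in $t$, which is a genuinely non-trivial step. In the polarized case it is the content of \cite[Thm.~B]{H} and \cite[Thm.~3.14]{ChiLi}, while in the K\"ahler case it rests on the analysis of weak geodesic rays in \cite[Sec.~3--4]{dyrefelt2}. Granting these inputs, the coercivity-to-stability principle reduces to the elementary slope estimate outlined above, in the spirit of the cscK arguments of \cite{BDL, dyrefelt2}.
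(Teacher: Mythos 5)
Your proposal is correct and follows essentially the same route as the paper: the uniform-stability statement is exactly the paper's one-line deduction from Proposition~\ref{p:J-proper} together with Example~\ref{J-NA}-bis (i.e. \cite[Thm.~B]{H}, \cite[Thm.~3.14]{ChiLi}), and the polystability statement is obtained from the same slope estimate $0\le d_1^{[\G]}(0,\psi_t)\le o(t)$ followed by the geodesic-ray analysis of \cite{dyrefelt2}. The only difference is one of detail: the paper spells out the step you defer to the citations, namely transferring the $o(t)$ bound from the smooth ray $\psi_t$ to the associated weak $C^{1,\bar1}$-geodesic ray (via \cite[Lemma~4.8]{dyrefelt2} and the passage to the relative class $\tstA_c$) and adapting the implication $(2)\Rightarrow(5)$ of \cite[Thm.~4.4]{dyrefelt2} from the Mabuchi energy and $\Aut_0(X)$ to the abstract functional $\cF$ and the reductive group $\T^{\C}$.
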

\begin{proof}    For the first part, we follow  \cite{dyrefelt2} with some minor modifications.  We want to show that if  $\cF^{\rm NA}(\tstX, \tstA)=0$,  then $(\tstX, \tstA)$ is a product test configuration.

We fix a  $\K\times \Sph^1$-invariant K\"ahler form $\Omega \in \tstA$ and let $\omega_t$ be the corresponding ray of smooth $\T$-invariant K\"ahler forms in $\alpha$, and $\psi_t\in \cH_{\K}(X, \bom)$ the \emph{normalized} smooth ray of K\"ahler potentials  satisfying $\cI(\psi_t)=0$.   According to \cite{dyrefelt1},  the K\"ahler test configuration  $(\tstX, \tstA)$ also determines a unique $C^{1, \bar 1}$ weak geodesic ray  $\varphi_t$ in $\cH^{1,{\bar 1}}(X, \bom)$,  emanating from $\psi_0$. Furthermore, $\varphi_t$ is invariant under $\K$ (by its uniqueness) provided that we have $\psi_0 \in \cH_{\K}(X, \bom)$.   According to  \cite[Prop.4.2]{dyrefelt2}, we can consider instead of $\tstA$  the  \emph{relative K\"ahler class}  $\tstA_c = \tstA - c [X_0]=\tstA - c\pi^*(\cO_{\PP^1}(1))$ (for a constant $c$ determined from $\tstA$ and where $[X_0]$ denotes the divisor corresponding to the central fibre $X_0$ of $\tstX$),  such that  the  $C^{1,\bar 1}$ weak geodesic ray $\varphi^c_t$ corresponding to $(\tstX, \tstA_c)$ is  the projection of $\varphi_t$ to the slice $\cH^{1, \bar 1}_{\K}(X, \bom)\cap \cI^{-1}(0)$. Notice that the smooth $(1,1)$-form $\Omega - c\pi^{*}\omega_{\rm FS} \in \tstA_c$  defines  the same smooth ray $\omega_t$ of $\K$-invariant K\"ahler metrics,  and thus the same ray of  smooth potentials $\psi_t \in \cH_{\K}(X, \bom)\cap \cI^{-1}(0)$ and  $\cF^{\rm NA}(\tstX, \tstA_c)= \cF^{\rm NA}(\tstX, \tstA)=0$. The key point is that   \eqref{d_1-proper} and $\lim_{t\to \infty} \frac{\cF(\omega_{\psi_t})}{t}=\cF^{\rm NA}(\tstX, \tstA_c)=0$  yield an estimate $0\le d_1^{[\G]}(0, \psi_t) \le o(t)$, which  is shown in \cite[Lemma 4.8]{dyrefelt2} to be equivalent to  $0 \le d_1^{[\G]}(0, \varphi_t^c) \le o(t)$. We can now apply the arguments in the proof of the implication `$(2) \Rightarrow (5)$' of \cite[Thm.~4.4]{dyrefelt2}, by replacing  the Mabuchi energy with the abstract functional $\cF$ and the group $\Aut_{0}(X)$ with $\T^{\C}$,  noting that
 in our $\K$-relative situation instead of the cscK potential  $\psi_0$ in \cite[Prop. 4.10]{dyrefelt2}  we can take \emph{any} K\"ahler potential in $\cH_{\K}(X, \bom)$ (as $\omega_{\psi_0}$ is $\K$-invariant and  $\T^{\C}$ is reductive). We thus deduce the implication (5) of \cite{dyrefelt2}, namely,  that the geodesic ray $\varphi^c_t$ associated to $(\tstX, \tstA_c)$ is given by the $\bom$-relative K\"ahler potentials of $\exp(tJ\xi)^*(\omega_{\psi_0})$ in $\cI^{-1}(0)$, where $\xi$ is a vector field in the Lie algebra of $\T$;  it follows from   \cite[Thm.~A.6]{dyrefelt2} that  $(\tstX, \tstA_c)$  and hence also $(\tstX, \tstA)$ is a product test configuration. 
 
 \smallskip
 The second part follows immediately from \eqref{J-proper} and Example~\ref{J-NA}-bis.\end{proof}

We next apply Theorem~\ref{proper=>stable} to $\cF=\cE_{v,w}$ and $\cF^{\rm NA} = \Fut_{v,w}$.
\begin{defn}  Let ${\bf F}^{\rm NA}= \Fut_{v, w}$, where $\Fut_{v,w}$ is defined on any smooth $\T$-equivariant test configuration  via  the formula \eqref{slope1}, see Definition~\ref{d:Futaki}. We then refer to the ${\bf F}^{\rm NA}$-K-stability notions introduced in the Definition~\ref{F-NA-stability} (i)--(iii) respectively as $\T$-equivariant $(v,w)$-K-semistability, $\T$-equivariant $(v,w)$-K-polystability, and $\T^\C$-uniform $(v,w)$-K-stability on $\T$-invariant dominating smooth K\"ahler test configurations with reduced central fibre.  \end{defn}
\begin{proof}[\bf Proof of Corollary~\ref{stability} modulo Theorem~\ref{main}] 
By the definition of $\cE_{v, w}$ (see Definition~\ref{d:Mabuchi}), we have  
\[ \cE_{v, w}(\varphi+ c) = \cE_{v, w}(\varphi) + c\int_{X}\big(\Scal_{v}(\omega_{\varphi}) - w(\m_{\varphi})\big)\omega_{\varphi}^{[m]}, \]
showing that if $\cE_{v, w}$ is bounded below on $\cK_{\T}(X, \bom)$ (in particular if  $\cE_{v, w}$ is  $\T$-relatively $\T^{\C}$-proper), then the relation \eqref{c-relation} holds  and  $\cE_{v, w}$ defines a functional on the space of $\T$-invariant K\"ahler metrics in $\alpha$ (see Remark~\ref{r:potentials/metrics}). In this case, Example~\ref{M-NA} tels us  that $\Fut_{v,w}(\tstX, \tstA)$ defines a non-Archimedean version of $\cE_{v,w}$.   
We can now apply Theorem~\ref{proper=>stable}.
\end{proof}

\section{Semi-simple principal fibrations}\label{s:geometric}

Let $(X, \omega)$ be a compact K\"ahler  $2m$-manifold,  endowed with a hamiltonian isometric action of an $\dtor$-dimensional torus $\T$.  As $\T$ will act on various spaces, we shall use at times upper  and under scripts to emphasize the space, on which $\T$ acts. For instance, $\T_X$ will denote the $\T$-action on $X$. Let  $\tor$ be the Lie algebra of $\T$ and $\Lambda \subset \tor$ the lattice of generators of circle groups in $\T$ (i.e. $\T = \tor /2\pi \Lambda$). We denote by  $\m_\omega : X \to \Pol  \subset \tor^*$ the normalized $\T_X$-momentum map of $\omega$, i.e. whose image is a fixed compact convex polytope $\Pol \subset \tor^*$.

Let $B= B_1 \times \cdots \times B_k$ be a $2n$-dimensional cscK manifold,  where each $(B_a, \omega_{B_a}), a=1, \ldots, k$ is a compact cscK Hodge K\"ahler $2n_a$-manifold  (i.e. $\frac{1}{2\pi} [\omega_{B_a}] \in H^2(B_a, \Z)$), and  $\pi_B : \bN \to B$  a principal $\T$-bundle  endowed with a connection $1$-form $\theta \in \Omega^1(\bN, \tor)$ with curvature
 \begin{equation}\label{d-theta}
d\theta = \sum_{a=1}^k (\pi_{B}^* \omega_{B_a}) \otimes p_a, \qquad p_a \in \Lambda. 
\end{equation}
\begin{rem}\label{r:Chern-class} The principle $\T$-bundle $P$  above can be described in terms of $r$ complex line bundles over $B$ as follows.
Fixing a lattice basis $\{\xi_1, \ldots, \xi_r\}$ of $\tor$,  and writing $p_a = \sum_{i=1}^r p_{ai} \xi_i, \, p_{ai} \in \Z, \, a=1, \ldots k$,  \eqref{d-theta} yields that $P$ is the (fiber-wise) product of $r$ principle ${\rm U}(1)$-bundles $P_i \to B$, where each $P_i$ is associated to a complex line bundle $L_i^{*}$ on $B$ with Chern class $2\pi c_1(L_i^*)= -\sum_{a=1}^r p_{ai}  \pi_B^*[\omega_{B_a}]$, i.e. we have
\begin{equation*}\label{Chern-Class}
 2\pi c_1(P) := -2\pi \sum_{i=1}^r c_1(L_i^*) \otimes \xi_i=\sum_{a=1}^k \pi_{B}^* [\omega_{B_a}] \otimes p_a. \end{equation*}
Fixing a connection $1$-form $\theta$  on $P$ as in \eqref{d-theta} amounts to introducing a hermitian metric $h_i^*$ on each $L_i^*$,   with curvature $-\sum_{a=1}^r p_{ai} \pi_B^*(\omega_{B_a})$, and identifying $P_i \subset L_i^{*}$ with the corresponding unitary $\Sph^1$-bundle.
\end{rem}

Let $\Ds = {\rm ann}(\theta) \subset T\bN$  be the horizontal distribution defined by $\theta$,   leading to a splitting
\[ T\bN = \Ds \oplus \tor_\bN, \]
where $\tor_\bN$ denotes the Lie algebra of $\T_\bN$ inside $C^{\infty}(\bN, T\bN)$,  corresponding to the $\T$-action $\T_\bN$ on $\bN$. The lift  $J_B$  of the integrable almost complex structure of $B$  to $\Ds$  gives rise to a  CR structure $(\Ds, J_B)$ on $\bN$ (of co-dimension $r$).

We further let $Z:=X \times \bN$ and consider the  induced  $\T$-action, denoted $\T_{Z}$,  generated by  $(-\xi_i^X + \xi_i^\bN)$ for any basis of $\Lambda$ as above. We thus  define
\[ Y := Z/\T_Z.\]
It follows that  $Y$ is a $2(m+n)$-dimensional smooth manifold,  and $\pi_Y : Z=X\times \bN \to Y$ is a principal $\T$-bundle over $Y$  whereas  $\pi_B: \bN \to B$  defines a  fibration $\pi_B : Y \to B$ with smooth fibres $X$, as summarized in the diagram below.

\begin{displaymath}
\begin{tikzcd}[arrows=rightarrow]
 & Z=X\times \bN \arrow{ld}{/\T_\bN}  \arrow{rd}{/\T_Z} \arrow{dd}{\pi_B}& \\ 
X\times B \arrow{rd}{\pi_B}  & &    Y \arrow{ld}{\pi_B} \\ 
&  B   & 
\end{tikzcd} 
\end{displaymath}

The $\T_X$-action on the factor $X$ in $Z=X\times \bN$ descends to a $\T$-action on $Y$, denoted $\T_Y$, which preserves each fibre (and thus coincides with the action of $\T_X$). Notice that the $1$-form $\theta$ also defines a connection $1$-form on $Z$ with horizontal distribution $\Hs$:
\begin{equation}\label{split-T(X x N)}
 T(X \times \bN) = \Hs \oplus \tor_{Z}, \qquad \Hs = TX \oplus \Ds= \mathrm{ann}(\theta), \end{equation}
giving rise to an induced CR structure $(\Hs, J=J_X \oplus J_B)$ of co-dimension $r$ on $Z$, which is clearly invariant under the $\T_Z$-action, and therefore defines a $\T_Y$-invariant complex structure $J_Y$ on $Y$.

We now consider K\"ahler metrics on $Y$, compatible with the  fibre-bundle construction of the above form. To simplify the notation, we denote by $\omega_a:=\omega_{B_a}$ the (fixed) cscK metric on each factor $B_a$,  by $\omega$ a $\T$-invariant K\"ahler structure in the class $\alpha$ on $X$,  and by $\tilde \omega$ the resulting K\"ahler structure on $Y$, which is defined in terms of a basic $2$-form on $Z=X\times \bN$,  depending on  $k$ real constants $c_a\in \R$ (which will be fixed) such that for each $a=1, \ldots, k$,  the affine linear function $\langle p_a, \x \rangle + c_a$ on $\tor^*$  is strictly positive on the momentum image $\Pol$:
\begin{equation}\label{Y-Kahler}
\begin{split}
\tilde \omega  &:= \omega  + \sum_{a=1}^k (\langle p_a, \m_{\omega}\rangle + c_a)\pi_B^*\omega_a +  \langle d \m_{\omega} \wedge  \theta \rangle \\
                     &= \omega + \sum_{a=1}^k c_a (\pi_B^*\omega_a) + d \left(\langle \m_{\omega}, \theta \rangle\right). 
                     \end{split}
\end{equation}
In the above expression, $\langle \cdot, \cdot \rangle$ stands for the natural pairing between $\tor$ and $\tor^*$:  thus $\langle p_a, \m_{\omega}\rangle$ is a smooth function, $\langle \m_{\omega}, \theta \rangle$ is a $1$-form,  and $\langle d\m_{\omega} \wedge \theta \rangle$ is a $2$-form on $Z$. One can directly check from the above expression that $\tilde \omega$ is closed, $\T_{Z}$-basic, and is positive definite on $(\Hs, J_X \oplus J_B)$, so it is the pullback of a K\"ahler form on $Y$. We shall tacitly identify in the sequel the K\"ahler form on $Y$ with its pullback \eqref{Y-Kahler} on $Z=X\times \bN$. Notice that $\tilde \omega$ is $\T_Y$-invariant and $\m_\omega$, seen as a smooth $\T_Z$-invariant  function  on $Z$, is the $\Pol$-normalized momentum map.

\begin{rem}\label{r:warped-geometry} The  horizontal part $\tilde \omega_h := {\tilde \omega}_{|_{\Hs}}$ of  $2$-form $\tilde \omega$ on $Z= X\times \bN \stackrel{\pi_B}{\longrightarrow} X \times B$ is invariant and basic with respect to the action $\T_\bN$ on the factor $\bN$, and thus induces a Hermitian (non-K\"ahler in general) metric on $X\times B=X\times \prod_{a=1}^k B_a$, given by
\[\tilde\omega_h = \omega + \sum_{a=1}^k (\langle p_a, \m_{\omega}\rangle + c_a)\omega_a, \]
which is an instance of warped geometry. On can thus think of $(X\times B, \tilde \omega_h)$ and $(Y, \tilde \omega)$  as being related by the twist construction of \cite{swann} applied to $(Z, \tilde \omega, \T_Z)$ and $(Z,  \tilde \omega, \T_\bN)$.
\end{rem}

\begin{defn}\label{d:(X, T)-principal} The K\"ahler manifold $(Y, \T_Y)$ constructed as above will be called a  \emph{semi-simple $(X, \T)$-principal fibration} associated to the K\"ahler manifold $(X, \T)$ and  the product cscK manifold $B= B_1\times \cdots \times B_k$. The $\T_Y$-invariant K\"ahler metric $\tilde \omega$ on $Y$ constructed from a $\T_X$-invariant K\"ahler metric $\omega$ on $X$ (and fixed cscK metrics $\omega_a$ on $B_a$) will be called \emph{bundle-compatible}. \end{defn}

\begin{rem} In the case when $(X, \T, \omega)$ is a toric K\"ahler manifold,  a semi-simple $(X, \T)$-principal fibration endowed with a bundle-compatible K\"ahler metric is an example of a semi-simple rigid toric fibration in the sense of \cite{HFKG4}, and is thus described by the \emph{generalized Calabi construction} with a global product structure on the base and no  blow-downs. \end{rem}

\subsection{The space of functions}\label{s:functions} The above bundle construction  gives rise to a natural embedding of the space ${C}_{\T}^{\infty}(X)$ of $\T_X$-invariant smooth functions on $X$ to the space $C^{\infty}_{\T} (Y)$ of $\T_Y$-invariant smooth functions on $Y$: for any $\varphi \in {C}_{\T}^{\infty}(X)$ we consider the induced function on $Z= X \times \bN$, which is clearly $\T_Z$-invariant, and thus descends to a smooth $\T_Y$-invariant function on $Y$. We shall  tacitly identify $\varphi$ and its image in $C^{\infty}_{\T} (Y)$, i.e. we shall consider 
\[ {C}_{\T}^{\infty}(X) \subset C^{\infty}_{\T} (Y). \]
Notice that the above embedding is closed in the Fr\'echet topology,  as we can identify a smooth $\T_X$-invariant function on $X$  with a smooth $\T_Y$-invariant function $\varphi$ on $Y$,  which has the property 
\[d_\bN(\pi_Y^*\varphi)=0\]
on $Z= X \times \bN$.

More generally, for any $\T_Y$-invariant smooth function $\psi \in C^{\infty}_{\T}(Y)$ its lift $\pi_Y^*\psi$ to $Z=X\times \bN$  a smooth function which is both $\T_Z$ and $\T_X$-invariant, or equivalently $\T_X$ and $\T_\bN$ invariant.  It thus follows  that $\pi^*_Y \psi$  can be equivalently viewed as a $\T_X$-invariant smooth function on $X\times B$, i.e. we have an identification
\begin{equation}\label{function-splitting}
C^{\infty}_{\T}(Y) \cong C^{\infty}_{\T}(X \times B).
\end{equation}
In particular, for any fixed point $x\in X$, we shall denote by  $\psi_x \in C^{\infty}(B)$ the induced smooth function on $B$, and for any fixed point $b\in B$ by $\psi_b \in C^{\infty}_{\T}(X)$ the induced function on $X$. We thus have the identification
\[{C}_{\T}^{\infty}(X)  \cong \{ \psi \in C^{\infty}_{\T} (Y) \, |  \, d_B \psi_x =0 \,  \forall x \in X\}.\]

\subsection{The space of bundle-compatible K\"ahler metrics}\label{s:geometric-construction} We shall next use the construction of \eqref{Y-Kahler} in order to identify the space $\cK_{\T}(X,\bom)$ of $\T_X$-invariant $\bom$-relative K\"ahler potentials on $X$   as a subset of  the space $\cK_{\T}(Y, \tilde \omega_0)$ of $\T_Y$-invariant $\tilde \omega_0$-relative K\"ahler potentials on $Y$. 

\begin{lemma}\label{l:Y-embedding} Let $\omega_{\varphi}=\bom + d_Xd^c_X \varphi$ be an  $\T_X$-invariant K\"ahler form on $X$ in the K\"ahler class $\alpha =[\bom]$, where $\varphi \in \cK_{\T}(X, \bom)$ is a $\T_X$-invariant smooth function on $X$. Denote by $\m_{\varphi}$ the momentum map of $\T_X$ with respect to $\omega_{\varphi}$, satisfying the normalization $\m_{\varphi}(X)=\Pol$, and by $\tilde \omega_{\varphi}$ the induced K\"ahler metric on $Y$,   via \eqref{Y-Kahler}. Then,
\[ \tilde \omega_{\varphi} = {\tilde \omega}_0 + d_Yd^c_Y \varphi, \]
where $\varphi$ stands for the induced smooth function on $Y$.
\end{lemma}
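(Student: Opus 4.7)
The approach is to verify the identity after pulling back to $Z = X \times \bN$ via the principal $\T_Z$-bundle $\pi_Y : Z \to Y$: both sides are $\T_Z$-basic, so it is enough to check them as $2$-forms on $Z$. Using the second formulation of \eqref{Y-Kahler} together with the momentum relation \eqref{momenta}, the left-hand side becomes
\[ \tilde \omega_\varphi - \tilde\omega_0 = (\omega_\varphi - \bom) + d\bigl(\langle \m_\varphi - \m_0, \theta\rangle\bigr) = d_X d^c_X \varphi + d_Z\bigl(\langle d^c\varphi, \theta\rangle\bigr), \]
where $d^c\varphi = \m_\varphi - \m_0$ is the $\tor^*$-valued function whose pairing with $\xi \in \tor$ equals $d^c_X\varphi(\xi^X)$. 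Everything therefore reduces to the $1$-form identity
\[ \pi_Y^*(d^c_Y\varphi) = d^c_X\varphi + \langle d^c\varphi, \theta\rangle \]
on $Z$; applying $d_Z$ to this identity then gives the claim.

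To prove the $1$-form identity I would use the splitting $TZ = \Hs \oplus \tor_Z$, where $\Hs = \ker\theta = TX \oplus \Ds$, together with the defining property that $J_Y$, pulled back via $\pi_Y$, acts on $\Hs$ as $J_X \oplus J_B$. Since $\varphi$ is $\T_X$-invariant its lift to $Z$ satisfies $d_Z\varphi = d_X\varphi$, with no component along $\Ds$ or $\tor_\bN$. Testing the claimed equality against a horizontal vector $V = V_X + V_\Ds \in \Hs$ reduces, using $\theta(V) = 0$, to the tautology $-d_X\varphi(J_X V_X) = d^c_X\varphi(V_X)$. Testing against the generator $\xi^\bN$ of the $\T_\bN$-action, and writing $\xi^\bN = \xi^X + \xi^Z$ where $\xi^Z \in \tor_Z$ is $\pi_Y$-vertical so that $\pi_Y^*(d^c_Y\varphi)(\xi^Z) = 0$, the left-hand side becomes $\pi_Y^*(d^c_Y\varphi)(\xi^X) = d^c_X\varphi(\xi^X) = \langle d^c\varphi, \xi\rangle$, which matches the right-hand side since $d^c_X\varphi$ annihilates $\tor_\bN$ while $\theta(\xi^\bN) = \xi$. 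Because $TZ = \Hs \oplus \tor_\bN$, this exhausts a complete set of directions at each point.

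Applying $d_Z$ to the $1$-form identity and expanding by the Leibniz rule, the curvature relation \eqref{d-theta} produces exactly the term $\sum_a \langle \m_\varphi - \m_0, p_a\rangle \pi_B^*\omega_a$, and one recovers precisely the expanded form of $\tilde\omega_\varphi - \tilde\omega_0$ derived from the first line of \eqref{Y-Kahler}. The main technical hazard is essentially bookkeeping: one must consistently juggle the three distinct $\T$-actions on $Z$ (namely $\T_X$, $\T_\bN$, and the anti-diagonal $\T_Z$ which defines $Y$), the associated vertical subspaces, and remember that $\pi_Y^*(J_Y)$ is intrinsically defined only on the horizontal distribution $\Hs$. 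Once these identifications are carefully set in place, the verification is a direct application of \eqref{momenta} and \eqref{d-theta}, and shows in particular that $\varphi \in \cK_\T(X,\bom)$ embeds into $\cK_\T(Y, \tilde\omega_0)$ via the bundle construction.
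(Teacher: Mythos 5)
Your proposal is correct and follows essentially the same route as the paper: both reduce the statement to the pulled-back $1$-form identity $d^c_Y\varphi = d^c_X\varphi + \langle d^c_X\varphi,\theta\rangle$ on $Z$ and then take $d_Z$. The only cosmetic difference is that the paper verifies this identity by showing the right-hand side is $\pi_Y$-basic and concluding, whereas you test it directly against the spanning set $\Hs \oplus \tor_\bN$; the substance is the same.
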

\begin{proof} Recall that $\m_{\varphi}=\m_0 + d^c\varphi$ (see \eqref{momenta}).
By \eqref{Y-Kahler}, the pullback of $\tilde \omega_{\varphi}$ to $Z=X \times \bN$ is 
\[ 
\begin{split}
\tilde \omega_{\varphi} &= \omega_{\varphi} + \sum_{a=1}^k c_a (\pi^*_B \omega_a) + { d}\langle \m_{\varphi},  \theta \rangle  \\
                                     &= \bom + \sum_{a=1}^k c_a (\pi^*_B \omega_a) + d_Xd^c_X \varphi + {d} \langle \m_{\varphi}, \theta \rangle  \\
                                     &= \tilde \omega_0 + {d}d^c_X \varphi + {d}\big(\langle d^c_X \varphi, \theta \rangle \big), \end{split}\]
so it is enough to check that
\begin{equation}\label{d^c}
d^c_Y \varphi  =  d^c_X \varphi + \langle d^c_X \varphi, \theta \rangle,
\end{equation}
                                     for any $\T_X$-invariant smooth function  $\varphi$ on $X$. 
To this end, let us choose a basis $\{\xi_1, \ldots, \xi_{\dtor}\}$ of $\tor$, with dual basis $\{ \xi^1, \ldots, \xi^{\dtor}\}$ of $\tor^*$, and write  $d^c_X \varphi = \sum_{j=1}^{\dtor} (d^c_X \varphi)(\xi_j^X)\xi^{j}$ and $\theta = \sum_{j=1}^{\dtor} \theta_j \xi_j$ for $1$-forms $\theta_j$ on $Z$ such that $\theta_j$ is zero on $\Hs$  and $\theta_j(\xi_i^\bN)= \theta_j(-\xi_i^X + \xi_i^\bN)=\delta_{ij}$. Thus, \eqref{d^c} is equivalent to 
  \[d^c_Y \varphi = d^c_X \varphi  + \sum_{j=1}^{\dtor} (d^c_X \varphi )(\xi_j^X) \theta_j.\]
 Evaluating the RHS of the above equality on the generators $(-\xi_j^X + \xi_j^\bN)$ of $\tor_Z$, we see that it is a $\pi_Y$-basic $1$-form on $Z$, and thus is the pullback of a $1$-form on $Y$ via $\pi_Y$. The claim follows easily. \end{proof}

Thus, Lemma~\ref{l:Y-embedding} defines an embedding ${\cK}_{\T}(X, \bom) \subset {\cK}_{\T}(Y, \tilde \omega_0)$ and we have also identified in Sect.~\ref{s:functions} a natural embedding of the space of $\T_X$-invariant functions on $X$ into the space of $\T_Y$-invariant functions on $Y$,   through their pull-backs to $Z= X \times \bN$.  

Letting  $\theta:= \sum_{j=1}^{\dtor} \theta_j \otimes \xi_j^\bN$ be the decomposition of the connection $1$-form $\theta$ on $P$ in a  basis  $\{\xi_1, \ldots, \xi_{\dtor}\}$ of the lattice $\Lambda \subset \tor$,   and  $\theta^{\wedge{\dtor}}:= \theta_1 \wedge \cdots \wedge \theta_{\dtor}$, it follows from  \eqref{Y-Kahler} and Lemma~\ref{l:Y-embedding} that  for any $\varphi \in {\cK}_{\T}(X, \bom) \subset {\cK}_{\T}(Y, \tilde \omega_0)$, the measure $\tilde \omega_{\varphi}^{[m+n]}$ on $Y$ is the push-forward  of the measure on $Z$:
\begin{equation}\label{Z-measure}
\frac{1}{(2\pi)^r} \tilde \omega_{\varphi}^{[m+n]}\wedge\theta^{\wedge\dtor}=\frac{1}{(2\pi)^{r}} \left( p(\mu_\varphi) \omega_{\varphi}^{[m]}\wedge  \bigwedge_{a=1}^{k} \pi^{*}_B\omega_a^{[n_a]}\right) \wedge\theta^{\wedge \dtor}, \end{equation}
where 
\begin{equation}\label{p-weight}  p(\x):= \prod_{a=1}^k (\langle p_a, \x\rangle + c_a)^{n_a}, \qquad n_a= {\rm dim}_{\C}(B_a)\end{equation}
is a positive polynomial on $\Pol$,  determined by the semi-simple $(X, \T)$-principal fibration $Y$ and the given bundle-compatible K\"ahler class on it.  It thus follows that any $\T_X$-invariant  integrable function $f$ on $X$ defines an integrable $\T_Y$-invariant function on $Y$ and, for any $\varphi \in {\cK}_{\T}(X, \bom) \subset {\cK}_{\T}(Y, \tilde \omega_0)$, we have
\begin{equation}\label{measures}
\int_{Y} f  \, \tilde \omega_{\varphi}^{[n+m]} = \vol(B, \omega_B) \int_X p(\m_{\varphi}) f \omega_{\varphi}^{[m]}.
\end{equation}

\begin{cor}\label{c:bundle-compatible} There exists an embedding ${\cK}_{\T}(X, \bom) \subset {\cK}_{\T}(Y, \tilde \omega_0)$ such that, for any smooth curve $\psi_t \in {\cK}_{\T}(X, \bom) \subset {\cK}_{\T}(Y, \tilde \omega_0)$, we have 
\[ L^Y_{1}(\psi_t)=  \V(B, \omega_B) L^X_{1, \p}(\psi_t), \]
where  $p(\x)$ is the positive weight function on $\Pol$ defined in \eqref{p-weight}, $L_{1, \p}^X$ is the $\p(\x)$-weighted length  function on $\cK_{\T}(X, \bom)$ given by  \[ L_{1, \p}^X (\psi_t) := \int_{0}^{1} \left(\int_X |\dot \psi_t| \p(\m_{{\psi_t}})\omega_{{\psi_t}}^{[m]}\right) dt,\]  and $L_1^Y$ is the length function on $\cK_{\T}(Y, \tilde \omega_0)$ corresponding to the weight $\p=1$. In particular, for any $\varphi_0, \varphi_1 \in \cK_{\T}(X, \bom)\subset {\cK}_{\T}(Y, \tilde \omega_0)$, $d_1^Y(\varphi_0, \varphi_1) = \V(B, \omega_B) d_{1,\p}^X(\varphi_0, \varphi_1)$, where $d_{1, \p}^X$ is the induced distance via the length functional $L_{1,\p}^X$.
\end{cor}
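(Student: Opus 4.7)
The plan splits into the length formula (a direct computation from the volume-form identity \eqref{measures}) and the distance equality (of which one direction follows from the length formula and the other requires an envelope-type argument). For the length formula, let $\psi_t \in \cK_{\T}(X, \bom)$ be a smooth curve and view it via Lemma~\ref{l:Y-embedding} as a smooth curve in $\cK_{\T}(Y, \tilde \omega_0)$. Its time-derivative $\dot\psi_t$ is (the lift to $Y$ of) a $\T_X$-invariant smooth function on $X$, so applying \eqref{measures} with $f := |\dot\psi_t|$ pointwise in $t$ gives
\[ \int_Y |\dot\psi_t|\,\tilde\omega_{\psi_t}^{[m+n]} \;=\; \V(B, \omega_B)\int_X |\dot\psi_t|\,p(\m_{\psi_t})\,\omega_{\psi_t}^{[m]}; \]
integrating in $t \in [0,1]$ yields $L_1^Y(\psi_t) = \V(B, \omega_B)\,L_{1, p}^X(\psi_t)$.

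The inequality $d_1^Y(\varphi_0, \varphi_1) \le \V(B, \omega_B)\,d_{1, p}^X(\varphi_0, \varphi_1)$ is then immediate, since any smooth curve in $\cK_{\T}(X, \bom)$ connecting $\varphi_0$ and $\varphi_1$ is, via the embedding, a smooth curve in $\cK_{\T}(Y, \tilde \omega_0)$ with the same endpoints and $\V(B, \omega_B)$ times the $L_{1, p}^X$-length. For the reverse inequality I would invoke Darvas' envelope characterization of $d_1$: for any $\varphi_0, \varphi_1$ the distance $d_1(\varphi_0, \varphi_1)$ has an explicit expression as a sum of integrals of $(\varphi_i - P(\varphi_0, \varphi_1))$ against the volume measures $\omega_{\varphi_i}^{[m+n]}$ (for $d_1^Y$), or against $p(\m_{\varphi_i})\,\omega_{\varphi_i}^{[m]}$ (for $d_{1, p}^X$), where $P(\varphi_0, \varphi_1)$ denotes the rooftop envelope in the relevant class of relative psh functions. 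Granted these formulae, the distance equality reduces via \eqref{measures} to showing that the $Y$-envelope $P_{\tilde\omega_0}(\varphi_0, \varphi_1)$ of the embedded potentials coincides (under the embedding) with the $X$-envelope $P_{\bom}(\varphi_0, \varphi_1)$.

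The envelope coincidence, which is the main content of the reverse inequality, can be established by a direct two-sided comparison. On one hand, $P_{\bom}(\varphi_0, \varphi_1) \in \PSH(X, \bom)$, viewed as a function on $Y$ via the embedding, is $\tilde\omega_0$-plurisubharmonic and dominated by $\min(\varphi_0, \varphi_1)$, hence $P_{\bom}(\varphi_0, \varphi_1) \le P_{\tilde\omega_0}(\varphi_0, \varphi_1)$. On the other hand, for any $\tilde\omega_0$-psh candidate $v$ on $Y$ with $v \le \min(\varphi_0, \varphi_1)$, its restriction $v_b := v|_{X_b}$ to each fibre $X_b \subset Y$ is $\bom$-plurisubharmonic on $X$ and dominated by $\min(\varphi_0, \varphi_1)$, so $v_b \le P_{\bom}(\varphi_0, \varphi_1)$ for every $b \in B$; taking the supremum over $v$ yields $P_{\tilde\omega_0}(\varphi_0, \varphi_1) \le P_{\bom}(\varphi_0, \varphi_1)$. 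The main obstacle is the fibrewise pshness property---namely that $\tilde\omega_0$ restricts to $\bom$ on each fibre $X_b$, so that fibre restrictions of $\tilde\omega_0$-psh functions are $\bom$-psh and the embedding preserves pshness---which is a local computation using the bundle-compatible form \eqref{Y-Kahler}.
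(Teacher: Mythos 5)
Your treatment of the length identity is exactly the paper's argument: the paper proves the whole corollary with the single sentence ``a direct consequence of \eqref{measures}'', and applying \eqref{measures} pointwise in $t$ to $f=|\dot\psi_t|$ is the correct way to unpack that. You are also right to flag that the distance equality is \emph{not} a formal consequence of the length identity: since $d_1^Y$ is an infimum over all smooth curves in $\cK_{\T}(Y,\tilde\omega_0)$ and not only over bundle-compatible ones, the length formula only yields $d_1^Y(\varphi_0,\varphi_1)\le \V(B,\omega_B)\,d^X_{1,\p}(\varphi_0,\varphi_1)$, and the paper does not address the converse at this point. The route most consistent with the paper's own development is to get the converse from Proposition~\ref{p:totally-geodesic} (the weak $C^{1,\bar 1}$-geodesic joining two embedded potentials stays in the embedded subspace) together with Darvas' theorem that $d_1$ is realized by that geodesic, plus an approximation step to evaluate $L^X_{1,\p}$ along a $C^{1,\bar 1}$ curve.

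The route you propose instead has a genuine gap precisely at the step you concede with ``granted these formulae''. Darvas' Pythagorean/envelope formula reads $d_1(u_0,u_1)=\cI(u_0)+\cI(u_1)-2\cI(P(u_0,u_1))$, where $\cI$ is the full Aubin--Mabuchi energy (so the integrands involve mixed terms $\omega_{u_i}^{[j]}\wedge\omega_{P}^{[k]}$, not only the top Monge--Amp\`ere measures as you describe); this is available for $d_1^Y$ on $Y$. But the weighted analogue $d^X_{1,\p}(u_0,u_1)=\cI_{\p}(u_0)+\cI_{\p}(u_1)-2\cI_{\p}(P(u_0,u_1))$ for the distance induced by $L^X_{1,\p}$ is not a citable result, is not established anywhere in the paper or in \cite{Da,DaRu,HL}, and proving it is of essentially the same order of difficulty as the reverse inequality itself --- indeed the most natural proof would pass through the fibration construction, which is circular here. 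Your envelope-coincidence argument is sound: the inequality $P_{\tilde\omega_0}\le P_{\bom}$ via fibrewise restriction is clean (one checks from \eqref{Y-Kahler} that $\tilde\omega_0$ restricts to $\bom$ on each fibre), and the inequality $P_{\bom}\le P_{\tilde\omega_0}$ needs only the extra remark that the lift of a non-smooth $\T$-invariant $\bom$-psh function is $\tilde\omega_0$-psh because it is a decreasing limit of lifts of elements of $\cK_{\T}(X,\bom)$ (Lemma~\ref{l:Y-embedding} together with \cite{BK2007}), so positivity passes to the limit in the sense of currents. But without the weighted Pythagorean formula the envelope coincidence does not by itself close the argument.
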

\begin{proof}  A direct consequence of \eqref{measures}. \end{proof}

  \begin{lemma}\label{l:Y-norm} Let $\varphi$ be a  smooth  $\T_X$-invariant function on $X$, also considered as a smooth $\T_Y$-invariant function on $Y$, and $\omega$  be  an $\T_X$-invariant K\"ahler metric on $X$ with $\tilde \omega$ the corresponding $\T_Y$-invariant K\"ahler metric on $Y$ given by \eqref{Y-Kahler}. Then 
\[ || d\varphi ||^2_{\omega} =||d \varphi ||^2_{\tilde \omega}.\]
\end{lemma}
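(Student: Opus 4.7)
The plan is to work on the product $Z = X \times \bN$, where $\tilde\omega$ has the explicit expression \eqref{Y-Kahler}, and exploit the block structure of $\tilde\omega$ on the horizontal distribution $\Hs = TX \oplus \Ds$ from \eqref{split-T(X x N)}.

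First, I would show that $\pi_Y^*(d_Y \varphi) = d_X\varphi$ as $1$-forms on $Z$. Since $\varphi$ is $\T_X$-invariant and constant along the $\bN$-factor, $d_X\varphi$ is $\T_Z$-invariant and annihilates the vertical distribution $\tor_Z = \{-\xi^X + \xi^\bN : \xi \in \tor\}$ (because $d_X\varphi(\xi^X) = 0$ by $\T_X$-invariance of $\varphi$). Hence $d_X\varphi$ is $\pi_Y$-basic and descends to $d_Y\varphi$, exactly as in the proof of Lemma~\ref{l:Y-embedding}.

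Second, I would restrict $\tilde\omega$ to $\Hs\otimes\Hs$. In \eqref{Y-Kahler}, the cross term $\langle d\m_\omega \wedge \theta\rangle$ vanishes on $\Hs\otimes\Hs$: indeed $\theta$ vanishes on $\Ds$ by definition of $\Hs = \mathrm{ann}(\theta)$, and $d\m_\omega$ annihilates $T\bN \supset \Ds$. Consequently
\[\tilde\omega\restr{\Hs} \;=\; \omega \;\oplus\; \sum_{a=1}^{k}\bigl(\langle p_a, \m_\omega\rangle + c_a\bigr)\,\pi_B^*\omega_a,\]
a direct sum with respect to the splitting $\Hs = TX \oplus \Ds$. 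Since the almost complex structure on $Y$ corresponds to $J_X \oplus J_B$ under the identification $\Hs \cong \pi_Y^* TY$, the associated Riemannian metric $g_{\tilde\omega}$ is likewise block-diagonal, with $X$-block equal to $g_\omega$.

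Third, the $1$-form $d_X\varphi$ lies entirely in the $T^*X$ summand of $\Hs^*$, so the cometric $g_{\tilde\omega}^{-1}$ applied to $(d_X\varphi, d_X\varphi)$ reduces to $g_\omega^{-1}(d_X\varphi, d_X\varphi)$, yielding $\|d\varphi\|^2_{\tilde\omega} = \|d\varphi\|^2_\omega$. Equivalently, the $g_{\tilde\omega}$-gradient of $\varphi$ is the lift of $\grad_\omega\varphi$ through $\pi_Y$, and its squared length is preserved. The argument is essentially a bookkeeping computation; the only points that require care are the basicness check in the first step and the block-diagonality in the second, and both are immediate from the bundle construction.
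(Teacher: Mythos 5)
Your proof is correct, but it is organized differently from the paper's. The paper computes both norms as ratios of top-degree forms, writing $\|d\varphi\|^2_{\omega}\,\omega^{[m]} = d_X\varphi\wedge d^c_X\varphi\wedge\omega^{[m-1]}$ and the analogous identity on $Y$, then pulls everything back to $Z=X\times \bN$, wedges with $\theta^{\wedge \dtor}$, and invokes the measure identity \eqref{Z-measure} together with $d_X\varphi=d_Y\varphi$ (the correction term in \eqref{d^c} being killed by $\theta^{\wedge\dtor}$); this keeps the argument in the same computational idiom as the surrounding lemmas. You instead argue pointwise: $d_X\varphi$ is $\pi_Y$-basic, the cross term $\langle d\m_\omega\wedge\theta\rangle$ dies on $\Hs\otimes\Hs$ because $\Hs=\mathrm{ann}(\theta)$, so $\tilde\omega\restr{\Hs}$ is block-diagonal for the splitting $\Hs=TX\oplus\Ds$ with $X$-block $\omega$, and since $J=J_X\oplus J_B$ respects the splitting the metric and cometric are block-diagonal as well; as $d_X\varphi$ sits in the $T^*X$ summand, the norms agree. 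Both routes are valid and short; yours is the more transparent linear-algebra argument and yields the slightly stronger statement that $\grad_{\tilde\omega}\varphi$ is the horizontal lift of $\grad_\omega\varphi$, while the paper's volume-ratio method recycles \eqref{Z-measure} and \eqref{d^c} and so integrates more directly with the proofs of Corollary~\ref{c:bundle-compatible} and Proposition~\ref{p:totally-geodesic}.
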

\begin{proof} We use that 
\[
\begin{split} 
||d\varphi||^2_{\omega} &= \frac{d_X\varphi \wedge d^c_X \varphi \wedge \omega^{[m-1]}}{\omega^{[m]}}=  \frac{d_X\varphi \wedge d^c_X \varphi \wedge \omega^{[m-1]}\wedge \p(\m_{\omega}) (\pi_B^* \omega_B)^{[n]}\wedge \theta^{\wedge\dtor}}{\omega^{[m]}\wedge p(\m_{\omega})(\pi_B^* \omega_B)^{[n]} \wedge \theta^{\wedge\dtor}} \\
|| d\varphi||^2_{\tilde \omega} &= \frac{d_Y\varphi \wedge d^c_Y \varphi \wedge \tilde \omega^{[m+n-1]}}{\tilde \omega^{[m+n]}} = \frac{d_Y\varphi \wedge d^c_Y \varphi \wedge \tilde \omega^{[m+n-1]}\wedge \theta^{\wedge\dtor}}{\tilde \omega^{[m+n]}\wedge \theta^{\wedge\dtor}} \end{split}\]
(where the RHS are written on $X\times \bN$)  together with $d_X \varphi = d_Y \varphi$ and \eqref{Z-measure}. 
\end{proof}
\begin{prop}\label{p:totally-geodesic} The embedding in Corollary~\ref{c:bundle-compatible} is totally geodesic with respect to the weak $C^{1, \bar 1}$ geodesics.
\end{prop}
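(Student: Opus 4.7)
The plan is to realize the weak $C^{1,\bar 1}$-geodesic in $\cK_{\T}(Y,\tilde\omega_0)$ between two elements of $\cK_{\T}(X,\bom)$ (viewed inside $\cK_{\T}(Y,\tilde\omega_0)$ via the embedding of Corollary~\ref{c:bundle-compatible}) as the direct lift of the weak geodesic in $\cK_{\T}(X,\bom)$, and conclude by uniqueness. Fix $\varphi_0,\varphi_1\in\cK_{\T}(X,\bom)$, let $\Sigma\subset\C$ denote the relevant strip, and let $\Phi\in C^{1,\bar 1}(\bar\Sigma\times X)$ be the weak geodesic on $X$: the unique $\T_X$-invariant, $\pi_X^*\bom$-psh function with boundary values $\varphi_0,\varphi_1$ satisfying $(\pi_X^*\bom + dd^c\Phi)^{m+1}=0$ on $\Sigma\times X$. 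Denote by $\tilde\Phi$ the function induced on $\bar\Sigma\times Y$ via the embedding of Section~\ref{s:functions}; I will verify that $\tilde\Phi$ satisfies the defining properties of the weak geodesic on $Y$.

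The core computation is a parametric version of Lemma~\ref{l:Y-embedding}. Working on $\bar\Sigma\times Z = \bar\Sigma\times X\times \bN$, the argument establishing~\eqref{d^c} applies pointwise in $\tau\in\bar\Sigma$ to give $d^c_{\Sigma\times Y}\Phi = d^c_\Sigma\Phi + d^c_X\Phi + \langle d^c_X\Phi,\theta\rangle$. Applying $d$ on $\Sigma\times Z$, using $d\theta=\sum_a(\pi_B^*\omega_a)\otimes p_a$ and Leibniz, and adding $\pi_Y^*\tilde\omega_0$, one obtains an explicit formula for $\tilde\omega_{\tilde\Phi}:=\pi_Y^*\tilde\omega_0 + dd^c_{\Sigma\times Y}\tilde\Phi$ on $\Sigma\times Z$. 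Restricted to the $\T_Z$-horizontal distribution $\mathcal{H}:=T\Sigma\oplus TX\oplus\Ds\subset T(\Sigma\times Z)$ (where $\Ds=\mathrm{ann}(\theta)$), every term containing the $\tor$-valued $1$-form $\theta$ vanishes (as $\theta|_{\mathcal{H}}=0$), and the form splits into pieces supported on the complementary direct summands $T\Sigma\oplus TX$ and $\Ds$:
\[
\tilde\omega_{\tilde\Phi}\big|_{\mathcal{H}} \;=\; A + \Omega_B,
\]
with
\[
A := \bigl(\pi_X^*\bom + dd^c_{\Sigma\times X}\Phi\bigr)\big|_{T\Sigma\oplus TX}, \qquad \Omega_B := \sum_{a=1}^k\bigl(\langle p_a,\m_{\Phi_\tau}\rangle + c_a\bigr)(\pi_B^*\omega_a)\big|_{\Ds},
\]
where $\m_{\Phi_\tau}$ denotes the $\Pol$-normalized $\T_X$-momentum map of $\bom + d_X d^c_X \Phi_\tau$.

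The conclusion then follows by type considerations. Since $T\Sigma\oplus TX$ has complex dimension $m+1$ and $\Ds\cong\pi_B^*TB$ has complex dimension $n$, the binomial expansion of $(A+\Omega_B)^{m+n+1}$ collapses to the single term
\[
\tilde\omega_{\tilde\Phi}^{m+n+1} \;=\; \binom{m+n+1}{m+1}\, A^{m+1}\wedge\Omega_B^{n}
\]
on $\mathcal{H}$, and hence on $\Sigma\times Y$. Because $\Omega_B$ is a strictly positive smooth $(n,n)$-form on $\Ds$ (by the standing positivity of $\langle p_a,\m\rangle+c_a$ on $\Pol$), the weak geodesic equation $A^{m+1}=0$ on $\Sigma\times X$ immediately yields $\tilde\omega_{\tilde\Phi}^{m+n+1}=0$ on $\Sigma\times Y$. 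The same decomposition gives $\tilde\omega_{\tilde\Phi}\geq 0$ (as $A\geq 0$ and $\Omega_B>0$ live on complementary summands), so $\tilde\Phi$ is $\pi_Y^*\tilde\omega_0$-psh; the $C^{1,\bar 1}$-regularity descends from $\bar\Sigma\times X$ through the smooth submersion $\pi_Y$; and the boundary values are tautological. Uniqueness of the weak $C^{1,\bar 1}$-geodesic on $Y$ as the solution of the homogeneous complex Monge--Amp\`ere Dirichlet problem (cf.~\cite{C,Bl,Tosatti-et-al}) then identifies $\tilde\Phi$ with the weak geodesic on $Y$, proving that the embedding is totally geodesic.

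The main technical point to verify is that the pointwise decomposition $\tilde\omega_{\tilde\Phi}|_{\mathcal{H}}=A+\Omega_B$ and the resulting identity $\tilde\omega_{\tilde\Phi}^{m+n+1}=\binom{m+n+1}{m+1}A^{m+1}\wedge\Omega_B^{n}$ remain valid for $C^{1,\bar 1}$ data, where $A$ and $\tilde\omega_{\tilde\Phi}$ are merely bounded $(1,1)$-forms. This presents no real difficulty: $dd^c$ of a $C^{1,\bar 1}$ function is an a.e.-defined bounded form, the direct-sum structure persists a.e., and wedging against the smooth positive form $\Omega_B$ is unambiguous, so the identity holds as bounded $L^\infty$-forms and, equivalently, as the Monge--Amp\`ere measures on either side.
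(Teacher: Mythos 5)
Your argument is correct, but it takes a genuinely different route from the paper's. Both proofs ultimately rest on the same algebraic fact — the collapse $\tilde\omega_{\tilde\Phi}^{[m+n+1]}\wedge\theta^{\wedge\dtor}= p(\m_{\Phi_\tau})\,(\pi_X^*\bom+dd^c\Phi)^{[m+1]}\wedge(\pi_B^*\omega_B)^{[n]}\wedge\theta^{\wedge\dtor}$ coming from the fibered structure, which is exactly your binomial identity $\binom{m+n+1}{m+1}A^{m+1}\wedge\Omega_B^n$ — but they handle the lack of smoothness differently. The paper never applies this identity to the weak geodesic itself: it approximates $\Phi$ by smooth solutions $\Psi^\varepsilon$ of the $\varepsilon$-perturbed Monge--Amp\`ere equation \eqref{epsilon-geodesic}, applies the identity to each smooth $\Psi^\varepsilon$ (getting a right-hand side that is $O(\varepsilon)$ uniformly since $p$ is bounded on $\Pol$), and concludes by weak convergence of Bedford--Taylor measures under $C^{1,\alpha}$ convergence. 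You instead apply the identity directly to $\Phi$, pointwise almost everywhere, and this is where the only delicate step of your proof sits: you need the Bedford--Taylor measure of a $C^{1,1}$ quasi-psh function to coincide with the a.e.\ pointwise product of its $L^\infty$ Hessian — a true and standard fact, but one that relies on the full $C^{1,1}$ regularity of \cite{Tosatti-et-al} (mere boundedness of $dd^c\Phi$ as a current would not immediately give a.e.\ second derivatives in the needed sense) and that deserves an explicit citation rather than the assertion that it "presents no real difficulty". Two further small points you should make explicit: $\Omega_B$ is only Lipschitz (not smooth), since its coefficients involve $d^c_X\Phi_\tau$, though this is harmless; and its strict positivity requires knowing that $\m_0+d^c_X\Phi_\tau$ still takes values in $\Pol$ for the weak geodesic, which follows by approximating $\Phi_\tau$ by elements of $\cK_\T(X,\bom)$. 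The trade-off is clear: the paper's regularization argument uses only soft weak-convergence tools, while yours is more direct and self-contained once the pointwise characterization of the Monge--Amp\`ere operator on $C^{1,1}$ potentials is invoked.
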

\begin{proof} Let $\varphi_0, \varphi_1 \in \cK_{\T}(X, \bom)$. If $\varphi_0$ and $\varphi_1$  can be connected with a \emph{smooth} geodesic $\varphi_t$, i.e. with a smooth curve in $\cK_{\T}(X, \bom)$ such that
\begin{equation}\label{real-geodesic}
 \ddot{\varphi}  = ||d \dot \varphi||^2_{\omega_{\varphi}},
 \end{equation}
then, by Lemma~\ref{l:Y-norm}, it follows that  $\varphi_t$  is also a smooth geodesic in $\cK_{\T}(Y, \tilde \omega_0)$ connecting $\varphi_0, \varphi_1 \in \cK_{\T}(Y, \tilde \omega_0)$. 

In general, by the results in \cite{C}, $\varphi_0, \varphi_1$ can be connected only with a weak $C^{1, \bar 1}$-geodesic in $ \cK^{1, \bar 1}_{\T}(X, \bom)$, where $\cK^{1, \bar 1}(X, \bom)$ stand for the space of $C^1(X)$ functions  $\varphi$ on $X$ such that $\bom + dd^c\varphi \ge 0$  and has bounded coefficients as a  $(1,1)$-current. More precisely, letting $\Sigma:=\{1< z< e\} \subset \C$, 
it is shown in \cite{C} that there exists a unique weak solution (i.e.  a positive $(1,1)$-current in the sense of Bedford--Taylor) of the homogeneous Monge--Ampère equation
\begin{equation}\label{weak-geodesic}
\begin{split}
&(\pi_X^*\bom + d_Xd_X^c \Phi)^{m+1}=0, \qquad  \pi_X^*\bom + d_Xd_X^c \Phi \ge  0, \, \Phi \in C^{1, \alpha}(X\times \overline{\Sigma}),\\
& \Phi(x, 1) = \varphi_0(x), \qquad \hspace{1.5cm} \Phi(x, e)= \varphi_1(x).
\end{split}
\end{equation}
 It was later shown in \cite{Tosatti-et-al} that $\Phi$ is actually of regularity $C^{1,1}(X\times \overline{\Sigma})$. Note that, by the uniqueness, $\Phi$ is $\T$-invariant as soon as $\varphi_0$ and $\varphi_1$ are. The link with \eqref{real-geodesic} is (see \cite{Se}) that if $\Phi$ were actually smooth, we can recover the smooth geodesic $\varphi_t$ joining $\varphi_0$ and $\varphi_1$ by letting $t:= \log|z|$ and $\varphi_t(x) :=\Phi(x, \log |z|)$. In the general case, the curve $\varphi_t$ of (weak)  $\bom$-relative pluri-subharmonic potentials  (of regularity $C^{1,1}(X\times [0,1])$) is referred to as the \emph{weak}  $C^{1, \bar 1}$-geodesic joining $\varphi_0$ and $\varphi_1$.
 
 \smallskip
 We are thus going to check that any weak $C^{1, \bar 1}$-geodesic on $X$ (invariant under $\T_X$) defines, via Lemma~\ref{l:Y-embedding},  a  $C^{1, \bar 1}$-geodesic on $Y$. To this end, we need to show that $\Phi$  satisfies
 \begin{equation}\label{Y-geodesic}
 (\pi_Y^*\tilde \omega_0 + d_Yd_Y^c \Phi)^{m + n+1}=0, \qquad  \pi_Y^*\tilde \omega_0 + d_Yd_Y^c \Phi \ge  0, \end{equation}
 the regularity statements being automatically satisfied on $Y$.
 
 By the results in  \cite{C} and \cite{Bl}, $\Phi$ can be approximated as $\varepsilon \to 0$,  both it the weak sense of currents and in $C^{1, \alpha}( X\times \overline{\Sigma})$ (for a fixed $\alpha \in (0, 1)$),  by smooth functions  $\Psi^{\varepsilon}(x, z)$ on $X\times \overline{\Sigma}$ which solve
 \begin{equation}\label{epsilon-geodesic}
 \begin{split}
 &(\pi^*_X \bom + d_Xd_X^c \Psi^{\varepsilon})^{[m+1]} = \varepsilon \left((\pi_X^*\bom)^{[m]} \wedge (dx \wedge dy)\right),  \, \, \varepsilon >0, \\
 & \pi^*_X \bom + d_Xd_X^c \Psi^{\varepsilon}>0, \, \, \Psi^{\varepsilon}(x, 1)=\varphi_0, \, \, \, \Psi^{\varepsilon}(x, e)=\varphi_1.
 \end{split} 
 \end{equation}
 By the uniqueness of the smooth solution of \eqref{epsilon-geodesic} (and using that both $\varphi_0, \varphi_1$ are $\T_X$-invariant), we have that $\Psi^{\varepsilon}(x, z)$ is a $\T_X$-invariant smooth function on $X$ for any $z\in \overline{\Sigma}$; furthermore, the positivity condition on the second line yields that $\Psi^{\varepsilon}(x, z) \in \cK_{\T}(X, \bom)$ for any $z\in \overline{\Sigma}$. We can then also see $\Psi^{\varepsilon}(x, z)$, via its pull-back to $X\times \bN \times \overline{\Sigma}$,  as  a $\T_Y$-invariant smooth function on $Y \times \overline{\Sigma}$; the arguments in the proof of Lemma~\ref{l:Y-embedding} yield that  $\pi^*_Y \tilde \omega _0+ d_Yd_Y^c \Psi^{\varepsilon}>0$ on $Y\times \overline{\Sigma}$. Furthermore, by the same proof, we have the following equality of volume forms on $X\times \bN\times \overline{\Sigma}$:
 \begin{equation}\label{total-measure}
 \begin{split}
(\pi^*_Y \tilde \omega_0 + d_Yd_Y^c \Psi^{\varepsilon})^{[m+n+1]}\wedge \theta^{\wedge \dtor}  &= \p(\m_{\Psi^{\varepsilon}})(\pi^*_X \bom + dd^c \Psi^{\varepsilon})^{[m+1]} \wedge (\pi^*_B\omega_B)^{[n]} \wedge  \theta^{\wedge\dtor} \\
&= \varepsilon \p(\m_{\Psi^{\varepsilon}})(\pi^*_X \bom)^{[m+1]} \wedge (\pi^*_B\omega_B)^{[n]} \wedge  \theta^{\wedge\dtor},
\end{split}
\end{equation}
where, we recall, $p(\x): = \prod_{a=1}^k(\langle p_a, \x \rangle + c_a)^{n_a}$, $\theta^{\wedge \dtor} := \theta_1 \wedge \cdots \wedge \theta_{\dtor}$ (for  $\theta = \sum_{i=1}^{\dtor} \theta_i \otimes \xi^\bN_i$  with respect to a basis $\{\xi_1, \ldots, \xi_{\dtor}\}$ of $\Lambda \subset \tor$), and, for any fixed $z\in \overline{\Sigma}$, $\m_{\Psi^{\varepsilon}}$ denotes the normalized $\T_X$-momentum map \eqref{momenta} of $\bom + d_X d^c_X \Psi^{\varepsilon}$.
 Notice that, as  $\p$ is uniformly bounded on $\Pol$ by positive constants, it follows by \eqref{total-measure} that 
\[ \lim_{\varepsilon \to 0}  \left((\pi^*_Y \tilde \omega_0 + d_Yd_Y^c \Psi^{\varepsilon})^{[m+n+1]}\wedge \theta^{\wedge \dtor}\right)  =0, \]
weakly (as measures on $Z\times \overline{\Sigma}$). The push-forward measure of $(\pi^*_Y \tilde \omega_0 + d_Yd_Y^c \Psi^{\varepsilon})^{[m+n+1]}\wedge \theta^{\wedge \dtor}$ to $Y$ is the measure $(\pi^*_Y \tilde \omega_0 + d_Yd_Y^c \Psi^{\varepsilon})^{[m+n+1]}$, so we obtain on $Y$:
\[\lim_{\varepsilon \to 0}  \left((\pi^*_Y \tilde \omega_0 + d_Yd_Y^c \Psi^{\varepsilon})^{[m+n+1]}\right) =0.\]
 Furthermore, using the $C^{1, \alpha}$-convergence of $\Psi^{\varepsilon}$ to $\Phi$, we get the weak convergences (of positive $(1,1)$-currents):
\[ 
\begin{split}
& \lim_{\varepsilon \to 0} \left(\pi^*_Y \tilde \omega_0 + d_Yd_Y^c \Psi^{\varepsilon}\right) = \pi^*_Y \tilde \omega_0 + d_Yd_Y^c \Phi \ge 0;  \\
0=& \lim_{\varepsilon \to 0} \left(\pi^*_Y \tilde \omega_0 + d_Yd_Y^c \Psi^{\varepsilon}\right)^{[m+n+1]} = \left(\pi^*_Y \tilde \omega_0 + d_Yd_Y^c \Phi\right)^{[m+n+1]}.
\end{split} \]
Thus, \eqref{Y-geodesic} follows.
\end{proof}

\begin{lemma}\label{l:Y-Scal}   Let $v$ be a smooth positive weight function on $\Pol$ and $\omega$, $\tilde \omega$ be $\T$-invariant K\"ahler metrics respectively on $X$ and $Y$, given by \eqref{Y-Kahler}, and suppose $(B_a, \omega_a)$  has constant scalar curvature $\Scal(\omega_{a})=s_a$.  Then,  the $v$-scalar curvature  $\Scal(\tilde \omega)$, considered as smooth function on $X \times \bN$,  is given by
\begin{equation}\label{Y-Scalv}
\Scal_v(\tilde \omega) = \frac{1}{\p(\m_{{\omega}})}\Scal_{\p v}(\omega) + v(\m_\omega)\q(\m_{\omega}) 
\end{equation}
with $\p(\x)= \prod_{a=1}^k(\langle p_a, \x\rangle + c_a)^{n_a}$ and $\q(\x)=\sum_{a=1}^k \frac{s_a}{\langle p_a, \x\rangle + c_a}$. In particular,  $\omega$ is $(\p v, \tilde w)$-cscK metric on $X$  if and only if $\tilde \omega$ is a $(v, w)$-cscK metric on $Y$,   with 
\[\tilde w(\x)= \p(\x)(w(\x)-v(\x)\q(\x)). \]
\end{lemma}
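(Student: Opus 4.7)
The first step is to observe that the $\T_Y$-momentum map of $\tilde\omega$, with image $\Pol$, coincides with $\m_\omega$ viewed as a $\T_Y$-invariant function on $Y$ via the embedding of Section~\ref{s:functions}. Indeed, since $\xi_i^X \in TX \subset \Hs$ is a $\T_Z$-invariant lift of $\xi_i^Y$ under $\pi_Y$, the interior product $\iota_{\xi_i^X}\tilde\omega$ computed on $Z$ from \eqref{Y-Kahler} equals $-d\m_\omega^{\xi_i}$: the $\omega$ term contributes $-d\m_\omega^{\xi_i}$ since $\m_\omega$ is the $\T_X$-momentum map of $\omega$, while the remaining two terms vanish because $\xi_i^X$ is tangent to $X$ and annihilates both $\pi_B^*\omega_a$ and $\theta$. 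Thus $\m_{\tilde\omega} = \m_\omega$, and the plan is to reduce the weighted formula \eqref{Y-Scalv} to its unweighted specialization $v\equiv 1$, namely
\begin{equation}\label{p:unweighted-goal}
\Scal(\tilde\omega) \;=\; \frac{1}{p(\m_\omega)}\Scal_p(\omega) + q(\m_\omega),
\end{equation}
combined with a compatibility result relating the remaining pieces of the definition \eqref{v-scal}.

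The compatibility step rests on two identities. First, using again that $\xi_i^X$ lies in $\Hs$ and that $J_Y$ restricts to $J_X\oplus J_B$ on $\Hs$, evaluation of \eqref{Y-Kahler} on $(\xi_i^X, J_X\xi_j^X)$ shows that the $\pi_B^*\omega_a$ and $\langle d\m_\omega\wedge\theta\rangle$ contributions both vanish, giving $g_{\tilde\omega}(\xi_i^Y,\xi_j^Y) = g_\omega(\xi_i,\xi_j)$ and hence
\[ \langle g_{\tilde\omega}, \m_{\tilde\omega}^*(\Hess v)\rangle \;=\; \langle g_\omega, \m_\omega^*(\Hess v)\rangle. \]
Second, for a function $f = v(\m_\omega)$ pulled back from $X$ to $Y$, Lemma~\ref{l:Y-norm} gives $\|df\|^2_{\tilde\omega} = \|df\|^2_\omega$; integrating by parts against an arbitrary $\T_Y$-invariant test function and invoking the measure identity \eqref{measures} yields
\[ \Delta_{\tilde\omega} v(\m_\omega) \;=\; \Delta_{\omega,p}\,v(\m_\omega), \qquad \Delta_{\omega,p} := \tfrac{1}{p(\m_\omega)}\delta_\omega\bigl(p(\m_\omega)\, d\,\cdot\,\bigr), \]
the $p$-weighted Laplacian of Appendix~\ref{s:appendix}.

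The main computational obstacle is the unweighted identity \eqref{p:unweighted-goal}. The strategy is to compute the Ricci form $\rho_{\tilde\omega}$ via the bundle splitting $TZ = TX\oplus\Ds\oplus\tor_P$ and the explicit form \eqref{Y-Kahler}: the warped factors $(\langle p_a,\m_\omega\rangle + c_a)\pi_B^*\omega_a$ on each base factor $B_a$ contribute, after tracing with $\tilde\omega$, the terms $s_a/(\langle p_a,\m_\omega\rangle + c_a)$ whose sum is $q(\m_\omega)$, while the fibrewise contribution is governed by the logarithmic derivative along $X$ of the relative volume density $p(\m_\omega)\omega^{[m]}$ appearing in \eqref{Z-measure}, which reproduces precisely $\Scal_p(\omega)/p(\m_\omega)$. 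The delicate point is the bookkeeping required to verify that the curvature cross-terms from $d\theta$ and $d\m_\omega$ in the off-diagonal blocks of $\tilde\omega$ contribute nothing to the trace; this generalizes the toric computation of \cite{HFKG4} and is deferred to the Appendix.

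Granted \eqref{p:unweighted-goal}, combining with the two compatibility identities yields
\[ \Scal_v(\tilde\omega) \;=\; \frac{v(\m_\omega)}{p(\m_\omega)}\Scal_p(\omega) + v(\m_\omega)q(\m_\omega) + 2\Delta_{\omega,p} v(\m_\omega) + \langle g_\omega, \m_\omega^*(\Hess v)\rangle. \]
On the other hand, a direct algebraic manipulation using the product rule $\Delta_\omega(pv) = p\Delta_\omega v + v\Delta_\omega p - 2g_\omega(dp,dv)$ together with the expansion $\Hess(pv) = p\Hess v + v\Hess p + 2\, dp\odot dv$ gives
\[ \Scal_{pv}(\omega) \;=\; v\,\Scal_p(\omega) \;+\; p(\m_\omega)\Bigl(2\Delta_{\omega,p} v(\m_\omega) + \langle g_\omega, \m_\omega^*(\Hess v)\rangle\Bigr), \]
so that dividing by $p(\m_\omega)$ and adding $vq$ reproduces the previous display. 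This establishes \eqref{Y-Scalv}. The $(v,w)$-cscK equivalence is then immediate: the equation $\Scal_v(\tilde\omega) = w(\m_\omega)$ reads $\Scal_{pv}(\omega) = p(\m_\omega)\bigl(w(\m_\omega) - v(\m_\omega)q(\m_\omega)\bigr) = \tilde w(\m_\omega)$, i.e. $\omega$ is $(pv,\tilde w)$-cscK on $X$ with $\tilde w = p(w - vq)$.
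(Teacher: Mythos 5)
Your reduction of the weighted identity to the unweighted case $v\equiv 1$ is correct and in fact mirrors how the paper itself finishes the proof: the identities $\m_{\tilde\omega}=\m_\omega$, $g_{\tilde\omega}(\xi_i,\xi_j)=g_\omega(\xi_i,\xi_j)$ and $\Delta_{\tilde\omega}=\Delta_{\omega,\p}$ on functions pulled back from $X$ (the last being part of Lemma~\ref{l:operators-tilde}), together with the algebraic expansion of $\Scal_{\p v}(\omega)$ in terms of $\Scal_{\p}(\omega)$, are exactly the ingredients the paper uses to pass from its unweighted formula \eqref{Scalp/p} to \eqref{Y-Scalv}, and your product-rule computation of $\Scal_{\p v}-v\,\Scal_{\p}$ checks out. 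One small caveat: your integration-by-parts derivation of $\Delta_{\tilde\omega}v(\m_\omega)=\Delta_{\omega,\p}v(\m_\omega)$ only identifies the two sides weakly against test functions pulled back from $X$, i.e. up to fibrewise averaging over $B$; to upgrade this to a pointwise identity you still need to know that $\Delta_{\tilde\omega}$ of a function pulled back from $X$ is again pulled back from $X$, which is precisely what the explicit splitting in Lemma~\ref{l:operators-tilde} provides.

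The genuine gap is that the unweighted identity $\Scal(\tilde\omega)=\frac{1}{\p(\m_\omega)}\Scal_{\p}(\omega)+\q(\m_\omega)$, which is the heart of the lemma, is not proved: you defer it to an appendix with a sketch, and the sketch is moreover inaccurate as stated. The off-diagonal contributions coming from $d\theta$ and $\langle d\m_\omega\wedge\theta\rangle$ do \emph{not} drop out of the trace. In the paper's computation one first identifies a Ricci potential of $\tilde\omega$ on $Z=X\times\bN$ as $\tilde\kappa=\sum_a\kappa_a+\kappa-\tfrac12\log \p(\m_\omega)$ via the quotient/volume argument of \cite[Prop.~7]{HFKG1}; the trace of $d_Yd_Y^c\tilde\kappa$ then produces, besides $\Scal(\omega)$ and the base terms $s_a/(\langle p_a,\m_\omega\rangle+c_a)$, the nontrivial terms $\sum_a \frac{2n_a\Delta_\omega\langle\m_\omega,p_a\rangle}{\langle p_a,\m_\omega\rangle+c_a}$, $\sum_a\frac{n_a|p_a|^2_{g_\omega}}{(\langle p_a,\m_\omega\rangle+c_a)^2}$ and $-\sum_{a,b}\frac{n_an_b\,g_\omega(p_a,p_b)}{(\langle p_a,\m_\omega\rangle+c_a)(\langle p_b,\m_\omega\rangle+c_b)}$, and it is only after matching these against the explicit expansion of $\Scal_{\p}(\omega)$ that one recovers $\Scal_{\p}(\omega)/\p(\m_\omega)$. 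This matching --- including the identity $d^c_X\kappa(p_a)=\tfrac12\Delta_\omega\m_\omega^{p_a}$, which converts the momentum of the Ricci form into a Laplacian of the momentum map and is itself derived from the volume-form argument --- is the actual content of the proof and cannot be dismissed as cross-terms contributing nothing. Until this identity is established, the lemma is not proved.
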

\begin{proof} We apply the arguments in the proof  \cite[Prop.~7]{HFKG1}  to both $(X, \T_X)$ and $(Y, \T_Y)$ to compute the corresponding scalar curvatures,  and compare the results.

On $X$,  we consider the open dense subset $\mathring{X} \subset X$  of {stable} points of the  $\T_X$-action,  and take the quotient $\pi_S : \mathring{X} \to S :=\mathring{X}/\T_{X}^{\C}$ under the induced complexified action $\T^{\C}_{X} \cong (\C^*)^{\dtor}$ (thus $S$ is a complex $2(m-\dtor)$-dimensional orbifold).~\footnote{Our argument is actually local, around each point in  $\mathring{X}$,   so one can assume without loss that $S$ is smooth.}

Consider the point-wise $\omega$-orthogonal and $\T$-invariant decomposition 
\[ T \mathring{X} =\Hor \oplus \tor_X \oplus J \tor_X, \]
and write the K\"ahler structure $(g, J, \omega)$ on $X$ as
\[ 
\begin{split}
g &= g_{\Hor} + \sum_{i,j=1}^{\dtor} H_{ij} \left(\eta_i \otimes \eta_j + J\eta_i \otimes J\eta_j\right) ,  \\
\omega  &= \omega_{\Hor} +  \sum_{i,j=1}^{\dtor} H_{ij} \eta_i \wedge J\eta_j, \\
\end{split}\]
where,  for a fixed basis $\{\xi_1, \ldots, \xi_{\dtor}\}$ of $\tor$, the $1$-forms $\eta_j$ on $\mathring{X}$ are defined by $(\eta_j)_{|_{\Hor}}=0$,  $\eta_j(\xi_i^X)= \delta_{ij}$; $\eta_j(J\xi_i^X)=0$ and $H_{ij}=g(\xi_i^X, \xi_j^X)$.

We next fix a local volume form $\vol_S$ on $S$ in some holomorphic coordinates, and write point-wisely
\begin{equation}\label{Q}
\omega_{\Hor}^{[m-\dtor]} = Q \pi_S^*(\vol_S),
\end{equation}
for some positive (locally defined) smooth function $Q$ on $\mathring{X}$ (where both $\omega_H^{[m-1]}$ and $\pi_S^*(\vol_S)$ are seen as sections of $\Wedge^{m-1} \Hor^*$). According to \cite[Prop.~7]{HFKG1}, we have that
\begin{equation}\label{kappa}
 \kappa := -\frac{1}{2}\left(\log(Q) + \log \det (H_{ij}) \right)
\end{equation}
is a (local) Ricci potential of $\omega$, i.e. $\rho_{\omega} = d_Xd_X^c \kappa$, and thus
\[ \Scal(\omega) = -2\frac{d_Xd^c_X \kappa \wedge \omega^{[m-1]}}{\omega^{[m]}}.\]
We can now make a similar argument on $Y$, noting that the K\"ahler reduction of $\mathring{Y}$ by the induced $\T_Y$-action  is $S\times B$; taking a local volume form in holomorphic coordinates on $S\times B$ of the form $\vol_S \wedge \vol_{B_1} \wedge \cdots \wedge \vol_{B_k}$, and using \eqref{Y-Kahler}, we see that a Ricci potential on $Y$ (when pulled back to $X \times \bN$) is written as
\[\tilde \kappa = \sum_{a=1}^k \kappa_{a} -\frac{1}{2}\left(\log(\tilde Q) + \log \det(H_{ij})\right),\]
where $\kappa_a: = -\frac{1}{2}\log\left(\frac{\omega_a^{[n_a]}}{\vol_{B_a}}\right)$ is a Ricci potential of $(B_a, \omega_a)$ and 
\[\tilde Q = p(\mu_{\omega})Q.\]
Thus, we obtain
\begin{equation}\label{Ricci-potentials}
\tilde{\kappa}= \sum_{a=1}^k \kappa_{a} +\kappa -\frac{1}{2}\log p(\mu_{\omega}),\end{equation}
as functions on $X\times \bN$. Introducing a basis $(\xi_i)_i$  of $\Lambda$ and writing the connection $1$-form $\theta\in \Omega^{1}(\bN, \tor)$ as $\theta = \sum_{j=1}^{\dtor} \theta_j \otimes \xi_j^\bN$  (where the $1$-forms $\theta_j$ on $\bN$ are such that $\theta_j$ is zero on $\Ds$  and $\theta_j(\xi_i^\bN)= \delta_{ij}$), we compute for the scalar curvature of $\tilde{\omega}$ 
\begin{align}\label{scal-tomega}
\begin{split}
\Scal(\tilde \omega)=&-\frac{d_Yd^c_Y \tilde \kappa \wedge \tilde \omega^{[m+n-1]}}{\tilde \omega^{[m+n]}} \text{    (on }Y\text{)}\\
=&-\frac{d_Yd^c_Y \tilde \kappa \wedge \tilde \omega^{[m+n-1]}\wedge\theta^{\wedge\dtor}}{\tilde \omega^{[m+n]}\wedge \theta^{\wedge\dtor}} \text{    (on }X\times \bN\text{)}.
\end{split}
\end{align}
By  \eqref{d^c} and \eqref{Ricci-potentials}, the pullback of $d_Yd^c_Y \tilde \kappa$ to $X\times \bN$ is given by,
\begin{align}
\begin{split}\label{ddc-tkappa}
d_Yd^c_Y \tilde \kappa=&d_Yd^{c}_Y\left(\kappa-\frac{1}{2}\log p(\mu_{\omega})\right)+\sum_{a=1}^{k}d_Yd^{c}_Y\kappa_a\\
=&d_Xd^{c}_X\left(\kappa-\frac{1}{2}\log p(\mu_{\omega})\right)+\sum_{j=1}^{\dtor} d_X\left(d^{c}_X\left(\kappa-\frac{1}{2}\log p(\mu_{\omega})\right)(\xi^{X}_j)\right) \wedge \theta_j\\
&+\sum_{j=1}^{\dtor} d^{c}_X\left(\kappa-\frac{1}{2}\log p(\mu_{\omega})\right)(\xi_j)d_\bN\theta_j+\sum_{a=1}^{k} dd^c_{B_a}\kappa_a\\
=&d_Xd^{c}_X\kappa-\frac{1}{2}d_Xd^{c}_X\left(\log p(\mu_{\omega})\right)+\sum_{j=1}^{\dtor} d_X\left(d^{c}_X\left(\kappa-\frac{1}{2}\log p(\mu_{\omega})\right)(\xi^{X}_j)\right)\wedge \theta_j\\
&+\sum_{a=1}^{k} d^{c}_X\left(\kappa-\frac{1}{2}\log p(\mu_{\omega})\right)(p_a)(\pi_B^*\omega_a)+\sum_{a=1}^{k} dd^{c}_{B_a}\kappa_a,
\end{split}
\end{align}
where in the last equality we used \eqref{d-theta} and we have denoted by $p_a$ the induced vector field on $X$ by the element $p_a\in \tor$. We shall compute the term $d^{c}_X\kappa(p_a)$ on $\mathring{X}$: using \eqref{kappa} we get 
\begin{equation}\label{d-kappa}
d^{c}_X\kappa(p_a)=\frac{1}{2}\left(\frac{\mathcal{L}_{Jp_a}Q}{Q}+\tr\left(H_{ij}^{-1}(\cL_{Jp_a}  H_{ij}\right)\right).
\end{equation}
Taking the wedge product of both sides of \eqref{Q} with 
\[\left(\sum_{i,j=1}^{\dtor} H_{ij} \eta_i \wedge J\eta_j\right)^{[\dtor]}=\det(H_{ij})\bigwedge_{j=1}^{\dtor}  (\eta_j \wedge J\eta_j),\]
 gives
\[
\omega^{[m]}=Q \pi^{*}_S\vol_S\wedge \det(H_{ij})\bigwedge_{j=1}^{\dtor}  (\eta_j \wedge J\eta_j).
\]
Applying the Lie derivative $\mathcal{L}_{Jp_a}$ to the above equality yields
\[
\begin{split}
& \left(\Delta_{\omega} \mu_{\omega}^{\p_{a}}\right) \omega^{[m]} =(\mathcal{L}_{J \xi_{a}}Q)\pi^{*}_S\vol_S\wedge \det(H_{ij})\bigwedge_{j=1}^{\dtor}  (\eta_j \wedge J\eta_j)\\
&\hspace{3cm} +Q\pi^{*}_S\vol_S\wedge \mathcal{L}_{Jp_a}\left(\det(H_{ij}) \bigwedge_{j=1}^{\dtor}\eta_j \wedge J\eta_j\right)\\
& Q\pi^{*}_S\vol_S\wedge \mathcal{L}_{Jp_a}\left(\det(H_{ij}) \bigwedge_{j=1}^{\dtor}(\eta_j \wedge J\eta_j)\right)\\
& \hspace{2.5cm}= \Big(\tr\left(H_{ij}^{-1}(\cL_{Jp_a} H_{ij}) \right)\Big) Q\pi^{*}_S\vol_S\wedge(\det(H_{ij}))\bigwedge_{j=1}^{\dtor}\eta_j \wedge J\eta_j, 
\end{split}
\]
where we used that $\mathcal{L}_{Jp_a}\eta_j$ is a basic form (since $(\mathcal{L}_{Jp_a}\eta_j)(\xi_i)=-\eta_j([Jp_a, \xi_i])=0$).
We thus get  $\Delta_{\omega} \mu_{\omega}^{p_{a}} =\frac{\mathcal{L}_{Jp_a}Q}{Q}+\tr\left(H_{ij}^{-1}(\cL_{Jp_a}H_{ij})\right)$, or equivalently, in terms of \eqref{d-kappa} 
\begin{equation}\label{m-rho}
d^{c}_X\kappa(p_a)=\frac{1}{2}\left(\Delta_{\omega}\m_{\omega}^{p_{a}}\right).\end{equation}
Using the above equation in \eqref{ddc-tkappa}, we continue the computation
\begin{align}
\begin{split}\label{ddc-tkappa1}
d_Yd^c_Y \tilde \kappa=&d_Xd^{c}_X\kappa-\frac{1}{2}dd^{c}_X\left(\log p(\mu_{\omega})\right)+\sum_{j=1}^{\dtor} d_X\left(d^{c}_X\left(\kappa-\frac{1}{2}\log p(\mu_{\omega})\right)(\xi^{X}_j)\right)\wedge \theta_j\\
&+\frac{1}{2}\sum_{a=1}^{k} \left(\Delta_{\omega} \m_{\omega}^{p_{a}} + \frac{(\cL_{Jp_a} (p(\mu_{\omega}))}{p(\mu_{\omega})}\right)(\pi_B^*\omega_a)+\sum_{a=1}^{k} d_{B_a}d^{c}_{B_a}\kappa_a.
\end{split}
\end{align}
Recall that \eqref{Z-measure} on $Z$  we have $\tilde \omega^{[m+n]}\wedge\theta^{\wedge\dtor}= p(\mu_\omega) \omega^{[m]}\wedge  \bigwedge_{a=1}^{k} \pi^{*}_B\omega_a^{[n_a]} \wedge\theta^{\wedge \dtor}$. 
Similarly,  by \eqref{Y-Kahler},  
\begin{align}
\begin{split}\label{omega^(m+n-1)}
\tilde \omega^{[m+n-1]}\wedge\theta^{\wedge\dtor}=& \sum_{b=1}^{k}\left(\frac{p(\mu_\omega)}{(\langle \mu_\omega,p_b\rangle + c_b)}\omega^{[m]}\wedge(\pi^{*}_B\omega_b)^{[n_b-1]}\wedge \bigwedge_{\substack{a=1\\a\neq b}}^{k} (\pi^{*}_B\omega_a)^{[n_a]}\wedge\theta^{\wedge\dtor}\right)\\
&+ p(\mu_\omega) \omega^{[m-1]}\wedge \bigwedge_{a=1}^{k} (\pi^{*}_B\omega_a)^{[n_a]}\wedge\theta^{\wedge\dtor}.
\end{split}
\end{align}
Using \eqref{scal-tomega},  \eqref{ddc-tkappa1}, \eqref{Z-measure} and \eqref{omega^(m+n-1)},   we obtain
\begin{align}
\begin{split}\label{Scalp}
\Scal(\tilde{\omega})=&  \Scal(\omega)+ \Delta_\omega(\log p(\mu_{\omega}))\\
&+ \sum_{a=1}^{k} \left( \frac{n_a}{(\langle \m_\omega,p_a\rangle + c_a)}\left[\Delta_{\omega}\m_{\omega}^{p_a} + \frac{\cL_{Jp_a} p(\m_\omega)}{p(\m_{\omega})}\right]+\frac{s_a}{(\langle \mu_\omega,p_a\rangle + c_a)}\right)\\
=& \Scal(\omega)+\sum_{a=1}^{k} n_a \Delta_\omega(\log(\langle \mu_\omega,p_a\rangle + c_a))\\
&+ \sum_{a=1}^{k}\left(\frac{n_a}{(\langle \mu_\omega,p_a\rangle + c_a)}\left[\Delta_{\omega}(\langle \mu_{\omega},p_{a}\rangle)+ \frac{\cL_{Jp_a}(p(\mu_\omega))}{p(\mu_{\omega})}\right]+ \frac{s_a}{(\langle \mu_\omega,p_a\rangle + c_a)}\right)\\
=&\Scal(\omega)- \sum_{a,b=1}^k\frac{n_an_b g(p_a, p_b)}{(\langle \mu_\omega,p_a\rangle + c_a)(\langle \mu_\omega,p_b\rangle + c_b)}  \\
& +\sum_{a=1}^{k}\left( \frac{2n_a\Delta_{\omega}(\langle \mu_{\omega},p_{a}\rangle)}{(\langle \mu_\omega,p_a\rangle + c_a)}+ \frac{n_a|\xi_{a}|^{2}_{g_\omega}}{(\langle \mu_\omega,p_a\rangle + c_a)^{2}} + \frac{s_a}{(\langle\mu_\omega,p_a\rangle + c_a)}\right).
%=&\Scal_p(\omega)+\left(\sum_{a=1}^{k}\frac{s_a}{\langle\mu_\omega ,p_a\rangle + c_a}\right)p(\mu_\omega),
\end{split}
\end{align}
On the other hand, using a basis $(\xi_i)$ of $\tor$ with a dual basis $(\xi^{i})$ of $\tor^*$, we compute
\[\begin{split}
\Scal_p(\omega):=& p(\mu_\omega)\Scal(\omega)+2\sum_{i=1}^{\dtor}p_{,i}(\mu_\omega)\Delta_{\omega}(\langle \mu_\omega,\xi_i\rangle)-\sum_{i,j=1}^{\dtor}p_{,ij}(\m_\omega)g_{\omega}(\xi_i,\xi_j)\\
=&p(\mu_\omega)\Scal(\omega)+2\sum_{i=1}^{\dtor}\Delta_{\omega} (\langle \mu_\omega,\xi_i\rangle)\sum_{a=1}^{k}\frac{n_a \xi^{i}(p_a)  p(\mu_\omega)}{\langle \mu_\omega,p_a\rangle + c_a}\\
&+\sum_{i,j=1}^{\dtor}g_{\omega}(\xi_i,\xi_j)\left[\sum_{a=1}^{k}\frac{n_a \xi^{i}(p_a) \xi^{j}(p_a)  p(\mu_\omega)}{(\langle \mu_\omega,p_a\rangle + c_a)^{2}}-\sum_{a, b=1}^{k}\frac{n_a n_b \xi^{i}(p_a)\xi^j(p_a) p(\mu_\omega) }{(\langle \mu_\omega,p_a\rangle + c_a)(\langle \mu_\omega,p_b\rangle + c_b)}\right]\\
=&p(\mu_\omega)\Scal(\omega)+2\Delta_{\omega}(\langle \m_\omega,p_a\rangle)\sum_{a=1}^{k}\frac{n_a p(\mu_\omega)}{(\langle \mu_\omega,p_a\rangle + c_a)}\\
&+\left[\sum_{a=1}^{k}\frac{n_a |p_a|^{2}_{g_\omega} p(\m_\omega)}{(\langle \m_\omega,p_a\rangle + c_a)^{2}}-\sum_{a, b=1}^{k}\frac{n_a n_b g_{\omega}(p_a,p_b)p(\mu_\omega)}{(\langle \mu_\omega,p_a\rangle + c_a)(\langle \mu_\omega,p_b\rangle + c_b)}\right].
\end{split}\]
Comparing the above expression with \eqref{Scalp}, we obtain
\begin{equation}\label{Scalp/p}
\Scal(\tilde \omega)= \frac{1}{\p(\m_{\omega})}\Scal_{\p}(\omega) +\left(\sum_{a=1}^{k}\frac{s_a}{\langle\mu_\omega ,p_a\rangle + c_a}\right). \end{equation}
Using that  (as functions on $X\times P$)  $\mu_{\tilde{\omega}}=\mu_{\omega}$  and $g_{\omega}(\xi_i,\xi_j)=g_{\tilde \omega}(\xi_i,\xi_j)$ (see the proof Lemma~\ref{l:Y-norm}), we further compute from \eqref{Scalp/p}
\[
\begin{split}
\Scal_v(\tilde{\omega})=&v(\x_{\tilde{\omega}})\Scal(\tilde{\omega})+2\sum_{i=1}^{\dtor}v_{,i}(\x_{\tilde{\omega}})\Delta^{Y}_{\tilde{\omega}} (\langle\x_{\tilde{\omega}},\xi_i\rangle)-\sum_{i,j=1}^{\dtor}v_{,ij}(\x_{\tilde{\omega}})g_{\tilde \omega}(\xi_i,\xi_j)\\
=& \frac{v(\x_\omega)}{\p(\m_{{\omega}})}\Scal_{\p}(\omega) + v(\m_\omega) q(\m_{\omega})+2\sum_{i=1}^{\dtor}v_{,i}(\x_{\omega})\Delta^{Y}_{\tilde{\omega}} (\langle\x_{\omega},\xi_i\rangle)-\sum_{i,j=1}^{\dtor}v_{,ij}(\x_\omega)g_{\tilde \omega}(\xi_i,\xi_j)\\
=&\frac{v(\x_\omega)}{\p(\m_{{\omega}})}\Scal_{\p}(\omega) + v(\m_\omega) q(\m_{\omega})+2\sum_{i=1}^{\dtor}v_{,i}(\x_{\omega})\Delta^{X}_{\omega,p} (\langle\x_{\omega},\xi_i\rangle)-\sum_{i,j=1}^{\dtor}v_{,ij}(\x_\omega)g_{\omega}(\xi_i,\xi_j),
\end{split}
\]
where for passing to the last line we used the identity  $\Delta_{\tilde \omega}^Y=\Delta^{X}_{\omega, \p}$  established in Lemma~\ref{l:operators-tilde}.  As 
\begin{equation}\label{p-laplacian}
\Delta^{X}_{\omega, \p}(\psi):=\frac{1}{\p(\m_{\omega})} \delta_{\omega}\Big( p(\m_{\omega}) d \psi\Big) =\Delta_{\omega}^{X}(\psi)-\sum_{j=1}^{\dtor}\frac{\p_{,j}(\x_\omega)}{\p(\x_\omega)}g_{\omega}(d\mu^{\xi_j}_\omega, d\psi), \end{equation}
we further get 
\[
\begin{split}
&\frac{v(\x_\omega)}{\p(\m_{{\omega}})}\Scal_{\p}(\omega)+2\sum_{i=1}^{\dtor}v_{,i}(\x_{\omega})\Delta^{X}_{\omega,p} (\langle\x_{\omega},\xi_i\rangle)-\sum_{i,j=1}^{\dtor}v_{,ij}(\x_\omega)g_{\omega}(\xi_i,\xi_j)\\
=&v(\x_\omega)\Scal(\omega)+2\sum_{i=1}^{\dtor}\frac{v(\x_\omega)p_{,i}(\x_{\omega})}{p(\x_\omega)}\Delta^{X}_{\omega} (\langle\x_{\omega},\xi_i\rangle)-\sum_{i,j=1}^{\dtor}\frac{v(\x_\omega)p_{,ij}(\x_\omega)}{p(\x_\omega)}g_{\omega}(\xi_i,\xi_j)\\
&+2\sum_{i=1}^{\dtor}v_{,i}(\x_{\omega})\Delta^{X}_{\omega} (\langle\x_{\omega},\xi_i\rangle)-2\sum_{i,j=1}^{\dtor}\frac{v_{,i}(\x_\omega)p_{,j}(\x_\omega)}{p(\mu_\omega)}g^{X}(\xi_i,\xi_j) -\sum_{i,j=1}^{\dtor}v_{,ij}(\x_\omega)g_{\omega}(\xi_i,\xi_j)\\
=&\frac{1}{p(\mu_\omega)}\Big((pv)(\mu_\omega)\Scal_\omega+2\sum_{i=1}^{\dtor}(pv)_{,i}(\x_{\omega})\Delta^{X}_{\omega} (\langle\x_{\omega},\xi_i\rangle)-\sum_{i,j=1}^{\dtor}(pv)_{,ij}(\x_\omega)g_{\omega}(\xi_i,\xi_j)\Big)\\
=&\frac{1}{p(\x_\omega)}\Scal_{pv}(\omega).
\end{split}
\]
The expression \eqref{Y-Scalv} follows from the above formulae. \end{proof}

\begin{lemma}\label{l:Mabuchi} The restriction of the weighted Mabuchi energy $\cE_{v, w}^Y$ on $Y$ to the subspace ${\cK}_{\T}(X, {\bom})\subset {\cK}_{\T}(Y, {\tilde \omega_0})$ is equal to  $C{\cE}^{X}_{\p v, \tilde w}$, where $\p, w, \tilde w$ are given in Lemma~\ref{l:Y-Scal} and $C=\vol(B, \omega_B)$.
\end{lemma}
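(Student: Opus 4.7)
The strategy is to differentiate. By Definition~\ref{d:Mabuchi} both functionals involved vanish at $0$ and are uniquely determined by their differentials along smooth paths, so it suffices to check that for every $\varphi\in \cK_{\T}(X,\bom)$ and every tangent vector $\dot\varphi\in C^{\infty}_{\T}(X)$ (identified with an element of $C^{\infty}_{\T}(Y)$ as in Section~\ref{s:functions}), one has
\[
(d_{\varphi}\cE^{Y}_{v,w})(\dot\varphi) \;=\; C\,(d_{\varphi}\cE^{X}_{\p v,\tilde w})(\dot\varphi),
\]
and then integrate along an arbitrary smooth path in $\cK_{\T}(X,\bom)$ joining $0$ to $\varphi$.

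For the first step I would unwind the definition on $Y$:
\[
(d_{\varphi}\cE^{Y}_{v,w})(\dot\varphi) \;=\; -\int_{Y}\bigl(\Scal_{v}(\tilde\omega_{\varphi})-w(\m_{\tilde\omega_{\varphi}})\bigr)\,\dot\varphi\,\tilde\omega_{\varphi}^{[m+n]}.
\]
Since $\m_{\tilde\omega_\varphi}=\m_{\omega_\varphi}$ (viewed via the identification of $\T_Y$- and $\T_X$-momentum maps on $Z=X\times P$), I would substitute the formula of Lemma~\ref{l:Y-Scal},
\[
\Scal_{v}(\tilde\omega_{\varphi}) \;=\; \frac{1}{\p(\m_{\omega_{\varphi}})}\Scal_{\p v}(\omega_{\varphi}) \;+\; v(\m_{\omega_{\varphi}})\,\q(\m_{\omega_{\varphi}}),
\]
and push the integral down to $X$ using the fibre integration formula \eqref{measures}:
\[
\int_{Y} f\,\tilde\omega_{\varphi}^{[m+n]} \;=\; \V(B,\omega_{B})\int_{X} \p(\m_{\varphi})\,f\,\omega_{\varphi}^{[m]}.
\]
The factor $\p(\m_{\varphi})$ combines with $1/\p(\m_\varphi)$ in front of $\Scal_{\p v}$, whereas on the remaining piece $\bigl(v\q - w\bigr)$ the factor $\p$ survives and,  by definition of $\tilde w= \p(w-v\q)$, the integrand collapses to $\Scal_{\p v}(\omega_\varphi)-\tilde w(\m_{\varphi})$.

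This gives
\[
(d_{\varphi}\cE^{Y}_{v,w})(\dot\varphi) \;=\; -\V(B,\omega_{B})\int_{X}\bigl(\Scal_{\p v}(\omega_{\varphi})-\tilde w(\m_{\varphi})\bigr)\dot\varphi\,\omega_{\varphi}^{[m]} \;=\; C\,(d_{\varphi}\cE^{X}_{\p v,\tilde w})(\dot\varphi),
\]
which is exactly the differential identity required. Since $\cK_{\T}(X,\bom)$ is path-connected and the inclusion $\cK_{\T}(X,\bom)\hookrightarrow \cK_{\T}(Y,\tilde\omega_0)$ sends smooth paths to smooth paths (by Lemma~\ref{l:Y-embedding}), integrating this identity along a path from $0$ to $\varphi$ and using that both functionals vanish at $0$ yields the statement. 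There is no real obstacle here beyond the algebraic bookkeeping already carried out in Lemma~\ref{l:Y-Scal}; the only thing to verify carefully is the consistency of normalizations of the momentum maps on $X$ and $Y$, which is guaranteed by construction of $\tilde\omega_\varphi$ on $Z=X\times\bN$.
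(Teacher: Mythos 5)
Your proof is correct and is essentially the paper's own argument: the paper dispatches this lemma as "a direct corollary of Lemma~\ref{l:Y-Scal} and Definition~\ref{d:Mabuchi}," which is precisely the differentiation, substitution of the $\Scal_v(\tilde\omega)$ formula, and fibre integration via \eqref{measures} that you carry out. The algebra $\p\bigl(\Scal_v(\tilde\omega)-w\bigr)=\Scal_{\p v}(\omega)-\tilde w$ checks out, so nothing is missing.
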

\begin{proof} A direct corollary of Lemma~\ref{l:Y-Scal} and Definition~\ref{d:Mabuchi}. \end{proof}

\bigskip
We now specialize to the case when each $(B_a, \omega_a)$ is a Hodge K\"ahler--Einstein manifold with positive scalar curvature $s_a =2n_a k_a,$  where $k_a \in \N$. Equivalently, $2\pi c_1(B_a)= k_a [\omega_a]$  for a positive  integer  $k_a$  and an integral K\"ahler class $\frac{1}{2\pi}[\omega_a]$.  Notice that $k_a$ must be a positive divisor of the Fano index  ${\rm Ind}(B_a)$ of $B_a$, which yields 
the a priori bound $1\le k_a \le {\rm Ind}(B_a)$. We also assume that $(X, \T)$ is Fano, with canonically normalized momentum polytope $\Pol$. We then have 
\begin{lemma}\label{Fano}  In the setting above, if  the affine linear functions $(\langle p_a, \x \rangle + k_a)>0$ on $\Pol$, then  the bundle-compatible K\"ahler metric $\tilde \omega$ on $Y$  corresponding to the constants $c_a=k_a$ belongs to deRham class $2\pi c_1(Y)$. Furthermore, $\tilde \omega$ is a $v$-soliton if and only if $\omega$ is a $pv$-soliton.
\end{lemma}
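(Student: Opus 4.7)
The plan is to first establish the cohomological statement $[\tilde\omega] = 2\pi c_1(Y)$, and then derive the $v$-soliton correspondence as a consequence by combining Lemma~\ref{v-KRS=(v,w)-cscK} (applied on $Y$) with Lemma~\ref{l:Y-Scal}. For the first assertion, my strategy is to exhibit a smooth function $\tilde h$ on $Y$ realizing $\rho_{\tilde\omega} - \tilde\omega = \tfrac{1}{2} d_Y d^c_Y \tilde h$. The key ingredient is already in the proof of Lemma~\ref{l:Y-Scal}: identity~\eqref{Ricci-potentials} expresses a local Ricci potential of $\tilde\omega$ pulled back to $Z=X\times \bN$ as $\tilde\kappa = \sum_a \kappa_a + \kappa - \tfrac{1}{2}\log \p(\m_\omega)$. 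Since each $(B_a, \omega_a)$ is K\"ahler--Einstein with $\rho_{B_a} = k_a\omega_a$ globally, and $X$ is Fano, admitting a global Ricci potential $h$ with $\rho_\omega - \omega = \tfrac{1}{2} d_X d^c_X h$ and canonical normalization \eqref{canonical-normalization}, I would substitute into~\eqref{ddc-tkappa1}. The choice $c_a = k_a$ is precisely what makes the base term $\sum_a k_a \pi_B^*\omega_a$ from $\rho_{\tilde\omega}$ cancel with $\sum_a c_a \pi_B^*\omega_a$ in $\tilde\omega$, while the remaining twist contributions in \eqref{ddc-tkappa1}, together with $d\langle \m_\omega, \theta\rangle$ from $\tilde\omega$, can be reassembled using \eqref{d^c} applied to $\tilde h := h - \log\p(\m_\omega)$ and the canonical normalization identity $\Delta_\omega \m_\omega^{p_a} = 2\m_\omega^{p_a} - \cL_{Jp_a} h$ from \eqref{basic-relations}, to produce exactly $\tfrac{1}{2} d_Y d^c_Y \tilde h$.

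Granted the first assertion, $\tilde\omega \in 2\pi c_1(Y)$, and since the natural $\T_Y$-action on $K_Y^{-1}$ is induced from the natural $\T_X$-action on $K_X^{-1}$ through the fibration structure, the canonical normalization on $(Y, \tilde\omega)$ agrees with that on $(X, \omega)$, with both momentum images equal to $\Pol$. By Lemma~\ref{v-KRS=(v,w)-cscK} applied to $Y$ (of complex dimension $m+n$), $\tilde\omega$ is a $v$-soliton if and only if it is $(v, w_Y)$-cscK with $w_Y(\x) = 2v(\x)\bigl[m + n + \langle d\log v, \x\rangle\bigr]$. By Lemma~\ref{l:Y-Scal}, this is equivalent to $\omega$ being $(\p v, \tilde w)$-cscK on $X$ with $\tilde w = \p(w_Y - v\q)$, where $\q(\x) = \sum_a s_a/(\langle p_a, \x\rangle + c_a)$. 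On the other hand, Lemma~\ref{v-KRS=(v,w)-cscK} applied on $X$ (of complex dimension $m$) with weight $\p v$ says $\omega$ is a $\p v$-soliton if and only if it is $(\p v, \tilde w')$-cscK with $\tilde w'(\x) = 2\p v\bigl[m + \langle d\log \p, \x\rangle + \langle d\log v, \x\rangle\bigr]$.

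Matching $\tilde w = \tilde w'$ reduces to the single identity $\q + 2\langle d\log \p, \x\rangle = 2n$. Using $\log \p = \sum_a n_a \log(\langle p_a, \x\rangle + c_a)$ and $s_a = 2n_ak_a$, a direct computation gives
\[ \q(\x) + 2\langle d\log \p, \x\rangle = \sum_{a=1}^k \frac{2n_a k_a + 2n_a \langle p_a, \x\rangle}{\langle p_a, \x\rangle + c_a}, \]
which equals $2\sum_a n_a = 2n$ precisely when $c_a = k_a$. This matches the choice imposed in the lemma and concludes the equivalence.

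The main obstacle is the first assertion: tracking the twist contributions in \eqref{ddc-tkappa1} and reassembling them, together with the term $d\langle \m_\omega, \theta\rangle$ from \eqref{Y-Kahler}, into a single exact form $\tfrac{1}{2} d_Y d^c_Y \tilde h$. Once this is in place, the second assertion follows by the clean weight-matching above; it is a pleasant feature of the setup that the arithmetic identity $\q + 2\langle d\log \p, \x\rangle = 2n$ holds exactly under the conditions $c_a = k_a$ and $s_a = 2n_ak_a$ -- the very same conditions making $\tilde\omega$ anticanonical on $Y$.
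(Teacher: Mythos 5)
Your proposal is correct. For the first assertion you follow essentially the paper's own route: the paper rearranges \eqref{ddc-tkappa} into the global identity \eqref{ricci-principal}, observes that with $\rho_a=k_a\omega_a$ and $c_a=k_a$ the base terms $\sum_a(\rho_a-c_a\omega_a)$ vanish, and then uses the mechanism of Lemma~\ref{l:Y-embedding} together with $\m_{\rho_\omega}-\m_\omega=d^c h$ to conclude $\rho_{\tilde\omega}-\tilde\omega=\tfrac12 d_Yd^c_Y\tilde h$ with $\tilde h=h-\log p(\m_\omega)$; these are exactly the ingredients you list. Where you genuinely diverge is the soliton equivalence. The paper reads it off in one line from the Ricci-potential relation: $\tilde\omega$ is a $v$-soliton iff $\tilde h-\log v(\m_\omega)$ is constant iff $h-\log(pv)(\m_\omega)$ is constant iff $\omega$ is a $pv$-soliton. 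You instead route through Lemma~\ref{v-KRS=(v,w)-cscK} on both $X$ and $Y$ and the scalar-curvature formula of Lemma~\ref{l:Y-Scal}, reducing everything to the arithmetic identity $\q+2\langle d\log\p,\x\rangle=2n$, which indeed holds exactly when $c_a=k_a$ and $s_a=2n_ak_a$; your computation checks out. This is longer but has the virtue of cross-validating the weight $\tilde w$ appearing in Theorem~\ref{v-soliton-bundle}. One point to make explicit: applying Lemma~\ref{v-KRS=(v,w)-cscK} on $Y$ with momentum polytope $\Pol$ requires that $\m_{\tilde\omega}=\m_\omega$ be \emph{canonically} normalized for the $\T_Y$-action on $K_Y^{-1}$; you assert this from the fibration structure, whereas the paper derives it (Remark~\ref{canonical-consistency}) from the relation $\tilde h=h-\log p(\m_\omega)$ together with \eqref{measures} and \eqref{canonical-normalization} --- so you should justify it that way, using the first assertion you have already established, rather than by appeal to the induced action alone.
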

\begin{proof}  By using \eqref{Ricci-potentials} and rearranging the terms in \eqref{ddc-tkappa}, we have the following relation (written on $Z$):
\begin{equation}\label{ricci-principal}
\begin{split}
\rho_{{\tilde \omega}} =&  \rho_{\omega}  + \sum_{a=1}^k \left(\langle p_a, \m_{\rho_{\omega}}\rangle + c_a\right) \omega_a +   \langle d\mu_{\rho_{\omega}} \wedge \theta \rangle \\
                                     & + \sum_{a=1}^{k} \left(\rho_a - c_a\omega_a\right) -\frac{1}{2} d_Y d^c_Y \log p(\m_{\omega}),
                                  \end{split}\end{equation}
where $\rho_{\tilde \omega}$, $\rho_{\omega}$  and $\rho_a$ respectively denote the Ricci forms of $(Y,\tilde \omega)$, $(X,\omega)$ and $(B_a, \omega_{a})$,  pulled back to $Z$, and $\m_{\rho_{\omega}} := d^c_X \kappa$ is the ``momentum map'' with respect to the Ricci form $\rho_{\omega}$.
As in \eqref{m-rho}, we have $\m_{\rho_{\omega}} = \frac{1}{2} \Delta_{\omega} \m_{\omega}$. Suppose $\rho_{\omega} -\omega = \frac{1}{2} d_X d^c_X h$ for some $\T$-invariant smooth function on $X$; by using that the momentum polytope $\Pol$ is  canonically normalized, we have (see \eqref{basic-relations}) $\m_{\rho_{\omega}} -\m_{\omega} = d^ch$.  A closer look at the proof of Lemma~\ref{l:Y-embedding}  and the relation  \eqref{ricci-principal} (with $c_a=\frac{s_a}{2n_a}=k_a$) show that 
\[\rho_{\tilde \omega} - \tilde \omega = \frac{1}{2} d_Y d^c_Y \tilde h, \qquad \tilde h := h  - \log p(\m_{\omega}). \]
The claim follows from the above. \end{proof}

\begin{rem}\label{r:Fano} Lemma~\ref{Fano} provides a useful way to \emph{construct} semi-simple $(X, \T)$-principal Fano fibrations.  Indeed, for given positive Hodge K\"ahler--Einstein manifolds $(B_a, \omega_a)$ as above, with corresponding integer constants $k_a$,  and a given Fano manifold $(X, \T)$ with associated canonical polytope $\Pol$,  one can try to find the possible principal $\T$-bundles $P$ over $B= \prod_{a=1}^k B_a$,  for which the corresponding semi-simple $(X, \T)$-principal fibration is Fano.  Such principal $\T$-bundles $P$ are in correspondence with the choice of lattice elements $p_a \in \Lambda \subset \tor$ and Lemma~\ref{Fano} tels us that for a set of elements  $p_a$ to  determine a Fano semi-simple $(X, \T)$-principal fibration $Y$, it is sufficient to check that all the affine linear functions 
\[ \langle p_a,  \x \rangle + k_a >0 \, \, \textrm{on} \, \, \Pol. \]
For instance, if we take $B=B_1=\PP^1$ with a Fubini--Study metric $\omega_1$ of scalar curvature $4$ (so that $k_1=2$ and $\omega_1$ is primitive)  and $(X, \T)=(\PP^1, \Sph^1)$ with canonical polytope $\Pol=[-1,1]$, then the possible Fano $(\PP^1, \Sph^1)$-principal fibrations will correspond to $p_1\in \Z$ such that $p_1 \x + 2 >0$ on $[-1, 1]$, i.e. $p_1= \pm 1, 0$ are the only possible values. This gives rise to the Fano surfaces $\PP(\cO \oplus \cO(-1)) \cong \PP(\cO \oplus \cO(1))$ and  $\PP^1 \times \PP^1$. In general, the isomorphism class of  the principal $\T$-bundle $P$ over $B$, and hence also the semi-simple $(X, \T)$-principal Fano fibration constructed as above,  is encoded by the Hodge classes $\frac{1}{2\pi} [\omega_a]\otimes p_a = \frac{1}{k_a}c_1(B_a)\otimes p_a \in H^2(B, \Z)^{r}.$ The a priori bounds  $1\le k_a \le {\rm Ind}(B_a)$ for $k_a$  show that  for given base $B=\prod_{a=1}^k B_a$ and fibre $(X, \T)$, there are  only a finite number of  semi-simple $(X, \T)$-principal Fano fibrations constructed this way.
\end{rem}

\begin{rem}\label{canonical-consistency} The relationship between the Ricci potentials $\tilde h$ and $h$ established  in the proof of Lemma~\ref{Fano} and \eqref{measures}  yield, via Remark~\ref{r:canonical-normalization}, that  if the momentum map $\m_{\omega}$ of $(X, \omega, \T_X)$ is canonically normalized, then the momentum map $\m_{\tilde \omega} = \m_{\omega}$ of the corresponding bundle-compatible K\"ahler metric $\tilde \omega$ on $(Y, \T_Y)$ is also canonically normalized. 
\end{rem}

We end up this section with the following straightforward extension of \cite[Lemma 5]{HFKG4}.
\begin{lemma}\label{T} Suppose $Y$ is a semi-simple principal $(X, \T)$-fibration over $B$, such that $\T$ is a maximal torus in the reduced group  of automorphisms  $\Aut_r(X)$. Let  $\tilde \omega$ be a bundle-compatible K\"ahler metric on $Y$ corresponding to a $\T$-invariant K\"ahler metric $\omega$ on $X$,  and  $\Ko_B \subset \Aut_r(B)$ be a maximal compact torus in the reduced group of automorphisms of $B$  which (without loss by Lichnerowicz--Matsushima theorem) belongs to the isometry group of $\omega_B$. Then $\tilde \omega$ is invariant under the action of a maximal torus $\Ko_Y \subset \Aut_r(Y)$,  and we have an exact sequence {of Lie algebras}
\[ \{0\} \to {\rm Lie}(\T_Y) \to {{\rm Lie}}(\Ko_Y) \to {{\rm Lie}}(\Ko_B) \to \{0\}.\]
Furthermore, for any positive weight functions $v, w_0$ defined on $\Pol \subset \tor^*$, there exists a unique affine-linear function $\ell^{\rm ext}_{v, w_0}$ on $\tor^*$ such that,  when pulled-back to the dual Lie algebra $\ko_Y^*$ of $\Ko_Y$, $(v, w_0\ell_{v, w_0}^{\rm ext})$ satisfy \eqref{Futaki} with respect to $\tilde \omega$ on $Y$, and any affine-linear function $\ell$ on $\ko_Y^*$.
\end{lemma}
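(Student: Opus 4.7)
The strategy is to follow \cite[Lemma~5]{HFKG4}: the arguments there depend only on the Hamiltonian $\T$-action on $(X,\omega)$ rather than on the toric hypothesis, and so adapt readily. The first task is to lift $\Ko_B$ through the principal bundle $\bN\to B$ to act on $Y$. By a standard averaging argument (applicable since $\Ko_B$ preserves each $[\omega_a]$), we may assume the connection $\theta$ is $\Ko_B$-invariant; its curvature $d\theta=\sum_a\pi_B^*\omega_a\otimes p_a$ is already so. The horizontal lift $\tilde\xi_B$ of each $\xi_B\in\ko_B$ is then $\T_\bN$-invariant and commutes with $\T_\bN$. The commutator formula $[\tilde\xi_B,\tilde\eta_B]=-\sum_a\omega_a(\xi_B,\eta_B)p_a$ together with the Hamiltonian identity $\omega_a(\xi_B,\eta_B)=-\{\m_a^{\xi_B},\m_a^{\eta_B}\}=0$ (valid since $\Ko_B$ is abelian) forces the lifts to commute. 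Integrating (and passing to a finite cover of $\Ko_B$ if necessary to handle holonomy) yields a compact torus $\tilde\Ko_B$ acting on $\bN$ commuting with $\T_\bN$. Acting trivially on the $X$-factor of $Z=X\times\bN$ and descending to $Y$, we obtain a compact torus $\Ko_Y$ on $Y$; it preserves $\tilde\omega$ by direct inspection of \eqref{Y-Kahler} and its Lie algebra fits into the advertised exact sequence by construction.

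For maximality of $\Ko_Y$ in $\Aut_r(Y)$, let $\Ko_Y'\supseteq\Ko_Y$ be a compact torus in $\Aut_r(Y)$. Since $\Ko_Y'$ is abelian it commutes with $\T_Y$; because $\T_Y$ acts fibre-wise along $\pi_B:Y\to B$, the elements of $\Ko_Y'$ permute fibres and induce a torus action on $B$ extending $\Ko_B$. By maximality of $\Ko_B$ in $\Aut_r(B)$ this induced action lies in $\Ko_B$; the kernel of $\Ko_Y'\to\Ko_B$ then acts fibre-wise commuting with $\T$, and its restriction to a single fibre takes values in $\Aut_r^{\T}(X)\subset\Aut_r(X)$, producing a compact torus there that contains $\T$. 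By the maximality hypothesis on $\T$ this compact torus coincides with $\T$, hence the kernel of $\Ko_Y'\to\Ko_B$ equals $\T_Y$, and therefore $\Ko_Y'=\Ko_Y$.

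For existence and uniqueness of $\ell^{\mathrm{ext}}_{v,w_0}$, fix a splitting $\ko_Y\cong\tor\oplus\ko_B$ of the exact sequence, which dually gives $\ko_Y^{*}\cong\tor^{*}\oplus\ko_B^{*}$. Any affine-linear $\ell$ on $\ko_Y^{*}$ then decomposes uniquely as $\ell=\tilde\ell\circ\pi+\langle\xi_B,\,\cdot\,\rangle$, with $\tilde\ell$ affine-linear on $\tor^{*}$ (pulled back via the projection $\pi:\ko_Y^{*}\to\tor^{*}$) and $\xi_B\in\ko_B$. Setting $F:=\Scal_v(\tilde\omega)-w_0(\m_\omega)\ell^{\mathrm{ext}}(\m_\omega)$, Lemma~\ref{l:Y-Scal} shows that $F$ is the pullback of a function on $X$, so \eqref{Z-measure} splits the Futaki integral on $Y$ as
\[\int_Y F\,\ell(\m_{\tilde\omega})\,\tilde\omega^{[m+n]}=C_B\!\int_X F\,p(\m_\omega)\tilde\ell(\m_\omega)\,\omega^{[m]}+\Bigl(\!\int_X F\,p(\m_\omega)\omega^{[m]}\Bigr)\Bigl(\!\int_B \langle\xi_B,\m_B\rangle\omega_B^{[n]}\Bigr),\]
with $C_B:=\int_B\omega_B^{[n]}$ and $\m_B:B\to\ko_B^{*}$ the $\Ko_B$-momentum map of $(B,\omega_B)$. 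Combining Lemma~\ref{l:Y-Scal} and \eqref{measures}, the first summand reduces to the weighted Futaki identity on $X$
\[\int_X\bigl(\Scal_{pv}(\omega)+p(\m_\omega)v(\m_\omega)q(\m_\omega)-p(\m_\omega)w_0(\m_\omega)\ell^{\mathrm{ext}}(\m_\omega)\bigr)\tilde\ell(\m_\omega)\,\omega^{[m]}=0,\]
which, viewed as a linear system in $\ell^{\mathrm{ext}}$ on the finite-dimensional space of affine-linear functions on $\tor^{*}$, admits a unique solution $\ell^{\mathrm{ext}}_{v,w_0}$ thanks to the positive-definite pairing $(\tilde\ell_1,\tilde\ell_2)\mapsto\int_X pw_0(\m_\omega)\tilde\ell_1(\m_\omega)\tilde\ell_2(\m_\omega)\omega^{[m]}$ (using $pw_0>0$ on $\Pol$). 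Specializing to $\tilde\ell\equiv 1$ yields $\int_X F\,p(\m_\omega)\omega^{[m]}=0$, so the second summand vanishes automatically. The main obstacle in the proof is the initial construction of the lift of $\Ko_B$ to a genuine torus acting on $Y$; once this structural step and the splitting of $\ko_Y^{*}$ are in place, the determination of $\ell^{\mathrm{ext}}_{v,w_0}$ is a routine Riesz representation on a finite-dimensional space.
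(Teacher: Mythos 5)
The structural step you yourself flag as the main obstacle --- lifting $\Ko_B$ to $Y$ --- is exactly where the argument breaks. The pure horizontal lift $\tilde\xi_B$ does not preserve the connection: since $\imath_{\tilde\xi_B}\theta=0$, Cartan's formula gives $\cL_{\tilde\xi_B}\theta=\imath_{\tilde\xi_B}d\theta=\sum_a\pi_B^*(\imath_{\xi_B}\omega_a)\otimes p_a\neq 0$. Hence $\tilde\xi_B$ does not preserve the horizontal distribution $\Ds=\ker\theta$, so it is not a CR field on $(\bN,\Ds,J_B)$ and does not descend to a holomorphic vector field on $Y$; moreover a direct computation in \eqref{Y-Kahler} gives $\cL_{\tilde\xi_B}\langle d\m_\omega\wedge\theta\rangle=\sum_a d\langle p_a,\m_\omega\rangle\wedge\pi_B^*(\imath_{\xi_B}\omega_a)\neq 0$, so your claim that $\tilde\omega$ is preserved ``by direct inspection'' is false. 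The paper's proof corrects the horizontal lift by a vertical term built from the Killing potentials $f_a$ of $\xi_B$ on $(B_a,\omega_a)$, namely $\hat K=\tilde K+\sum_a f_a\,\xi^{\bN}_{p_a}$, which makes $\cL_{\hat K}\theta=0$ and yields $\imath_{\hat K}\tilde\omega=-d\bigl(\sum_a(\langle p_a,\m_\omega\rangle+c_a)f_a\bigr)$; this correction is the whole content of the lifting step and cannot be replaced by averaging $\theta$ or passing to finite covers. (Your commutator computation $\omega_a(\xi_B,\eta_B)=0$ is correct for an abelian Hamiltonian action on a compact manifold, but it addresses the wrong obstruction.)

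The error propagates into your Futaki computation: the $\tilde\omega$-Hamiltonian of the lifted field is $\sum_a(\langle p_a,\m_\omega\rangle+c_a)f_a$, which genuinely mixes $X$ and $B$, not the pullback $\langle\xi_B,\m_B\rangle$ of the momentum on $B$, so your displayed splitting of $\int_Y F\,\ell(\m_{\tilde\omega})\,\tilde\omega^{[m+n]}$ is wrong as written. The conclusion is still salvageable: the correct cross term factors as $\sum_a\bigl(\int_X F\,p(\m_\omega)(\langle p_a,\m_\omega\rangle+c_a)\,\omega^{[m]}\bigr)\cdot\bigl(\int f_a\,\cdots\bigr)$, and it vanishes either by normalizing $\int_{B_a}f_a\,\omega_a^{[n_a]}=0$ (as the paper does) or by observing that $\langle p_a,\cdot\rangle+c_a$ is itself affine-linear on $\tor^*$ so the $X$-factor is already killed by the defining property of $\ell^{\rm ext}_{v,w_0}$. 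Your positive-definiteness argument for the unique solvability of $\ell^{\rm ext}_{v,w_0}$ in $\Aff(\tor^*)$ is sound and agrees with the paper, and your maximality argument is essentially the paper's. But as it stands the proof has a genuine gap at the construction of the lift.
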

\begin{proof} The proof of the above result is not materially different than the proof of  \cite[Lemma~5]{HFKG4} (which is made in the case when $(X, \T)$ is toric and $v=w_0=1$). We only give a sketch.  A Killing potential  $f$ for a Killing vector field $K \in \ko_B:= {\rm Lie}(\Ko_B)$ is of the form $f=\sum_{a=1}^k f_a$, where $f_a$ is a Killing potential of $(B_a, \omega_a)$.  Letting $\tilde K$ be the horizontal lift of $K$ to $\bN$ (using the $\tor_\bN$-valued connection $1$-form $\theta$), one can check that the vector field on $\bN$
\[ \hat K = \tilde K + \sum_{a=1}^k f_a \xi^\bN_{p_a} \]
is a CR vector field on $(\bN, \Ds, J_B)$, hence also on  $(Z, \Hol, J_B \oplus J_X)$.  Furthermore, a direct verification in \eqref{Y-Kahler} reveals that
\begin{equation}\label{Killing-potential}
\imath_{\hat K} \tilde \omega = - d\left(\sum_{a=1}^k (\langle p_a, \mu_{\omega}\rangle + c_a)f_a \right)\end{equation}
showing that $\hat K$ also preserves $\tilde \omega$.
We thus obtain a lift $\hat \ko_B$ of the Lie algebra $\ko_B={\rm Lie}(T_B)$ to $Z$, which clearly commutes with the action $\T_Z$, and preserves both the CR structure of $(Z, \Hol)$ and the $2$-form $\tilde \omega$. The Lie algebra $\ko_Y$ of $\Ko_Y$ is then induced by $\tor_X \oplus \hat \ko_B \subset TZ$, which descend to an abelian Lie algebra of Killing fields on  $Y$. The maximality of $\Ko_Y\subset \Aut_r(Y)$  and the exactness of the sequence follow from the maximality of each $\Ko_B \subset \Aut_r(B)$ and  $\T \subset \Aut_r(X)$, and the fact that (recall that $Y$ is a locally trivial $X$-fibre bundle and therefore the fibres have trivial normal bundle) any holomorphic vector field on $Y$ projects under $\pi_B$ to a holomorphic vector field on $B$.  For the final claim in Lemma~\ref{T}, notice that  by  \eqref{Killing-potential}   the Killing potentials of all lifted Killing vector fields $\hat K$  from $B$ are of the form $\sum_{a=1}^k (\langle p_a, \mu_{\omega} \rangle + c_a) f_a$.  Thus, by Lemma~\ref{l:Y-Scal} and using \eqref{Z-measure},  the integral condition \eqref{Futaki} on $(Y, \tilde \omega)$  will be zero for any such Killing potential,  as soon as we normalize $\int_{B_a} f_a \omega_a^{n_a} =0$ and assume $\ell_{v, w_0}^{\rm ext} \in \Aff(\tor_X^*)$. On the other hand, examining \eqref{Futaki} on $(Y, \tilde \omega)$ for  the Killing potentials $\ell(\m_{\tilde \omega}), \, \ell \in \Aff(\tor^*)$ reduces (again by Lemma~\ref{l:Y-Scal} and \eqref{measures}) to an integral  relation on $(X, \omega)$ which defines a unique  element $\ell_{v,w_0}^{\rm ext} \in \Aff(\tor^*)$. \end{proof}

\section{Weighted functionals and distances and their extensions}\label{s:extensions}
Let $\bom$ a $\T$-invariant K\"ahler metric in the K\"ahler class $\alpha$. We denote by ${\PSH}_{\T}(X,\bom)$ the space of $\T$-invariant $\bom$-pluri-subharmonic functions in $L^1(X, \bom)$, and define the  class of potentials of full volume by
\[\begin{split}
\E_{\T}(X, \bom):=&\left\{\varphi\in\PSH_\T(X,\bom)\,\mid \,\int_X \MA(\varphi)=\int_{X} \omega_0^{[n]} \right\}
\end{split}\]
According to \cite{Da}, the $d_1$-completion of $\cK_{\T}(X, \bom)$ can be identified with the subspace of potentials of finite energy, i.e. 
\[\E^1_{\T}(X, \bom)= \left\{ \varphi \in \E_{\T}(X, \bom) \, | \, \int_X |\varphi | \MA(\varphi) < \infty \right\}. \]

Our main result in this section will be the existence of a lsc extension of the weighted Mabuchi functional (defined in Definition~\ref{d:Mabuchi} on the space $\cK_{\T}(X, \bom)$) to a  functional on the space $\E^{1}_{\T}(X, \bom)$. Our starting point is that the weighted Mabuchi energy $\cE_{v, w}$ admits a weighted Chen--Tian decomposition \cite[Thm.~5]{lahdili2} into energy and entropy parts as follows:
\begin{align}
\begin{split}\label{Chen-Tian}
\cE_{v, w}(\varphi)=\int_X\log\left(\frac{v(\m_\varphi)\omega^{m}_\varphi}{\omega_0^{m}}\right) v(\m_\varphi)\omega^{[m]}_\varphi &-2\ce^{\rho_{\bom}}_{v}(\varphi)+\cI_w(\varphi) \\
&- \int_X \log(v(\m_0))v(\m_0)\omega_0^{[m]},
\end{split}
\end{align}
where  $\rho_{\bom}$ is the  Ricci form of $\bom$ and the functionals $\cI_w$ and $\ce^{\rho_{\bom}}_{v}$ are introduced in Definition~\ref{d:I,J,v} below.   We want to show the following
\begin{Theorem}\label{M-extension}
For smooth weight functions $v(\mu)$, $w(\mu)$ such that $v(\mu)>0$ on $\Pol$, the weighted Mabuchi energy $\cE_{v, w}:\cK_\T(X,\bom)\to\R$ extends using \eqref{Chen-Tian} to the largest $d_1$ lsc functional $\cE_{v, w}:\E^{1}_\T(X,\bom)\to\R\cup \{\infty\}$ which is convex along the finite energy geodesics of $\E_\T(X,\bom)$. Additionally, the extended weighted Mabuchi energy $\cE_{v, w}$ is linear in $v,w$, uniformly continuous in $w$ in the $C^{0}(\Pol)$ topology and continuous with respect to $v$ in the $C^{1}(\Pol)$ topology.
\end{Theorem}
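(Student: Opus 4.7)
The plan is to extend $\cE_{v,w}$ term by term along the Chen--Tian decomposition~\eqref{Chen-Tian}. Three kinds of pieces appear: (a) a weighted entropy $\Ent_v(\varphi):=\int v(\m_\varphi)\log\bigl(v(\m_\varphi)\omega_\varphi^m/\omega_0^m\bigr)\omega_\varphi^{[m]}$; (b) a $v$-weighted Aubin-type functional $\ce^{\rho_{\bom}}_v(\varphi)$ coupling $v$ to the Ricci form of $\bom$; (c) a linear term $\cI_w(\varphi)$ depending only on $w$. The strategy is to first handle special polynomial weights via the semi-simple principal fibration construction of Section~\ref{s:geometric}, and then reach arbitrary smooth weights by Stone--Weierstrass approximation, exploiting the linearity of $\cE_{v,w}$ in $(v,w)$ which is immediate from the definition of $\Scal_v$.

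For the polynomial case, I would choose lattice elements $p_a\in\Lambda$ and reals $c_a$ so that $\ell_a(\x):=\langle p_a,\x\rangle+c_a>0$ on $\Pol$, and form $p(\x):=\prod_a\ell_a(\x)^{n_a}$. By Lemmas~\ref{l:Y-Scal} and~\ref{l:Mabuchi}, this $p$ is precisely the weight arising from a semi-simple principal $(X,\T)$-fibration $Y\to B=\prod_a B_a$ over a product of Hodge cscK manifolds of dimensions $n_a$, and the restriction of the unweighted Mabuchi energy on $Y$ to the embedded subspace $\cK_\T(X,\bom)\subset\cK_\T(Y,\tilde\omega_0)$ equals $\V(B,\omega_B)\,\cE^X_{p,\tilde w}$ for the $\tilde w$ specified there. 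Proposition~\ref{p:totally-geodesic} and Corollary~\ref{c:bundle-compatible} show that this embedding is $d_1$-isometric up to a volume factor and totally geodesic for weak $C^{1,\bar 1}$-geodesics, so it extends to a $d_1$-closed embedding $\E^1_\T(X,\bom)\hookrightarrow\E^1_\T(Y,\tilde\omega_0)$. Since the unweighted Mabuchi energy on $Y$ admits a largest $d_1$-lsc extension that is convex along weak geodesics (Berman--Darvas--Lu, Berman--Berndtsson), restriction produces a convex, $d_1$-lsc extension of $\cE^X_{p,\tilde w}$ on $\E^1_\T(X,\bom)$, and, reading off~\eqref{Chen-Tian}, extensions of the individual $p$-weighted pieces $\Ent_p$ and $\ce^{\rho_{\bom}}_p$.

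To pass to arbitrary smooth positive $v$ and smooth $w$, I would use linearity in $(v,w)$ combined with Stone--Weierstrass: the $\R$-linear span of polynomials of the form $\prod_a\ell_a(\x)^{n_a}$, with $p_a\in\Lambda$ and $c_a$ chosen so each $\ell_a>0$ on $\Pol$, is dense in $C^k(\Pol)$ for every $k$, because such factors separate points of $\Pol$. Given smooth $v>0$ on $\Pol$, pick a sequence of such combinations $v_j\to v$ in $C^1(\Pol)$; linearity extends the definition of the $v$-dependent pieces to each $v_j$, and the continuity estimates---$\Ent_v$ and $\cI_w$ in the $C^0(\Pol)$ topology, $\ce^{\rho_{\bom}}_v$ in the $C^1(\Pol)$ topology, uniformly on $d_1$-bounded sets in $\cK_\T(X,\bom)$---allow passage to the limit. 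The Chen--Tian identity then defines $\cE_{v,w}$ on all of $\E^1_\T(X,\bom)$, with convexity along weak geodesics and $d_1$-lower semicontinuity inherited as pointwise limits of convex lsc functionals.

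The main obstacle I expect is verifying that this construction produces the \emph{largest} $d_1$-lsc extension of $\cE_{v,w}|_{\cK_\T(X,\bom)}$, not merely some lsc extension. This requires sharp uniform continuity of each Chen--Tian piece in $(v,w)\in C^1\times C^0$ on $d_1$-bounded sets, together with density of $\cK_\T(X,\bom)$ in $(\E^1_\T(X,\bom),d_1)$ and the $\liminf$ characterization of the largest lsc extension. A secondary technical point is to ensure that the restriction of the largest extension on $Y$ to the image of $\E^1_\T(X,\bom)$ coincides with the largest lsc extension of $\cE^X_{p,\tilde w}$; this again reduces to density of smooth potentials together with the totally-geodesic nature of the bundle embedding established in Proposition~\ref{p:totally-geodesic}.
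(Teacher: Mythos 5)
Your overall architecture (Chen--Tian decomposition, the semi-simple fibration for polynomial weights, then approximation in the weight) is genuinely close to the paper's, but two steps as written do not go through. First, the \emph{largest} lsc extension: you correctly flag this as the main obstacle, but density of $\cK_\T(X,\bom)$ in $(\E^1_\T(X,\bom),d_1)$ plus continuity in $(v,w)$ is not enough, because the entropy piece is only lsc in $\varphi$ --- one needs, for every $\varphi\in\E^1_\T(X,\bom)$, a sequence of \emph{smooth} potentials $\varphi_j\to\varphi$ in $d_1$ along which the entropy actually converges (not merely satisfies $\liminf\ge$). The paper's Lemma~\ref{Ent-approx} produces such a sequence by smoothing the density of $\MA_v(\varphi)$ and then \emph{solving the weighted Monge--Amp\`ere equation} $\MA_v(\varphi_j)=c_jg_j\,\omega_0^{[m]}$ via \cite[Prop.~3.7]{HL}; no argument of this kind appears in your proposal, and without it the ``largest'' claim is unproven. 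Relatedly, the paper uses the fibration and polynomial approximation only to \emph{define} the Radon measure $\MA_v(\varphi)$ for $\varphi\in\E_\T(X,\bom)$ (Proposition~\ref{weight-MA}); once that is done, lsc of $\varphi\mapsto{\rm Ent}(\omega_0^{[m]},\MA_v(\varphi))$ follows directly from weak convergence $\MA_v(\varphi_j)\to\MA_v(\varphi)$ under $d_1$-convergence and lsc of the entropy on measures, with no limit in $v$ of whole functionals needed. The terms $\cI_w$ and $\ce^{\rho_{\bom}}_v$ are extended by direct elementary estimates (Lemmas~\ref{l:I,J-extension}, \ref{ext-E^rho}) for arbitrary smooth weights, not through the fibration.

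Second, your convexity argument fails at the approximation step. Stone--Weierstrass produces $v_j$ that are \emph{signed} linear combinations of the special products $\prod_a\ell_a^{n_a}$, so each $\cE_{v_j,w}$ is only a difference of functionals obtained by restriction from $Y$, and signed combinations destroy convexity; moreover your approximants come with the specific fibration weight $\tilde w$, not an arbitrary $w$. A pointwise limit of non-convex functionals gives nothing, and lsc is in any case not preserved under pointwise limits (the needed locally uniform convergence of the entropy piece in $v$ on $d_1$-bounded sets is false, since it is controlled by the total entropy, which is unbounded there; the paper instead exploits that the entropy is simultaneously lsc and \emph{affine} in $v$, hence continuous in $v$). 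The paper proves convexity differently: approximate the two endpoints by smooth potentials with converging energy (again Lemma~\ref{Ent-approx}), invoke convexity of $\cE_{v,w}$ along $C^{1,\bar 1}$-geodesics for \emph{arbitrary} weights from \cite[Thm.~5]{lahdili3}, and pass to the limit using lsc. Two smaller points: Stone--Weierstrass as you invoke it (``the factors separate points'') only gives $C^0(\Pol)$ density, not the $C^1(\Pol)$ density you need for the $\ce^{\rho_{\bom}}_v$ term; and Proposition~\ref{p:totally-geodesic} is established for $C^{1,\bar1}$-geodesics between smooth potentials, so restricting the $Y$-Mabuchi energy along general finite energy geodesics of $\E^1_\T(X,\bom)$ would require an additional limiting argument.
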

The above result is well-known for the unweighted case, by the work~\cite{BDL0}, and we will follow a similar path to get an extension in the weighted case. The proof of  Theorem~\ref{M-extension} will be given at the end of the section,  and we detail below the definition and extension of each component of \eqref{Chen-Tian}.

\subsection{The weighted Aubin--Mabuchi functionals}
\begin{defn}\cite{lahdili2}\label{d:I,J,v} For a smooth weight function $v(\x)$ on $\Pol$, we let ${\cI}_{v}$ denote the functional on $\cK_{\T}(X, \bom)$,  defined by
\[ (d_{\varphi} {\cI}_v)(\dot \varphi) = \int_X \dot \varphi  v(\m_{\varphi})\omega_{\varphi}^{[n]}, \qquad {\cI}_v(0)=0, \]
and let ${\cJ}_{v}:= \int_X \varphi v(\m_{0})\omega_0^{[m]} - \cI_v(\varphi)$. Furthermore, for a fixed $\T$-invariant closed $(1,1)$-form $\rho$ on $X$ with momentum $\mu_\rho: X\to\tor^{*}$,  we define the  \emph{$\rho$-twisted Aubin--Mabuchi functional} $\ce^{\rho}_v:\cK_\T (X,\bom)\to\R$ by 
\begin{equation*}\label{Energi-chi}
(d_\varphi\ce^{\rho}_{v})(\dot{\varphi}):=\int_X \dot{\varphi}\left(v(\m_\varphi)\rho\wedge\omega_{\varphi}^{[m-1]}+\langle (dv)(\m_\varphi),\mu_\rho\rangle\omega_{\varphi}^{[m]} \right),\quad \ce^{\rho}_{v}(0)=0.
\end{equation*}
For $v\equiv 1$,  we let $\cI_{1}=\cI$, $\cJ_1=\cJ$ and $\ce^{\rho}_v = \ce^{\rho}$, and notice that $\cI, \cJ$ are the functionals introduced in Definition~\ref{d:I, I-Aubin, J}
\end{defn}
\begin{rem}\label{r:I,J,v} It  follows from the above definition and the results in \cite{lahdili2} that for any weight $v(x)$ and a constant $c$, $\J_v(\varphi+c) = \cJ_v(\varphi)$,  allowing one to  see  $\cJ_v$ as a functional on the space of $\T$-invariant K\"ahler metrics in the K\"ahler class $\alpha = [\bom]$, and motivates the notation ${\cJ}_v(\omega_{\varphi})$.  Notice also that  $\cI_v$, $\cJ_v$  $\ce^{\rho}_v$ are linear in $v$. In the case when $v>0$, $\cJ_v$ is non-negative (see Lemma~\ref{Han-Li} below),   whereas $\cI_{v}$ is monotone in the sense that   for any 
$\varphi_0, \varphi_1 \in \cK_{\T}(X, \bom)$ with  $\varphi_1(x) \ge  \varphi_0(x)$
\[\cI_{v}(\varphi_1) -\cI_v(\varphi_0) \ge \inf_{\Pol}(v)  \int_X(\varphi_1-\varphi_0)\omega_{\varphi_0}^{[m]}. \]
The above inequality follows by Definition~\ref{d:I,J,v} and integrating the derivative of $\cI_v$ along the path $t\varphi_1 + (1-t)\varphi_0 \in \cK_{\T}(X, \bom)$ and integrating by parts.
\end{rem}
The following is established in \cite[(2.37)]{HL}.
\begin{lemma}\label{Han-Li} Let $v>0$. There exists a uniform constant $C=C(X, \bom, v)>0$ such that
\[\frac{1}{C} \cJ(\varphi) \le \cJ_v(\varphi) \le C\cJ(\varphi). \]
\end{lemma}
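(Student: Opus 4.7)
The plan is to establish an explicit representation
\[
\cJ_v(\varphi) = \int_0^1 (1-\tau)\int_X v(\m_{\tau\varphi})\, d\varphi \wedge d^c\varphi \wedge \omega_{\tau\varphi}^{[m-1]}\, d\tau,
\]
which exhibits $\cJ_v$ as a weighted integral of the non-negative $(m,m)$-form $d\varphi \wedge d^c\varphi \wedge \omega_{\tau\varphi}^{[m-1]}$ with positive coefficient $v(\m_{\tau\varphi})$, uniformly bounded between $v_{\min} := \min_{\Pol} v > 0$ and $v_{\max} := \max_{\Pol} v$. Specializing this representation to the constant weight $v \equiv 1$ gives the analogous formula for $\cJ$, so once the formula is established the sandwich $v_{\min}\cJ(\varphi) \le \cJ_v(\varphi) \le v_{\max}\cJ(\varphi)$ is immediate, yielding the lemma with $C = \max(v_{\max},\, v_{\min}^{-1})$.

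The derivation of the representation splits into two steps. First, I would integrate Definition~\ref{d:I,J,v} along the linear path $\tau \mapsto \tau\varphi$, $\tau \in [0,1]$ (all such potentials lie in $\cK_\T(X, \bom)$ since $\omega_{\tau\varphi} = (1-\tau)\bom + \tau \omega_\varphi > 0$, with momentum map $\m_{\tau\varphi} = \m_0 + \tau d^c\varphi$), to obtain the preliminary identity
\[
\cJ_v(\varphi) = \int_0^1 \int_X \varphi\,\bigl[v(\m_0)\bom^{[m]} - v(\m_{\tau\varphi})\omega_{\tau\varphi}^{[m]}\bigr]\, d\tau.
\]
The crucial second step is the pointwise identity
\[
\frac{d}{d\tau}\bigl[v(\m_{\tau\varphi})\omega_{\tau\varphi}^{[m]}\bigr] = d\bigl[v(\m_{\tau\varphi})\, d^c\varphi \wedge \omega_{\tau\varphi}^{[m-1]}\bigr],
\]
which I would verify by expanding both sides using $\dot{\m}_{\tau\varphi} = d^c\varphi$ and the Hamiltonian relation $d\m^{\xi}_{\tau\varphi} = -\iota_\xi \omega_{\tau\varphi}$, combined with the dimensional vanishing $d^c\varphi \wedge \omega_{\tau\varphi}^{[m]} \equiv 0$ (which forces $d^c\varphi(\xi)\omega_{\tau\varphi}^{[m]} = -\iota_\xi\omega_{\tau\varphi}\wedge d^c\varphi \wedge \omega_{\tau\varphi}^{[m-1]}$). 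Integrating this identity in $\tau$, substituting into the displayed formula, performing an integration by parts on $X$, and swapping the resulting iterated integrals via Fubini deliver the desired representation.

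The main obstacle is the sign bookkeeping in the Hamiltonian identity above; every other ingredient is routine manipulation of the Monge--Amp\`ere operator together with Stokes' theorem on the closed manifold $X$. As a self-contained alternative one could instead invoke linearity of $\cJ_v$ in $v$ together with Stone--Weierstrass approximation on $C^0(\Pol)$, mirroring the strategy used later in Section~\ref{s:extensions} for extending $\cE_{v,w}$, and reduce the comparison to an explicit family of weights; but the identity-based approach is more transparent and directly matches the argument attributed to \cite[(2.37)]{HL} in the statement.
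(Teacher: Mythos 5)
Your proposal is correct and follows essentially the same route as the paper: the paper's proof integrates Definition~\ref{d:I,J,v} along the linear path $\varphi_t$, rewrites the difference via $\int_0^s\frac{d}{dt}[v(\m_{\varphi_t})\omega_{\varphi_t}^{[m]}]\,dt$ using exactly the exactness identity you state (via \eqref{v-variation} and an integration by parts on $X$), and arrives at $\cJ_v(\varphi)=\int_0^1\int_0^s\int_X v(\m_{\varphi_t})\,d\varphi\wedge d^c\varphi\wedge\omega_{\varphi_t}^{[m-1]}\,dt\,ds$, which is your representation after Fubini, before concluding with the same uniform bounds $\tfrac1C\le v\le C$ on $\Pol$.
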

\begin{proof} 
Let $\varphi_t := \varphi_0+t\varphi$ with $\varphi:=\varphi_1-\varphi_0$ and $\omega_{\varphi_{t}} = \omega_{\varphi_0}  +tdd^{c}\varphi$, $t\in [0, 1]$. We compute
\begin{align*}
\cJ_v(\varphi)=& \cJ_v(\varphi_1)-\cJ_v(\varphi_0)\\
=&\int^{1}_0\int_X \varphi \left(
v(\m_{\omega})\omega^{[m]}- v(\m_{\varphi_s})\omega^{[m]}_{\varphi_s}\right)ds\\
=&-\int^{1}_0\int_X\varphi \left(\int_0^{s}  \frac{d}{dt}[v(\m_{\varphi_t})\omega^{[m]}_{\varphi_t}]dt\right) ds\\
=& -\int^{1}_0\int_X\varphi \left(\int_0^{s}  \left( g_{\varphi_t}(d[\log\circ v(\m_{\varphi_t})], d\varphi) -\Delta_{\varphi_t}(\varphi)\right) v(\m_{\varphi_t})\omega^{[m]}_{\varphi_t}dt\right) ds\\
=& -\int^{1}_0\int_0^{s}\left( \int_X\varphi d[ v(\m_{\varphi_t})]\wedge d^{c}\varphi\wedge \omega^{[m-1]}_{\varphi_t} +\varphi dd^{c}\varphi \wedge v(\m_{\varphi_t})\omega^{[m-1]}_{\varphi_t}\right) dtds\\
=& \int^{1}_0\int_0^{s}\left( \int_X v(\m_{\varphi_t}) d\varphi\wedge d^{c}\varphi\wedge \omega^{[m-1]}_{\varphi_t}\right) dtds\\
=&\int^{1}_0\int_0^{s}\left( \int_X v(\m_{\varphi_t}) d\varphi\wedge d^{c}\varphi\wedge (t\omega_\varphi+(1-t)\omega )^{[m-1]}\right) dtds\\
=&\sum_{j=0}^{m-1}\int^{1}_0\int_0^{s}\left( \int_X t^{j}(1-t)^{m-j-1}v(\m_{\varphi_t}) d\varphi\wedge d^{c}\varphi\wedge\omega^{[j]}\wedge\omega_\varphi^{^{[m-j-1]}}\right) dtds
\end{align*}
where, in the fourth equality,  we have used that 
\begin{equation}\label{v-variation} 
\frac{d}{dt}[v(\m_{\varphi_t})]=\sum_{i=1}^{\dtor}v_{,i}(\m_{\varphi_t})(d^{c}\varphi)(\xi_i)=g_{\varphi_t}(d[ v(\m_{\varphi_t})], d\varphi) \end{equation}
for any basis  $(\xi_i)_{i=1,\cdots,\dtor}$ of $\tor$. It follows that
\[\frac{1}{C} \cJ(\varphi) \le \cJ_v(\varphi) \le C \cJ(\varphi),\]
where $C=C(X,\alpha,v)$ is a constant such that $\frac{1}{C}\leq v\leq C$ on $\Pol_\alpha$. \end{proof}

\begin{lemma}\label{l:I,J-uniform} Suppose $v,w$ are smooth functions on $\Pol$.    Then  
\[
\begin{split}
\Big| \cJ_{v}(\varphi) - \cJ_w(\varphi) \Big|  & \le ||v-w||_{C^0(\Pol)} \cJ_1(\varphi); \\
\Big|\cI_v(\varphi) - \cI_w(\varphi)\Big| & \le ||v -w||_{C^0(\Pol)}\Big( ||\varphi ||_{L^1(X, \bom)} + \cJ_1(\varphi)\Big).
  \end{split} \]
In particular, for a fixed $\varphi\in \cK_{\T}(X, \bom)$, $\cI_{v}(\varphi)$ and $\cJ_{v}(\varphi)$ are uniform continuous in $v$.
\end{lemma}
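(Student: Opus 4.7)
The plan is to exploit the linearity of both $\cI_v$ and $\cJ_v$ in the weight $v$ noted in Remark~\ref{r:I,J,v}. Setting $u := v-w$, the two estimates reduce to proving
\begin{align*}
|\cJ_u(\varphi)| &\le \|u\|_{C^0(\Pol)}\, \cJ_1(\varphi), \\
|\cI_u(\varphi)| &\le \|u\|_{C^0(\Pol)}\big(\|\varphi\|_{L^1(X,\bom)} + \cJ_1(\varphi)\big),
\end{align*}
for every $\varphi\in\cK_\T(X,\bom)$.

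For the $\cJ$-bound, I would revisit the computation at the end of the proof of Lemma~\ref{Han-Li}: the integration by parts carried out there goes through verbatim with $u$ in place of $v$ and yields
\begin{equation*}
\cJ_u(\varphi) = \sum_{j=0}^{m-1} \int_0^1\!\!\int_0^s \int_X t^j(1-t)^{m-j-1}\, u(\m_{\varphi_t})\, d\varphi\wedge d^c\varphi \wedge \bom^{[j]} \wedge \omega_\varphi^{[m-j-1]}\, dt\, ds.
\end{equation*}
Each form $d\varphi\wedge d^c\varphi \wedge \bom^{[j]} \wedge \omega_\varphi^{[m-j-1]}$ is a non-negative measure on $X$, being the wedge of the non-negative $(1,1)$-form $d\varphi\wedge d^c\varphi$ with a positive $(m-1,m-1)$-form. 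Bounding $|u(\m_{\varphi_t})| \le \|u\|_{C^0(\Pol)}$ pointwise and substituting $u\equiv 1$ on the right-hand side produces the analogous identity for $\cJ_1(\varphi)$, giving the first inequality.

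For the $\cI$-bound, I would use the defining relation $\cJ_u(\varphi) = \int_X \varphi\, u(\m_0)\,\bom^{[m]} - \cI_u(\varphi)$, rewritten as
\[ \cI_u(\varphi) = \int_X \varphi\, u(\m_0)\,\bom^{[m]} - \cJ_u(\varphi). \]
The first term on the right is controlled by $\|u\|_{C^0(\Pol)}\|\varphi\|_{L^1(X,\bom)}$ and the second by the just-obtained bound for $|\cJ_u(\varphi)|$; the triangle inequality then gives the stated estimate. Uniform continuity in $v$ (for fixed $\varphi$) follows immediately, since both right-hand sides tend to $0$ as $\|v-w\|_{C^0(\Pol)}\to 0$.

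No step here looks delicate: beyond the linearity and the integral representation from the proof of Lemma~\ref{Han-Li}, the only substantive input is the positivity of $d\varphi\wedge d^c\varphi \wedge \bom^{[j]} \wedge \omega_\varphi^{[m-j-1]}$, which is a standard property of positive forms on K\"ahler manifolds. The content of the lemma is therefore essentially the observation that the representation derived in the proof of Lemma~\ref{Han-Li} has manifestly good continuity properties with respect to the weight function.
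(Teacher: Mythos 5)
Your argument is correct and coincides with the paper's own (very terse) proof: the first inequality is read off from the integral representation of $\cJ_v$ derived in the proof of Lemma~\ref{Han-Li} together with the positivity of the forms $d\varphi\wedge d^c\varphi\wedge\bom^{[j]}\wedge\omega_\varphi^{[m-j-1]}$, and the second follows from the first via the defining relation $\cJ_v(\varphi)=\int_X\varphi\,v(\m_0)\,\bom^{[m]}-\cI_v(\varphi)$ and the triangle inequality. Nothing further is needed.
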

\begin{proof} The first relation follows from Lemma~\ref{Han-Li} above whereas the second inequality follows from the first and Definition~\ref{d:I,J,v}.
\end{proof}

\begin{lemma}\label{l:I_p} The restrictions of ${\cI}_1^{Y}$,  $\cJ_1^Y$  to the subspace ${\cK}_{\T}(X, \bom)\subset {\cK}_{\T}(Y, {\tilde \omega_0})$ are respectively equal to  $C{\cI}^X_{\p}$  and  $C\cJ^X_{\p}$, where $\p(\x)$ is the weight function defined in Lemma~\ref{l:Y-Scal} and $C=\V(B, \omega_B)$. Furthermore, if $\tilde{\rho}$ is a  K\"ahler form on $Y$,  induced by a K\"ahler form $\rho$ on $X$ using \eqref{Y-Kahler}, then the restriction of $(\ce_1^{\tilde \rho})^Y$
to the subspace ${\cK}_{\T}(X, \bom)$ equals $C(\ce_p^{\rho})^X$.
\end{lemma}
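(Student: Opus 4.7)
The plan is to verify each of the three identities by differentiating both sides along a smooth path in $\cK_\T(X, \bom)$ (viewed also as a path in $\cK_\T(Y, \tilde\omega_0)$ via the embedding in Section~\ref{s:geometric-construction}), checking the equality of derivatives, and then integrating from $\varphi=0$, where both functionals vanish by the normalization in Definition~\ref{d:I,J,v}.

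For the first identity, differentiating gives $(d_\varphi \cI_1^Y)(\dot\varphi) = \int_Y \dot\varphi\, \tilde\omega_\varphi^{[m+n]}$, and applying \eqref{measures} to the $\T_X$-invariant function $\dot\varphi$ yields $\V(B,\omega_B)\int_X p(\m_\varphi)\dot\varphi\,\omega_\varphi^{[m]} = C(d_\varphi \cI_p^X)(\dot\varphi)$. The identity $\cJ_1^Y = C\cJ_p^X$ then follows immediately from the relation $\cJ_v(\varphi) = \int_X \varphi\, v(\m_0)\omega_0^{[m]} - \cI_v(\varphi)$ together with one more application of \eqref{measures} to the linear term.

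The substantial part is the third identity. We compute $(d_\varphi (\ce_1^{\tilde\rho})^Y)(\dot\varphi) = \int_Y \dot\varphi\, \tilde\rho \wedge \tilde\omega_\varphi^{[m+n-1]}$ by pulling back to $Z = X\times P$ and wedging with $\theta^{\wedge \dtor}$. Splitting each bundle-compatible form as $\tilde\omega_\varphi = A + \langle d\m_\varphi \wedge \theta\rangle$ and $\tilde\rho = A' + \langle d\m_\rho\wedge\theta\rangle$, where $A, A'$ are the horizontal summands in \eqref{Y-Kahler} (with $\m_\rho$ chosen so that $\tilde\rho$ uses the same constants $c_a$ as $\tilde\omega$, consistent with the $\Pol$-normalization), we observe that any factor of the form $\langle d\m\wedge\theta\rangle$ wedged with $\theta^{\wedge\dtor}$ vanishes. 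Hence only $A'\wedge A^{[m+n-1]}\wedge \theta^{\wedge\dtor}$ contributes, and its combinatorial expansion — entirely parallel to the one used to derive \eqref{omega^(m+n-1)} — produces exactly two types of terms: those involving $\rho\wedge\omega_\varphi^{[m-1]}$, which contribute $p(\m_\varphi)\rho\wedge \omega_\varphi^{[m-1]}\wedge\bigwedge_a(\pi_B^*\omega_a)^{[n_a]}$, and those involving $\pi_B^*\omega_b\wedge\omega_\varphi^{[m]}$, which contribute $\sum_b \tfrac{n_b p(\m_\varphi)\langle p_b,\m_\rho\rangle}{\langle p_b,\m_\varphi\rangle + c_b}\,\omega_\varphi^{[m]}\wedge\bigwedge_a(\pi_B^*\omega_a)^{[n_a]}$.

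The key identification is then obtained by differentiating $p(\x) = \prod_a(\langle p_a,\x\rangle + c_a)^{n_a}$, which yields
\[
\langle (dp)(\m_\varphi), \m_\rho\rangle = \sum_b \frac{n_b\, p(\m_\varphi)}{\langle p_b,\m_\varphi\rangle + c_b}\langle p_b,\m_\rho\rangle,
\]
matching exactly the combinatorial sum above. Collecting terms and applying \eqref{measures} in reverse gives $(d_\varphi (\ce_1^{\tilde\rho})^Y)(\dot\varphi) = C\,(d_\varphi(\ce_p^\rho)^X)(\dot\varphi)$, which upon integration yields the claim. The main technical obstacle in the argument is tracking the combinatorics of wedge products on $Z$ and identifying the second family of terms with $\langle dp(\m_\varphi),\m_\rho\rangle$; the issue of choosing a consistent normalization for $\m_\rho$ is resolved once one notes that only the horizontal parts $A, A'$ enter the final computation, so the relevant constants are fixed by the bundle data and the choice of $\Pol$.
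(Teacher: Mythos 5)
Your treatment of the first two identities is correct and is exactly the paper's argument: differentiate, apply \eqref{measures} (equivalently \eqref{Z-measure}) to the $\T_X$-invariant function $\dot \varphi$, and integrate from $\varphi=0$; the statement for $\cJ_1^Y$ then follows from $\cJ_v(\varphi)=\int_X \varphi\, v(\m_0)\omega_0^{[m]}-\cI_v(\varphi)$ and one more application of \eqref{measures}. Your strategy for the third identity --- pull back to $Z=X\times \bN$, wedge with $\theta^{\wedge \dtor}$, discard every term containing a factor of $\langle d\m\wedge\theta\rangle$, and expand $A'\wedge A^{[m+n-1]}$ by multi-degree --- is also the computation the paper has in mind (its own proof simply asserts the resulting identity in one line).

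However, your ``key identification'' does not close as written. By \eqref{Y-Kahler}/\eqref{tilde-rho} the coefficient of $\pi_B^*\omega_b$ in $\tilde\rho$ is $\langle p_b,\m_\rho\rangle+c_b$, not $\langle p_b,\m_\rho\rangle$, so the second family of terms contributes
\[
\sum_{b=1}^k\frac{n_b\,p(\m_\varphi)\,\bigl(\langle p_b,\m_\rho\rangle+c_b\bigr)}{\langle p_b,\m_\varphi\rangle+c_b}\;\omega_\varphi^{[m]}\wedge\bigwedge_{a=1}^k(\pi_B^*\omega_a)^{[n_a]},
\]
which exceeds $\langle (dp)(\m_\varphi),\m_\rho\rangle\,\omega_\varphi^{[m]}\wedge\cdots$ by $\bigl(\sum_b \tfrac{n_b c_b}{\langle p_b,\m_\varphi\rangle+c_b}\bigr)p(\m_\varphi)\,\omega_\varphi^{[m]}\wedge\cdots$. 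This extra term is genuinely nonzero: testing at $\varphi=0$ with $\rho=\bom$, $\tilde\rho=\tilde\omega_0$, the claimed identity forces $\langle (dp)(\x),\x\rangle=n\,p(\x)$, which by Euler's relation holds only when all $c_a=0$. Your closing remark that the normalization of $\m_\rho$ resolves this is not correct: replacing $\m_\rho$ by $\m_\rho+\lambda$ shifts $\tilde\rho$ by $\sum_a\langle p_a,\lambda\rangle\pi_B^*\omega_a$ and shifts $\langle (dp)(\m_\varphi),\m_\rho\rangle$ by $\sum_a\tfrac{n_a\langle p_a,\lambda\rangle}{\langle p_a,\m_\varphi\rangle+c_a}p(\m_\varphi)$, so both sides of the identity move by the same amount and the discrepancy is normalization-independent. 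What your computation actually establishes is $(\ce_1^{\tilde\rho})^Y\restr{\cK_{\T}(X,\bom)}=C\bigl[(\ce_p^{\rho})^X+\cI_u^X\bigr]$ with $u(\x)=p(\x)\sum_b\tfrac{n_bc_b}{\langle p_b,\x\rangle+c_b}$; the clean identity requires either taking the horizontal $B$-component of $\tilde\rho$ to be $\sum_a\langle p_a,\m_\rho\rangle\pi_B^*\omega_a$ (no constants) or recording this correction term. Since the paper's one-line proof is silent on this point, you should state explicitly which convention for $\tilde\rho$ you adopt and verify the matching with that convention rather than asserting that it takes care of itself.
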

\begin{proof} The first part follows from the definition of $\cI^Y_1$, using that ${\tilde \omega}_{\varphi}^{[n+m]}\wedge \theta^{\wedge r} =\p(\m_{\varphi})\omega_{\varphi}^{[m]} \wedge \omega_B^{[n]}\wedge \theta^{\wedge r}$ on $Z$.

Similarly, if $\tilde \rho$ is a $(1,1)$-form $Y$ whose pull-back to $Z$ is 
\begin{equation}\label{tilde-rho}
\tilde{\rho}:=\rho+\sum_{a=1}^{k}(\langle p_a,\mu_\rho\rangle+c_a\rangle)\pi^{*}_B\omega_a+\langle d\mu_\rho\wedge\theta\rangle, \end{equation}
 we compute
 \begin{align*}
(d_\varphi\ce^{\rho}_{p})^X(\dot{\varphi})=&\int_X \dot{\varphi}\left[p(\m_\varphi)\rho\wedge\omega_{\varphi}^{[m-1]}+\langle (dp)(\m_\varphi),\mu_\rho \rangle\omega_{\varphi}^{[m]} \right]\\
=&\frac{1}{\vol(B,\omega_B)}\int_Y\dot{\varphi}\tilde{\rho}\wedge {\tilde \omega}_{\varphi}^{[n+m-1]} =\frac{1}{\vol(B,\omega_B)}(d_\varphi\ce^{\tilde{\rho}})^Y(\dot{\varphi}).
\end{align*}
The claim follows as $(\ce^{\rho}_{p})^X(0)=0=(\ce^{\tilde \rho})^Y(0)$. \end{proof}

\subsection{The weighted $d_1$-distance}
\begin{defn}\label{d:d_{1,v}} Let $v>0$ be a positive function on $\Pol$. For  $\varphi_0, \varphi_1 \in \cK_{\T}(X, \bom)$ we let
\[d_{1, v}(\varphi_0, \varphi_1) := \inf_{\psi(t)}\{L_{1, v}(\psi(t)) \, | \, \psi(t, x)\in C_{\T}^{\infty}([0, 1]\times X), \, \psi(t)\in \cK_{\T}(X, \bom)\}\]
where
\[L_{1, v}(\psi(t)):=\int_{0}^{1}\left(\int_X |\dot{\psi}(t)|v(\m_{\psi(t)})\omega_{\psi(t)}^{[m]}\right) dt. \]
For $v\equiv 1$ we have $d_{1, 1}=d_1$ where $d_1$ is the distance introduced in Section~\ref{s:coercivity}.
\end{defn}
\begin{lemma}\label{l:d_{1,v}} For any weight $v>0$, there exists  uniform constant $C=C(X, \bom, v)>0$ such that
\begin{equation}\label{d_1-equivalence}
\frac{1}{C} d_{1}(\varphi_0, \varphi_1)  \le d_{1, v}(\varphi_0, \varphi_1) \le C d_{1}(\varphi_0, \varphi_1), \qquad \forall \varphi_0, \varphi_1 \in \cK_{\T}(X, \bom), 
\end{equation}
where $d_1 := d_{1, 1}$ is the distance introduced in \cite{Da}.
In particular, $d_{1, v}$ is a distance on $\cK_{\T}(X, \bom)$ which is quasi-isometric with  $d_1$. 
\end{lemma}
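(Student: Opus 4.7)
\smallskip
\noindent\textbf{Proof proposal.} The plan is to reduce the inequality \eqref{d_1-equivalence} to a pointwise comparison of the integrands in the length functionals $L_{1,v}$ and $L_{1}$. The key observation is that, since $v$ is smooth and positive on the \emph{compact} polytope $\Pol$, there exist constants $0 < c_{1} \le c_{2}$ (depending only on $X, \bom, v$) such that
\[ c_{1} \le v(\x) \le c_{2}, \qquad \forall \, \x \in \Pol. \]
For every $\varphi \in \cK_{\T}(X, \bom)$ the momentum map $\m_{\varphi}$ sends $X$ onto $\Pol$, so the same inequalities hold pointwise on $X$ for $v(\m_{\varphi})$.

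Given any smooth path $\psi(t) \in \cK_{\T}(X, \bom)$, $t\in [0,1]$, integrating the pointwise inequality $c_{1} \le v(\m_{\psi(t)}) \le c_{2}$ against $|\dot\psi(t)|\,\omega_{\psi(t)}^{[m]}$ and then in $t$ yields immediately
\[ c_{1}\, L_{1}(\psi(t)) \le L_{1, v}(\psi(t)) \le c_{2}\, L_{1}(\psi(t)). \]
Taking the infimum over smooth $\T$-invariant paths joining $\varphi_{0}$ to $\varphi_{1}$ in $\cK_{\T}(X, \bom)$ and applying the definition of $d_{1,v}$ (Definition~\ref{d:d_{1,v}}) and of $d_{1}$ produces \eqref{d_1-equivalence} with $C = \max(c_{2}, 1/c_{1})$.

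With \eqref{d_1-equivalence} in hand, the remaining assertions follow formally. Symmetry and the triangle inequality for $d_{1,v}$ are inherited from $L_{1,v}$ by the same standard reversal/concatenation arguments used for $d_{1}$ in \cite{Da}. For positive definiteness, if $d_{1,v}(\varphi_{0}, \varphi_{1})=0$, then by the lower bound in \eqref{d_1-equivalence} one has $d_{1}(\varphi_{0}, \varphi_{1})=0$; since $(\cK_{\T}(X, \bom), d_{1})$ is a metric space by \cite{Da,DaRu}, this forces $\varphi_{0}=\varphi_{1}$. The quasi-isometry claim is just a restatement of \eqref{d_1-equivalence}. There is no real obstacle here: the only subtle point is the use of compactness of $\Pol$ (independent of the potential) to secure uniform two-sided bounds on $v$, which is the feature that makes the comparison global on $\cK_{\T}(X, \bom)$ rather than merely local.
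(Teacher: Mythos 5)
Your argument is correct and is essentially the paper's own proof, just written out in more detail: the paper likewise derives \eqref{d_1-equivalence} from the uniform two-sided bounds on $v$ over the compact polytope $\Pol$ and then deduces that $d_{1,v}$ is a distance from the fact that $d_1$ is one. No gaps; the expanded justification of positive definiteness via the lower bound is exactly the intended reasoning.
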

\begin{proof} The relation \eqref{d_1-equivalence} follows from the fact that $v(\x)$ is positive and uniformly bounded on $\Pol$. This yields that $d_{1, v}$ is a distance, as $d_1$ is a distance according to \cite{Da}.  \end{proof}

\begin{lemma}\label{l:I-Lipschitz} For any smooth weight $v>0$ we have
\[ \left| \cI_{v}(\varphi_0) - \cI_{v}(\varphi_1) \right| \le d_{1,v}(\varphi_0, \varphi_1) \le Cd_1(\varphi_0, \varphi_1), \qquad \forall \varphi_0, \varphi_1 \in \cK_{\T}(X, \bom).\]
\end{lemma}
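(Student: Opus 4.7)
\medskip
\noindent\textbf{Proof proposal.} The right-hand inequality is immediate from Lemma~\ref{l:d_{1,v}}, which gives the bi-Lipschitz equivalence $d_{1,v} \le C\, d_1$ on $\cK_{\T}(X,\bom)$. The content of the lemma is therefore the left-hand inequality
\[
|\cI_v(\varphi_0) - \cI_v(\varphi_1)| \le d_{1,v}(\varphi_0,\varphi_1),
\]
which is a Lipschitz property of $\cI_v$ with respect to the weighted Finsler-type distance $d_{1,v}$. The plan is to verify this on arbitrary smooth paths and then take the infimum.

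First, I would fix a smooth curve $\psi(t) \in \cK_{\T}(X,\bom)$, $t \in [0,1]$, joining $\varphi_0$ to $\varphi_1$. By Definition~\ref{d:I,J,v}, the weighted Aubin--Mabuchi functional satisfies
\[
\frac{d}{dt}\cI_v(\psi(t)) = \int_X \dot\psi(t)\, v(\m_{\psi(t)})\,\omega_{\psi(t)}^{[m]},
\]
so integrating from $0$ to $1$, applying the triangle inequality (for both the $t$-integral and the $X$-integral), and using the positivity $v > 0$ on $\Pol$ to drop the absolute value on $v(\m_{\psi(t)})\omega_{\psi(t)}^{[m]}$, one obtains
\[
|\cI_v(\varphi_1) - \cI_v(\varphi_0)| \le \int_0^1 \!\!\int_X |\dot\psi(t)|\, v(\m_{\psi(t)})\,\omega_{\psi(t)}^{[m]}\, dt = L_{1,v}(\psi(t)).
\]
Taking the infimum of the right-hand side over all smooth $\T$-invariant paths joining $\varphi_0$ to $\varphi_1$ yields the desired bound $|\cI_v(\varphi_0)-\cI_v(\varphi_1)| \le d_{1,v}(\varphi_0,\varphi_1)$.

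There is no substantial obstacle here: the argument is the standard ``length functional dominates the variation of a smooth functional whose differential is captured by the length's integrand.'' The only thing worth checking is that the measure $v(\m_{\psi(t)})\omega_{\psi(t)}^{[m]}$ appearing in $d_v\cI_v$ matches exactly the density used inside $L_{1,v}$, which it does by construction. Combining with Lemma~\ref{l:d_{1,v}} gives the full chain of inequalities stated in the lemma.
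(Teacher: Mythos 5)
Your proposal is correct and coincides with the paper's own argument: both differentiate $\cI_v$ along an arbitrary smooth path using Definition~\ref{d:I,J,v}, bound the variation by $L_{1,v}$, take the infimum over paths, and then invoke Lemma~\ref{l:d_{1,v}} for the comparison $d_{1,v}\le C d_1$. No differences worth noting.
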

\begin{proof} For any smooth curve  $\varphi_t$ between $\varphi_1$ and $\varphi_2$, using Definition~\ref{d:I,J,v},  we have
\[\left| \cI_{v}(\varphi_0) - \cI_{v}(\varphi_1) \right| = \left| \int_{0}^1 (d_{\varphi_t}\cI_v)(\dot \varphi_t) dt \right| \le L_{1, v}(\varphi_t).\]
The claim follows from the above and Lemma~\ref{l:d_{1,v}}. \end{proof}

\subsection{Extensions to $\E_{\T}^1(X, \bom)$}

\begin{lemma}\label{l:I,J-extension} For any smooth weight $v$, the functionals $\cI_v$ and $\cJ_v$  continuously extend to the space $\E^1_{\T}(X, \bom)$. Furthermore,  for any $\psi\in \E^1_{\T}(X, \bom)$, the extended functionals are linear and uniform continuous in $v$, in the topology $C^0(\Pol)$.
\end{lemma}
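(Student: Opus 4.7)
The plan is to first handle the case of a \emph{positive} smooth weight $v>0$ using the Lipschitz estimate already established in Lemma~\ref{l:I-Lipschitz}, then deduce the general case by linearity, and finally obtain the uniform continuity in $v$ from Lemma~\ref{l:I,J-uniform} by density.

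For $v>0$ smooth on $\Pol$, Lemma~\ref{l:I-Lipschitz} shows that $\cI_v:\cK_\T(X,\bom)\to\R$ is Lipschitz continuous with respect to $d_1$, with Lipschitz constant depending only on $\|v\|_{C^0(\Pol)}$. Since $(\E^1_\T(X,\bom),d_1)$ is the metric completion of $(\cK_\T(X,\bom),d_1)$ by the results of \cite{Da}, the functional $\cI_v$ extends uniquely to a $d_1$-Lipschitz continuous functional on $\E^1_\T(X,\bom)$. For $\cJ_v$, I would use the decomposition $\cJ_v(\varphi)=\int_X\varphi\,v(\m_0)\omega_0^{[m]}-\cI_v(\varphi)$, noting that $v(\m_0)\omega_0^{[m]}$ is a fixed smooth finite measure and that any $\psi\in\E^1_\T(X,\bom)$ belongs to $L^1(X,\bom)$ with $d_1$-convergence implying $L^1(X,\bom)$-convergence (a standard property of the Darvas metric completion). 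Hence the first term is $d_1$-continuous on $\E^1_\T(X,\bom)$, and $\cJ_v$ inherits a continuous extension.

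To handle an \emph{arbitrary} smooth weight $v$ on $\Pol$, choose a constant $C>0$ with $v+C>0$ and $C>0$ both positive on $\Pol$, and define the extensions by
\[\cI_v:=\cI_{v+C}-\cI_C,\qquad \cJ_v:=\cJ_{v+C}-\cJ_C,\]
where the right-hand sides are the extensions obtained in the previous step. By the linearity of $\cI_v,\cJ_v$ in $v$ on $\cK_\T(X,\bom)$ (clear from Definition~\ref{d:I,J,v}), these extensions are independent of the choice of $C$ and agree with the original definitions on $\cK_\T(X,\bom)$. The linearity in $v$ then passes to the extensions because the approximation of $\psi\in\E^1_\T(X,\bom)$ by smooth $\varphi_n\in\cK_\T(X,\bom)$ with $d_1(\varphi_n,\psi)\to 0$ commutes with taking linear combinations in $v$ on the dense subspace.

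For the uniform continuity in $v$ in the $C^0(\Pol)$-topology, fix $\psi\in\E^1_\T(X,\bom)$ and pick $\varphi_n\in\cK_\T(X,\bom)$ with $d_1(\varphi_n,\psi)\to 0$. By Lemma~\ref{l:I,J-uniform},
\[\bigl|\cI_v(\varphi_n)-\cI_w(\varphi_n)\bigr|\le\|v-w\|_{C^0(\Pol)}\bigl(\|\varphi_n\|_{L^1(X,\bom)}+\cJ_1(\varphi_n)\bigr),\]
and analogously for $\cJ_v,\cJ_w$. Passing to the limit using the continuous extensions established above, together with the $d_1$-continuity of $\|\cdot\|_{L^1(X,\bom)}$ and of $\cJ_1$ on $\E^1_\T(X,\bom)$, yields the same bound with $\varphi_n$ replaced by $\psi$, which is precisely the desired uniform continuity. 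No genuine obstacle arises: the only mild point to verify carefully is that $d_1$-convergence controls both $\|\cdot\|_{L^1(X,\bom)}$ and $\cJ_1$, but this is a standard consequence of the Darvas theory together with Lemma~\ref{Han-Li}.
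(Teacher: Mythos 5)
Your proposal is correct and follows essentially the same route as the paper: the $d_1$-Lipschitz bound for $\cI_v$ from Lemma~\ref{l:I-Lipschitz}, the splitting of $\cJ_v$ into the fixed-measure term (controlled on $d_1$-bounded sets via \cite[Cor.~5.7]{Da}) plus $\cI_v$, and the uniform continuity in $v$ obtained by passing Lemma~\ref{l:I,J-uniform} to the limit. Your explicit reduction of a general (possibly sign-changing) smooth weight to positive weights via $\cI_v=\cI_{v+C}-\cI_C$ is a small but welcome extra step of care, since Lemma~\ref{l:I-Lipschitz} is stated only for $v>0$ while the extension is claimed for arbitrary smooth $v$.
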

\begin{proof} $\cI_{v}$ is $d_1$-Lipschitz by  Lemma~\ref{l:I-Lipschitz};  for $\cJ_v$ we get from Definition~\ref{d:I,J,v}
\[ 
\Big| \cJ_{v}(\varphi_0) - \cJ_v(\varphi_1) \Big| \le \int_X |\varphi_0-\varphi_1|\omega_0^{[m]} + \Big| \cI_{v}(\varphi_0) - \cI_v(\varphi_1) \Big|. \]
Combining the above inequality with  Lemma~\ref{l:I-Lipschitz} and  \cite[Cor.~5.7]{Da},  there exists a uniform positive constant $C=C(X, \bom, v)$ and, for any fixed positive real number $R>0$,  an increasing continuous function $F_R: \R_+ \to \R_+, F(0)=0$, 
defined in terms of  $(X, \bom, R)$, such that for any $\varphi_0, \varphi_1 \in \cK_{\T}(X, \bom)$ with  $d_1(0, \varphi_i) \le R$, we have 
\[\Big| \cJ_{v}(\varphi_0) - \cJ_v(\varphi_1) \Big| \le C d_1(\varphi_0, \varphi_1) + F_R(d_1(\varphi_0, \varphi_1)), \]
showing that $\cJ_v$ is locally uniform continuous on $(\cK_{\T}(X, \bom), d_1)$ and thus extends continuously to $(\E^1_{\T}(X, \bom), d_1)$.

The $v$-linearity of $\cI_{v}$  and $\cJ_v$ is clear by continuity, see Remark~\ref{r:I,J,v}.   The continuity with respect to $v$ follows from the  continuous extensions of the inequalities in Lemma~\ref{l:I,J-uniform}, noting that we have already shown that $\cJ_{v}, \cJ_{w}, \cJ,  \cI_{v}, \cI_{w}$ all extend continuously,  whereas $||\cdot||_{L^1(X, \bom)}$ extends continuously by  \cite[Thm.~5.8]{Da}.  \end{proof}

\begin{cor}\label{K0-completion} The metric completion of $(\cK_\T(X,\bom)\cap\cI_v^{-1}(0),d_1)$ is the complete geodesic metric space $(\E^{1}_\T(X,\bom)\cap \cI_v^{-1}(0),d_1)$. \end{cor}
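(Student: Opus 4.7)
The plan is to verify three properties of the candidate limit space $\E^1_\T(X,\bom)\cap\cI_v^{-1}(0)$: that it is complete, that $\cK_\T(X,\bom)\cap\cI_v^{-1}(0)$ is $d_1$-dense inside it, and that it is a geodesic space for the induced distance $d_1$. Completeness is immediate: Lemma~\ref{l:I,J-extension} gives the $d_1$-continuity of the extended functional $\cI_v$ on $\E^1_\T(X,\bom)$, so $\cI_v^{-1}(0)$ is a closed subset of the $d_1$-complete space $(\E^1_\T(X,\bom),d_1)$ (by \cite{Da}) and hence complete in the induced distance.

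For density, the first substep is to check that the weighted volume
\[
V_v:=\int_X v(\m_\varphi)\,\omega_\varphi^{[m]}
\]
is independent of $\varphi\in\cK_\T(X,\bom)$. Differentiating along a smooth path, the ``weight variation'' term $\int_X\langle dv(\m_\varphi),d^c\dot\varphi\rangle\omega_\varphi^{[m]}$ and the ``measure variation'' term $\int_X v(\m_\varphi)\,dd^c\dot\varphi\wedge\omega_\varphi^{[m-1]}$ cancel after integration by parts, thanks to the Hamiltonian identity $d\m_\varphi^{\xi}=-\iota_{\xi}\omega_\varphi$. It follows that $\cI_v(\varphi+c)=\cI_v(\varphi)+cV_v$. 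Given $\psi\in\E^1_\T(X,\bom)\cap\cI_v^{-1}(0)$ and a $d_1$-approximating sequence $\varphi_j\in\cK_\T(X,\bom)$ (furnished by the Darvas theorem), I would then pass to the shifted potentials $\tilde\varphi_j:=\varphi_j-\cI_v(\varphi_j)/V_v\in\cK_\T(X,\bom)\cap\cI_v^{-1}(0)$. Since $d_1(\tilde\varphi_j,\varphi_j)=|\cI_v(\varphi_j)|\,\V(X,\bom)/V_v$ and $\cI_v(\varphi_j)\to\cI_v(\psi)=0$ by $d_1$-continuity of $\cI_v$, the sequence $\tilde\varphi_j$ still converges to $\psi$ in $d_1$, giving density.

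The remaining, and in my view the main, step is geodesicity. Given $\psi_0,\psi_1\in\E^1_\T(X,\bom)\cap\cI_v^{-1}(0)$, the natural candidate is the unique finite energy $d_1$-geodesic $\psi_t$ joining them in $\E^1_\T(X,\bom)$, and it suffices to show $\cI_v(\psi_t)\equiv 0$. For a smooth geodesic $\varphi_t$, i.e.\ a solution of $\ddot\varphi_t=|d\dot\varphi_t|^2_{\omega_{\varphi_t}}$, a direct computation using the geodesic equation, integration by parts, and again the identity $d\m_{\varphi_t}^{\xi}=-\iota_\xi\omega_{\varphi_t}$ produces the cancellation
\[
\frac{d^{2}}{dt^{2}}\cI_v(\varphi_t)=0,
\]
the point being that the ``cross'' term $\int_X \dot\varphi_t\,dv(\m_{\varphi_t})\wedge d^c\dot\varphi_t\wedge\omega_{\varphi_t}^{[m-1]}$ matches exactly the ``weight variation'' term $\int_X\dot\varphi_t\frac{d}{dt}v(\m_{\varphi_t})\,\omega_{\varphi_t}^{[m]}$. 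Thus $\cI_v$ is affine along smooth geodesics, and the general finite energy case follows by the same $\varepsilon$-regularization scheme as in the proof of Proposition~\ref{p:totally-geodesic}, combined with the $d_1$-continuity of $\cI_v$. I expect this affineness identity to be the subtle point of the argument: although the calculation is short, it is not apparent from the definition of $\cI_v$ and relies on a delicate cancellation specific to Hamiltonian torus actions, rather than simply mimicking the classical unweighted case.
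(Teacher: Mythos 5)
Your proposal is correct and follows essentially the same route as the paper's (very terse) proof, which simply invokes the linearity of $\cI_v$ along finite energy geodesics (``similarly to [DaRu, Lemma 5.2]'') together with the $d_1$-continuity of $\cI_v$ from Lemma~\ref{l:I,J-extension} to conclude that $\E^{1}_\T(X,\bom)\cap\cI_v^{-1}(0)$ is a closed, totally geodesic subspace. Your cancellation computation for $\tfrac{d^2}{dt^2}\cI_v(\varphi_t)$ along smooth geodesics, and the density argument via the normalization shift using the invariance of $\int_X v(\m_\varphi)\omega_\varphi^{[m]}$, are exactly the details the paper leaves implicit.
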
 \begin{proof} Similarly to \cite[Lemma 5.2]{DaRu}, one can show that  $\cI_v$ is linear along finite energy geodesics.  As $\cI_v:\E^{1}_\T(X,\bom)\to\R$ is $d_1$-continuous, it follows that $\E^{1}_\T(X,\bom)\cap \cI_v^{-1}(0)$ is a $d_1$-closed subspace. \end{proof}

\begin{lemma}\label{ext-E^rho}
Let $v$ be a smooth weight function and $\rho$ a $\T$-invariant closed $(1,1)$-form. The functional $\ce^{\rho}_v:\cK_\T (X,\bom)\to\R$ extends to a $d_1$-continuous functional on $\E^{1}_\T(X,\bom)$,  which is bounded on $d_1$-bounded subsets of $\E^{1}_\T(X,\bom)$. Furthermore, the extended functional is linear and uniformly continuous in $v$, in the $C^{1}(\Pol)$ topology.
\end{lemma}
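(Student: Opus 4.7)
The strategy is to reduce to the unweighted case via the semi-simple principal $(X,\T)$-fibration construction developed in Section~\ref{s:geometric}, following the same philosophy used earlier to extend $\cI_v$ and $\cJ_v$. By writing $\rho = \rho_1 - \rho_2$ as a difference of $\T$-invariant K\"ahler forms, and using the linearity of $\ce^{\rho}_v$ in $\rho$ that is evident from Definition~\ref{d:I,J,v}, one may assume at the outset that $\rho$ itself is K\"ahler. The bulk of the work is then carried out for weights of the polynomial form $\p(\x) = \prod_{a=1}^k(\langle p_a,\x\rangle + c_a)^{n_a}$ arising from such a fibration.

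For a polynomial weight $\p$ of this form, choose $(B_a,\omega_{B_a})$ to be any compact cscK Hodge factors with the prescribed dimensions $n_a$, form the semi-simple principal $(X,\T)$-fibration $Y\to B$ with connection data $(p_a,c_a)$, and lift $\rho$ to the K\"ahler form $\tilde\rho$ on $Y$ via \eqref{tilde-rho}. Lemma~\ref{l:I_p} then gives the identity $\V(B,\omega_B)\,(\ce^{\rho}_\p)^X = (\ce^{\tilde\rho})^Y$ on the image of the embedding $\cK_\T(X,\bom)\hookrightarrow \cK_\T(Y,\tilde\bom)$ of Corollary~\ref{c:bundle-compatible}. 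The unweighted twisted Aubin--Mabuchi functional $(\ce^{\tilde\rho})^Y$ is classically known (e.g.\ by Berman--Boucksom/Darvas) to extend $d_1^Y$-continuously to $\E^1_\T(Y,\tilde\bom)$ and to be uniformly bounded on $d_1^Y$-bounded subsets. Because $d_1^Y$ and $d_{1,\p}^X$ coincide on $\cK_\T(X,\bom)$ by Corollary~\ref{c:bundle-compatible}, and $d_{1,\p}^X$ is quasi-isometric to $d_1^X$ by Lemma~\ref{l:d_{1,v}}, the embedding extends to a Lipschitz map $\E^1_\T(X,\bom)\to\E^1_\T(Y,\tilde\bom)$ (pull back and use completeness of $Y$-side, noting that weak $C^{1,\bar 1}$-geodesics are preserved by Proposition~\ref{p:totally-geodesic}). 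Restricting the extension of $(\ce^{\tilde\rho})^Y$ along this embedding yields the desired extension of $\ce^{\rho}_\p$ on $X$, together with $d_1$-continuity and boundedness on $d_1$-bounded sets.

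The passage from polynomial weights of the specific product form above to arbitrary smooth weights $v$ proceeds via the linearity of $\ce^\rho_v$ in $v$ and the Stone--Weierstrass theorem. Varying the choices of $p_a$, $c_a$, $n_a$, one obtains all products of strictly positive affine functions on $\Pol$; finite linear combinations of these form an algebra of polynomials containing the coordinate functions on $\tor^*$, hence dense in $C^1(\Pol)$. To transfer continuity through this approximation, the key estimate is the uniform continuity of $\ce^\rho_v$ in $v\in C^1(\Pol)$ on $d_1$-bounded subsets of $\cK_\T(X,\bom)$: integrating the derivative formula of Definition~\ref{d:I,J,v} along a smooth path in $\cK_\T(X,\bom)$, and bounding the two resulting terms by $\|v-w\|_{C^0(\Pol)}\int|\dot\varphi|\,\rho\wedge\omega_\varphi^{[m-1]}$ and $\|v-w\|_{C^1(\Pol)}\,\|\mu_\rho\|_{L^\infty}\int|\dot\varphi|\,\omega_\varphi^{[m]}$ respectively, one obtains (using that $\rho$ is K\"ahler, so the first integrand is controlled by $d_{1,\rho}$, itself quasi-isometric to $d_1$ by Lemma~\ref{l:d_{1,v}} applied to the K\"ahler class of $\rho$) an estimate of the form
\[
\bigl|\ce^\rho_v(\varphi)-\ce^\rho_w(\varphi)\bigr|\le C_R\,\|v-w\|_{C^1(\Pol)}, \qquad d_1(0,\varphi)\le R.
\]
Combining this estimate with Step~2 defines $\ce^\rho_v$ unambiguously on $\E^1_\T(X,\bom)$ for arbitrary smooth $v$, and transfers both $d_1$-continuity and boundedness on $d_1$-bounded subsets, while linearity and $C^1$-uniform continuity in $v$ persist in the limit.

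The main obstacle I anticipate is the clean verification of the embedding $\E^1_\T(X,\bom)\hookrightarrow \E^1_\T(Y,\tilde\bom)$ at the level of completions, and in particular the compatibility of the fibration identity of Lemma~\ref{l:I_p} with passage to non-smooth potentials of finite energy. A secondary technical point is that the derivative estimate for $\ce^\rho_v - \ce^\rho_w$ genuinely involves $\rho\wedge\omega_\varphi^{[m-1]}$ rather than $\omega_\varphi^{[m]}$, so its control by $d_1$ requires either invoking the quasi-isometry of $d_{1,\rho}$ with $d_1$ or replacing the smooth path by a piecewise smooth path staying inside a $d_1$-ball and appealing to standard mass-comparison estimates in pluripotential theory.
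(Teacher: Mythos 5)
Your route is genuinely different from the paper's. The paper proves this lemma by a direct estimate: for $\varphi_0,\varphi_1\in\cK_\T(X,\bom)$ it integrates $(d\ce_v^\rho)$ along the \emph{linear} path $s\varphi_1+(1-s)\varphi_0$, which produces an explicit sum of mixed terms $\rho\wedge\omega_{\varphi_1}^{[j]}\wedge\omega_{\varphi_0}^{[m-1-j]}$ and $\omega_{\varphi_1}^{[j]}\wedge\omega_{\varphi_0}^{[m-j]}$ with coefficients $v_{j,k}(\m_{\varphi_0},\m_{\varphi_1})$, $(dv)_{j,k}(\m_{\varphi_0},\m_{\varphi_1})$ that are bounded on $\Pol\times\Pol$; using $-C\bom<\rho<C\bom$ and $\omega_{(\varphi_0+\varphi_1)/4}=\bom/2+\omega_{\varphi_0}/4+\omega_{\varphi_1}/4$ everything is absorbed into $C\int_X|\varphi_1-\varphi_0|\,\omega_{(\varphi_0+\varphi_1)/4}^{[m]}$, and the BDL machinery then gives local uniform $d_1$-continuity; the same computation with $v$ replaced by $v-w$ gives the $C^1(\Pol)$-continuity in the weight. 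Your proposal instead pushes the Han--Li fibration reduction (which the paper reserves for $\MA_v$ and the entropy term) onto $\ce_v^\rho$, using Lemma~\ref{l:I_p} plus Stone--Weierstrass. That is a coherent alternative and buys conceptual uniformity with the treatment of the other Chen--Tian terms, at the cost of the extra bookkeeping you yourself flag (the embedding $\E^1_\T(X,\bom)\hookrightarrow\E^1_\T(Y,\tilde\omega_0)$ at the level of completions, positivity of the lifted $\tilde\rho$, and $C^1$-density of the special polynomial algebra) — none of which is needed in the paper's two-page computation.

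There is, however, one step in your argument that fails as stated. In your uniform-continuity-in-$v$ estimate you propose to control $\int_X|\dot\varphi|\,\rho\wedge\omega_\varphi^{[m-1]}$ by invoking ``Lemma~\ref{l:d_{1,v}} applied to the K\"ahler class of $\rho$.'' That lemma compares $d_{1,v}$ and $d_1$ for a \emph{weight function $v$ on the momentum polytope}, i.e.\ for measures of the form $v(\m_\varphi)\omega_\varphi^{[m]}$ with $\omega_\varphi\in[\bom]$; it says nothing about the mixed mass $\rho\wedge\omega_\varphi^{[m-1]}$, and the $d_1$-distance of the class $[\rho]$ lives on a different space of potentials altogether. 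The correct mechanism is exactly the mass comparison the paper uses, namely $\bom\wedge\omega_{\varphi_1}^{[j]}\wedge\omega_{\varphi_0}^{[m-1-j]}\le C\,\omega_{(\varphi_0+\varphi_1)/4}^{[m]}$ followed by the $d_1$-control of $\int|\varphi_1-\varphi_0|\,\omega_{(\varphi_0+\varphi_1)/4}^{[m]}$ from \cite[Prop.~4.4]{BDL} — your fallback remark about ``standard mass-comparison estimates'' is the right instinct, but once you write that comparison out you have essentially reproduced the paper's proof and the fibration detour becomes unnecessary. You should also note that for the $C^1$-dependence on $v$ the paper obtains the cleaner bound $|\ce_v^\rho(\varphi)-\ce_w^\rho(\varphi)|\le C\|v-w\|_{C^1(\Pol)}\int_X|\varphi|\,\omega_\varphi^{[m]}$ by taking $\varphi_0=0$ in the same computation, which is what extends to $\E^1_\T(X,\bom)$ by monotone approximation.
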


\begin{proof}
Following the proof of \cite[Prop.~4.4]{BDL}, we show that $\ce^{\rho}_v$ is locally uniformly $d_1$-continuous and bounded on $d_1$-bounded subsets of $\cK_\T(X,\bom)$.  Let $\varphi_{0},\varphi_1\in\cK_\T(X,\bom)$, we put $\varphi_s:=s\varphi_1+(1-s)\varphi_0$, $s\in[0,1]$ and compute
\begin{align}
\begin{split}\label{E1-E0}
\ce_{v}^{\rho}(\varphi_1)-&\ce_{v}^{\rho}(\varphi_0)=\int^{1}_0\frac{d}{ds}\ce_{v}^{\rho}(\varphi_s)ds\\
=&\int^{1}_0\int_X(\varphi_1-\varphi_0)\left(v(\m_{\varphi_s})\rho\wedge\omega_{\varphi_s}^{[m-1]}+\langle (dv)(\m_{\varphi_s}),\mu_\rho\rangle\omega_{\varphi_s}^{[m]} \right)ds\\
=&\int_X(\varphi_1-\varphi_0)\sum_{j=0}^{m-1}v_{j,m-1}(\mu_{\varphi_0},\mu_{\varphi_1})\rho\wedge\omega_{\varphi_1}^{[j]}\wedge\omega_{\varphi_0}^{[m-j-1]}\\
&+\int_X(\varphi_1-\varphi_0)\sum_{j=0}^{m} \langle (dv)_{j,m}(\m_{\varphi_0},\m_{\varphi_1}),\m_\rho\rangle\omega_{\varphi_1}^{[j]}\wedge\omega_{\varphi_0}^{[m-j]}
\end{split}
\end{align}
where $v_{j,k}(\m_0,\m_1)$, $(dv)_{j,k}(\m_{0},\m_{1})$ are defined on $\Pol\times\Pol$ by
\[\begin{split}
v_{j,k}(\m_0,\m_1) :=&\int_{0}^{1} s^{j}(1-s)^{k-j} v(s\m_1+(1-s)\m_0),\\
(dv)_{j,k}(\m_{0},\m_{1})=&\int_{0}^{1} s^{j}(1-s)^{k-j} (dv)(s\m_1+(1-s)\m_0).
\end{split}
\] 
Using the computation \eqref{E1-E0}, we get
\begin{align}
\begin{split}\label{A(E1-E0)}
|\ce_{v}^{\rho}(\varphi_1)-\ce_{v}^{\rho}(\varphi_0)|\leq & C\int_X|\varphi_1-\varphi_0|\sum_{j=0}^{m-1}\bom\wedge\omega_{\varphi_1}^{[j]}\wedge\omega_{\varphi_0}^{[m-j-1]}\\
&+C\int_X|\varphi_1-\varphi_0|\sum_{j=0}^{m} \omega_{\varphi_1}^{[j]}\wedge\omega_{\varphi_0}^{[m-j]}\\
\leq & C\int_X|\varphi_1-\varphi_0|\omega_{\frac{\varphi_0+\varphi_1}{4}}^{[m]} 
\end{split}
\end{align}
in the first inequality we use that the functions $\langle (dv)_{j,k}(\m_{\varphi_0},\m_{\varphi_1}),\mu_\rho\rangle$ and $v_{j,k}(\m_{\varphi_0},\m_{\varphi_1})$ are bounded on $\Pol\times \Pol$ and $-C\bom<\rho<C\bom$ for some constant $C>1$ and in the second inequality we use the observation $\omega_{\frac{\varphi_0+\varphi_1}{4}}=\bom/2+\omega_{\varphi_0}/4+\omega_{\varphi_1}/4$. Using the estimate \eqref{A(E1-E0)} we can show, similarly to \cite[Prop.~4.4]{BDL}, that for any $R>0$ there is an increasing continuous function $F_R:\R\to\R$ with $F_R(0)=0$ such that
\[|\ce_{v}^{\rho}(\varphi_1)-\ce_{v}^{\rho}(\varphi_0)|\leq F_R(d_1(\varphi_0,\varphi_1))\]
for any $\varphi_0,\varphi_1\in\cK_\T(X,\bom)\cap\{\varphi,\,\,d_1(0,\varphi)<R\}$. It follows that $\ce^{\rho}_v$ extends to a $d_1$-continuous functional on $\E^{1}_\T(X,\bom)$ which is bounded on $d_1$-bounded subsets of $\E^{1}_\T(X,\bom)$. 

For the last statement, let $v,w$ be two (smooth) positive weight functions and $\varphi\in\cK_\T(X,\bom)$. Taking $\varphi_1=\varphi$ and $\varphi_0=0$ in the computation \eqref{E1-E0} 
\[
\begin{split}
\ce_{v}^{\rho}(\varphi)=&\int_X\varphi\sum_{j=0}^{m-1}v_{j,m-1}(\mu_{0},\mu_{\varphi})\rho\wedge\omega_{\varphi}^{[j]}\wedge\omega_0^{[m-j-1]}\\
&+\int_X\varphi\sum_{j=0}^{m} \langle (dv)_{j,m}(\m_{0},\m_{\varphi}),\mu_{\rho}\rangle\omega_{\varphi}^{[j]}\wedge\omega_0^{[m-j]}
\end{split}
\]
Let $C>1$ such that $-C\bom<\rho<C\bom$, using the above formula we obtain
\[
\begin{split}
|\ce_{v}^{\rho}(\varphi)-\ce_{w}^{\rho}(\varphi)|=&|\ce_{v-w}^{\rho}(\varphi)|\\
\leq&C\int_X|\varphi|\sum_{j=0}^{m-1}|(v-w)_{j,m-1}(\mu_{0},\mu_{\varphi})|\omega_{\varphi}^{[j]}\wedge\omega_0^{[m-j]}\\
&+C\int_X|\varphi|\sum_{j=0}^{m} |\langle (d(v-w))_{j,m}(\m_{0},\m_{\varphi}),\mu_{\rho}\rangle|\omega_{\varphi}^{[j]}\wedge\omega_0^{[m-j]}\\
\leq& C\parallel v-w\parallel_{C^{1}(\Pol)}\int_X\sum_{j=0}^{m}|\varphi|\omega_{\varphi}^{[j]}\wedge\omega_0^{[m-j]}\\
\leq&C\parallel v-w\parallel_{C^{1}(\Pol)} \int_X|\varphi|(2\bom+dd^{c}\varphi)^{[m]}\\
\leq&C\parallel v-w\parallel_{C^{1}(\Pol)}\int_X|\varphi|\omega_\varphi^{[m]}.
\end{split}
\]
Using approximation by decreasing sequences in $\cK_\T(X,\bom)$, the above estimate holds for $\E^{1}_\T(X,\bom)$.
\end{proof}

Following Berman--Witt-Nystr\"om \cite{B-N} and the recent work of Han--Li \cite{HL},  we now define the  extension of weighted Monge--Ampère measures to the space $\E_{\T}(X, \bom)$.
\begin{prop}~\label{weight-MA}
Let $v>0$ be a smooth weight function. For any $\varphi\in \cK_{\T}(X,\bom)$,  we let
\begin{equation*}
\MA_v(\varphi):=v(\m_\varphi)\omega^{[m]}_\varphi.
\end{equation*}
Then $\MA_v(\varphi)$ extends to a well-defined Radon measure defined  for any $\varphi\in \E_{\T}(X, \bom)$, such that,  for any decreasing sequence $(\varphi_j)_j$ of elements in $\cK_{\T}(X,\bom)$ converging to $\varphi$ (which exists by \cite{BK2007}),  we have  $\lim_{j\to \infty} \MA_v(\varphi_j) = \MA_v(\varphi)$.
\end{prop}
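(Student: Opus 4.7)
The plan is to reduce the problem first to polynomial weights and then, for such weights, to exploit the semi-simple principal $(X,\T)$-fibration construction from Section~\ref{s:geometric} in order to express $\MA_p(\varphi)$ as a (normalized) push-forward of an ordinary complex Monge--Amp\`ere measure on a larger manifold $Y$, where Bedford--Taylor--Cegrell theory applies directly. Linearity of $v \mapsto \MA_v$ then handles arbitrary polynomial weights, and Stone--Weierstrass gives the general smooth case.

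More concretely, I will first consider a polynomial weight of the form $p(\mu)=\prod_{a=1}^k(\langle p_a,\mu\rangle+c_a)^{n_a}$ with $p_a\in\Lambda$, $c_a\in\Z$, and $\langle p_a,\mu\rangle+c_a>0$ on $\Pol$. Taking the $B_a$'s to be positive Hodge K\"ahler--Einstein manifolds of complex dimension $n_a$ (e.g.\ $(\PP^{n_a},\omega_{\mathrm{FS}})$) and forming the associated principal $\T$-bundle as in Remark~\ref{r:Chern-class}, I obtain a semi-simple principal $(X,\T)$-fibration $\pi_B:Y\to B$ with a distinguished bundle-compatible K\"ahler class $[\tilde\omega_0]$. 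By Lemma~\ref{l:Y-embedding} and Proposition~\ref{p:totally-geodesic}, the inclusion $\cK_\T(X,\bom)\hookrightarrow\cK_\T(Y,\tilde\omega_0)$ extends naturally to an inclusion $\E_\T(X,\bom)\hookrightarrow\E_\T(Y,\tilde\omega_0)$ (one checks directly that $\varphi\in\PSH_\T(X,\bom)$ pulls back to $\pi_Y^*\varphi\in\PSH_\T(Y,\tilde\omega_0)$ of full mass, using \eqref{Z-measure} applied to a decreasing smooth approximation). For such a $\varphi\in\E_\T(X,\bom)$, the classical non-pluripolar product on $Y$ gives a well-defined Radon measure $\tilde\omega_\varphi^{[m+n]}$, and I define
\[
\int_X f\,\MA_p(\varphi)\ :=\ \frac{1}{\V(B,\omega_B)}\int_Y f\,\tilde\omega_\varphi^{[m+n]}
\qquad\forall f\in C^0_\T(X)\subset C^0_\T(Y).
\]
The formula \eqref{measures} ensures this agrees with $p(\mu_\varphi)\omega_\varphi^{[m]}$ when $\varphi$ is smooth; the Riesz representation theorem then upgrades it to a Radon measure on $X$ for every $\varphi\in\E_\T(X,\bom)$. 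Continuity along decreasing sequences $\varphi_j\searrow\varphi$ in $\cK_\T(X,\bom)$ follows from the analogous classical property of $\tilde\omega_{\varphi_j}^{[m+n]}$ on $Y$, integrated against $\T_Z$-basic test functions on $Z$.

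Next, by linearity in $v$ I extend the construction to any $\Z$-linear combination of such products; by shifting with large positive constants one checks that the $\R$-span of the products $\prod_a(\langle p_a,\mu\rangle+c_a)^{n_a}$ (with the positivity constraint on $\Pol$) coincides with the algebra of all polynomials on $\tor^*$. To pass to a general smooth weight $v>0$, I apply the Stone--Weierstrass theorem to approximate $v$ by polynomials $p_j\to v$ in $C^0(\Pol)$. For smooth $\varphi$ one has the pointwise estimate $|\MA_{p_j}(\varphi)-\MA_{p_k}(\varphi)|\leq \|p_j-p_k\|_{C^0(\Pol)}\,\omega_\varphi^{[m]}$, and the total mass $\int_X\omega_\varphi^{[m]}=\V(X,\bom)$ is fixed; by the continuity in the polynomial case just established, this bound propagates to the extended measures on $\E_\T(X,\bom)$. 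Hence $(\MA_{p_j}(\varphi))_j$ is Cauchy in total variation and converges to a Radon measure which I take as the definition of $\MA_v(\varphi)$; the limit is independent of the approximating sequence. The continuity along decreasing sequences $\varphi_j\searrow\varphi$ in $\cK_\T(X,\bom)$ is inherited from the polynomial case by a standard $\varepsilon/3$ argument.

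The main obstacle I expect is Step~2: verifying that the push-forward identity \eqref{measures}, which is an integration formula valid for smooth $\varphi$, indeed extends to arbitrary $\varphi\in\E_\T(X,\bom)$ via the inclusion $\E_\T(X,\bom)\subset\E_\T(Y,\tilde\omega_0)$. The extension of the inclusion itself requires checking that the pull-back $\pi_Y^*\varphi$ of a full-mass $\bom$-psh function on $X$ is of full mass on $(Y,\tilde\omega_0)$, which can be done using the same $\varepsilon$-regularization $\Psi^\varepsilon$ appearing in the proof of Proposition~\ref{p:totally-geodesic} together with \eqref{total-measure}. Once this is in place, the polynomial case is a clean application of Bedford--Taylor--Cegrell theory, and the density argument only uses the elementary fact that $\omega_\varphi^{[m]}$ has uniformly bounded mass.
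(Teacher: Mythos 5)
Your proposal follows essentially the same route as the paper's proof: define $\MA_p(\varphi)$ for polynomial weights $p(\mu)=\prod_a(\langle p_a,\mu\rangle+c_a)^{n_a}$ by realizing it as the normalized push-forward of the Monge--Amp\`ere measure of a bundle-compatible potential on a semi-simple principal $(X,\T)$-fibration over a product of projective spaces, extend by linearity, and pass to general smooth $v$ by uniform polynomial approximation using the total-variation estimate $|\MA_{p}(\varphi)-\MA_{q}(\varphi)|\le\|p-q\|_{C^0(\Pol)}\MA(\varphi)$. The only cosmetic difference is that the paper defines the measure on $Y$ as the limit along a decreasing smooth approximation (citing Guedj--Zeriahi for well-definedness) rather than invoking the non-pluripolar product directly, which amounts to the same thing for full-mass potentials.
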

\begin{proof}
The result is established in \cite{B-N, HL} for $\bom\in \alpha=c_1(L)$ a K\"ahler Hodge class on a projective variety $X$. The method of Han--Li \cite[Prop.~2.2]{HL}, which uses the  semi-simple principle fibration construction  and polynomial approximations,  extends to the case of arbitrary K\"ahler class $\alpha =[\bom]$.   Below we give details of this construction,  for Reader's convenience.

Let $\varphi\in \E_{\T}(X,\bom)$. Following the proof of \cite[Prop. 2.2]{HL}, we first define $\MA_p(\varphi)$ for a positive polynomial  weight of the form $p(\mu):=\prod_{a=1}^{k}(\langle p_a,\mu\rangle+c_a)^{n_a}$,  and extend the definition linearly on $p$ for finite sums of such polynomials.  We can then use the  Bernstein approximation theorem of an arbitrary positive $v$  with polynomials of the above form in order to obtain $\MA_v(\varphi)$. 

We start with a semi-simple principal $(X, \T)$-fibration $Y$ (see Section~\ref{s:geometric}) with corresponding polynomial weight $p(\mu):=\prod_{a=1}^{k}(\langle p_a,\mu\rangle+c_a)^{n_a}$ (see \eqref{p-weight}). As  the choice of the base $B= B_1 \times \cdots \times B_k$ does not matter, we can simply take (as in \cite{HL}) $B$ to be the product of projective spaces $(B_a, \omega_a)= (\PP^{n_a}, \omega_a)$ endowed with Fubini--Study metrics  of scalar curvatures $2n_a(n_a+1)$, and $P$ be the principal  ${\rm U}(1)^r$ bundle over $B$, obtained  from  the tensor products $P_i$ of  (the pull-backs to $B$ of) the natural principle ${\rm U}(1)$-bundles of degrees $p_{ai}$ over $\PP^{n_a}$ (see Remark~\ref{r:Chern-class}).

 Using \cite[Thm.~1]{BK2007}, there is a decreasing sequence $\varphi_{j}\in \PSH_{\T}(X,\bom)\cap C^{\infty}(X)=\mathcal{K}_{\T}(X,\bom)$ converging towards $\varphi$. By Lemma \ref{l:Y-embedding} we have $\varphi_{j}\in \mathcal{K}_{\T}(Y,\tilde \omega_0)$ and, by \eqref{measures},  for any $\T_X$-invariant continuous function $f$ on $X$ we have
\[
\int_X f  p(\m_{\varphi_{j}})(\bom+d_Xd^{c}_X\varphi_{j})^{[m]}=\frac{1}{\vol(B,\omega_B)}\int_Y f (\tilde \omega_0+d_Yd^{c}_Y\varphi_{j})^{[m+n]}.\]
Passing to the limit in both sides of the above equation, we can define $\MA^{X}_p(\varphi)$ on $\T$-invariant continuous functions $f$ by
\begin{equation}\label{MA-f-T}
\int_X f \MA^{X}_p(\varphi):=\underset{j\to\infty}{\lim}\frac{1}{\vol(B,\omega_B)}\int_Y f (\tilde{\omega}_0+d_Yd^{c}_Y\varphi_{j})^{[m+n]}
\end{equation}
Notice that by \cite[Thm.~1.9]{GZ} the limit  exists and is well-defined on $Y$ (independent of the chosen sequence).

For a continuous function $f$ on $X$ which is not necessarily $\T_X$-invariant, we define
\[\int_X f \MA^{X}_p(\varphi):=\int_X f^{\T} \MA^{X}_p(\varphi)\]
where $f^{\T}$ is the $\T_X$-invariant function given by average of $f$ over the $\T_X$-action. It follows that $\MA^{X}_p(\varphi)$ is a well-defined Radon measure by Riesz representation theorem. 

We can extend the above definition  by linearity in $p$ on polynomials  which are   linear combinations with positive coefficients of polynomials of the above special form:  thus, for $\varphi\in\PSH_{\T}(X,\bom)$ and for two polynomials $p,q$ on $\Pol$, we will have
\begin{equation}\label{MA-uniform}
\left|\int_X f\MA^{X}_p(\varphi)-\int_X f\MA^{X}_q(\varphi)\right|\leq \parallel p-q\parallel_{C^{0}(\Pol)}\int_X |f|\MA^{X}(\varphi)\end{equation}
for any $f\in C^{0}(X)$.

For an arbitrary smooth positive function $v$ on $\Pol$,  we can approximate $v$ in $C^0(\Pol)$ by polynomials $p_i$ as above (e.g. by using Bernstein's Approximation Theorem)  and thus,  for any continuous function $f$,  the limit
 \[\underset{i\to\infty}{\lim}\underset{j\to\infty}{\lim}\int_X f\MA^{X}_{p_i}(\varphi_{j})\] 
 exists  independently of the chosen approximation.   We then define
\[\int_X f\MA^{X}_v(\varphi):=\underset{i\to\infty}{\lim}\underset{j\to\infty}{\lim}\int_X f\MA^{X}_{p_i}(\varphi_{j}).\]
By the Riesz representation theorem,  $\MA^{X}_v(\varphi)$ is a well-defined Radon measure. 
\end{proof}

\begin{rem}
Notice that for any $\varphi \in \E_\T(X,\bom)$,  the measure $\MA_v(\varphi)$ is absolutely continuous with respect to $\MA(\varphi)$ since $v$ is bounded on $\Pol$. In particular, for any positive weight $v$, we have that 
\[\E^1_{\T}(X, \bom) = \left\{ \varphi \in \E_{\T}(X, \bom) \, \Big| \, \int_X |\varphi | \MA_v(\varphi) < \infty \right\}. \]
\end{rem}

\begin{lemma}\label{lim-MA-E1}
Let $v$ be a positive weight function and $\varphi_j,\varphi\in\E_{\T}^1(X, \bom)$ such that $d_1(\varphi_j,\varphi)\rightarrow 0$. Then, $\MA_v(\varphi_j)\rightarrow\MA_v(\varphi)$ weakly.
\end{lemma}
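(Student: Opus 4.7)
\medskip

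\noindent\textbf{Proof plan.} The strategy is to reduce the statement to the unweighted case via the semi-simple principal fibration construction of Section~\ref{s:geometric}, as used in the proof of Proposition~\ref{weight-MA}, and then to pass from polynomial weights to a general positive smooth $v$ by Bernstein-type approximation.

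\medskip

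First, I would handle the case of a polynomial weight of the special form $p(\mu)=\prod_{a=1}^k(\langle p_a,\mu\rangle+c_a)^{n_a}$ with $c_a$ chosen so that $\langle p_a,\mu\rangle+c_a>0$ on $\Pol$. Pick a semi-simple principal $(X,\T)$-fibration $Y\to B=\prod_a \PP^{n_a}$ realizing the weight $p$, as in the proof of Proposition~\ref{weight-MA}. By Lemma~\ref{l:Y-embedding} the inclusion $\cK_\T(X,\bom)\hookrightarrow \cK_\T(Y,\tilde\omega_0)$ is well-defined, and by Corollary~\ref{c:bundle-compatible} it satisfies $d_1^Y=\V(B,\omega_B)\,d_{1,p}^X$ on smooth potentials. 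Since $p$ is bounded above and below by positive constants on $\Pol$, Lemma~\ref{l:d_{1,v}} gives that $d_{1,p}^X$ and $d_1^X$ are quasi-isometric. Using density of $\cK_\T$ in $\E^1_\T$ and the construction \eqref{MA-f-T} of $\MA_p$ via decreasing smooth approximations, the embedding extends to $\E^1_\T(X,\bom)\hookrightarrow \E^1_\T(Y,\tilde\omega_0)$ and the quasi-isometry persists, so $d_1^X(\varphi_j,\varphi)\to 0$ implies $d_1^Y(\varphi_j,\varphi)\to 0$. The standard result of Darvas~\cite{Da} gives $\MA^Y(\varphi_j)\to\MA^Y(\varphi)$ weakly on $Y$. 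For any $f\in C^0(X)$, we may replace it by its $\T_X$-average $f^\T$ (both measures $\MA_p(\varphi_j),\MA_p(\varphi)$ are $\T$-invariant), and then $f^\T$ lifts to a continuous $\T_Y$-invariant function on $Y$ as in Section~\ref{s:functions}. The identity \eqref{measures}, valid for smooth potentials and extended by the defining formula \eqref{MA-f-T}, gives
\[
\int_X f\,\MA_p(\varphi_j)=\tfrac{1}{\V(B,\omega_B)}\int_Y f^\T\,\MA^Y(\varphi_j)\longrightarrow \tfrac{1}{\V(B,\omega_B)}\int_Y f^\T\,\MA^Y(\varphi)=\int_X f\,\MA_p(\varphi),
\]
which is the weak convergence $\MA_p(\varphi_j)\to \MA_p(\varphi)$. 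By linearity, this extends to any finite positive linear combination of such polynomials.

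\medskip

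Second, for a general positive smooth weight $v$ on $\Pol$, approximate $v$ uniformly on $\Pol$ by polynomials $p_i$ of the above type (Bernstein approximation, as in the proof of Proposition~\ref{weight-MA}). For $f\in C^0(X)$, decompose
\[
\int_X f\,\MA_v(\varphi_j)-\int_X f\,\MA_v(\varphi)=T_1^{i,j}+T_2^{i,j}+T_3^i,
\]
with $T_1^{i,j}=\int_X f(\MA_v-\MA_{p_i})(\varphi_j)$, $T_2^{i,j}=\int_X f\bigl(\MA_{p_i}(\varphi_j)-\MA_{p_i}(\varphi)\bigr)$, and $T_3^i=\int_X f(\MA_{p_i}-\MA_v)(\varphi)$. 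The estimate \eqref{MA-uniform} (extended by continuity to $\E^1_\T(X,\bom)$ and to polynomial-to-$C^0$ approximation of positive weights) yields
\[
|T_1^{i,j}|+|T_3^i|\le 2\|v-p_i\|_{C^0(\Pol)}\,\|f\|_\infty\,\V(X,\bom),
\]
using that the total mass of $\MA(\varphi_j),\MA(\varphi)$ equals $\V(X,\bom)$. Given $\eps>0$, choose $i$ large so that the right-hand side is $<\eps$ uniformly in $j$; then by the polynomial case, $T_2^{i,j}\to 0$ as $j\to\infty$, giving weak convergence.

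\medskip

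The main obstacles are the two technical extensions from smooth to $\E^1$-setting: (i) checking that the embedding $\cK_\T(X,\bom)\hookrightarrow \cK_\T(Y,\tilde\omega_0)$ of Corollary~\ref{c:bundle-compatible} extends isometrically to finite energy classes, so that $d_1^X$-convergence yields $d_1^Y$-convergence on $Y$ (this is where Proposition~\ref{p:totally-geodesic} plays a role, since $C^{1,\bar 1}$-geodesics on $X$ remain $C^{1,\bar 1}$-geodesics on $Y$, making the length functionals comparable under completion); and (ii) extending \eqref{MA-uniform} to $\E^1_\T(X,\bom)$, which is done by approximation with decreasing sequences in $\cK_\T$ and using that $p\mapsto \MA_p(\varphi)$ is $C^0(\Pol)$-continuous uniformly in $\varphi$ due to the uniformly bounded total mass.
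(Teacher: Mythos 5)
Your proposal is correct and follows essentially the same route as the paper: reduce to special polynomial weights via the semi-simple principal fibration, transfer the $d_1$-convergence to $Y$ through the quasi-isometric embedding and invoke Darvas's weak continuity of $\MA^Y$, then pass to general $v$ by Bernstein approximation together with the uniform estimate \eqref{MA-uniform}. The only cosmetic difference is that you fix the polynomial index $i$ first using the uniform mass bound $\int_X\MA(\varphi_j)=\V(X,\bom)$ before letting $j\to\infty$, whereas the paper takes the limits in the opposite order; both are valid.
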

\begin{proof}
Let $v(\mu)$ be a polynomial of the form $p(\mu):=\prod_{a=1}^{k}(\langle p_a,\mu\rangle+c_a)^{n_a}$, $\varphi_j\in \cK_{\T}(X, \bom)$, and $f$ any continuous $\T$-invariant function on $X$.  We then have by the construction in Section~\ref{s:geometric}
\[
\int_X f  p(\m_{\varphi_{j}})(\bom+d_Xd^{c}_X\varphi_{j})^{[m]}=\frac{1}{\vol(B,\omega_B)}\int_Y f (\tilde{\omega}_0+d_Yd^{c}_Y\varphi_{j})^{[m+n]}.\]
It follows that for each $\varphi_j\in \E_{\T}^1(X, \bom)$ (using an approximation with a decreasing sequence of smooth potentials \cite{BK2007}), we have
\[
\int_X f  \MA^{X}_p(\varphi_j)=\frac{1}{\vol(B,\omega_B)}\int_Y f \MA^{Y}(\varphi_j).\]
By \cite[Thm.~5]{Da},  $\MA^{Y}(\varphi_j)\to\MA^{Y}(\varphi)$ weakly as $j\to \infty$. It follows that
\[
\underset{j\to \infty}{\lim}\int_X f  \MA^{X}_p(\varphi_j)=\frac{1}{\vol(B,\omega_B)}\int_Y f \MA^{Y}(\varphi)=\int_X f  \MA^{X}_p(\varphi).\]
Using \eqref{MA-f-T}, we conclude that $\MA^{X}_p(\varphi_j)\to \MA^{X}_p(\varphi)$ weakly as $j\to \infty$.

For an arbitrary weight function $v\in C^{0}(\Pol)$,  we take a sequence of polynomials $p_{i}$ of the above form  converging to $v$ in $C^{0}(\Pol)$. For any continuous function $f$ on $X$, using \eqref{MA-uniform},  we have
\[\begin{split}
&\left|\int_X f  \MA_{v}(\varphi_j)-\int_X f  \MA_{v}(\varphi)\right|\\
& \leq \left|\int_X f  \MA_{v}(\varphi_j)-\int_X f  \MA_{p_i}(\varphi_j)\right|+\left|\int_X f  \MA_{p_i}(\varphi_j)-\int_X f  \MA_{p_i}(\varphi)\right|\\
& \, \, \, \, +\left|\int_X f  \MA_{p_i}(\varphi)-\int_X f  \MA_{v}(\varphi)\right|\\
&\leq \left|\int_X f  \MA_{p_i}(\varphi_j)-\int_X f  \MA_{p_i}(\varphi)\right|+\parallel p_i-v\parallel_{C^{0}(\Pol)}\left(\int_X|f|\MA(\varphi_j)+\int_X|f|\MA(\varphi)\right).
\end{split}
\]
Letting $j\to\infty$, we get
\[
\underset{j\to\infty}{\lim}\left|\int_X f  \MA_{v}(\varphi_j)-\int_X f  \MA_{v}(\varphi)\right|\leq  2\parallel p_i-v\parallel_{C^{0}(\Pol)}\int_X|f|\MA(\varphi)
\]
where we used the existence of the weak limits $\MA_{p_i}(\varphi_j)\to \MA_{p_i}(\varphi)$ and $\MA(\varphi_j)\to\MA(\varphi)$ as $j\to\infty$ (by \cite[Thm.~5]{Da}). Taking the limit  $i\to\infty$ in the above inequality, we obtain
\[
\underset{j\to\infty}{\lim}\left|\int_X f  \MA_{v}(\varphi_j)-\int_X f  \MA_{v}(\varphi)\right|=0.
\]
It follows that $\MA_{v}(\varphi_j)\to \MA_{v}(\varphi)$ weakly as $j\to\infty$.
\end{proof}

For a finite measure $\chi$ on $X$ we define the entropy of $\chi$ with respect to $\omega^{[m]}$ by
\[
\Ent(\omega^{[m]},\chi):=\int_X \log\left(\frac{\chi}{\omega^{[m]}}\right)\chi.
\]

In the following lemma we show that the elements of $\E^{1}_\T(X,\bom)$ can be approximated in the $d_1$ distance by smooth potentials with converging entropy of the corresponding weighted Monge--Amp\`ere measures. This is the weighted analogue of \cite[Lemma~3.1]{BDL}.

\begin{lemma}\label{Ent-approx}
If $v>0$,  $\E^{1}_\T(X,\bom) \ni\varphi\mapsto {\rm Ent}(\omega_0^{[m]},\MA_v(\varphi))$ is $d_1$ lsc. Furthermore, for any $\varphi\in \E^{1}_\T(X,\bom)$ there exist a sequence of smooth potentials $\varphi_j\in\cK_\T(X,\bom)$ such that $d_1(\varphi_j,\varphi)\to 0$ and ${\rm Ent}(\omega_0^{[m]},\MA_v(\varphi_j))\to {\rm Ent}(\omega_0^{[m]},\MA_v(\varphi))$ as $j\to \infty$.
\end{lemma}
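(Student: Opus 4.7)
\textbf{Proof plan for Lemma~\ref{Ent-approx}.} My plan is to reduce the weighted statement to the unweighted one established in \cite[Lemma~3.1]{BDL}, by exploiting the semi-simple principal $(X,\T)$-fibration construction of Section~\ref{s:geometric} together with a Stone--Weierstrass-type approximation on the weight, following the strategy of Han--Li~\cite{HL} and the approach used in Lemma~\ref{ext-E^rho} and Proposition~\ref{weight-MA}. I would first handle polynomial weights of the form $p(\mu)=\prod_{a=1}^{k}(\langle p_a,\mu\rangle+c_a)^{n_a}$ arising from a semi-simple principal $(X,\T)$-fibration $\pi_B:Y\to B$ with a bundle-compatible K\"ahler class $[\tilde\omega_0]$. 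Using \eqref{Z-measure}, the Radon--Nikodym derivative $\tilde\omega_\varphi^{[m+n]}/\tilde\omega_0^{[m+n]}$ equals $p(\mu_\varphi)\omega_\varphi^{[m]}/(p(\mu_0)\omega_0^{[m]})$ on the embedded subspace $\cK_\T(X,\bom)\subset\cK_\T(Y,\tilde\omega_0)$, so integration over $Y$ yields the identity
\begin{equation*}
\vol(B,\omega_B)\,\Ent(\omega_0^{[m]},\MA_p^X(\varphi))=\Ent(\tilde\omega_0^{[m+n]},\MA^Y(\varphi))+\vol(B,\omega_B)\int_X\log p(\mu_0)\,\MA_p(\varphi).
\end{equation*}
The last term on the right is $d_1$-continuous on $\E^1_\T(X,\bom)$ since $\log p(\mu_0)$ is a fixed bounded continuous function on $X$ and the weighted Monge--Amp\`ere measures converge weakly along $d_1$-convergent sequences by Lemma~\ref{lim-MA-E1}.

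From the identity above, and using that by Corollary~\ref{c:bundle-compatible} and Lemma~\ref{l:d_{1,v}} the distance $d_1^X$ on $\cK_\T(X,\bom)$ is quasi-isometric to the restriction of $d_1^Y$ (which extends to the completions), the $d_1^X$-lower semicontinuity of the $p$-weighted entropy on $X$ follows directly from the unweighted $d_1^Y$-lsc on $Y$ established in \cite[Lemma~3.1]{BDL}. For the approximation claim at polynomial weight, I would apply \cite[Lemma~3.1]{BDL} to $\varphi\in\E^1_\T(X,\bom)\subset\E^1_\T(Y,\tilde\omega_0)$ to obtain $\psi_j\in\cK_\T(Y,\tilde\omega_0)$ with $d_1^Y(\psi_j,\varphi)\to 0$ and $\Ent^Y(\psi_j)\to\Ent^Y(\varphi)$. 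Since such $\psi_j$ need not lie in the embedded subspace $\cK_\T(X,\bom)$, the key step is to produce from them a sequence $\varphi_j\in\cK_\T(X,\bom)$ with the same properties, e.g.\ by fibre-wise averaging along $\pi_B:Y\to B$ using the $\T$-equivariant identification of fibres with $X$, combined with Jensen's inequality applied to the convex entropy functional and the lsc already established.

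Finally, to extend to general smooth positive weights $v$, I would uniformly approximate $v$ on the compact polytope $\Pol$ by positive linear combinations of polynomials of the above product form (via Bernstein approximation, as in the proof of Proposition~\ref{weight-MA}), and exploit the uniform positivity $\inf_\Pol v>0$ together with the bounds on $\MA(\varphi)$ on $d_1$-bounded subsets of $\E^1_\T(X,\bom)$ to estimate $|\Ent(\omega_0^{[m]},\MA_v(\varphi))-\Ent(\omega_0^{[m]},\MA_{p_i}(\varphi))|$ uniformly in $\varphi$, passing to the limit in $i$ to conclude both assertions. The main obstacle I anticipate is the fibre-averaging step: one must show that averaging a smooth $\tilde\omega_0$-relative potential on $Y$ over the base $B$ produces a potential in $\cK_\T(X,\bom)$ along which both the weighted Monge--Amp\`ere measures and their entropies converge. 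Convexity of entropy gives the right direction for an inequality (average entropy dominates entropy of average), which combined with the lsc established separately should yield the required two-sided convergence.
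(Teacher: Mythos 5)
Your lower-semicontinuity argument is viable but roundabout: the paper gets it in one line from Lemma~\ref{lim-MA-E1} (weak convergence of $\MA_v$ along $d_1$-convergent sequences, already established for arbitrary $v>0$, not just polynomial weights) together with the lower semicontinuity of $\chi\mapsto\Ent(\omega_0^{[m]},\chi)$ under weak convergence of measures \cite[Prop.~3.1]{BB}. Your detour through the entropy identity on $Y$ and the quasi-isometry of distances is not wrong, but it is unnecessary, and it forces the Bernstein approximation step at the end, where your claimed uniform estimate of $|\Ent(\omega_0^{[m]},\MA_v(\varphi))-\Ent(\omega_0^{[m]},\MA_{p_i}(\varphi))|$ on $d_1$-bounded sets cannot hold as stated: by linearity of the entropy in the weight this difference is controlled by $\|v-p_i\|_{C^0(\Pol)}\int_X|\log(\MA(\varphi)/\omega_0^{[m]})|\,\MA(\varphi)$, and the latter integral is \emph{not} uniformly bounded on $d_1$-bounded subsets of $\E^1_\T(X,\bom)$.

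The genuine gap, which you correctly identify as the main obstacle but do not resolve, is the fibre-averaging step in the approximation claim. Averaging a potential $\psi_j\in\cK_\T(Y,\tilde\omega_0)$ over $B$ does produce an element of $\cK_\T(X,\bom)$ (the fibrewise restrictions are $\omega_0$-K\"ahler potentials and a convex combination of K\"ahler forms is K\"ahler), but neither of the two required convergences follows. Jensen's inequality applies to the convex functional $\chi\mapsto\Ent(\omega_0^{[m]},\chi)$ acting on \emph{measures}, whereas you need to compare $\MA_p(\bar\psi_j)$ with the fibrewise Monge--Amp\`ere measures of $\psi_j$; the Monge--Amp\`ere operator does not commute with averaging in the potential, so there is no inequality relating $\Ent(\omega_0^{[m]},\MA_p(\bar\psi_j))$ to an average of fibrewise entropies. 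Likewise $d_1^Y(\bar\psi_j,\varphi)\to 0$ does not follow from $d_1^Y(\psi_j,\varphi)\to 0$ by any soft argument. The paper avoids this entirely and works directly on $X$: when the entropy is finite it takes smooth positive $\T$-invariant approximants $g_j$ of the density $g=\MA_v(\varphi)/\omega_0^{[m]}$ in $L^1$ with $\int_X g_j\log g_j\,\omega_0^{[m]}\to\Ent(\omega_0^{[m]},\MA_v(\varphi))$ (as in the proof of \cite[Lemma~3.1]{BDL}), then solves the \emph{weighted} Calabi--Yau equation $\MA_v(\varphi_j)=c_j\,g_j\,\omega_0^{[m]}$ for $\varphi_j\in\cK_\T(X,\bom)$ using \cite[Prop.~3.7]{HL}, and identifies the $d_1$-limit of $(\varphi_j)$ with $\varphi$ via the compactness result \cite[Lemma~2.16]{HL}, Lemma~\ref{lim-MA-E1}, and the uniqueness theorem \cite[Thm.~2.18]{B-N} for weighted Monge--Amp\`ere measures. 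If you want to salvage your plan, you would need to replace fibre-averaging by this prescribed-measure argument (or prove a genuinely new statement about the behaviour of $\MA$ and $d_1$ under averaging along the fibration).
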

\begin{proof}
The proof follows closely the arguments of \cite[Lemma 3.1]{BDL}. By Lemma \ref{lim-MA-E1} and the fact that the entropy $\chi\mapsto \Ent(\omega_0^{m},\chi)$ is lsc on the space of finite measures,  with respect to the weak convergence of measures, (cf. \cite[Prop. 3.1]{BB}),  it follows that the entropy $\varphi\mapsto {\rm Ent}(\omega_0^{[m]},\MA_v(\varphi))$ is $d_1$ lsc. Let $\varphi\in \E^{1}_\T(X,\bom)$. If $\Ent(\omega_0^{[m]},\MA_v(\varphi))=\infty$ then any sequence $\varphi_j\in\cK_\T(X,\bom)$ such that $d_1(\varphi_j,\varphi)\to 0$ satisfies ${\rm Ent}(\omega_0^{[m]},\MA_v(\varphi))\to \infty$ as $j\to \infty$. We suppose that ${\rm Ent}(\omega_0^{[m]},\MA_v(\varphi))<\infty$ and we put $g:=\frac{\MA_v(\varphi)}{\omega_0^{[m]}}\geq 0$ the density function of the measure $\MA_v(\varphi)$. From the proof of \cite[Lemma 3.1]{BDL}, there exist a sequence of positive functions $g_j\in C_{\T}^{\infty}(X)$ such that $\parallel g-g_j\parallel_{L^{1}}\to 0$ and
\[
\int_X g_j\log g_j\, \omega_0^{[m]}\to {\rm Ent}(\omega_0^{[m]},\MA_v(\varphi)).
\]
Using \cite[Prop. 3.7]{HL}, we can find a smooth potential $\varphi_j\in \cK_{\T}(X,\bom)$ (which is unique up to adding a constant) such that $\MA_v(\varphi_j)=\left(\frac{\int_X v(\mu_0)\omega_0^{m}}{\int_X g_j\omega_0^{m} } \right) g_j\omega_0^{[m]}$. By \cite[Lemma 2.16]{HL}, up to a  passing to a subsequence of $\varphi_j$, there exists a $\psi\in \E_{\T}^{1}(X,\bom)$ such that $d_1(\psi,\varphi_j)\to 0$. Lemma \ref{lim-MA-E1} together with  $\parallel g-g_j\parallel_{L^{1}}\to 0$ gives $\MA_v(\psi)=\underset{j\to \infty}{\lim} \MA_v(\varphi_j)=\MA_v(\varphi)$. It follows that $\varphi=\psi$ (up to a constant) by \cite[Thm. 2.18]{B-N}. Thus, $d_1(\varphi,\varphi_j)\to 0$ and ${\rm Ent}(\omega_0^{[m]},\MA_v(\varphi_j))\to {\rm Ent}(\omega_0^{[m]},\MA_v(\varphi))$ as $j\to \infty$.
\end{proof}

Now we are in position to prove Theorem~\ref{M-extension}

\begin{proof}[\bf Proof of Theorem~\ref{M-extension}]
By Lemmas \ref{l:I,J-extension} and \ref{ext-E^rho} the functionals $\cI_{w}$ and $\ce_{v}^{\rho_\bom}$ extend as  continuous functionals on $\E_\T^{1}(X,\bom)$.  On the other hand,  the entropy $\varphi\mapsto {\rm Ent}(\omega_0^{m},\MA_v(\varphi))$ is $d_1$ lsc by Lemma~\ref{Ent-approx}).  Thus, the weighted Chen--Tian decomposition \eqref{Chen-Tian} gives rise to an extension of  the $(v,w)$-Mabuchi energy to a $d_1$ lsc functional $\cE_{v, w}:\E^{1}_\T(X,\bom)\to\R\cup \{\infty\}$. Notice that (using the continuity of $\cI_{w}$ and  $\ce_{v}^{\rho_\bom}$)  the restriction of $\cE_{v, w}:\E^{1}_\T(X,\bom)\to\R\cup \{\infty\}$ on the subspace $\cK_{\T}^{1,\bar{1}}(X,\bom)$ is equal to the weighted $(v,w)$-Mabuchi energy on that space defined in \cite[Cor.~3]{lahdili3}. By Lemma \ref{Ent-approx}, for $\varphi\in \E^{1}_\T(X,\bom)$  we can find a sequence $\varphi_j\in \cK_\T(X,\bom)$, such that $d_1(\varphi_j,\varphi)\to 0$ and 
\[\underset{j\to\infty}{\lim} \cE_{v,w}(\varphi_{j})=\cE_{v,w}(\varphi).\]
It follows that the extension $\cE_{v, w}:\E^{1}_\T(X,\bom)\to\R\cup \{\infty\}$ using \eqref{Chen-Tian} is the largest $d_1$ lsc extension of $\cE_{v, w}:\cK_\T(X,\bom)\to\R$.

\smallskip
We now show  that $t\mapsto\cE_{v, w}(\varphi_t)$, $t\in [0,1]$ is convex and continuous along the finite energy geodesics $\varphi_t\in \E_\T(X,\bom)$. We follow closely the arguments of \cite[Thm. 4.7]{BDL}. Let $\varphi_t\in \E^{1}_\T(X,\bom)$, $t\in [0,1]$ be a finite energy geodesic. Suppose that $t_0,t_1\in [0,1]$ with $t_0\leq t_1$. Using Lemma \ref{Ent-approx}, we can find sequences $\varphi^j_{t_0},\varphi^j_{t_1}\in \cK_\T(X,\bom)$, such that $d_1(\varphi^j_{t_0},\varphi_{t_0})\to 0$ and $d_1(\varphi^j_{t_1},\varphi_{t_1})\to 0$ and 
\[\underset{j\to\infty}{\lim} \cE_{v,w}(\varphi^j_{t_0})=\cE_{v,w}(\varphi_{t_0}),\quad\underset{j\to\infty}{\lim} \cE_{v,w}(\varphi^j_{t_1})=\cE_{v,w}(\varphi_{t_1}).\]
Let $t\mapsto \varphi_{t}^{j}\in \cK_{\T}^{1,\bar{1}}(X,\bom)$, $t\in[t_0,t_1]$ the $C^{1,\bar{1}}$-weak geodesic segment connecting $\varphi^j_{t_0},\varphi^j_{t_1}$. By \cite[Thm.~5]{lahdili3}, the function $[t_0,t_1]\ni t\mapsto \cE_{v,w}(\varphi^{j}_t)$ is convex. Since $\cE_{v, w}:\E^{1}_\T(X,\bom)\to\R\cup \{\infty\}$ is $d_1$ lsc we have
\[
\begin{split}
\cE_{v, w}(\varphi_t)\leq \underset{j\to\infty}{\liminf}\,\,\cE_{v, w}(\varphi^{j}_t)
\leq&\left(\frac{t-t_0}{t_1-t_0}\right) \underset{j\to\infty}{\lim}\,\,\cE_{v, w}(\varphi^{j}_{t_0})+\left(\frac{t_1-t}{t_1-t_0}\right) \underset{j\to\infty}{\lim}\,\,\cE_{v, w}(\varphi^{j}_{t_1})\\
\leq&\left(\frac{t-t_0}{t_1-t_0}\right) \cE_{v, w}(\varphi_{t_0})+\left(\frac{t_1-t}{t_1-t_0}\right) \underset{j\to\infty}{\lim}\,\,\cE_{v, w}(\varphi_{t_1})
\end{split}
\]
where the second inequality uses the  convexity of  $ t\mapsto \cE_{v,w}(\varphi^{j}_t)$.
Thus, $ t\mapsto \cE_{v,w}(\varphi_t)$ is convex and continuous up to the boundary of $[t_0,t_1]$ since it is $d_1$ lsc. 

\smallskip
It remains to show that  $\cE_{v, w}:\E^{1}_\T(X,\bom)\to\R\cup \{\infty\}$ is linear and continuous in $v, w$. For smooth potentials $\varphi\in \cK_\T(X,\bom)$, we have
\begin{equation}\label{Ent-lin-v}
\Ent(\omega_0^{[m]},\MA_v(\varphi))- \int_X \log(v(\m_0))v(\m_0)\omega_0^{[m]}=\int_X\log\left(\frac{\MA(\varphi)}{\omega_0^{m}}\right)\MA_v(\varphi),
\end{equation}
which is manifestly linear in $v$. For $\varphi\in \E^{1}_\T(X,\bom)$, the above expression is  still linear in $v$ by Proposition~\ref{weight-MA}.
Substituting back in \eqref{Chen-Tian},  and using Lemma~\ref{l:I,J-extension} and \ref{ext-E^rho}, it follows that $\cE_{v, w}:\E^{1}_\T(X,\bom)\to\R\cup \{\infty\}$ is linear in $v,w$. From these two lemmas, we know that $\ce^{\rho}_v:\E^{1}_\T(X,\bom)\to\R$ and $\cI_w:\E^{1}_\T(X,\bom)\to\R$ are uniformly continuous in $v,w$.  For the remaining entropy part,  we notice that if  $\varphi\in \E^{1}_\T(X,\bom)$, $v,v^{\prime}\in C^{\infty}(\Pol)$ and $f\in C^{0}(X)$, then
\[
\left|\int_X f\MA_v(\varphi)-\int_X f\MA_{v^{\prime}}(\varphi)\right|\leq \parallel v-v^{\prime}\parallel_{C^{0}(\Pol)}\int_X|f|\MA(\varphi),
\]
which can be obtained again by approximating $\varphi$ with a monotone sequence of smooth relative potentials and using Proposition~\ref{weight-MA}. It follows that $C^{\infty}(\Pol) \times \E^{1}_\T(X,\bom)\ni(v, \varphi)\mapsto \MA_v(\varphi)$ is uniformly continuous with respect to $v$ for the weak topology on the space of measures. Since the entropy $\chi\mapsto \Ent(\omega_0^{m},\chi)$ is lsc on the space of finite measures with respect to the weak convergence of measures \cite[Prop. 3.1]{BB},  the term ${\rm Ent}(\omega_0^{[m]},\MA_v(\varphi))$ is lsc with respect to $v$. The linearity with respect to $v$ in the RHS of \eqref{Ent-lin-v} shows that ${\rm Ent}(\omega_0^{[m]},\MA_v(\varphi))$ is in fact continuous with respect to $v$.
\end{proof}

We derive the following weighted version of the key compactness result from \cite{BBEGZ,BDL0}:
\begin{Theorem}\label{compact}
Any sequence $\varphi_j\in \E^{1}_\T(X,\bom)$ such that
\[
d_1(0,\varphi_j)\leq C,\quad \cE_{v,w}(\varphi_j)\leq C
\]
 admits a $d_1$-convergent subsequence. 
\end{Theorem}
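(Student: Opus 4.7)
\smallskip
\noindent
\textbf{Plan of proof.} I would deduce Theorem~\ref{compact} from the unweighted compactness theorem of Berman--Berndtsson--Eyssidieux--Guedj--Zeriahi~\cite{BBEGZ} (see also \cite{BDL0}), which asserts that a $d_1$-bounded sequence in $\E^1_\T(X,\bom)$ with uniformly bounded entropy ${\rm Ent}(\omega_0^{[m]},\MA(\cdot))$ admits a $d_1$-convergent subsequence, whose $d_1$-limit lies in $\E^1_\T(X,\bom)$ (the latter being $d_1$-closed in $\E^1(X,\bom)$).

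Starting from the extended Chen--Tian decomposition \eqref{Chen-Tian} provided by Theorem~\ref{M-extension}, I rewrite
\[
\cE_{v,w}(\varphi_j) \;=\; {\rm Ent}\!\left(\omega_0^{[m]},\, \MA_v(\varphi_j)\right) \;-\; 2\,\ce_v^{\rho_\bom}(\varphi_j) \;+\; \cI_w(\varphi_j) \;+\; C_0,
\]
where $C_0 = -\int_X \log(v(\m_0))\,v(\m_0)\,\omega_0^{[m]}$ depends only on $(X,\bom,v)$. By Lemma~\ref{ext-E^rho} the functional $\ce_v^{\rho_\bom}$ is bounded on $d_1$-bounded subsets of $\E^1_\T(X,\bom)$, and by Lemma~\ref{l:I-Lipschitz} (applied to $w = w_+ - w_-$ with both $w_\pm>0$ and using linearity in $w$) the functional $\cI_w$ is $d_1$-Lipschitz. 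Combining this with the hypotheses $d_1(0,\varphi_j)\le C$ and $\cE_{v,w}(\varphi_j)\le C$ yields a uniform upper bound on the weighted entropy ${\rm Ent}(\omega_0^{[m]}, \MA_v(\varphi_j))$.

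The key point is then to convert this into a bound on the standard entropy. Writing $f_j := \omega_{\varphi_j}^m/\omega_0^m \ge 0$, we split
\[
{\rm Ent}\!\left(\omega_0^{[m]}, \MA_v(\varphi_j)\right) \;=\; \int_X v(\m_{\varphi_j})\,f_j \log f_j\,\omega_0^{[m]} \;+\; \int_X v(\m_{\varphi_j})\,f_j \log v(\m_{\varphi_j})\,\omega_0^{[m]}.
\]
The second integral is uniformly bounded, since $\log v$ is bounded on $\Pol$ and the total mass $\int_X \MA_v(\varphi_j) = \int_X \MA_v(\varphi_0)$ is $j$-independent. For the first integral, on $\{f_j\ge 1\}$ one has $v f_j \log f_j \ge (\inf_\Pol v)\, f_j \log f_j \ge 0$, while on $\{f_j<1\}$ both $v f_j \log f_j$ and $f_j \log f_j$ are bounded below by $-(\sup_\Pol v)/e$; using $\inf_\Pol v>0$ this transfers the upper bound on the weighted entropy to an upper bound on $\int_X f_j \log f_j\,\omega_0^{[m]} = {\rm Ent}(\omega_0^{[m]}, \MA(\varphi_j))$.

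With the $d_1$-bound and the standard entropy bound in hand, the unweighted compactness theorem of \cite{BBEGZ,BDL0} immediately supplies a $d_1$-convergent subsequence, concluding the argument. I expect no substantive obstacle: the main analytic work has already been absorbed into the construction of the extended weighted Monge--Amp\`ere measures (Proposition~\ref{weight-MA}), the lower semicontinuity of the weighted entropy (Lemma~\ref{Ent-approx}) and the extension of $\cE_{v,w}$ to $\E^1_\T(X,\bom)$ (Theorem~\ref{M-extension}); the entropy comparison above is entirely elementary thanks to the uniform two-sided positivity of $v$ on the compact polytope $\Pol$.
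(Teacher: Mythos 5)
Your argument is correct, and its first half coincides with the paper's: both proofs use the extended Chen--Tian decomposition \eqref{Chen-Tian} together with Lemma~\ref{ext-E^rho} (boundedness of $\ce^{\rho_{\bom}}_v$ on $d_1$-bounded sets) and the $d_1$-Lipschitz property of $\cI_w$ to extract a uniform upper bound on the weighted entropy $\Ent(\omega_0^{[m]},\MA_v(\varphi_j))$. Where you diverge is the last step: the paper simply invokes the \emph{weighted} compactness lemma of Han--Li (\cite[Lemma 2.16]{HL}) as a black box, whereas you insert an elementary comparison — using $0<\inf_{\Pol}v\le v\le \sup_{\Pol}v$ and the pointwise lower bound $t\log t\ge -1/e$ — to convert the weighted entropy bound into a bound on the standard entropy $\Ent(\omega_0^{[m]},\MA(\varphi_j))$, and then conclude from the classical unweighted compactness theorem of \cite{BBEGZ,BDL0} plus the $d_1$-closedness of $\E^1_{\T}(X,\bom)$. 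This buys a proof that rests only on the unweighted literature (essentially reproving the relevant special case of Han--Li's lemma in passing), at the cost of the extra comparison step; the paper's citation is shorter but imports the weighted statement wholesale. Two small points worth making explicit in your write-up: (i) the identity $\MA_v(\varphi_j)=v(\m_{\varphi_j})\MA(\varphi_j)$ for merely finite-energy potentials should be justified from Proposition~\ref{weight-MA} — what you really need, and what does follow directly from the construction by weak limits, is the two-sided domination $\tfrac1C\MA(\varphi_j)\le\MA_v(\varphi_j)\le C\MA(\varphi_j)$ as measures, which suffices for your entropy comparison; and (ii) the $j$-independence of the total mass $\int_X\MA_v(\varphi_j)$ follows from the smooth case by integration by parts (as in the proof of Lemma~\ref{Han-Li}) and passes to the limit by Lemma~\ref{lim-MA-E1}.
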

\begin{proof}
From the formula \eqref{Chen-Tian} and  Lemmas \ref{l:I-Lipschitz} and \ref{ext-E^rho},   we see that ${\rm Ent}(\omega_0^{[m]},\MA_v(\varphi))$ is uniformly bounded under the hypotheses in the Corollary. We conclude using \cite[Lemma 2.16]{HL}.
\end{proof}

\section{Regularity of the weak minimizers of the weighted Mabuchi energy}\label{s:regularity}
In this section, we establish the  regularity  of  the weak minimizers of $\cE_{v,w}$.
\begin{Theorem}\label{W&S} Suppose  $\T\subset \Aut_r(X)$ is  a maximal torus and $(X,\alpha,\T)$ admits a $(v,w)$-cscK metric $\omega$ with $w=\ell^{\ext}_{v,w_0}w_0$, where $v,w_0>0$ are two positive smooth weight functions on $\Pol$.  If $\psi\in\E^{1}_\T(X,\bom)$ is a minimizer of the extended $(v,w)$-Mabuchi energy $\cE_{v,w}:\E^{1}_\T(X,\bom)\to \R\cup \{\infty\}$, then $\psi\in \cK_{\T}(X, \bom)$ is a smooth potential. \end{Theorem}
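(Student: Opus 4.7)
The strategy follows the template of Berman--Darvas--Lu \cite[\S 5]{BDL} for the unweighted cscK case, with the main replacements being Theorem~\ref{M-extension} for the extension and geodesic convexity of $\cE_{v,w}$, Theorem~\ref{thm:uniqueness-weighted} for the weighted uniqueness, and Proposition~\ref{T-max} for the reductive structure of $\Aut_r^{\T}(X)$. Let $\varphi_0 \in \cK_\T(X,\bom)$ be a normalized potential with $\omega_{\varphi_0}=\omega$ the given smooth $(v,w)$-cscK metric, and suppose $\psi \in \E^1_\T(X,\bom)$ is an extended minimizer of $\cE_{v,w}$. Since the extension constructed in Theorem~\ref{M-extension} is the \emph{largest} $d_1$-lsc extension of the smooth weighted Mabuchi energy, and $\varphi_0$ is a minimizer on $\cK_\T(X,\bom)$ by the theorem stated just after Remark~\ref{r:potentials/metrics}, $\varphi_0$ is in fact a minimizer on $\E^1_\T(X,\bom)$ with $\cE_{v,w}(\varphi_0)=\cE_{v,w}(\psi)$. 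Let $t \mapsto \varphi_t \in \E^1_\T(X,\bom)$, $t\in[0,1]$, denote the unique $\T$-invariant finite-energy geodesic connecting $\varphi_0$ to $\psi$. By Theorem~\ref{M-extension}, the function $t \mapsto \cE_{v,w}(\varphi_t)$ is convex and continuous; since its endpoints both attain the minimum, it is constant on $[0,1]$, hence in particular affine.

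The core of the argument is to upgrade this affinity to the existence of a holomorphic one-parameter subgroup driving the geodesic. I would first show, mimicking \cite[Lem.~5.3, 5.4]{BDL}, that each summand in the Chen--Tian decomposition \eqref{Chen-Tian} is individually affine along $t \mapsto \varphi_t$: namely the weighted entropy $t \mapsto \Ent(\omega_0^{[m]},\MA_v(\varphi_t))$, the $\rho_{\bom}$-twisted weighted energy $\ce^{\rho_{\bom}}_v(\varphi_t)$, and the functional $\cI_w(\varphi_t)$. The last two are convex along finite-energy geodesics by convex-analytic arguments that extend verbatim from the unweighted case using the extensions established in Lemma~\ref{ext-E^rho} and Lemma~\ref{l:I,J-extension}, while the entropy term is convex by the weighted analogue of Berndtsson's subharmonicity theorem that underlies the uniqueness proof in \cite{lahdili3}; since the sum is affine and each summand convex, each is separately affine.

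Next I would invoke the \emph{strict} convexity part of the weighted Berndtsson-type theorem from \cite{lahdili3}: the affinity of the entropy along the geodesic forces the weighted Monge--Amp\`ere measures $\MA_v(\varphi_t)$ to be transported by a one-parameter family of holomorphic automorphisms. More precisely, one extracts a real $\T$-invariant holomorphic vector field $Z$ on $X$ whose flow $\sigma_t=\exp(tJZ)$ satisfies $\omega_{\varphi_t}=\sigma_t^*\omega$ modulo additive constants for all $t\in[0,1]$. Since $Z$ commutes with $\T$, it lies in $\mathrm{Lie}(\Aut_r^\T(X))$; and since $\T\subset\Aut_r(X)$ is maximal by hypothesis, Proposition~\ref{T-max} gives $\Aut_r^\T(X)=\T^\C$, so $\sigma_t \in \T^\C$. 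Consequently $\psi = \sigma_1^*\varphi_0 + c$, which is smooth, proving $\psi\in\cK_\T(X,\bom)$.

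The principal technical obstacle is the middle paragraph: transferring the affinity of $\cE_{v,w}$ along the weak geodesic into affinity of the \emph{individual} summands of the Chen--Tian decomposition, and then promoting affinity of the weighted entropy into the holomorphic generator $Z$. In the unweighted case this relies on Berndtsson's strict plurisubharmonicity of the relative volume form along $C^{1,\bar 1}$-geodesics and on a careful analysis of the equality case; the weighted counterparts are implicit in the proof of Theorem~\ref{thm:uniqueness-weighted} in \cite{lahdili3}, but invoking them cleanly requires a careful approximation of $\psi$ by smooth potentials preserving the weighted entropy (cf.\ Lemma~\ref{Ent-approx}) and handling the $v$-dependence of the Monge--Amp\`ere measure $\MA_v$ throughout the Berndtsson-style calculation.
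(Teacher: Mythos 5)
Your opening reductions are sound: the smooth $(v,w)$-cscK potential $\varphi_0$ does minimize the extended functional (combine the minimization theorem following Remark~\ref{r:potentials/metrics} with the approximation in Lemma~\ref{Ent-approx}), and Theorem~\ref{M-extension} gives that $t\mapsto\cE_{v,w}(\varphi_t)$ is constant, hence affine, along the finite-energy geodesic from $\varphi_0$ to $\psi$. But from that point on your route diverges from the paper's and contains two genuine gaps. First, the claim that every summand of the Chen--Tian decomposition \eqref{Chen-Tian} is convex along finite-energy geodesics (so that affinity of the sum forces affinity of each term) is false as stated: the decomposition contains $-2\ce^{\rho_{\bom}}_{v}$, where $\rho_{\bom}$ is the Ricci form of $\bom$ and has no sign, and $\ce^{\rho}_{v}$ is convex along geodesics only when $\rho\ge 0$ (cf. \cite[Prop.~4.5]{BDL}); writing $\rho_{\bom}$ as a difference of K\"ahler forms only produces one convex and one concave piece, and the convexity bookkeeping collapses. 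Second, and more seriously, even granting affinity of $t\mapsto\Ent(\omega_0^{[m]},\MA_v(\varphi_t))$, the extraction of a holomorphic vector field $Z$ with $\omega_{\varphi_t}=\exp(tJZ)^*\omega$ requires the equality-case analysis of the (weighted) Berndtsson convexity theorem along a weak geodesic one of whose endpoints lies only in $\E^1_{\T}(X,\bom)$. This is not ``implicit in \cite{lahdili3}'': Theorem~\ref{thm:uniqueness-weighted} there concerns two \emph{smooth} extremal metrics, and the degenerate case with a singular endpoint is exactly where this strategy breaks down --- it is the reason \cite{BDL}, and this paper, do not argue this way.

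The paper instead runs the perturbation/continuity scheme of \cite[Prop.~3.1--3.2]{BDL}. For $\lambda>0$ one minimizes the strictly convex functional $\cE_{v,w}+\lambda\ce^{\omega_\varphi}$ (Proposition~\ref{prop: BDL1}); the heart of the argument (Proposition~\ref{prop: BDL2}) constructs an approximate minimizer $\varphi_0+\lambda\dot\psi_0$ by solving the weighted Lichnerowicz equation ${\mathbb L}_{\omega_{\varphi_0},v}(\dot\psi_0)=\omega_{\varphi}\wedge\omega_{\varphi_0}^{[m-1]}/\omega_{\varphi_0}^{[m]}$ at the smooth cscK potential, solvable because the right-hand side is $L^2$-orthogonal to $\ker{\mathbb L}_{\omega_{\varphi_0},v}$ after replacing $\varphi^*$ by the point $\sigma[\varphi^*]$ of the $\T^{\C}$-orbit minimizing $\ce^{\omega_\varphi}$ (this is where reductivity of $\T^{\C}$ and the maximality of $\T$ enter, rather than through your step identifying $Z$). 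Comparing slopes via Lemma~\ref{p:sub-slop} forces $\ce^{\omega_\varphi}$ to be linear on the geodesic joining its minimizer on $\cM_{v,w}$ to $\sigma[\varphi^*]$, and strict convexity of $\ce^{\omega_\varphi}$ identifies the two; a final $d_1$-approximation of $\psi$ by smooth potentials together with the estimate \eqref{I_lambda<I} concludes. To salvage your approach you would have to prove a weighted, singular-endpoint version of Berndtsson's equality analysis from scratch; the perturbation argument is designed precisely to avoid that.
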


The proof of this result, which is an adaptation of the arguments in \cite{BDL},  will occupy the reminder of the section.

\begin{defn}\label{twisted-weighted-minimizers}
Let $v(\mu)>0$, $w(\mu)$ be smooth weight functions on $\Pol$ and $\rho>0$ a $\T$-invariant K\"ahler form on $X$. We let
\begin{equation}
\cM_{v,w}:=\{\psi\in \E^{1}_\T(X,\bom)\cap \cI^{-1}(0)\mid \cE_{v,w}(\psi)=\underset{\varphi\in \E^{1}_\T}{\inf}\cE_{v,w}(\varphi)\}
\end{equation}
and $\cE_{v,w}^{\rho}:=\cE_{v,w}+\ce^{\rho}$, 
where $\ce^{\rho}$ is introduced via Lemma~\ref{ext-E^rho} and $v=1$. 
\end{defn}
By \cite[Lemma 5.2]{DaRu}  and Theorem~\ref{M-extension},   the set $\cM_{v,w}$ (when non-empty) is totally geodesic with respect to the finite energy geodesics of $\E^{1}_\T(X,\bom)$.  Furthermore, if there exists a $\psi_\rho\in \cM_{v,w}$ such that $\ce^{\rho}(\psi_\rho)=\underset{\psi\in \cM_{v,w}}{\inf} \ce^{\rho}(\psi)$,  then $\psi_\rho$ is unique by the strict convexity of $\ce^{\rho}$ established in \cite[Prop. 4.5]{BDL}. Furthermore, by  Theorem~\ref{M-extension}, the functional $\cE_{v,w}^{\rho}:\E^{1}_\T(X,\bom)\to \R\cup \{\infty\}$ will be also strictly convex along finite energy geodesics, showing the uniqueness of an element $\psi\in \E^{1}_\T(X,\bom)\cap \ce^{-1}(0)$ such that $\cE^{\rho}_{v,w}(\psi)=\underset{\varphi\in \E^{1}_\T}{\inf}\cE^{\rho}_{v,w}(\varphi)$ (assuming that such minimizer $\psi$ exists).

\bigskip
We then have the following weighted version of the continuity method of \cite[Prop.~3.1]{BDL}:

\begin{prop}\label{prop: BDL1}
Let $v>0$, $w$ be smooth weight functions on $\Pol$.  Suppose that $\cM_{v,w}$ is non-empty,  and $\varphi\in\cK_\T(X,\bom)\cap\cI^{-1}(0)$. For any $\lambda>0$, there exists a unique minimizer $\psi_{\lambda}\in \E^{1}_\T(X,\bom)\cap \cI^{-1}(0)$ of $\cE^{\lambda\omega_\varphi}_{v,w}:=\cE_{v,w}+\ce^{\lambda\omega_\varphi}$. The curve $[0,\infty)\ni\lambda\mapsto \psi_\lambda\in \E^{1}_\T(X,\bom)\cap \cI^{-1}(0)$ is $d_1$-continuous, $d_1$-bounded and  $\psi_0:=\underset{\lambda\to 0}{\lim}\,\psi_\lambda$ is the unique minimizer of $\ce^{\omega_\varphi}$ on $\cM_{v,w}$. Furthermore, for any $\psi\in \cM_{v,w}$ and $\lambda>0$,  we have
\begin{equation}\label{I_lambda<I}
I(\varphi,\psi_\lambda)\leq m(m+1)I(\varphi,\psi),
\end{equation}
where $I(\varphi,\psi):=\int_X(\varphi-\psi)\left(\omega^{m}_{\psi}-\omega^{m}_{\varphi}\right)$.
\end{prop}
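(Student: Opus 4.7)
\smallskip
\noindent
\textbf{Proof plan.}  The strategy mirrors the continuity-method argument of \cite[Prop.~3.1]{BDL}, with the weighted Mabuchi functional $\cE_{v,w}$ replacing the usual one and the key analytic inputs supplied by Theorems~\ref{M-extension} and~\ref{compact} together with the convexity/continuity of $\ce^{\rho}$ proved in Lemma~\ref{ext-E^rho}. The first step is to establish, for each fixed $\lambda>0$, the existence and uniqueness of $\psi_\lambda$. Since $\cM_{v,w}\neq \emptyset$, the functional $\cE_{v,w}$ is bounded below on $\E^1_\T(X,\bom)\cap \cI^{-1}(0)$ by its minimum. On the other hand, because $\lambda\omega_\varphi>0$ is a genuine K\"ahler form, the twist $\ce^{\lambda\omega_\varphi}$ satisfies $\ce^{\lambda\omega_\varphi}(\eta)\ge \lambda c\, \cJ(\eta)-C$ on the normalized space $\cI^{-1}(0)$ for positive constants $c,C$ (a standard consequence of Definition~\ref{d:I,J,v} applied to $v=1$ and $\rho=\lambda\omega_\varphi$, together with $\omega_\varphi \ge \varepsilon \bom$ for some $\varepsilon>0$). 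Combining these two facts, $\cE_{v,w}^{\lambda\omega_\varphi}$ is $d_1$-coercive on $\E^1_\T(X,\bom)\cap \cI^{-1}(0)$. Taking a minimizing sequence, the compactness Theorem~\ref{compact} (applied to $\cE_{v,w}+\ce^{\lambda\omega_\varphi}$) yields a subsequential $d_1$-limit, which by the $d_1$-lsc  of $\cE_{v,w}^{\lambda\omega_\varphi}$ (Theorem~\ref{M-extension} and Lemma~\ref{ext-E^rho}) is a minimizer $\psi_\lambda$. Uniqueness is immediate from the strict convexity of $\ce^{\lambda\omega_\varphi}$ along finite energy geodesics, established in \cite[Prop.~4.5]{BDL}, together with the convexity of $\cE_{v,w}$ from Theorem~\ref{M-extension}.

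\smallskip
For the next step one fixes an arbitrary $\psi\in \cM_{v,w}$ and notices that, by the minimizing property of $\psi_\lambda$,
\[
\cE_{v,w}(\psi_\lambda)+\ce^{\lambda\omega_\varphi}(\psi_\lambda) \leq \cE_{v,w}(\psi)+\ce^{\lambda\omega_\varphi}(\psi).
\]
Since $\psi\in\cM_{v,w}$ one has $\cE_{v,w}(\psi)\le \cE_{v,w}(\psi_\lambda)$, which yields
\[
\ce^{\lambda\omega_\varphi}(\psi_\lambda)\leq \ce^{\lambda\omega_\varphi}(\psi).
\]
From this inequality and the coercivity bound for $\ce^{\lambda\omega_\varphi}$ recalled above, the $d_1$-boundedness of $\psi_\lambda$ for $\lambda$ in any compact subinterval of $(0,\infty)$ follows; a uniform $d_1$-bound on $[0,\Lambda]$ for all $\Lambda>0$ is obtained by inserting $\psi$ as a competitor. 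The $d_1$-continuity of $\lambda\mapsto \psi_\lambda$ is then a routine consequence of the uniqueness combined with compactness (Theorem~\ref{compact}) and lsc: any $d_1$-accumulation point of $\psi_{\lambda_j}$ as $\lambda_j\to \lambda$ must be the unique minimizer $\psi_\lambda$. Likewise, any $d_1$-accumulation point $\psi_0$ as $\lambda\to 0^+$ lies in $\cM_{v,w}\cap \cI^{-1}(0)$ and minimizes $\ce^{\omega_\varphi}$ among such points; the strict convexity of $\ce^{\omega_\varphi}$ along the finite energy geodesics of $\cM_{v,w}$ (which is itself geodesically closed by \cite[Lemma 5.2]{DaRu}) identifies $\psi_0$ uniquely and hence gives the full convergence $\psi_\lambda\to\psi_0$ in $d_1$.

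\smallskip
The final quantitative inequality is derived from the comparison between the twisted energy $\ce^{\omega_\varphi}$ and the symmetric functional $I$. Specifically, a direct expansion using $\omega_\varphi=\bom+dd^c\varphi$ and the defining formula for $\ce^{\omega_\varphi}$ yields the two-sided bound
\[
\frac{1}{m+1}I(\varphi,\eta)\leq \ce^{\omega_\varphi}(\eta)-\ce^{\omega_\varphi}(\varphi)\leq \frac{m}{m+1}I(\varphi,\eta)\cdot (m+1)= m\,I(\varphi,\eta),
\]
valid for any $\eta\in\E^1_\T(X,\bom)\cap \cI^{-1}(0)$ (this is precisely the computation of \cite[Lemma~5.8 and proof of Prop.~3.1]{BDL}, which involves only the Monge--Amp\`ere measures of $\varphi$ and $\eta$ and is insensitive to the presence of the weights). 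Applying the lower bound to $\eta=\psi_\lambda$, the upper bound to $\eta=\psi$, and using  $\ce^{\lambda\omega_\varphi}(\psi_\lambda)\le \ce^{\lambda\omega_\varphi}(\psi)$ (equivalently $\ce^{\omega_\varphi}(\psi_\lambda)\le \ce^{\omega_\varphi}(\psi)$ since the $\lambda$ factors out), we obtain
\[
\frac{1}{m+1}I(\varphi,\psi_\lambda) \leq \ce^{\omega_\varphi}(\psi_\lambda)-\ce^{\omega_\varphi}(\varphi)\leq \ce^{\omega_\varphi}(\psi)-\ce^{\omega_\varphi}(\varphi)\leq m\, I(\varphi,\psi),
\]
which is the desired estimate \eqref{I_lambda<I}.

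\smallskip
The main obstacle in this plan is the first step: verifying that the coercivity estimate for $\ce^{\lambda\omega_\varphi}$ combines with the mere lower boundedness of $\cE_{v,w}$ to give $d_1$-coercivity of the sum in the normalized space, cleanly enough to apply Theorem~\ref{compact}. The remaining steps are essentially formal, and only the final comparison inequality requires checking that the purely pluripotential bounds of \cite{BDL} extend unchanged (which they do, since they involve only $\ce^{\omega_\varphi}$ and not the weights $v,w$).
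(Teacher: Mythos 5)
Your proposal is correct and follows essentially the same route as the paper, whose own proof simply defers to a "straightforward adaptation of the arguments in \cite[Prop.~3.1]{BDL}": coercivity plus the compactness of Theorem~\ref{compact} and lower semicontinuity give existence, strict convexity of $\ce^{\lambda\omega_\varphi}$ gives uniqueness, and the comparison with the Aubin functionals gives \eqref{I_lambda<I}. The only point worth flagging is that your two-sided bound relating $\ce^{\omega_\varphi}(\eta)-\ce^{\omega_\varphi}(\varphi)$ to $I(\varphi,\eta)$ is false without the normalization $\cI(\eta)=\cI(\varphi)=0$ (it fails already for $\eta=\varphi+c$), and the precise constants depend on the $\omega^m$ versus $\omega^{[m]}$ convention — but since you work throughout in $\cI^{-1}(0)$, where that difference equals the classical $J^{*}_{\varphi}(\eta)$, the ratio of the upper to the lower constant is at most $m(m+1)$ and the stated estimate follows.
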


\begin{proof}
The proof follows by a straightforward adaptation of the arguments in \cite[Prop.~3.1]{BDL}. 
\end{proof}
We next need a weighted analogue of \cite[Lemma 3.3]{BDL}.
\begin{lemma}\label{p:sub-slop} Let $v>0, w$ be smooth weight functions  on $\Pol$, and  $\rho>0$ a smooth $\T$-invariant K\"ahler form on $X$.  Let $\varphi_0\in\cK_\T(X,\bom)$, 
$\varphi_1\in \mathcal{E}^{1}_{\T}(X,\bom)$,  and $[0,1]\ni t\mapsto\varphi_t\in \mathcal{E}^{1}_{\T}(X,\bom)$ be a finite energy geodesic connecting $\varphi_0$ and $\varphi_1$. Then,
\[
\begin{split}
\underset{t\to 0^{+}}{\lim}\frac{\cE_{v,w}^{\rho}(\varphi_t)-\cE_{v,w}^{\rho}(\varphi_0)}{t}\geq &\int_X\left(w(\m_{\varphi_0})-\Scal_v(\varphi_0)\right)\dot{\varphi}_0\omega_{\varphi_0}^{[m]}+\int_X\dot{\varphi}_0\rho\wedge\omega_{\varphi_0}^{[m-1]}
\end{split}
\]
where $\cE_{v,w}^{\rho}:=\cE_{v,w}+\ce^{\rho}$.
\end{lemma}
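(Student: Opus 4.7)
The plan is to decompose $\cE^\rho_{v,w} = \cE_{v,w} + \ce^\rho$ via the Chen--Tian formula \eqref{Chen-Tian} as
\[
\cE^\rho_{v,w}(\varphi) = \mathcal{H}_v(\varphi) - 2\ce^{\rho_\bom}_v(\varphi) + \cI_w(\varphi) + \ce^\rho(\varphi) + C_{v,\bom},
\]
where $\mathcal{H}_v(\varphi) := \int_X \log\bigl(v(\m_\varphi)\omega_\varphi^m/\omega_0^m\bigr)\, v(\m_\varphi)\, \omega_\varphi^{[m]}$ is the weighted entropy piece and $C_{v,\bom}$ is a constant. By Theorem~\ref{M-extension} and the convexity of $\ce^\rho$ along finite energy geodesics (a positive-form variant of Lemma~\ref{ext-E^rho}, obtained as in \cite[Prop.~4.5]{BDL}), the map $t\mapsto \cE^\rho_{v,w}(\varphi_t)$ is convex on $[0,1]$, so the one-sided slope at $t=0^+$ exists in $\R\cup\{-\infty\}$ as the infimum of the difference quotients.

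First, I would treat the three ``smooth'' summands $-2\ce^{\rho_\bom}_v + \cI_w + \ce^\rho$. Since $\varphi_0 \in \cK_\T(X,\bom)$ is smooth, the weak geodesic $\varphi_t$ is of class $C^{1,\bar 1}([0,1]\times X)$ with $\omega_{\varphi_t}$ having locally bounded coefficients in a right-neighbourhood of $t=0$ (\cite{C, Tosatti-et-al}). The differentials of $\ce^{\rho_\bom}_v$, $\cI_w$ and $\ce^\rho$ given in Definitions~\ref{d:Mabuchi} and~\ref{d:I,J,v} extend by continuity to such potentials and yield the one-sided derivative
\[
\int_X \dot\varphi_0\Bigl(w(\m_{\varphi_0})\omega_{\varphi_0}^{[m]} - 2v(\m_{\varphi_0})\rho_\bom\wedge\omega_{\varphi_0}^{[m-1]} - 2\langle (dv)(\m_{\varphi_0}),\m_{\rho_\bom}\rangle\omega_{\varphi_0}^{[m]} + \rho\wedge\omega_{\varphi_0}^{[m-1]}\Bigr).
\]

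Next, for the entropy piece $\mathcal{H}_v$, I would establish a subgradient-type lower bound: the one-sided slope of $\mathcal{H}_v(\varphi_t)$ at $t=0^+$ is at least the first variation of $\mathcal{H}_v$ along the affine path $\varphi_0 + s\dot\varphi_0$ at $s=0$. Granted this, differentiating the integrand using \eqref{v-variation} together with the identity $dd^c\log(\omega_{\varphi_0}^m/\omega_0^m) = \rho_\bom - \rho_{\omega_{\varphi_0}}$, and integrating by parts twice, the definition \eqref{v-scal} of $\Scal_v$ reassembles the resulting divergence-type terms to give
\[
\int_X \dot\varphi_0\Bigl(-\Scal_v(\omega_{\varphi_0})\,\omega_{\varphi_0}^{[m]} + 2v(\m_{\varphi_0})\rho_\bom\wedge\omega_{\varphi_0}^{[m-1]} + 2\langle (dv)(\m_{\varphi_0}),\m_{\rho_\bom}\rangle\omega_{\varphi_0}^{[m]}\Bigr).
\]
Summing this with the smooth-pieces derivative above, the $\rho_\bom$ and $\langle dv,\m_{\rho_\bom}\rangle$ contributions cancel and one is left exactly with the RHS of the claim.

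The main obstacle is the rigorous justification of the subgradient inequality for the weighted entropy $\mathcal{H}_v$. In the unweighted case this is Berndtsson's convexity of the entropy along weak geodesics combined with a test-function argument that relies on the smoothness of $\MA(\varphi_0)$; the presence of $v(\m_\varphi)$ in the weighted integrand complicates both the convexity and the identification of the subgradient. The cleanest remedy is the semi-simple principal $(X,\T)$-fibration trick of Section~\ref{s:geometric}: for polynomial weights $p(\m)=\prod_a(\langle p_a,\m\rangle+c_a)^{n_a}$, the push-forward identity \eqref{measures} together with Lemma~\ref{l:I_p} and Proposition~\ref{p:totally-geodesic} identifies $\mathcal{H}_p(\varphi_t)$, up to a multiplicative constant, with the unweighted entropy of a finite energy geodesic on the total space $Y$, to which \cite[Lem.~3.3]{BDL} applies verbatim. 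A Bernstein-type approximation in $C^1(\Pol)$ of a general smooth positive $v$ by such polynomials (as in the proof of Proposition~\ref{weight-MA}), combined with the $C^1$-uniform continuity of each summand proved in Lemmas~\ref{l:I,J-extension} and~\ref{ext-E^rho}, then allows one to pass to the limit and conclude in the generality stated.
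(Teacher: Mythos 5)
Your overall architecture (Chen--Tian decomposition, isolate the entropy, pass to the fibration $Y$ for polynomial weights, approximate general weights) is in the spirit of how the paper builds the \emph{extension} of $\cE_{v,w}$, but as a proof of this lemma it has two genuine gaps. First, your treatment of the ``smooth'' summands rests on the assertion that the finite energy geodesic emanating from $\varphi_0$ is of class $C^{1,\bar 1}$ with bounded $\omega_{\varphi_t}$ near $t=0$. This is false: the $C^{1,\bar 1}$ regularity of \cite{C,Tosatti-et-al} requires \emph{both} endpoints to be smooth, and with $\varphi_1$ merely in $\E^{1}_{\T}(X,\bom)$ the potentials $\varphi_t$, $t>0$, need not even be bounded. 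The stated derivative formulas for $\cI_w$, $\ce^{\rho_\bom}_v$, $\ce^{\rho}$ at $t=0^+$ are still true, but they must be justified via the $L^1$-integrability of $\dot\varphi_0$ and a difference-quotient argument as in \cite[Lemma~3.4]{BDL}, not via interior regularity of the geodesic. Second, and more seriously, the entropy step is where the entire difficulty of the lemma is concentrated, and \cite[Lemma~3.3]{BDL} does not apply ``verbatim'' to it: that lemma is the sub-slope inequality for the full twisted K-energy (it is precisely the unweighted version of the present statement), not for the entropy term in isolation, and its proof is itself the approximation argument you are trying to avoid. In addition, identifying $\mathcal{H}_p(\varphi_t)$ with an unweighted entropy on $Y$ requires knowing that $t\mapsto\varphi_t$ is a finite energy geodesic of $(\E^{1}_{\T}(Y,\tilde\omega_0),d_1)$, whereas Proposition~\ref{p:totally-geodesic} only covers $C^{1,\bar 1}$-geodesics between smooth potentials; and by \eqref{Z-measure} the entropy on $Y$ restricts to $\Ent(p(\m_0)\omega_0^{[m]},\MA_p(\varphi))$, which differs from the entropy relative to $\omega_0^{[m]}$ by the non-constant term $-\int_X\log(p(\m_0))\MA_p(\varphi)$, which you do not account for.

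The paper's proof is much shorter and bypasses the decomposition entirely: for fixed small $t$ it approximates $\varphi_t$ in $d_1$ by smooth potentials $\varphi^{k}_t$ with $\cE_{v,w}^{\rho}(\varphi^{k}_t)\to\cE_{v,w}^{\rho}(\varphi_t)$ (Theorem~\ref{M-extension}), applies the already-established sub-slope inequality along the $C^{1,\bar 1}$-geodesic joining $\varphi_0$ to $\varphi^{k}_t$ (the weighted Berman--Berndtsson result of \cite[Cor.~1]{lahdili3}), and then lets $k\to\infty$ using the convergence of the initial velocities $\dot\psi^{k}_0\to\dot\varphi_0$ from \cite[Lemma~3.4]{BDL}. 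If you want to rescue your route, the cleanest fix is to run this same endpoint-approximation argument rather than to differentiate the Chen--Tian pieces separately along the finite energy geodesic.
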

\begin{proof} Using Theorem~\ref{M-extension} and the fact that $\cI^{\rho}$ is $d_1$-continuous (see \cite{BDL} or  Lemma~\ref{ext-E^rho}),  for any $t\in[0,1]$ there exists a sequence $(\varphi^{k}_t)_k\in\cK_\T(X,\bom)$ such that $\lim_{k\to \infty} d_1(\varphi^{k}_t,\varphi_t)= 0$ and $\cE_{v,w}^{\rho}(\varphi^{k}_t)\to \cE_{v,w}^{\rho}(\varphi_t)$. We let $[0,t]\ni s\mapsto \psi^{k}_s$ be the weak $C^{1,\bar{1}}$-geodesic joining $\varphi^{k}_0=\varphi_0$ with $\varphi^{k}_t$. By the proof of \cite[Cor.~1]{lahdili3}, we get
\[
\begin{split}
\underset{t\to 0^{+}}{\lim}\frac{\cE_{v,w}^{\rho}(\varphi^{k}_t)-\cE_{v,w}^{\rho}(\varphi_0)}{t}\geq &\int_X\left(w(\m_{\varphi_0})-\Scal_v(\varphi_0)\right)\dot{\psi}^{k}_0\omega_{\varphi_0}^{[m]}+\int_X\dot{\psi}^{k}_0\rho\wedge\omega_{\varphi_0}^{[m-1]}
\end{split}
\]
According to \cite[Lemma 3.4]{BDL},  we can use the dominated
convergence theorem on the RHS of the above inequality to conclude.
\end{proof}

The last step is to establish a weighted version of \cite[Prop.~3.2.]{BDL}.
\begin{prop}\label{prop: BDL2} Suppose $\T\subset \Aut_{r}(X)$ is a maximal torus, and let $v(\mu),w_0(\mu)>0$, $w=\ell^{\rm ext}_{v, w_0}w_0$.  Suppose that $\varphi^{*}\in \mathcal{K}_\T(X,\bom)\cap\cI^{-1}(0)$ is a $(v,w)$-cscK potential. Then, for any fixed K\"ahler form $\omega_{\varphi}$, $\varphi\in \mathcal{K}_{\T}(X,\bom)$,   there exists a $\sigma\in \G:=\T^{\C}$, such that
\[
\underset{\psi\in\mathcal{M}_{v,w}}{\inf}\ce^{\omega_\varphi}(\psi)=\ce^{\omega_\varphi}(\sigma[\varphi^{*}]).
\]
\end{prop}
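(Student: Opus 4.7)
The plan is to reduce the claim to showing that the unique $d_1$-minimizer $\psi_0\in\cM_{v,w}$ of $\ce^{\omega_\varphi}$ lies in the smooth orbit $\G[\varphi^*]$; once this is established, $\psi_0=\sigma[\varphi^*]$ for some $\sigma\in\G$ and the desired equality is clear. The uniqueness of $\psi_0$ is guaranteed by the strict convexity of $\ce^{\omega_\varphi}$ along finite-energy geodesics in the totally geodesic subset $\cM_{v,w}\subset\E^1_\T(X,\bom)$ (cf.\ \cite[Prop.~4.5]{BDL}), while its existence follows from Proposition~\ref{prop: BDL1}, which identifies $\psi_0$ as the $d_1$-limit $\lim_{\lambda\to 0^+}\psi_\lambda$. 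That the smooth orbit $\G[\varphi^*]\cap\cI^{-1}(0)$ is contained in $\cM_{v,w}$ follows from the $\G$-invariance of $\cE_{v,w}$ (a consequence of \eqref{Futaki} since $w=\ell^{\rm ext}_{v,w_0}w_0$) together with the weighted Berman--Berndtsson theorem of \cite{lahdili3}, which shows that the $(v,w)$-cscK potential $\varphi^*$ minimizes $\cE_{v,w}$.

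To place $\psi_0$ inside $\G[\varphi^*]$, I would run the continuity path of Proposition~\ref{prop: BDL1} as $\lambda\to 0^+$. For each $\lambda>0$, the minimizer $\psi_\lambda\in\E^1_\T(X,\bom)\cap\cI^{-1}(0)$ of $\cE_{v,w}^{\lambda\omega_\varphi}$ should be smooth and satisfy the twisted Euler--Lagrange equation
\[
\Scal_v(\omega_{\psi_\lambda})-w(\mu_{\psi_\lambda})=\lambda\,\mathrm{tr}_{\omega_{\psi_\lambda}}(\omega_\varphi)-c_\lambda,
\]
with $c_\lambda$ a Lagrange multiplier from the constraint $\cI=0$; this regularity is a weighted version of the results of Chen--Cheng \cite{CC} and He \cite{He-ext}, analogous to the $v$-soliton treatment of Han--Li \cite{HL}. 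The bound \eqref{I_lambda<I} applied with $\psi=\varphi^*\in\cM_{v,w}$ gives a uniform $d_1$-bound on the family $\{\psi_\lambda\}_{\lambda>0}$. Upgrading this $d_1$-bound to uniform $C^\infty$-estimates via the Chen--Cheng--He scheme, I could extract a smoothly convergent subsequence whose limit must coincide with the $d_1$-limit $\psi_0$, so that $\psi_0$ is smooth. Passing to the limit $\lambda\to 0^+$ in the twisted equation shows that $\psi_0$ is $(v,w)$-cscK, and Theorem~\ref{thm:uniqueness-weighted} then forces $\psi_0\in\G[\varphi^*]$, as required.

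The main obstacle is the uniform $C^\infty$ control on $\{\psi_\lambda\}$ as $\lambda\to 0^+$. Only the $d_1$-bound is immediate from \eqref{I_lambda<I}; the upgrade to uniform bounds on the relative potential and on $\log(\omega_{\psi_\lambda}^m/\bom^m)$, with the twist parameter collapsing and non-trivial weights $v,w$, requires adapting the weighted maximum-principle scheme of Chen--Cheng in a form valid uniformly in $\lambda$. A secondary, more routine point is to verify that the Lagrange multipliers $c_\lambda$ remain bounded as $\lambda\to 0^+$, so that the limiting equation for $\psi_0$ is indeed the $(v,w)$-cscK equation and not a shifted one.
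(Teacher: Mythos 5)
Your reduction of the statement to showing that the minimizer $\psi_0$ of $\ce^{\omega_\varphi}$ on $\cM_{v,w}$ lies in the orbit $\G[\varphi^*]$ is correct, and your use of Proposition~\ref{prop: BDL1} to produce $\psi_0=\lim_{\lambda\to 0^+}\psi_\lambda$ with a uniform $d_1$-bound matches the paper. But the core of your argument --- regularity of each $\psi_\lambda$ via a weighted Chen--Cheng/He theory, followed by uniform $C^\infty$-estimates as $\lambda\to 0^+$ and passage to the limit in the twisted equation --- is a genuine gap, not a routine technical point. For general weights $v,w$ no Chen--Cheng-type a priori estimates are available (the paper only invokes \cite{CC,He-ext} on the total space $Y$ of a fibration, i.e.\ for the \emph{unweighted} equation, and only for the special polynomial weights $p$ arising there), and uniform estimates down to $\lambda=0$ would amount to proving the existence direction of the weighted YTD correspondence. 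Worse, there is a circularity: the regularity of weak minimizers of $\cE_{v,w}$ is precisely Theorem~\ref{W&S}, whose proof \emph{uses} Proposition~\ref{prop: BDL2}; your route would need that regularity theory (for the twisted functional) as an input.

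The paper's proof, following \cite[Prop.~3.2]{BDL}, is designed to avoid any regularity of $\psi_\lambda$. It first picks $\sigma\in\G$ minimizing $\ce^{\omega_\varphi}$ over the orbit and sets $\varphi_0=\sigma[\varphi^*]$; this minimization forces $\dW_{\varphi_0}$ to vanish on Killing potentials, so one can solve the \emph{linear} equation $\mathbb{L}_{\omega_{\varphi_0},v}(\dot\psi_0)=\omega_\varphi\wedge\omega_{\varphi_0}^{[m-1]}/\omega_{\varphi_0}^{[m]}$ (the RHS being $L^2$-orthogonal to the kernel of the weighted Lichnerowicz operator). The smooth potentials $\varphi_0+\lambda\dot\psi_0$ are then critical points of $\cE^{\lambda\omega_\varphi}_{v,w}$ up to $O(\lambda^2)$. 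Running the finite-energy geodesic from the weak minimizer $\psi_\lambda$ to this smooth approximate critical point, the sub-slope inequality of Lemma~\ref{p:sub-slop} (applied only at the smooth endpoint) together with convexity of $\cE_{v,w}$ and $\ce^{\omega_\varphi}$ shows that $\ce^{\omega_\varphi}$ deviates from linearity by $O(\lambda)$ along this geodesic; letting $\lambda\to 0$ and using endpoint stability, $\ce^{\omega_\varphi}$ is linear on the geodesic joining $\psi_0$ to $\varphi_0$, and its strict convexity gives $\psi_0=\varphi_0$. If you want to salvage your outline, you should replace the a priori estimate step with this approximate-solution-plus-convexity mechanism.
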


\begin{proof} 
 As $\G$ is reductive,  there exists a unique $\sigma\in \G$ such that
\begin{equation}\label{min-I-orb}
\ce^{\omega_{\varphi}}(\sigma[\varphi^{*}])=\underset{\tau\in \G}{\inf} \, \ce^{\omega_{\varphi}}(\tau[\varphi^{*}]),
\end{equation}
(see e.g. \cite[Sect. 6]{DaRu} or \cite[Lemma 11]{lahdili3}) where, we recall,  the $\G$ action on potentials is introduced via  the slice $\cI^{-1}(0)$.  Let $\varphi_0:=\sigma[\varphi^{*}]\in \mathcal{K}_\T(X,\bom)\cap\cI^{-1}(0)$ and $\psi_0\in \mathcal{M}_{v,w}$ be the unique minimizer of $\ce^{\omega_{\varphi}}$. We want to show that $\varphi_0=\psi_0$. 

For $\lambda>0$, let $\psi_\lambda$ be the unique minimizer of $\cE^{\lambda\omega_{\varphi}}_{v,w}=\cE_{v,w}+\lambda\cI^{\omega_{\varphi}}$ on $\E_\T^{1}(X,\bom)\cap \cI^{-1}(0)$, given by Proposition~\ref{prop: BDL1}.  By this proposition, we know that $\lim_{\lambda \to 0}d_1(\psi_\lambda,\psi_0)= 0$. We denote respectively by $\dF_\lambda$ and $\dW$ the differentials of $\cE^{\lambda\omega_{\varphi}}_{v,w}$ and $\cI^{\omega_{\varphi}}$, viewed as $1$-forms on the Fr\'echet space $\cK(X, \bom)$. We thus have  $\forall \,  \psi\in\cK_\T(X, \bom),  \, \forall \, \dot\psi \in C^{\infty}_{\T}(X)$
\begin{equation}\label{linearizations}
\begin{split}
(\dF_0)_{\psi}(\dot{\psi}) &=-\int_X\left(\Scal_v(\omega_\psi)-w(\m_\psi)\right)\dot{\psi}\omega^{[m]}_\psi, \\
\dW_{\psi}(\dot{\psi}) &=\int_X \dot{\psi}\omega_\varphi\wedge\omega_{\psi}^{[m-1]}, \\
(\dF_{\lambda})_{\psi}(\dot{\psi}) &= (\dF_0)_{\psi}(\dot{\psi}) + \lambda \dW_{\psi}(\dot{\psi}). \end{split}
\end{equation}
Recall that the Mabuchi connection $\mathcal D$ on the Fr\'echet space $\cK_{\T}(X, \bom)$ is  introduced by
\begin{equation*}\label{Mabuchi-connection}
\left( {\mathcal D}_{\dot{\varphi}_t} {\dot \psi_t }\right)_{\varphi_t}:= {\ddot \psi}_t - \langle d\dot \psi_t, d\dot \varphi_t \rangle_{\omega_{\varphi_t}}, \end{equation*}
where $\varphi_t$ and $\psi_t$ are smooth paths in $\cK_{\T}(X, \bom)$. Using \cite[Lemma B.1]{lahdili2}, we compute the covariant derivative of $\dF_0$ with respect to the Mabuchi connection  to be
\[\begin{split}
\left( (\mathcal{D}_{\dot{\psi}_2}\dF_0)(\dot{\psi}_1)\right)_{\psi}=&\int_X\Big[2v(\m_\psi)\left((\nabla^{\omega_{\psi}}d\dot{\psi}_1)^{-},(\nabla^{\omega_{\psi}}d\dot{\psi}_2)^{-}\right)_{\omega_{\psi}} \\
\, \,  &+\left(\Scal_v(\omega_\psi)-w(\m_\psi)\right))(d\dot{\psi}_1,d\dot{\psi}_2)_{\omega_{\psi}}\Big]\omega^{[m]}_\psi,\end{split}
\]
where $(\nabla^{\omega_{\psi}}d\dot \psi)^{-}$ denotes the $(2,0)+(0,2)$ part of the Hessian of $\dot \psi$ with respect to the Levi--Civita connection $\nabla^{\omega_{\psi}}$ of $\omega_{\psi}$. Taking $\psi=\varphi_0$ to be  the  $(v,w)$-cscK potential, we get
 \begin{equation*}
 \begin{split}
\left( (\mathcal{D}_{\dot{\psi}_2}\dF_0)(\dot{\psi}_1)\right)_{\varphi_0}=&2\int_X\left((\nabla^{\omega_{\varphi_0}}d\dot{\psi}_1)^{-},(\nabla^{\omega_{\varphi_0}}d\dot{\psi}_2)^{-}\right)_{\omega_{\varphi_0}} v(\m_{\varphi_0})\omega^{[m]}_{\varphi_0}\\
=&2\int_X {\mathbb L}_{\omega_{\varphi_0}, v}(\dot{\psi}_1)\dot{\psi}_2 \omega^{[m]}_{\varphi_0}= 2\int_X {\mathbb L}_{\omega_{\varphi_0}, v}(\dot{\psi}_2)\dot{\psi}_1 \omega^{[m]}_{\varphi_0},
\end{split}
\end{equation*}
where the operator ${\mathbb L}_{\omega_{\psi}, v}(\dot{\psi}) :=\delta_{\omega_{\psi}}\delta_{\omega_{\psi}}\left(v(\m_\psi)(\nabla^{\omega_{\psi}}d\dot{\psi})^{-}\right)$ is a 4th order elliptic self-adjoint operator on $(X, \omega_{\psi})$, with kernel given by the space of Killing potentials in $C^{\infty}_{\T}(X)$, see Appendix~\ref{s:appendix}. 

As $\varphi_0$ is a  $(v,w)$-cscK potential which satisfies \eqref{min-I-orb},  we have by \cite[Lemma~10]{lahdili3} that $\dW_{\varphi_0}(\dot{\psi})=0$ for any  $\T$-invariant Killing potential $\dot{\psi}$ with respect to $\omega_{\varphi_0}$. It follows that we can solve the linear equation (for a function $\dot \psi \in C^{\infty}_{\T}(X)$)
\[ {\mathbb L}_{\omega_{\varphi_0}, v}(\dot{\psi}) = \frac{\omega_{\varphi}\wedge\omega_{\varphi_0}^{[m-1]}}{\omega_{\varphi_0}^{[m]}}\]
as the RHS is $L^2$-orthogonal (with respect to the measure $\omega_{\varphi_0}^{[m]}$) to the kernel of ${\mathbb L}_{\omega_{\varphi_0}, v}$.
Equivalently, there exits a $\dot \psi_0 \in C^{\infty}_{\T}(X)$, such that we have equality of $1$-forms on $\cK_{\T}(X, \bom)$:
\begin{equation}\label{Eq-DF=W}
(\mathcal{D}_{\dot{\psi}_0}\dF_0)_{\varphi_0}=-\dW_{\varphi_0}.
\end{equation}
Let $\lambda \to \dot{\phi}_\lambda \in C^{\infty}_\T(X)$ be a smooth curve in the tangent space to $(\varphi_0+\lambda\dot{\psi}_0)\in\mathcal{K}_\T(X,\bom)$, defined for $\lambda>0$ small enough. We compute
\begin{equation}\label{d-lambda-F}
\dfrac{d}{d\lambda}_{\mid \lambda=0} (\dF_\lambda)_{\varphi_0+\lambda\dot{\psi}_0}(\dot{\phi}_\lambda)=\dW_{\varphi_0} (\dot{\phi}_0)+\left((\mathcal{D}_{\dot{\psi}_0} \dF_0)(\dot{\phi}_0)\right)_{\varphi_0}+(\dF_0)_{\varphi_0}(\dfrac{d}{d\lambda}_{\mid \lambda=0}\dot{\phi}_\lambda) =0,
\end{equation}
where we have used \eqref{Eq-DF=W} and  that $(\dF_0)_{\varphi_0}=0$ since $\varphi_0$ is a $(v,w)$-cscK potential, see \eqref{linearizations}. On the other hand, letting
\[
f_\lambda:=-\Scal_v(\omega_{\varphi_0+\lambda\dot{\psi}_0})+w(\m_{\varphi_0+\lambda\dot{\psi}_0})+\Big\langle\omega_{\varphi_0+\lambda\dot{\psi}_0}, \omega_{\varphi}\Big\rangle_{\omega_{\varphi}} 
\]
it follows from \eqref{linearizations}  that for any $\dot \phi \in C^{\infty}_{\T}(X)$
\[
\begin{split}
(\dF_\lambda)_{\varphi_0+\lambda\dot{\psi}_0}(\dot{\phi})&=\int_X\dot{\phi} f_\lambda\omega_{\varphi_0+\lambda\dot{\psi}_0}^{[m]}.
\end{split}
\]
Thus \eqref{d-lambda-F} implies that $f_\lambda=O(\lambda^{2})$ and 
\[
|(\dF_\lambda)_{\varphi_0+\lambda\dot{\psi}_0}(\dot{\phi})|\leq C\lambda^{2}\underset{X}{\sup}|\dot{\phi}|.
\]
Let $\psi_\lambda(t)\in \E_\T^{1}(X,\bom)$ be a finite energy geodesic connecting $\psi_\lambda(0):=\psi_\lambda\in \E_\T^{1}(X,\bom)$ with $\psi_\lambda(1):=\varphi_0+\lambda\dot{\psi}_0\in\cK_\T(X,\bom)$ for $\lambda>0$ small enough. By Lemma \ref{p:sub-slop}, we get
\[
\dfrac{d}{dt}_{\mid t=1^{-}}\cE_{v,w}^{\lambda\omega_{\varphi}}(\psi_\lambda(t))\leq \int_X\dot{\psi}_\lambda(1) f_\lambda\omega_{\varphi_0+\lambda\dot{\psi}_0}^{[m]}.
\]
By Proposition~\ref{prop: BDL1},  $d_1(0,\psi_\lambda(0))$ is uniformly bounded. As  $\psi_\lambda(1):=\varphi_0+\lambda\dot{\psi}_0\in\cK_\T(X,\bom)$, it follows that $d_1(0,\psi_\lambda(1))$ is uniformly bounded for $\lambda$ small enough. We thus have that both $d_1(0,\psi_\lambda(0))$ and $d_1(0,\psi_\lambda(0))$ are uniformly bounded and by  \cite[Lemma 3.4(ii)]{BDL} we get
\[
\int_X |\dot{\psi}_\lambda(1)|\omega_{\varphi_0+\lambda\dot{\psi}_0}^{[m]} =d_1(\psi_\lambda(0),\psi_\lambda(0))\leq d_1(0,\psi_\lambda(0))+d_1(0,\psi_\lambda(1))\leq C.
\]
From $f_\lambda=O(\lambda^{2})$ we obtain
\[
\dfrac{d}{dt}_{\mid t=1^{-}}\cE_{v,w}^{\lambda\omega_{\varphi}}(\psi_\lambda(t))\leq O(\lambda^{2}).
\]
As the unique minimizer of the strictly convex functional $\cE_{v,w}^{\lambda\omega_{\varphi}}$ on $\E^{1}_{\T}(X,\bom)\cap \cI^{-1}(0)$ is $\psi_\lambda(0)=\psi_\lambda$, 
\[
\dfrac{d}{dt}_{\mid t=1^{-}}\cE_{v,w}^{\lambda\omega_{\varphi}}(\psi_\lambda(t))\geq \dfrac{d}{dt}_{\mid t=0^{+}}\cE_{v,w}^{\lambda\omega_{\varphi}}(\psi_\lambda(t))\geq 0.
\]
Using that the functions $t\mapsto \ce^{\omega_{\varphi}}(\psi_\lambda(t))$ and $t\mapsto \cE_{v,w}(\psi_\lambda(t))$ are both convex (this follows from \cite[Prop.~4.5]{BDL} and Theorem~\ref{M-extension}), we have
\[
0\leq \lambda\left(\dfrac{d}{dt}_{\mid t=1^{-}}-\dfrac{d}{dt}_{\mid t=0^{+}}\right)  \ce^{\omega_{\varphi}}(\psi_\lambda(t))   \leq    \left(\dfrac{d}{dt}_{\mid t=1^{-}}-\dfrac{d}{dt}_{\mid t=0^{+}}\right)  \cE_{v,w}^{\lambda\omega_{\varphi}}(\psi_{\lambda}(t))   \leq  O(\lambda^{2}).
\]
By the convexity of $t\mapsto \ce^{\omega_{\varphi}}(\psi_\lambda(t))$, the last estimate also gives
\[
\begin{split}
0\leq& t\ce^{\omega_{\varphi}}(\psi_\lambda(1))+(1-t)\ce^{\omega_{\varphi}}(\psi_\lambda(0))-\ce^{\omega_{\varphi}}(\psi_\lambda(t))\\
=&t(1-t)\left(\frac{\ce^{\omega_{\varphi}}(\psi_\lambda(1))-\ce^{\omega_{\varphi}}(\psi_\lambda(t))}{1-t}\right)-t(1-t)\left(\frac{-\ce^{\omega_{\varphi}}(\psi_\lambda(0))+\ce^{\omega_{\varphi}}(\psi_\lambda(t))}{t}\right)\\
\leq& t(1-t) O(\lambda).
\end{split}
\]
Letting $\lambda\to 0$, and using using the endpoint stability of the finite energy geodesic segments (see \cite[Prop. 4.3]{BDL}), together with the $d_1$-continuity of $\ce^{\omega_{\varphi}}$ established in \cite[Prop. 4.4]{BDL}, it follows that $t\mapsto\ce^{\omega_{\varphi}}(\psi(t))$ is linear along the finite energy geodesic $\psi(t)=\underset{\lambda\to 0^{+}}{\lim} \psi_\lambda(t)$ connecting $\psi_0(0)=\psi_0$ and $\psi_0(1)=\varphi_0$. The strict convexity of $\ce^{\omega_{\varphi}}$ along finite energy geodesics (\cite[Prop.~4.5]{BDL}) then  yields $\psi_0=\varphi_0=\sigma[\varphi^{*}]$.
\end{proof}
Now, we are in position to prove Theorem~\ref{W&S} by the arguments in \cite[Thm.~1.4]{BDL}.

\begin{proof}[\bf Proof of Theorem \ref{W&S}]
Without loss of generality, we can assume that the $(v,w)$-extremal metric $\omega^{*}=\bom$ is the initial metric, and we suppose $\psi_0\in\E_\T^{1}(X,\bom)\cap \cI^{-1}(0)$ is a weak minimizer of $\cE_{v,w}:\E_\T^{1}(X,\bom)\to\R\cup\{\infty\}$. We want to show that $\psi_0=\sigma[0]$ for some $\sigma\in \G=\T^{\C}$. It is well-known (see \cite{D} or Lemma~\ref{K0-completion}) that there exists a sequence $\varphi_j\in \cK_\T(X,\bom)\cap \cI^{-1}(0)$ such that $d_1(\varphi_j,\psi_0)\to 0$. We set $\rho_j=\bom+dd^{c}\varphi_j$ which is a $\T$-invariant K\"ahler form. 

Since $\bom$ is $(v,w)$-extremal metric, $\mathcal{M}_{v,w}$ is non-empty. By Proposition~\ref{prop: BDL1}, the functional $\cE_{v,w}^{\lambda\rho_j}=\cE_{v,w}+\lambda\ce^{\rho_j}$ admits a unique minimizer $\psi_{j,\lambda}\in \E^{1}_\T(X,\bom)\cap \cI^{-1}(0)$,  such that 
\[
I(\varphi_j, \psi_{j,\lambda})\leq m(m+1)I(\varphi_j,\psi_0).
\]
By the quasi-triangle identity \cite[(2.16)]{BDL}, we get
\begin{equation}\label{I-lambda-j}
I(\psi_0, \psi_{j,\lambda})\leq C\left(I(\psi_0,\varphi_j)+ I(\varphi_j, \psi_{j,\lambda})\right)
\leq C(m^{2}+m+1) I(\varphi_j,\psi_0),
\end{equation}
 where $C>0$ is a uniform constant depending only on $m$.

Let $j>0$ be fixed. According to Proposition~\ref{prop: BDL1},  $\psi_{j,0}:= \lim_{\lambda \to 0} \psi_{j, \lambda}$ is the unique minimizer of $\cI^{\lambda\rho_j}$ on $\mathcal{M}_{v,w}$ whereas  Proposition \ref{prop: BDL2} yields that there exists a $\sigma_j\in \G$ such that $\psi_{j,0}=\sigma_j[0]$. Letting $\lambda\to 0^{+}$ in \eqref{I-lambda-j} (and using the $d_1$-continuity of $I$, see e.g. \cite{BDL0} or Lemma~\ref{l:I,J-extension}), we have
\[
I(\psi_0, \sigma_j[0])\leq C(m^{2}+m+1)  I(\varphi_j,\psi_0).
\]
Taking  $j\to\infty$ (and using $d_1(\varphi_j, \psi_0) \to 0$), we get $I(\psi_0,\sigma_j[0])\to 0$. By \cite[ Prop. 2.3]{BBEGZ} and \cite[Prop. 5.9]{Da},  the latter limit is  equivalent to $d_1(\sigma_j[0],\psi_0)\to 0$. Using \cite[Lemma 3.7]{BDL}, there exists  a $\sigma\in \G$ such that $\sigma[0]=\psi_0$.
\end{proof}
\begin{rem}\label{r:Chi-Li} The arguments in the proofs of Proposition~\ref{prop: BDL2} and Theorem~\ref{W&S} extend  if we remove the maximality assumption for $\T\subset \Aut_{r}(X)$, but replace the group $\G=\T^{\C}$ with the connected component of the identity  $\hat \G=\Aut_r^{\T}(X)$ of the centralizer of $\T$ in $\Aut_r(X)$. The key points are that $\hat \G$ is reductive (see Proposition~\ref{T-max}), and $\hat \G$ acts transitively on the space of $\T$-invariant $(v, w_0)$-extremal K\"ahler metrics (see Theorem~\ref{thm:uniqueness-weighted}).
\end{rem}

\begin{proof}[\bf Proof of Theorem~\ref{main}] We apply the Coercivity Principle of \cite{DaRu}, see Theorem~\ref{thm:coercivity-principle}.  By Theorem~\ref{M-extension}, the extension of the weighted Mabuchi energy $\cE_{v,w}$  to the space $\E^1_{\T}(X, \bom)$ satisfies the hypotheses of Theorem~\ref{thm:coercivity-principle} (the invariance of $\cE_{v,w}$ under the action of $\G=\T^{\C}$ is equivalent to the necessary condition \eqref{Futaki} for the existence of a $(v,w)$-cscK metric). We thus need to ensure that $\cE_{v,w}$ further satisfies the properties (i)-(iv) of Theorem~\ref{thm:coercivity-principle}. Theorem~\ref{M-extension} also yields the  convexity property (i)  whereas the regularity property (ii) is established in Theorem~\ref{W&S}. This last result also yields the uniqueness property (iii), via Theorem~\ref{thm:uniqueness-weighted}. Finally, the compactness property (iv) is established in Theorem~\ref{compact}. 
\end{proof}
\begin{rem}\label{r:hat-G} By virtue of Theorem~\ref{thm:uniqueness-weighted} and Remark~\ref{r:Chi-Li}, the conclusion of  Theorem~\ref{main} holds true if one drops the assumption that $\T\subset \Aut_r(X)$ is a maximal torus, but instead of $\T^{\C}$ one considers the larger reductive group $\hat \G=\Aut^{\T}_r(X)$ (see Proposition~\ref{T-max}). 
 \end{rem}

\section{Proofs of Theorems~\ref{extremal-bundle} and \ref{v-soliton-bundle}}\label{s:proofs2} 

\begin{proof}[\bf Proof of Theorem~\ref{extremal-bundle}] The implication ${\rm (ii)} \Rightarrow {\rm (i)}$ follows from Lemma~\ref{l:Y-Scal} whereas ${\rm (ii)} \Rightarrow {\rm (iii)}$ is established in Theorem~\ref{main}. We shall prove below ${\rm (iii)} \Rightarrow {\rm (ii)}$ and ${\rm (i)} \Rightarrow {\rm (ii)}$. The arguments are very similar to the ones in  the proof of \cite[Thm.~1]{Jubert} where the case when $(X, \T)$ is toric is studied. The main idea is to show that on a semi-simple principal $(X, \T)$-fibration, the continuity path  used  by  Chen--Cheng~\cite{CC}  in the cscK-case and its modification by He~\cite{He-ext} to the extremal case, can be adapted to  bundle-compatible construction. We sketch the proof below for Reader's convenience.
\begin{proof}[Proof of ${\rm (iii)} \Rightarrow {\rm (ii)}$] We shall work on $Y$. Let $\tilde \omega_0$ be a bundle-compatible K\"ahler metric on $Y$, corresponding to a $\T_X$-invariant K\"ahler metric $\bom$ on $X$. By Lemma~\ref{T}, $\tilde \omega_0$ is invariant under a maximal torus $\Ko_Y \subset \Aut_r(Y)$ (containing $\T_{Y}$), and  by this lemma and Lemma~\ref{l:Mabuchi}, the extremal affine-linear function corresponding to $\Ko_Y$  is the pull-back  to the vector space $\ko_Y^*=\left({\rm Lie}(\Ko_Y)\right)^*$ of the extremal affine-linear function $\ell^{\rm ext}(\mu)$ on $\tor$ defined in Theorem~\ref{extremal-bundle}-(ii). Furthermore, by Lemma~\ref{l:Mabuchi}, we have  that the restriction of $\cE^Y_{1, \ell^{\rm ext}}$ to the subspace $\cK_{\T}(X, \bom) \subset \cK_{\Ko_Y}(Y, \tilde \omega_0)$ (see Corollary~\ref{c:bundle-compatible} and Lemma~\ref{T}) is a positive multiple of $\cE_{p,\tilde w}^X$,  where the weights are the one defined in Theorem~\ref{extremal-bundle}-(ii).  In this setup, the main ingredients of the proof are as follows.

\smallskip
\noindent
{\it Step 1.}  Following \cite{CC,Hash,He-ext}, one considers the continuity path  $\varphi_t \in \cK_{\Ko_Y}(Y, \tilde \omega_0)$, determined by the solution of the PDE
\begin{equation}\label{e:continuity-path}
t\left(\Scal(\tilde{\omega}_{\varphi_t}) - \ell^{\rm ext}(\m_{\tilde \omega_{\varphi_t}})\right)= (1-t)\left(\tr_{\tilde \omega_{\varphi_t}}(\tilde \rho)- (n+m)\right), \qquad t\in (0, 1), 
\end{equation}
where $\tilde \rho$ is a suitable (fixed) $\Ko_Y$-invariant K\"ahler metric on $Y$ in the class $[\tilde \omega_0]$. By \cite{CC,He-ext}, there exits  $\tilde \rho \in [\tilde \omega_0]$ and a $t_0 \in (0,1)$,  such  that  a solution $\varphi_t$ of  \eqref{e:continuity-path} exits for $t$ in the interval $[t_0, 1)$; furthermore,  the solution $\varphi_t(y)$ is smooth as a function on $[t_0, 1) \times Y$.   The main observation of \cite{Jubert} is that,  with a suitable choice for $\tilde \rho$, the path \eqref{e:continuity-path} can in fact be reduced to a continuity path on $X$. To see this, we observe that,  by \cite[Prop. 3.1]{He-ext}, one can take $\tilde \rho$ in \eqref{e:continuity-path}
to be of the form $\tilde \rho= \tilde \omega_0 + \frac{1}{r_0}dd^c f$ with $r_0$ large enough,  where $f$ is the smooth function on $Y$  with zero mean with respect to $\tilde \omega_0$, which solves the Laplace equation
\[\Delta_{\tilde \omega_0} f = \left(\Scal(\tilde{\omega}_{0}) - \ell^{\rm ext}(\m_{\tilde \omega_{0}})\right).\]
By Lemmas~\ref{l:Y-Scal} and~\ref{l:operators-tilde},  $f\in C^{\infty}_{\T}(X)$,  whereas by Lemma~\ref{l:Y-embedding}  $\tilde \rho$  is bundle-compatible, i.e.
\[\tilde{\rho}=\rho+\sum_{a=1}^{k}(\langle p_a,\mu_\rho\rangle+c_a\rangle)\pi^{*}_B\omega_a+\langle d\mu_\rho\wedge\theta\rangle), \]
where $\rho = \omega_0 + \frac{1}{r_0} dd^c f$ is a $\T$-invariant K\"ahler metric on $X$, see \eqref{Y-Kahler}.
Using Lemma~\ref{l:Y-Scal} and  that  both $\tilde \omega_{\varphi}$ and  $\tilde \rho$ are of the form  \eqref{Y-Kahler}, we get a path of PDE's on $X$ of the form  
\begin{equation}\label{reduced-path}
 t \left(\Scal_p({\omega}_{\varphi_t}) - \tilde w(\m_{\omega_{\varphi_t}})\right)= (1-t) H(\varphi_t), \qquad t\in (t_0, 1),  \end{equation}
where $\varphi_t \in \cK_{\T}(X, \omega_0)$ and $H(\varphi_t) :=\left(\tr_{\tilde \omega_{\varphi_t}}(\tilde \rho)-(n+m)\right)$ is manifestly 
a second order differential operator on $X$  for  $\varphi_t \in \cK_{\T}(X, \omega_0)\subset \cK_{\Ko_{Y}}(Y, \tilde \omega_0)$. It follows that the solution $\varphi_t, \, t\in [t_0, 1)$ of \eqref{e:continuity-path} will actually belong to $\cK_{\T}(X, \bom) \subset \cK_{\Ko_Y}(Y, \tilde \omega_0)$.  This last point  is a consequence of the implicit function theorem (used in \cite{Hash,He-ext} to establish the openness) which can be applied directly to \eqref{reduced-path}; to find the linearization of \eqref{reduced-path}, we use~\cite{Hash}  that the linearization of $H(\varphi)$ on $Y$ is the operator $\Ha^{\tilde\rho}_{\tilde \omega_{\varphi}, 1}$ (see Definition~\ref{diff-oper}) so that, by virtue of  Lemma~\ref{l:operators-tilde}, the linearization of $H(\varphi)$ when restricted to $\cK_{\T}(X, \bom) \subset \cK_{\Ko_Y}(Y, \tilde \omega_0)$ is given by the $p$-weighted operator $\Ha^{\rho}_{\omega_{\varphi}, p}$ introduced in Appendix~\ref{s:appendix}. Similar argument allows us to identify the linearization of  $\Scal_p(\omega_{\varphi})$ (see  also \cite[Lemma~B1]{lahdili2}). We refer the Reader to \cite[Sect.~6]{Jubert} for further details.

\smallskip
\noindent
{\it Step 2.} 
The next ingredient is a deep result from \cite{CC} with a complement in \cite{He-ext}, showing that  if  $\cE_{1, \ell^{\rm ext}}^Y$ is $G$-coercive along the continuity  path $\varphi_t$ with respect to a reductive subgroup $G\subset \Aut_r(Y)$ containing the torus generated by the extremal vector field $\xi^Y_{\rm ext}= \d\ell^{\rm ext}\in \tor_Y$ in its center, then there exists a subsequence of times $j\to 1$ and elements $\sigma_j \in G$, such that $\sigma_j^*(\tilde \omega_{\varphi_j})$ converges in $C^{\infty}(Y)$ to an extremal K\"ahler metric $\tilde \omega_1$. In our case, assuming {\rm (iii)}, we have that $\cE^Y_{1, \ell^{\rm ext}}(\varphi_t)= \V(B, \omega_B) \cE_{p,\tilde w}^{X}(\varphi_t)$ (see Lemma~\ref{l:Mabuchi}) is $\G=\T_Y^{\C}$-coercive (see Lemmas~\ref{l:I_p}, \ref{Han-Li} and Proposition~\ref{p:J-proper}). We can thus find $\sigma_j\in \T_Y^{\C}$ and $\varphi_j$ as above. The K\"ahler metrics $\sigma_j^*(\tilde \omega_{\varphi_j})$ are bundle-compatible in the sense of Definition~\ref{d:(X, T)-principal}, and thus are of the form $\sigma_j^*(\tilde \omega_{\varphi_j})= \tilde \omega_0 + d_Y d^c_Y \sigma_j[\varphi_j]$, $\sigma_j[\varphi_j] \in \cK_{\T}(X, \bom)\subset \cK_{\Ko_{Y}}(Y, \tilde \omega_0)$. It follows that $\tilde \omega_1$ is bundle-compatible extremal K\"ahler metric on $Y$ (as $\cK_{\T}(X, \bom)$ is $C^{\infty}(Y)$ closed in $\cK_{\Ko_Y}(Y, \tilde \omega_0)$). By Lemma~\ref{l:Y-Scal}, the corresponding K\"ahler metric $\omega_1$ on $X$ is then $(p,\tilde w)$-cscK.  \end{proof}

\begin{proof}[\it Proof of ${\rm (i)} \Rightarrow {\rm (ii)}$] The proof is very similar to the proof of ${\rm (iii)} \Rightarrow {\rm (ii)}$. As in the Step 1 of the latter, we consider the continuity path \eqref{e:continuity-path} which defines potentials  $\varphi_t \in \cK_{\T}(X, \bom) \subset \cK_{\Ko_Y}(Y, \tilde \omega_0)$ for $t\in [t_0, 1)$.   We can assume without loss~\cite{calabi}  that $Y$ admits a $\Ko_{Y}$-invariant extremal K\"ahler metric in $[\tilde \omega_0]$, where $\Ko_Y\subset \Aut_r(Y)$ is the maximal torus given by Lemma~\ref{T}. This implies that $\cE_{1, \ell^{\rm ext}}^Y$ is $G$-coercive for $G=\Ko_Y^{\C}$. Indeed, this can be justified for instance by applying Theorem~\ref{main} and Proposition~\ref{p:J-proper} in the case $(v, w)=(1, \ell^{\rm ext})$.  As in the Step 2 of the proof of ${\rm (iii)} \Rightarrow {\rm (ii)}$, we use \cite{CC,He-ext} and the $G$-coercivity of $\cE^Y_{1, \ell^{\rm ext}}$ along the path in order to find a sub-sequence of times  $j\to 1$ and elements $\sigma_j \in G$,  such that $\sigma_j^*(\tilde \omega_{\varphi_j})$ converges in $C^{\infty}(Y)$ to a $\Ko_{Y}$-invariant extremal K\"ahler metric $\tilde \omega_1 \in [\tilde \omega_0]$. However, unlike the proof of ${\rm (iii)} \Rightarrow {\rm (ii)}$, in general $\sigma_j^*(\tilde \omega_{\varphi_j})$  and hence $\tilde \omega_1$ are not bundle-compatible, as $\sigma_j$ can act non-trivially on $B$ (see the proof of Lemma~\ref{T}). We thus need to modify slightly the argument in order to show that $\tilde \omega_1$ still induces a $(p, \tilde w)$-cscK metric on any given fibre $X_b= \pi_B^{-1}(b) \subset Y$. We denote by $\omega_j(b): = (\tilde \omega_{\varphi_j})_{|_{X_b}}$ and $\overline{\omega}_j (b):= (\sigma_j^*(\tilde \omega_{\varphi_j}))_{|_{X_b}}$ the induced $\T$-invariant metrics on $X_b$. As $\tilde \omega_{\varphi_j}$ is bundle-compatible, Lemma~\ref{l:Y-Scal} yields \[\Scal_p(\omega_j(b) )= \left[p(\m_{\tilde \omega_{\varphi_j}}) \Scal(\tilde \omega_{\varphi_j}) - p(\m_{\tilde \omega_{\varphi_j}}) q(\m_{\tilde \omega_{\varphi}})\right]_{|_{X_b}}.\] Using that $\sigma_j \in \Ko_Y^{\C}$  sends the fibre $X_b$ to the fibre $X_{\sigma_j(b)}$ (this follows from the construction of $\Ko_Y$ in the proof of Lemma~\ref{T}) the above equality holds true for the metrics $\overline{\omega}_j(b)$, where in the RHS we replace the metric $\tilde \omega_{\varphi_j}$ on $Y$ with $\bar \omega_j :=\sigma_j^*(\tilde \omega_{\varphi_j})$. It thus follows by the smooth convergence of $\overline{\omega}_j(b)$ to $\omega_1(b)$ that \[ \begin{split} \Scal_p(\omega_1(b)) & = \left[p(\m_{\tilde \omega_{1}}) \Scal(\tilde \omega_{1}) - p(\m_{\tilde \omega_{1}}) q(\m_{\tilde \omega_{1}})\right]_{|_{X_b}}\\ &=  \left[p(\m_{\tilde \omega_{1}})(\ell^{\rm ext}(\m_{\tilde \omega_1}) - q(\m_{\tilde \omega_1})\right]_{|_{X_b}} = \tilde w(\m_{\omega_1(b)}), \end{split}\] where for the equalities on the second line we have used that the $\Ko_Y$-extremal function $\ell^{\rm ext}\in \Aff(\tor_X^*)$ (see Lemma~\ref{T}). Thus $\omega_1(b)$ is a $(v, \tilde w)$-cscK metric on $X$. \end{proof}

\begin{proof}[\bf Proof of Theorem~\ref{v-soliton-bundle}] In \cite{HL}, Han--Li introduced a  functional  $\cE^{\HL}_v:\cK_\T(X,\bom)\to \R$ whose critical points are the $v$-solitons, see \cite[Lemma 4.4]{HL}. A careful inspection using \eqref{Chen-Tian} shows that $\cE_{v}^{\HL}(\omega)=\cE_{v, w}(\omega)- \int_X \log(v(\m_\omega))v(\m_\omega)\omega^{[m]}$, where
$w$ is the weight function defined in Proposition~\ref{tilde v}. Thus,  the difference of the two functionals is a constant independent of  the choice of a $\T$-invariant K\"ahler metric $\omega \in 2\pi c_1(X)$, see e.g. \cite{lahdili2}.
Thus, by \cite[Thm.~3.5]{HL} applied to $(X, 2\pi c_1(X), \T)$ (and weights $pv, \tilde w$), the  $\T^{\C}$-coercivity of $\cE_{pv, \tilde w}^X$ is equivalent with the existence of a $vp$-soliton on $X$.  By Lemma~\ref{Fano},  this implies that $Y$ admits a (bundle-compatible) $v$-soliton.

By~\cite[Thm.~1.7]{HL},  the  $\T^{\C}$-coercivity of $\cE_{pv, \tilde w}^X$ is also equivalent to the uniform $vp$-K-stability on $\T$-equivariant \emph{special} test configurations. When $(X, \T)$ is a toric Fano variety, the only such test configurations are the product test configurations, and thus by \cite[Prop.3]{lahdili2}, the condition is  reduced to verifying  \eqref{Futaki}  on $X$  with respect to the weights $(pv, \tilde w)$.

\smallskip
By the above conclusion, in order to show the existence of  a K\"ahler--Ricci soliton, it is sufficient  to find  $\xi_0\in \tor$, such \eqref{Futaki} is satisfied for the weights functions $v(\x)=e^{\langle \xi_0, \x\rangle}p(\xi)$ and $\tilde w(\x)=2p(\x)e^{\langle \xi_0, \x\rangle}(m + \langle \xi_0, \x \rangle + \langle d\log p, \x\rangle)$.  We detail the proof of this fact below.

Let $\omega \in 2\pi c_1(X)$ be any $\T$-invariant K\"ahler metric with canonically normalized momentum map $\m_{\omega} : X \to \Pol$. We then consider the following $p$-weighted version of a functional on $\tor$, defined originally by Tian--Zhu~\cite[Lemma~2.2]{TZ}:
\begin{equation}\label{reduced-functional}
 \xi \to \int_X e^{\langle \xi, \m_{\omega}\rangle}\,  p(\m_{\omega}) \omega^{[m]}, \qquad \xi \in \tor.\end{equation}
The convexity and properness  of the above functional follow by the arguments in \cite[Lemma~2.2]{TZ}, but under our toric assumption these can also be seen directly by rewriting  the RHS in \eqref{reduced-functional}  as an integral over the Delzant polytope: 
\[ \xi \to (2\pi)^m \int_{\Pol} e^{\langle \xi, \x\rangle} \p(\x) d\x.\] 
The properness of the latter follows by the fact that the origin is in the interior of $\Pol$ (by the canonical normalization condition of $\Delta$, see Remark~\ref{r:canonical-normalization}). Let $\xi_0\in \tor$ be the unique critical point of \eqref{reduced-functional}. We have that 
\[ \int_X \langle \zeta, \m_{\omega}\rangle \, e^{\langle \xi_0, \m_{\omega}\rangle}\,  p(\m_{\omega}) \omega^{[m]}=0, \]
which is precisely the condition ${\rm Fut}_{v, \tilde w}=0$ according to  Lemma~\ref{l:Futaki-TZ} in the Appendix~\ref{s:Futaki}.

\smallskip
The existence of a Sasaki--Einstein structure follows by a similar  argument: By Proposition~\ref{SE}, Lemma~\ref{Fano} and Proposition~\ref{tilde v} in that order, we want to find  $\xi_0\in \tor$ such that \eqref{Futaki} holds true for the weights given as in Proposition~\ref{tilde v},  with $v(\x)= p(\x)(\langle \xi_0, \x \rangle + a)^{-(m+n+2)}$. (This will be enough to conclude  the existence of a $pv$-soliton on the toric Fano manifold $(X, \T)$ and hence a $v$-soliton on $Y$ by the general arguments evoked above.) We argue based on \cite{MSY} who introduced the \emph{volume functional} on the space of \emph{normalized} positive affine-linear functions on $\Delta$. Strictly speaking, the functional  in  \cite[Sect.3]{MSY} is introduced on the principle $\Sph^1$-bundle $N$ over $(X,\omega)$ (which admits a natural strictly pseudo-convex CR structure $(\Ds, J)$ coming from $X$),  and  is then defined as the Sasaki volume of a $(\Ds, J)$-compatible normalized Sasaki--Reeb vector field $\hat \xi$ on $N$; using the point of view of \cite{ACL} (see in particular Lemma 1.4),  the volume functional can also be written on $X$,  noting that positive affine-linear functions $\ell_{\xi}= \langle \xi, \x \rangle + a$ over $\Pol$ are in bijection with Sasaki--Reeb vector fields $\hat \xi$ on $(N, \Ds, J)$,  and the normalization condition used in \cite{MSY} is equivalent to requiring  $\ell_{\xi}(0)= a=1$. Specifically, in our toric weighted setting, we let
\[
  \xi   \to   \int_X  (\langle \xi,  \m_{\omega} \rangle + 1)^{-(m+n+1)} p(\m_{\omega}) \omega^{[m]} 
   = (2\pi)^m \int_{\Pol}(\langle \xi,  \x \rangle + 1)^{-(m+n+1)} p(\x) d\x,\]
which is defined for $\xi\in \tor$ such that  $(\langle \xi,  \x \rangle + 1)> 0 $ on  $\Pol$; the properness of the functional follows by the fact that a canonically normalized  Delzant  polytope of a Fano toric manifold is determined  by  $\Pol=\{\x : L_j(\x) \ge 0\}$ where the affine-linear functions $L_j(\x)$ satisfy $L_j(0)=1$, see e.g. \cite[Sect.~7.4]{apostolov-notes}.  The unique critical point $\xi_0 \in \tor$ of the above convex functional  then satisfies 
\[ \int_X \langle \zeta, \m_{\omega}\rangle \,(\langle \xi_0,  \m_{\omega} \rangle + 1)^{-(m+n+2)} p(\m_{\omega})  \omega^{[m]}=0, \qquad \zeta \in \tor, \]
which, by Lemma~\ref{l:Futaki-TZ},  is precisely the condition \eqref{Futaki} for the weight functions considered. This concludes the proof of Theorem~\ref{v-soliton-bundle}. \end{proof}

\appendix
\section{Weighted differential operators}\label{s:appendix} Let $(X, \omega, \T)$ be as in Section~\ref{s:weighted-setup} and $v>0$ be a positive smooth weight function defined over the polytope $\Pol$.  We denote by  $\nabla^{\omega}$ the Levi--Civita connection of the Riemannian metric $g_{\omega}$,  and by $\delta_{\omega}$ the formal adjoint of $\nabla^{\omega}$.
We define the following  weighted differential operators which are self-adjoint with respect to the volume form $v(\m_{\omega})\omega^{[m]}$ on $X$.
\begin{defn}\label{diff-oper}
The  \emph{$v$-weighted Laplacian} of $\psi$ is the second order operator acting of smooth functions defined by
\begin{equation}\label{v-Delta}
\Delta_{\omega,v}(\psi) = \frac{1}{v(\m_\omega)}\delta_{\omega} (v(\m_{\omega}) d \psi).
\end{equation}
The \emph{$v$-weighted linear Lichnerowicz} operator is the forth-order operator given  by
\begin{equation}\label{Lic_v}
\mathbb{L}_{\omega,v}(\psi):=\frac{\delta_\omega\delta_\omega \left(v(\mu_\omega)(\nabla^{\omega}d\psi)^{-}\right)}{v(\mu_\omega)},
\end{equation}
where  $(\nabla^{\omega} d\phi)^{-}$ stands for the $(0,2)$-symmetric tensor of type $(2,0)+(0,2)$ with respect to the complex structure of $X$. 
For any $\T$-invariant K\"ahler form $\rho$ on $X$,  we define the second-order  operator given by
\begin{equation}
\begin{split}
{\Ha}^{\rho}_{\omega, v}(\psi):=& \langle \rho, dd^c \psi \rangle_{\omega}  + \langle d \tr_{\omega} (\rho), d\psi\rangle_{\omega} +\frac{1}{v(\m_{\omega})}\langle \rho, d v(\m_{\omega})\wedge d^c \psi\rangle_{\omega},
\end{split}
\end{equation}
where $\tr_{\omega} (\rho):= \left(\rho\wedge \omega^{[m-1]}\right)/\omega^{[m]} =\langle \rho, \omega \rangle_{\omega}$. The operator 
${\Ha}^{\rho}_{\omega, v}$ is  a $v$-weighted version of the linear operator used in~\cite{Hash}.
\end{defn}
A straightforward computation shows that
\begin{lemma}\label{l:Lic_v}
The $v$-weighted Lichnerowicz's operator can be written as 
\begin{equation*}\label{Lic_v-eq}
\mathbb{L}_{\omega,v}(\psi)=\frac{1}{2}(\Delta_{\omega,v})^{2}(\psi)+\delta_{\omega,v}\left(\rho_{\omega,v}((d^{c}\psi)^{\sharp})\right),
\end{equation*}
where $\delta_{\omega,v}:=\frac{1}{v(\mu_\omega)}\delta_\omega v(\mu_\omega)$ is the formal adjoint of the exterior derivative $d$ on functions with respect to the weighted volume form $v(\mu_\omega)\omega^{[m]}$,  $\rho_{\omega,v}:=\rho_{\omega}-\frac{1}{2}dd^{c}\left(\log v(\mu_{\omega})\right)$ is the Ricci form of the weighted volume form $v(\mu_\omega)\omega^{[m]}$, and ${\sharp}=g_{\omega}^{-1}$ stands for the riemannian duality between $TM$ and $T^*M$ by using the K\"ahler metric $\omega$.
\end{lemma}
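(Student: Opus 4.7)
The identity is a weighted generalization of the classical Lichnerowicz formula, and I would prove it by reducing to the unweighted case and carefully tracking the corrections coming from the weight.

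First, applying the Leibniz rule twice to the definition \eqref{Lic_v}, I expand
\[
\mathbb{L}_{\omega,v}(\psi) \;=\; \delta_\omega\delta_\omega H \;-\; \frac{1}{v(\mu_\omega)}\big\langle dv(\mu_\omega),\, \delta_\omega H\big\rangle_\omega \;-\; \frac{1}{v(\mu_\omega)}\,\delta_\omega\!\left(\iota_{\nabla v(\mu_\omega)} H\right),
\]
where $H := (\nabla^\omega d\psi)^-$ and the last two terms collect the first- and second-order corrections from passing the weight past $\delta_\omega\delta_\omega$. The leading term is handled by the classical (unweighted) Lichnerowicz identity
\[
\delta_\omega\delta_\omega H \;=\; \tfrac{1}{2}\Delta_\omega^2\psi \;+\; \delta_\omega\!\big(\rho_\omega((d^c\psi)^\sharp)\big),
\]
which itself follows from Bochner's formula $\nabla^*\nabla\alpha = (d\delta_\omega + \delta_\omega d)\alpha - \mathrm{Ric}(\alpha)$ applied to $\alpha = d\psi$, combined with the standard K\"ahler identity expressing $(\nabla^\omega d\psi)^+$ in terms of $dd^c\psi$.

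Next, I would identify the two correction terms produced in the first step with those arising from replacing $(\Delta_\omega, \delta_\omega, \rho_\omega)$ by their weighted counterparts $(\Delta_{\omega,v}, \delta_{\omega,v}, \rho_{\omega,v})$ on the right-hand side of the lemma. Expanding $\Delta_{\omega,v}\varphi = \Delta_\omega\varphi - (d\log v(\mu_\omega), d\varphi)_\omega$ produces the difference $(\Delta_{\omega,v})^2\psi - \Delta_\omega^2\psi$ as a sum of first- and second-order terms in $d\log v(\mu_\omega)$. Likewise, the identity $\rho_{\omega,v} - \rho_\omega = -\tfrac{1}{2}dd^c\log v(\mu_\omega)$, combined with the Leibniz expansion of $\delta_{\omega,v}\!\big(\rho_{\omega,v}((d^c\psi)^\sharp)\big) - \delta_\omega\!\big(\rho_\omega((d^c\psi)^\sharp)\big)$ and the formal-adjoint formula $\delta_{\omega,v}\beta = \delta_\omega\beta - v(\mu_\omega)^{-1}(dv(\mu_\omega),\beta)_\omega$ on $1$-forms, produces matching correction terms. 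The $\T$-invariance of $v(\mu_\omega)$---so that $d^c v(\mu_\omega)$ contracts naturally with $\tor$-valued data---supplies the algebraic cancellations needed to close the match.

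The main obstacle is purely algebraic bookkeeping: several distinct correction terms, both first- and second-order in $d\log v(\mu_\omega)$ as well as genuine second-derivative terms involving $\nabla^2 v(\mu_\omega)$, must be shown to collapse precisely into the stated form. No new analytic input is required beyond the classical Bochner/Weitzenb\"ock identity, and the proof is intrinsic to the K\"ahler manifold $(X,\omega)$; in particular, it does not invoke the semi-simple principal fibration machinery of Section~\ref{s:geometric}.
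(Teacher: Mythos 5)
Your skeleton --- expand $\tfrac{1}{v}\delta_\omega\delta_\omega(vH)$ by Leibniz, invoke the classical identity $\delta_\omega\delta_\omega H=\tfrac12\Delta_\omega^2\psi+\delta_\omega(\rho_\omega((d^c\psi)^\sharp))$ together with its first--order version $\delta_\omega H=\tfrac12 d\Delta_\omega\psi+\rho_\omega((d^c\psi)^\sharp)$, and then match corrections --- is the right one (the paper offers no proof, calling the lemma ``a straightforward computation''). But your central claims that the matching is ``purely algebraic bookkeeping'', requires ``no new analytic input beyond Bochner/Weitzenb\"ock'', and is ``intrinsic to the K\"ahler manifold'' are not correct: if you actually carry out the expansion, the difference of the two sides of the lemma reduces to $-\delta_{\omega,v}\alpha$, where, writing $f=\log v(\mu_\omega)$,
\[
2\alpha(Y)=\nabla^\omega df\big(J\nabla\psi,JY\big)-\nabla^\omega d\psi\big(J\nabla f,JY\big),
\]
and this residual $1$-form does \emph{not} vanish, nor is it $\delta_{\omega,v}$-coclosed, for a general positive weight and a general $\psi$. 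A two-line check on flat $\C$ with $v=e^{x}$ (so that $\rho_{\omega,v}=0$ and the second term of the lemma drops out) gives $\mathbb{L}_{\omega,v}(\psi)-\tfrac12(\Delta_{\omega,v})^{2}(\psi)=\pm\tfrac12\psi_{yy}$, which is nonzero unless $\psi$ is invariant under the Hamiltonian flow of $x$, i.e.\ under $y$-translations. So the stated formula is genuinely an identity of operators on $C^{\infty}_{\T}(X)$ for weights that are functions of the momentum map; it is false as a pointwise identity for arbitrary smooth weights and test functions, which is exactly what your ``collapse by bookkeeping'' would have to prove.

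The missing ingredient is the one you gesture at but misidentify. What kills $\alpha$ is not the $\T$-invariance of the function $v(\mu_\omega)$ (a $\T$-invariant weight that is not a function of $\mu_\omega$ still breaks the formula, as the example $v=e^{x_2}$ on $\C^2$ with $\T$ rotating the first factor shows), but the conjunction of \textup{(i)} $\psi\in C^{\infty}_{\T}(X)$ --- which you never assume --- and \textup{(ii)} the fact that the symplectic gradient $\zeta:=J\,\mathrm{grad}_\omega f=\sum_i f_{,i}(\mu_\omega)\,\xi_i$ is a pointwise $\tor$-valued combination of the generating holomorphic Killing fields. Under (i)--(ii) one has $d\psi(\zeta)=0$ and $df=-g_\omega(J\zeta,\cdot)$, and a short computation shows that \emph{both} terms of $2\alpha(Y)$ equal $-d\psi(\nabla^\omega_{JY}\zeta)$, so $\alpha\equiv 0$ and the matching closes. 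You need to isolate this residual term explicitly and prove its vanishing via these momentum-map/Killing identities; as written, the final step of your argument would fail.
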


We now specialize to the case  when $(Y, \tilde \omega, \T_Y)$ is a semi-simple principal $(X, \omega,  \T_X)$-fibration over $B$, as in Section~\ref{s:geometric}. We then denote by $\Delta_{\tilde \omega}^Y$, $\bL_{\tilde \omega}^Y$ and $({\Ha}_{\tilde \omega}^{\tilde \rho})^Y$ the corresponding unweighted operators on $(Y, \tilde \omega)$, where  the K\"ahler form $\tilde \rho$  in the definition of ${\Ha}^{\tilde \rho}_{\tilde \omega}$  is bundle-compatible, i.e. given by \eqref{Y-Kahler} for a $\T_X$-invariant K\"ahler form $\rho$ on $X$. We further let $\Delta^{B_a}_{\omega_a}$ denote the Laplacian on $(B_a, \omega_a)$,  and $\Delta^B_x$ and $\bL^B_x$ respectively the Laplacian and Lichnerowicz operators on $B$ with respect to the K\"ahler metric $\omega_B(x):= \sum_{a=1}^k(\langle p_a, \m_{\omega}(x)\rangle + c_a)\omega_{a}$. We thus have the following result.
\begin{lemma}\label{l:operators-tilde} Let $\psi$ be a $\T_Y$-invariant smooth function on $Y$, seen as a $\T_X$-invariant function on $X \times B$ via \eqref{function-splitting}, and  $\tilde \omega$ a bundle-compatible $\T_Y$-invariant K\"ahler metric on $Y$ associated to a $\T_X$-invariant K\"ahler metric $\omega$ on $X$.  We then have 
\begin{equation*}\label{operators-tilde}
\begin{split}
\Delta_{\tilde \omega}^Y \psi &= \Delta^X_{\omega,p} \psi_b + \Delta^B_{x} \psi_x,\\ 
\bL^Y_{\tilde \omega} \psi  &= \bL^{X}_{\omega, p} \psi_b  + \bL^B_x \psi_x + \Delta^B_x\left(\Delta^X_{\omega,p}\psi_b) \right)_x +  \Delta^X_{\omega,v}\left(\Delta^B_{x} \psi_x \right)_{b} \\ 
                  &+ \sum_{a=1}^k Q_a(x)\Delta^{B_a}_{\omega_a} \psi_x ,  \\              
({\Ha}^{\tilde \rho}_{\tilde \omega, 1})^Y \psi  &= ({\Ha}^{\rho}_{\omega, p})^X \psi_b  + \sum_{a=1}^k P_a(x)\Delta^{B_a}_{\omega_a} \psi_x, 
                   \end{split}\end{equation*}
where $P_a(x), Q_a(x)$  are  smooth $\T$-invariant functions on $X$, and $\psi_x$ and $\psi_b$ are respectively the induced smooth functions on $B$ and $X$ via \eqref{function-splitting}.
\end{lemma}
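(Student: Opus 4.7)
The plan is to exploit the warped-product structure of $\tilde\omega$ on $Y$ and compute each operator by pulling back to $Z = X \times P$, following the pattern of the proof of Lemma~\ref{l:Y-Scal}. The key observation is that in the natural splitting $T_y Y \cong T_x X \oplus \mathcal{D}_p$ at a point $y = [x,p]$, where $\mathcal{D}_p = \ker \theta_p$ is identified with $T_{\pi_B(p)}B$ via the connection, the K\"ahler form $\tilde\omega$ of \eqref{Y-Kahler} decomposes as a \emph{pointwise direct sum} $\omega \oplus \omega_B(x)$: the mixed-term contribution $\langle d\m_\omega\wedge\theta\rangle$ vanishes on this splitting because $\theta$ is zero on $\mathcal{D}$, while $\omega$ and $\sum_a(\langle p_a,\m_\omega\rangle+c_a)\pi_B^*\omega_a$ act respectively on the $TX$- and $\mathcal{D}$-blocks. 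Using the identification \eqref{function-splitting}, a $\T_Y$-invariant function $\psi$ becomes a $\T_X$-invariant function on $X\times B$, and an extension of \eqref{d^c} shows that, on $Z$,
\[ d^c_Y\psi = d^c_X\psi_b + d^c_B\psi_x + \langle d^c_X\psi_b,\theta\rangle,\]
where $d^c_B\psi_x$ is understood via its horizontal lift.

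For the first identity, I would compute $\Delta^Y_{\tilde\omega}\psi$ via the defining identity $-\Delta^Y_{\tilde\omega}\psi \cdot \tilde\omega^{[m+n]} \wedge \theta^{\wedge r} = dd^c_Y\psi \wedge \tilde\omega^{[m+n-1]}\wedge \theta^{\wedge r}$ on $Z$, expanding $\tilde\omega^{[m+n-1]}$ as in \eqref{omega^(m+n-1)}. The contribution from the $X$-block yields $-\text{tr}_\omega(dd^c_X\psi_b)$ plus a first-order correction coming from differentiating the $x$-dependent factors $(\langle p_a,\m_\omega\rangle+c_a)$ along $X$; this correction reassembles, using \eqref{p-laplacian}, into $\Delta^X_{\omega, p}\psi_b$. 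The contribution from the $B$-block produces $\Delta^B_x\psi_x$ directly, since for fixed $x$ the metric $\omega_B(x)$ is a K\"ahler metric on $B$. A volume-form check using $\tilde\omega^{[m+n]}\wedge \theta^{\wedge r} = p(\m_\omega)\omega^{[m]}\wedge \bigwedge_a\omega_a^{[n_a]}\wedge\theta^{\wedge r}$ from \eqref{Z-measure} and integration by parts confirms the formula.

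For the Lichnerowicz operator, I would apply Lemma~\ref{l:Lic_v} to write $\bL^Y_{\tilde\omega}\psi = \tfrac{1}{2}(\Delta^Y_{\tilde\omega})^2\psi + \delta_{\tilde\omega}\bigl(\rho_{\tilde\omega}((d^c_Y\psi)^\sharp)\bigr)$. Squaring the decomposition of $\Delta^Y_{\tilde\omega}$ gives four terms that, combined with the pure-fiber Lichnerowicz expressions from Lemma~\ref{l:Lic_v} applied to $(X,\omega,p)$ and to $(B,\omega_B(x))$, reassemble into $\bL^X_{\omega, p}\psi_b + \bL^B_x\psi_x$ plus the mixed cross-terms $\Delta^B_x(\Delta^X_{\omega, p}\psi_b)_x + \Delta^X_{\omega, p}(\Delta^B_x\psi_x)_b$ (matching the statement, where the ``$v$'' in the second cross-term should read $p$). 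The Ricci contribution is handled via the decomposition \eqref{ricci-principal} of $\rho_{\tilde\omega}$ into the horizontal lift of $\rho_\omega$, the summands $\sum_a(\rho_a-c_a\omega_a)$ from the bases, and the crucial term $-\tfrac{1}{2}dd^c_Y\log p(\m_\omega)$; it is precisely this last term that converts the un-weighted Ricci form on $Y$ into the $p$-weighted Ricci form $\rho_{\omega,p}$ governing $\bL^X_{\omega, p}$ in the fiber direction, while the $B_a$-summands produce the residual coefficients $Q_a(x)$ in front of $\Delta^{B_a}_{\omega_a}\psi_x$.

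For $\Ha^{\tilde\rho}_{\tilde\omega,1}$, I would use the bundle-compatible form \eqref{tilde-rho} of $\tilde\rho$ and expand each of the three terms in Definition~\ref{diff-oper} relative to the warped splitting. The $X$-block reproduces $(\Ha^\rho_{\omega,p})^X\psi_b$: the $p$-weight again emerges from differentiating $\text{tr}_{\tilde\omega}\tilde\rho$ along $X$, exactly as in the Laplacian calculation. The $B$-block contributes $\sum_a P_a(x)\Delta^{B_a}_{\omega_a}\psi_x$ with coefficients $P_a(x)$ built from ratios $(\langle p_a,\m_\rho\rangle + c_a)/(\langle p_a,\m_\omega\rangle+c_a)$; the third term in Definition~\ref{diff-oper} drops because the ambient weight is $v=1$. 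The main obstacle throughout will be the bookkeeping in the Lichnerowicz case: tracking how the $p$-twisted Ricci form $\rho_{\omega,p}=\rho_\omega-\tfrac{1}{2}dd^c\log p(\m_\omega)$ emerges from $\rho_{\tilde\omega}$, which relies essentially on the ``log-$p$'' correction in \eqref{ricci-principal} and parallels the scalar-curvature identity \eqref{Y-Scalv}.
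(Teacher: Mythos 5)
Your proposal is correct and follows essentially the same route as the paper: computing on $Z=X\times P$ against $\theta^{\wedge r}$, decomposing $d^c_Y\psi$ and $\tilde\omega^{[m+n-1]}$ to reassemble the $p$-weighted Laplacian via \eqref{p-laplacian}, using $\bL^Y_{\tilde\omega}=\tfrac12(\Delta^Y_{\tilde\omega})^2+\delta_{\tilde\omega}(\rho_{\tilde\omega}(d^c_Y\psi))$ together with the decomposition of $\rho_{\tilde\omega}$ (whose $-\tfrac12 dd^c\log p(\m_\omega)$ term produces $\rho_{\omega,p}$), and expanding $\Ha^{\tilde\rho}_{\tilde\omega,1}$ from its trace expression. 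You also correctly identify that the $\Delta^X_{\omega,v}$ in the stated cross-term should read $\Delta^X_{\omega,p}$, consistent with the paper's own computation \eqref{Delta^2}.
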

\begin{proof} This first two equalities are established in \cite{HFKG4} (see the proof of Lemma 8) in the special case when $(X, \omega, \T_X)$ is a toric variety whereas the third identity is proved in \cite{Jubert} (also in the case when $(X, \T_X)$ is toric). These computations extend to the general setting with no substantial additional difficulty (by using Lemma~\ref{l:Lic_v} above for the second identity), but we include them below for the sake of self-containedness.   

In the notation of Sect.~\ref{s:geometric}
\begin{equation} \label{laplacian}
\begin{split}
\Delta_{\tilde{\omega}}^{Y}(\psi)=&-\frac{d_Yd^{c}_Y\psi\wedge\tilde{\omega}^{[n+m-1]}}{\tilde{\omega}^{[n+m]}}\quad \text{(on }Y\text{)}\\
=&-\frac{d_Yd^{c}_Y\psi\wedge\tilde{\omega}^{[n+m-1]}\wedge\theta^{\wedge \dtor}}{\tilde{\omega}^{[n+m]}\wedge\theta^{\wedge \dtor}} \quad \text{(on }Z=X\times \bN\text{)},
\end{split}
\end{equation}
where   $\theta^{\wedge \dtor}:=\bigwedge_{i=1}^{\dtor}\theta_i$ with respect to any lattice basis $(\xi_i)_i$ of $\Lambda\subset \tor$.
Viewing $d^{c}_{X\times B}\psi$ as a 1-form on $Z$, it admits the following decomposition with respect to \eqref{split-T(X x N)} 
\begin{equation}\label{dc-XxB}
d^{c}_{X\times B}\psi=(d^{c}_{X\times B}\psi)_{\Hs}+\sum_{i=1}^{\dtor} (d^{c}_{X\times B}\psi)(\xi_i^{\bN}-\xi_i^X)\theta_i=d^{c}_{Y}\psi-\langle d^{c}_{X}\psi,\theta\rangle.
\end{equation}
We thus compute on $Z$:
\begin{align}
\begin{split}\label{(ddc)Y-psi}
(d_Yd^{c}_Y\psi)_{(x,b)}=&d_{Z}\left(d^{c}_X\psi+\sum_{j=1}^{\dtor}d^{c}_X\psi(\xi^{X}_j)\theta_j+d^{c}_B\psi\right)\\
=&d_{Z}d^{c}_X\psi+\sum_{j=1}^{\dtor}d_{Z}(d^{c}_X\psi(\xi^{X}_j))\theta_j\\
&+\sum_{j=1}^{\dtor}d^{c}_X\psi_b(\xi^{X}_j) \left(\sum_{a=1}^{k}\xi^{j}(p_a)\pi_B^{*}\omega_a\right)+d_Zd^{c}_B\psi\\
=&d_Xd^{c}_X\psi_b+d_Bd^{c}_B\psi_x+\sum_{j=1}^{\dtor}d_{Z}(d^{c}_X\psi(\xi^{X}_j))\wedge\theta_j+\sum_{a=1}^{k}d^{c}_X\psi(p^{X}_a)\pi_B^{*}\omega_a\\
&+d_Bd_X^{c}\psi+d_Xd_B^{c}\psi,
\end{split}
\end{align}
where for getting  the third equality we used \eqref{d-theta},  as well as the identities  $d_{\bN}d^{c}_X\psi=d_Bd_X^{c}\psi$ and $d_{\bN} d^{c}_B\psi=d_B d_B^{c}\psi$ (which follow from the identification \eqref{function-splitting}).
Using  \eqref{Z-measure} and \eqref{omega^(m+n-1)},  we derive from \eqref{laplacian}  and \eqref{(ddc)Y-psi}
\[
\begin{split}
\Delta_{\tilde{\omega}}^{Y}(\psi)(x,b)=&(\Delta^X_{\omega} \psi_b)(x) +(\Delta^B_{\omega_B(x)} \psi_x)(b)-\sum_{a=1}^{k}\frac{n_a}{(\langle \m_\omega,p_a\rangle + c_a)}(d^{c}_X\psi_b)(p^{X}_a),
\end{split}
\]
where, we recall, for a fixed $x\in X$, we have set $\omega_B(x):=\sum_{a=1}^k(\langle p_a, \m_{\omega}\rangle + c_a)\omega_a$, and $p_a^X$ denotes the vector field field on $X$ corresponding to $p_a\in \tor$.  The first equality  in the Lemma follows from the identity \eqref{p-laplacian}, taking in mind that for any smooth function on $u$ on $\Pol$ and any $\T$-invariant smooth function $\phi$ on $X$,  $g_{\omega}(d (u(\m_{\omega})), d\phi)=\sum_{i=1}^{\dtor}u_{,i}(\m_{\omega})d^c\phi(\xi_i)$.

\bigskip
Now, we establish the expression of the corresponding Lichnerowicz operators. Recall that (see e. g.~\cite{gauduchon-book})  
\begin{equation}\label{Lic-tilde-0}
\mathbb{L}_{\tilde{\omega}}^{Y}\psi :=\frac{1}{2}(\Delta_{\tilde{\omega}}^{Y})^{2}(\psi)+\delta_{\tilde{\omega}}(\rho_{\tilde{\omega}}(d^{c}_Y\psi)).
\end{equation}
Using  the decomposition of $\Delta_{\tilde \omega}^Y$ we have just established, we have
\begin{equation}\label{Delta^2}
(\Delta_{\tilde{\omega}}^{Y})^{2}(\psi)= (\Delta^X_{\omega,p} )^{2}(\psi_b)+ (\Delta_{x}^{B})^{2}(\psi_x)+\Delta^X_{\omega,p} (\Delta_{x}^{B}(\psi_x))+ \Delta_{x}^{B}\left(\Delta^X_{\omega,p}(\psi_b)\right).
\end{equation}
It remains to compute the Ricci term in \eqref{Lic-tilde-0}.
From \eqref{ddc-tkappa1}, we have
\begin{align}
\begin{split}\label{tilde-Ricci}
\rho_{\tilde{\omega}}=&\rho_{\omega,p}+\pi^{*}_B\rho_{\omega_B}+\frac{1}{2}\sum_{a=1}^{k} \Delta^{X}_{\omega,p}(\langle \mu_{\omega},p^{X}_{a}\rangle)\pi^{*}_B\omega_a\\
&+\sum_{j=1}^{\dtor} d_X\left(d^{c}_X\left(\kappa-\frac{1}{2}\log p(\mu_{\omega})\right)(\xi^{X}_j)\right)\wedge\theta_j.
\end{split}
\end{align}
where $\rho_{\omega,p} := \rho_{\omega} -\frac{1}{2}d_X d^c_X \log p(\m_{\omega})$ is the Ricci form of the weighted volume form $p(\mu_\omega)\omega^{[m]}$.
Using integration by parts, for any  $\T_Y$-invariant smooth test function $\phi$ on $Y$, seen as a $\T_X$ and $\T_{\bN}$-invariant  function on $Z=X \times \bN$ via \eqref{function-splitting}, we have
\begin{align}
\begin{split}
&\int_Z \phi \delta_{\tilde{\omega}}(\rho_{\tilde{\omega}}(d^{c}_Y\psi)) \tilde{\omega}^{[n+m]}\wedge\theta^{\wedge r} =-\int_Z \rho_{\tilde{\omega}}(d_Y\phi,d^{c}_Y\psi) \tilde{\omega}^{[n+m]}\wedge \theta^{\wedge r}\\
&=\int_Z \rho_{\tilde{\omega}}\wedge d_Y\phi\wedge d^{c}_Y\psi\wedge\tilde{\omega}^{[n+m-2]}\wedge\theta^{\wedge r}-\frac{1}{2}\int_Z\Scal(\tilde{\omega}) \tilde{g}_{\tilde \omega}(d_Y\phi,d_Y\psi)\tilde{\omega}^{[n+m]}\wedge \theta^{\wedge r}\\
=&\int_Z\rho_{\tilde{\omega}}\wedge d_Y\phi\wedge d^{c}_Y\psi\wedge\tilde{\omega}^{[n+m-2]}\wedge\theta^{\wedge \dtor}-\frac{1}{2}\int_Z\left(\frac{\Scal_p(\omega)}{p(\mu_\omega)}+q(\mu_\omega)\right) d_Y\phi\wedge d^{c}_Y\psi\wedge \tilde{\omega}^{[n+m-1]}\wedge\theta^{\wedge \dtor}.
\end{split}
\end{align}
From the above formula, using  \eqref{omega^(m+n-1)},  \eqref{dc-XxB}  and \eqref{tilde-Ricci},  we compute (after some straightforward but long algebraic manipulations and integration by parts over $X$ and $B$)
\begin{equation}\label{del-ric-1}
\begin{split}
&\delta^Y_{\tilde{\omega}}(\rho_{\tilde{\omega}}(d^{c}_Y\psi))= \delta^X_{\omega, p}\left(\rho_{\omega,p}(d^{c}_X\psi)\right)+\delta^B_{\omega_B(x)}\left(\rho_{\omega_B}(d^{c}_B\psi)\right)\\
&+\frac{1}{2}\sum_{a=1}^{k}\frac{q(\mu_\omega)}{ (\langle \mu_{\omega},p_{a}\rangle+c_a)}\Delta^B_{\omega_a}(\psi) +\frac{1}{2}\sum_{a=1}^{k}\frac{(n_a-1)}{(\langle \mu_{\omega},p_{a}\rangle+c_a)^{2}} \Delta^{X}_{\omega,p}(\langle \mu_{\omega},p_{a}\rangle)\Delta^B_{\omega_a}(\psi_x)\\
&+\sum_{a,b=1}^{k}\frac{n_b}{(\langle \mu_{\omega},p_{a}\rangle+c_a)(\langle \mu_{\omega},p_{b}\rangle+c_b)}\Delta^{X}_{\omega,p}(\langle \mu_{\omega},p_{b}\rangle)\Delta^B_{\omega_a}(\psi_x).
\end{split}
\end{equation}
Combining \eqref{Lic-tilde-0}, \eqref{Delta^2} and \eqref{del-ric-1}  yields the desired expression.

\bigskip
The expression for  $({\Ha}^{\tilde \rho}_{\tilde \omega, 1})^Y(\psi)$ is obtained by similar arguments, using that
\[
\begin{split}
({\Ha}^{\tilde \rho}_{\tilde \omega, 1})^Y(\psi) =& \langle \tilde{\rho},d_Yd^{c}_Y\psi\rangle_{\tilde{\omega}}+\langle d_Y\tr_{\tilde{\omega}}(\tilde{\rho}),d_Y\psi\rangle_{\tilde{\omega}}\\
=& -\tr_{\tilde{\omega}}(\tilde{\rho}) \Delta^Y_{\tilde{\omega}}(\psi)-\frac{\tilde{\rho}\wedge d_Yd^{c}_Y\psi\wedge \tilde{\omega}^{[n+m-2]}}{\tilde{\omega}^{[n+m]}}+\frac{d_Y\tr_{\tilde{\omega}}(\tilde{\rho})\wedge d^{c}_Y\psi\wedge \tilde{\omega}^{[n+m-1]} }{\tilde{\omega}^{[n+m]}}.
\end{split}
\]
\end{proof} 
\end{proof}

\section{Weighted Futaki invariants}\label{s:Futaki} On  a smooth Fano manifold $(X,  \T)$ as in the setting and notation of Section~\ref{s:v-KRS},  we further relate the weighted Futaki obstruction ${\rm Fut}_{v, w} =0$ (see \eqref{Futaki}) with weights $v(\x), w(\x)$  as in  Proposition~\ref{tilde v} with the Futaki-type obstructions studied by Tian--Zhu~\cite{TZ} in the case of K\"ahler--Ricci solitons (i.e. when  $v= e^{\langle \xi, \x\rangle}$):  
\begin{lemma}\label{l:Futaki-TZ} Let $(X, \T)$ be a smooth Fano manifold $(X, \T)$ with canonically normalized momentum polytope $\Pol$,  and $v>0, w$ smooth functions on  $\Pol$  as in Proposition~\ref{tilde v}. Then, for any $\T$-invariant K\"ahler metric $\omega \in 2\pi c_1(X)$ with momentum map  $\m_{\omega}$ and $\T$-invariant Ricci potential $h$ (i.e. $\rho_{\omega} - \omega = \frac{1}{2} dd^c h$),  the weighted Futaki invariant ${\rm Fut}_{v, w}$ introduced in \eqref{Futaki} satisfies
\[ {\rm Fut}_{v, w} (\ell_{\zeta}) = \int_X \Big(\cL_{J\zeta}\big(\log v(\m_{\omega}) -h\big)\Big) v(\m_{\omega})\omega^{[m]}=-2\int_X \langle \zeta, \m_{\omega}\rangle v(\m_{\omega}) \omega^{[m]}, \qquad \ell_{\zeta} = \langle\zeta, \x \rangle + a, \, \, \zeta \in \tor.\]
\end{lemma}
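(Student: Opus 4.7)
The plan is to derive both equalities from the pointwise identity \eqref{Scal-v-tilde v} established in the proof of Lemma~\ref{v-KRS=(v,w)-cscK}, namely
\[
\Scal_v(\omega) - w(\m_\omega) = v(\m_\omega)\,\Delta_{\omega, v}\bigl(\log v(\m_\omega) - h\bigr),
\]
which is valid for \emph{any} $\T$-invariant K\"ahler metric $\omega\in 2\pi c_1(X)$ with Ricci potential $h$, together with an integration by parts. Setting $F := \log v(\m_\omega) - h$, I will rewrite $v(\m_\omega)\Delta_{\omega, v}(F) = \delta_\omega\bigl(v(\m_\omega)\,dF\bigr)$ and integrate this against $\ell_\zeta(\m_\omega) = \m_\omega^\zeta + a$; using that $\delta_\omega$ is the formal adjoint of $d$ with respect to $\omega^{[m]}$ and that $d\ell_\zeta(\m_\omega) = d\m_\omega^\zeta$, this produces
\[
{\rm Fut}_{v,w}(\ell_\zeta) = \int_X v(\m_\omega)\,g_\omega\bigl(dF,\, d\m_\omega^\zeta\bigr)\,\omega^{[m]}.
\]
The first stated equality then follows from the identity $g_\omega(df, d\m_\omega^\zeta) = \cL_{J\zeta} f$, which expresses the fact that $J\zeta$ is the $g_\omega$-gradient of the Killing potential $\m_\omega^\zeta$ (a direct consequence of $d\m_\omega^\zeta = -\iota_\zeta\omega$ in the sign conventions used throughout the paper).

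For the second equality, I will split $\int_X v(\m_\omega)\cL_{J\zeta}(\log v(\m_\omega) - h)\,\omega^{[m]}$ into two pieces. The $\log v$ part simplifies to $\int_X \cL_{J\zeta} v(\m_\omega)\,\omega^{[m]}$, and Stokes' theorem combined with the computation $\iota_{J\zeta}\omega = -d^c\m_\omega^\zeta$ (which gives $\cL_{J\zeta}\omega^{[m]} = -dd^c\m_\omega^\zeta\wedge\omega^{[m-1]} = \Delta_\omega\m_\omega^\zeta\,\omega^{[m]}$ via the definition of $\Delta_\omega$) yields $-\int_X v(\m_\omega)\Delta_\omega\m_\omega^\zeta\,\omega^{[m]}$. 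For the $h$ part, the second identity in \eqref{basic-relations}, $\cL_{J\zeta}h = 2\m_\omega^\zeta - \Delta_\omega\m_\omega^\zeta$, gives
\[
-\int_X v(\m_\omega)\cL_{J\zeta}h\,\omega^{[m]} = -2\int_X \m_\omega^\zeta\,v(\m_\omega)\,\omega^{[m]} + \int_X v(\m_\omega)\Delta_\omega\m_\omega^\zeta\,\omega^{[m]}.
\]
Adding the two pieces, the contributions involving $\int_X v(\m_\omega)\Delta_\omega\m_\omega^\zeta\,\omega^{[m]}$ cancel exactly, leaving $-2\int_X \m_\omega^\zeta\, v(\m_\omega)\,\omega^{[m]}$ as claimed.

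The only nontrivial input is the pointwise identity \eqref{Scal-v-tilde v}, and once this is in hand both equalities reduce to a bookkeeping argument with Stokes' theorem and the Killing-field identities \eqref{basic-relations}. The main delicate point will be keeping track of signs; the canonical normalization of $\Pol$ is used implicitly to force the constants of integration in \eqref{basic-relations} to vanish, which is essential for the final cancellation of the $\Delta_\omega\m_\omega^\zeta$ terms.
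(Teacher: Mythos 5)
Your argument is correct and is essentially the paper's own proof: the first equality is obtained there by exactly the same chain (the identity $\cL_{J\zeta}F=g_\omega(d\ell_\zeta(\m_\omega),dF)$, self-adjointness of $\Delta_{\omega,v}$ with respect to $v(\m_\omega)\omega^{[m]}$, and the pointwise identity \eqref{Scal-v-tilde v}), merely written in the reverse direction, and the second equality is stated in the paper to follow "from the first, the second relation in \eqref{basic-relations} and integration by parts," which is precisely the splitting and cancellation of the $\Delta_\omega\m_\omega^\zeta$ terms you carry out. The sign bookkeeping you flag is indeed the only delicate point, and it is shared with (and no worse than) the paper's own treatment.
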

\begin{proof} We have
\[
\begin{split}
 & \int_X\Big(\cL_{J\zeta}\big(\log v(\m_{\omega}) -h\big)\Big) v(\m_{\omega})\omega^{[m]} = \int_X g_{\omega}\Big(d\ell_{\zeta}, d \log (v(\m_{\omega}) -h)\Big) v(\m_{\omega}) \omega^{[m]} \\
 &= \int_X \ell_{\zeta} \left(\Delta_{\omega,v}\big( \log v(\m_{\omega}) -h\big)\right) v(\m_{\omega}) \omega^{[m]} = \int_X \ell_{\zeta}\left(\Scal_v(\omega) - w(\m_{\omega})\right) \omega^{[m]} = {\rm Fut}_{v, w}(\ell_{\zeta}), 
\end{split} \]
where for the last equality we have used \eqref{Scal-v-tilde v}. The second equality in the Lemma follows from the first, the second relation in \eqref{basic-relations} and integration by parts.
\end{proof}

%\end{proof}

\end{document}